\newcommand{\ubar}[1]{\underaccent{\bar}{#1}}
\numberwithin{equation}{section}
\theoremstyle{plain}
\newtheorem{thm}{Theorem}
\newtheorem{prop}{Proposition}[section]
\newtheorem{lem}[prop]{Lemma}
\newtheorem{cor}[prop]{Corollary}
\newtheorem{rmk}{Remark}[section]
\newtheorem{problem}{Problem}
\newcommand {\R} {\mathbb{R}}
\newcommand {\p} {\partial}
\newcommand {\supp} {\text{supp}}
\DeclareMathOperator {\dist} {dist}
\def\Xint#1{\mathchoice
	{\XXint\displaystyle\textstyle{#1}}
	{\XXint\textstyle\scriptstyle{#1}}
	{\XXint\scriptstyle\scriptscriptstyle{#1}}
	{\XXint\scriptscriptstyle\scriptscriptstyle{#1}}
	\!\int}
\def\XXint#1#2#3{{\setbox0=\hbox{$#1{#2#3}{\int}$ }
		\vcenter{\hbox{$#2#3$ }}\kern-.6\wd0}}
\def\dashint{\Xint-}
\def\ba{\begin{array}}
	\def\ea{\end{array}}
\def\be{\begin{equation}}
	\def\ee{\end{equation}}
\def\f{\frac}
\def\p{\partial}
\def\cpsi{\check{\psi}}
\def\bn{{\bf n}}
\def\bu{{\bf u}}
\def\bp{{\bf p}}
\def\mcB{\mathcal{B}}
\def\mcD{\mathcal{D}}
\def\mcG{\mathcal{G}}
\def\mcJ{\mathcal{J}}
\def\mcK{\mathcal{K}}
\def\mcO{\mathcal{O}}
\def\mcR{\mathcal{R}}
\def\mcS{\mathcal{S}}
\def\mcU{\mathcal{U}}
\def\msA{\mathscr{A}}
\def\msG{\mathscr{G}}
\def\msR{\mathscr{R}}
\def\mfb{\mathfrak{b}}
\def\mfc{\mathfrak{c}}
\def\mfh{\mathfrak{h}}
\def\mfM{\mathfrak{M}}
\def\mfq{\mathfrak{q}}
\def\mfS{\mathfrak{S}}
\def\mft{\mathfrak{t}}
\def\mfT{\mathfrak{T}}
\def\baru{\bar{u}}
\def\barho{\bar{\rho}}
\def\barH{\bar{H}}
\def\unrho{\ubar{\rho}}
\def\unu{\ubar{u}}
\def\unH{\ubar{H}}
\begin{document}
	\title[Variational Structure and Jet Flows]
	{Variational Structure and {Two-Dimensional} subsonic Jet Flows for Compressible Euler System with general incoming flows}
	
	\author{Yan Li}
	
	\address{School of Mathematical Sciences, Shanghai Jiao Tong University, 800 Dongchuan Road, Shanghai, 200240}
	\email{liyanly@sjtu.edu.cn}
	
	\author{Wenhui Shi}
	\address{School of Mathematics, Monash University,  9 Rainforest Walk, Clayton, VIC 3800, Australia}
	\email{shi@math1.rwth-aachen.de}

	\author{Lan Tang}
	\address{School of Mathematics and Statistics, Central China Normal University, Wuhan, Hubei 430079, China}
	\email{lantang@mail.ccnu.edu.cn}
	
	\author{Chunjing Xie}
	\address{School of mathematical Sciences, Institute of Natural Sciences,
		Ministry of Education Key Laboratory of Scientific and Engineering Computing,
		and CMA-Shanghai, Shanghai Jiao Tong University, 800 Dongchuan Road, Shanghai, China}
	\email{cjxie@sjtu.edu.cn}

	\begin{abstract}
		In this paper, we proved the well-posedness theory of compressible subsonic jet flows for {two-dimensional} steady Euler system with {\it general} incoming horizontal velocity {as long as the flux is larger than a critical value}. 
			One of the key observations is that the stream function formulation for {two-dimensional} compressible steady Euler system enjoys a variational structure even when the flows have nontrivial vorticity, so that the jet problem can be reformulated as a domain variation problem. {This variational structure helps to adapt} the framework developed by Alt, Caffarelli, and Friedman to study {the jet problem, which is a Bernoulli type free boundary problem. A major technical point to analyze the jet flows} is that the inhomogeneous terms in the rescaled equation near the free boundary are always small, even when the vorticity of the flows is big. 
	\end{abstract}
	
	\keywords{steady Euler equations, variational structure, jet, subsonic flows, free boundary,  vorticity.}
	\subjclass[2010]{
	  35Q31, 35R35, 35J20, 35J70, 35M32, 76N10}

	\thanks{Updated on \today}
	
	\maketitle

	\section{Introduction and main results}
	\subsection{Background and motivation}\label{sec:background} One of the most interesting problems in fluid dynamics is the study of flows through nozzles and the associated free boundary problems, such as shocks, jets, and cavities (\cite{CF48, Batchelor, Bersbook}). Historically lots of works have been done in the irrotataional (zero vorticity) case, for which we give a brief review.  The problem of finding steady irrotational subsonic flows in a two-dimensional infinitely long \emph{fixed} nozzle was first posed by Bers in \cite{Bersbook}. For this problem, properties of smooth solutions were studied by Gilbarg (\cite{Gilbarg1}), and the well-posedness theory when the mass flux is no larger than a critical number was first established in \cite{XX1}, cf. \cite{XX2, DXY, HWW} for further generalizations.  
	Compared with the above nozzle flow problem, the jet problem, where the flow is partially bounded by fixed nozzles and partially free, is  more challenging. Early investigations of the jet problem relied on hodograph transformation and complex analysis techniques, which mainly deal with two-dimensional incompressible flows with restrictive assumptions on nozzles, cf. \cite{Chaplygin, Birkhoff, Gilbargjets} and references therein.
	A major breakthrough for the jet problem was made by Alt, Caffarelli, and Friedman in 1980s. They developed a systematic regularity theory for the free boundary problems which are equivalent to domain variation problems (\cite{AC81, ACF84, Friedman82}). With the aid of this general theory they established  the well-posedness for two-dimensional and three-dimensional axisymmetric jet and cavity problems for  steady irrotational incompressible and compressible subsonic flows (\cite{ACF83,ACF85}). 
	This approach is recently applied to investigate irrotational subsonic jet problem with different boundary conditions, cf. \cite{CDXiang}. An important progress for the existence of irrotational subsonic-sonic jet flows and jet flows in bounded domains has been made in  \cite{WX2,WX1} via a careful study of the so-called Chaplygin equation.

	Vorticity plays an important role in understanding fluid dynamics not only mathematically but also physically (\cite{Batchelor,Majda}). For example, when vorticity appears, the steady water waves (a class of typical free boundary problems) have many new features comparing with the irrotational flows, eg. the existence of stagnation points, see \cite{Strauss} and references therein. 
	{The jet flows with non-zero vorticity are not only generalization of irrotational flows, but also have their own physical significance. For example, a compressible rotational (non-zero vorticity) jet flow should be an important constituent of} the flow pattern with a transonic shock inside and a jet issued from the nozzle (\cite{CF48}). In the past two decades significant progress has been made on the stability of transonic shocks in a nozzle. It was shown in \cite{XY1,XY2} that the transonic shock problem for isentropic irrotational flows in a finite nozzle is in general ill-posed with prescribed exit pressure. However, in the class of rotational flows which are subsonic in the downstream region, transonic shocks in a divergent nozzle were shown to be stable under the perturbation of the exit pressure, see \cite{LXY, ChenSX, LXY1, WXX} in different settings. 
{Therefore, }in order to study the flow patterns with both transonic shock and jet, it is necessary to look at the compressible rotational jet flows.

For the rotational flows the steady {compressible} Euler system becomes much more complicated compared with that for irrotational flows. 
One of the main difficulties is that the steady Euler system for subsonic flows is a hyperbolic-elliptic coupled system. Fortunately, it is interesting that in \cite{XX3} the steady compressible Euler system for subsonic flows was reduced to a single second order quasilinear elliptic equation with inhomogeneous term, which represents the information of the hyperbolic mode.  This observation and a careful study for the associated quasilinear elliptic equation allow to establish the well-posedness theory for subsonic flows with non-zero small vorticity in an infinitely long fixed nozzle (\cite{XX3}), and later in the case with large vorticity (\cite{DXX,CHWX}). For more results in this direction, one may refer to \cite{DD, DD1, ChenXie1, CDX, DL, DX, CDXX} and references therein for various generalizations. 

There are only very few analytical results concerning the effect of vorticity on jet flows. The well-posedness of the
steady incompressible jet flows with small non-zero vorticity was {established} in \cite{Friedmanrot}.  This work crucially relied on the fact that  the steady incompressible Euler system can be reduced to a semilinear elliptic equation for the stream function which has a variational structure, hence the jet problem can be formulated as a domain variation problem, see \cite{ChengDu} for recent progress in this direction for incompressible jet flows.  As far as the jet problems for {compressible} flows with \emph{non-zero} vorticity are concerned, to the authors' knowledge, there are no rigorous analytical results available. A major obstacle is that it is unknown whether such problem enjoys some nice structure for the analysis on the associated free boundary problem. In this paper, we give an affirmative answer to this question and prove the existence and {uniqueness} of the steady compressible jet flows with general vorticity.


\subsection{The problem and main results} Two-dimensional steady isentropic compressible ideal flows are governed by the
following Euler system
\begin{equation}\label{a0}
	\left\{
	\begin{aligned}
		&\nabla\cdot(\rho\mathbf{u}) =0,\\
		&\rho\mathbf{u}\cdot \nabla\mathbf{u}+ \nabla p =0,
	\end{aligned}
	\right.
\end{equation}
where $\mathbf{u}=(u_1,u_2)$ denotes the flow velocity, $\rho$ is the density, and $p=p(\rho)$
is the pressure of the flow. Suppose that the flow is a polytropic gas, after nondimensionalization,  the equation of state can be written as  $ p(\rho)=
\frac{\rho^{\gamma}}{\gamma}$, where the constant $\gamma>1$ is called
the adiabatic exponent. The local sound speed and the Mach number of the flow are defined as
\begin{equation}\label{eq:sound}
	c(\rho)=\sqrt{p'(\rho)}=\rho^{\frac{\gamma-1}{2}}\quad \text{and}\quad M=\f{|\textbf{u}|}{c(\rho)},
\end{equation}
respectively.
The flow is called \emph{subsonic} if $M<1$, \emph{sonic} if $M=1$ and \emph{supersonic} if $M>1$.

In this paper we consider {a symmetric} nozzle in $\R^2$ bounded by two solid boundaries. Without loss of generality we assume that the nozzle is symmetric about $x_1$-axis. 
We denote the symmetry axis and the upper solid boundary of the nozzle by 
\begin{equation}\label{eq:nozzle}
	S_0:=\{(x_1, 0)| x_1\in \mathbb{R}\}\quad \text{and}\quad
	S_1:=\{(x_1, x_2)| x_1=\Theta(x_2), \ x_2\in [1,\bar H) \}
\end{equation}
respectively, 
{where $\Theta\in C^{1,\bar\alpha}([1,\barH])$ 
	($\bar\alpha\in(0,1)$)}
for some given $\barH>1$, and satisfies
\begin{equation}\label{eq:nozzle1}
	\Theta(1)=0\quad \text{and}\quad  \lim_{x_2\rightarrow \barH-} \Theta(x_2)= -\infty,
\end{equation}
i.e., the orifice of the nozzle is at $A:= (0,1)$ and the nozzle is asymptotically horizontal with height $\barH$ at upstream $x_1\rightarrow -\infty$ (cf. Figure 1). In order to study the uniqueness of solution, we also require that there exists   $h_*\in[1,\bar H)$ such that 
\begin{equation}\label{eq:nozzle2}
	\Theta'(x_2)\leq 0\quad\text{for } x_2\in (h_*, \bar{H}).
\end{equation}
The main goal of this paper is to study the following jet problem.
\begin{problem}\label{pb}
	Given a mass flux $Q>0$ and 
	{a positive horizontal velocity $\bar u=\bar u(x_2)$} of the flow at upstream as $x_1\rightarrow -\infty$, find $(\rho, \mathbf{u})$, the free boundary $\Gamma$ and the outer pressure $p_e$, which is assumed to be a constant, such that the following statements hold.
	\begin{enumerate}
		\item The free boundary $\Gamma$ joins the outlet of the nozzle as a continuous differentiable curve and tends asymptotically horizontal at downstream as $x_1\rightarrow \infty$. 
		\item The solution $(\rho, \mathbf{u})$ solves the Euler system \eqref{a0} in the flow region $\mcO$ bounded by $S_0$, $S_1$, and $\Gamma$. It takes the incoming data at upstream, i.e., 
		\begin{equation}\label{eq:asymp_up}
			{u_1(x_1,x_2)\to \bar u(x_2)}  
			\quad \text{as}\,\, x_1\to -\infty,
		\end{equation}
		and 
		\begin{equation}\label{Emassflux}
			\int_0^1 (\rho u_1)(0, x_2)\ dx_2=Q.
		\end{equation}
		Furthermore it satisfies the boundary conditions
		\begin{equation}\label{FBPbc}
			p(\rho)=p_e \text{ on}\,\, \Gamma \quad \text{and}\quad   \mathbf{u}\cdot \bn=0 \text{ on}\,\, S_1\cup\Gamma,
		\end{equation}
		where $\bf{n}$ is the unit normal along $S_1\cup \Gamma$.
	\end{enumerate}
\end{problem}
\begin{center}
	\includegraphics[height=5cm, width=10cm]{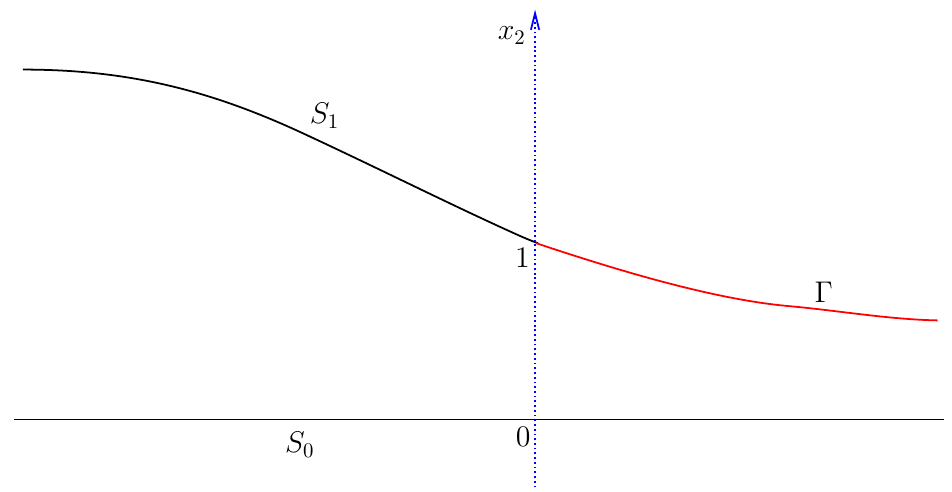}\\
	{\small Figure 1.  The jet problem}
\end{center}	

The main results in this paper can be stated as follows.
\begin{thm}\label{mainthm}
	Let $S_1$ defined in \eqref{eq:nozzle} satisfy \eqref{eq:nozzle1}--\eqref{eq:nozzle2} and $Q$ be a positive constant. If $\bar u\in C^{1,1}([0,\bar H])$ satisfies
	\begin{equation}\label{cond:u0_eps0}
		\bar u(x_2)>0 \ \ \text{for } x_2\in[0,\bar H],\quad \bar u'(0)= 0,   \quad\text{and}\quad
		\bar u'(\barH)\geq0,
	\end{equation}
	then there exists $Q_c>0$ depending on $\bar u$, $\gamma$, and the nozzle, such that the following statements hold. 
	\begin{enumerate}
		\item[(i)] (Existence and properties of solutions) For any $Q>Q_c$, {there are functions  $\rho,\mathbf{u}\in C^{1,\alpha}(\mathcal{O})\cap C^0(\overline{\mathcal{O}})$ where $\mathcal O$ is the flow region, the free boundary $\Gamma$, and the outer pressure $p_e$ such that $(\rho,\mathbf u,\Gamma,p_e)$  solves Problem \ref{pb}. 
		Furthermore, the following properties hold.
		\begin{enumerate}
		\item The flow is globally uniformly subsonic  and has negative vertical velocity in the flow region $\mcO$, i.e.,
			\begin{equation}\label{Esubsonic}
				{\sup_{\overline{\mcO}}\frac{|\mathbf{u}|^2}{c^2(\rho)}<1}  
				\quad\text{and}\quad 
				u_2<0 \,\,\text{in}\,\,{\mcO}.
			\end{equation}
			\item (Smooth fit) The free boundary $\Gamma$ joins the outlet of the nozzle as a $C^1$ curve.
			\item The free boundary $\Gamma$ is given by a graph $x_1=\Upsilon(x_2)$, $x_2\in (\ubar H, 1]$ for some $\ubar H\in(0,1)$, where the function  $\Upsilon$ is $C^{2,\alpha}$ for any $\alpha\in (0,1)$ and $\lim_{x_2\rightarrow \ubar H+} \Upsilon(x_2)=\infty$. For $x_1$ sufficiently large, the free boundary  can also be written as $x_2=f(x_1)$ for some {$C^{2,\alpha}$} function $f$, which satisfies
			$$\lim_{x_1\rightarrow \infty}f(x_1)=\unH
			\quad\text{and}\quad  \lim_{x_1\rightarrow \infty}f'(x_1)=0.$$			
		\item (Upstream and downstream asymptotics) 
		For any $\alpha\in(0,1)$, there exist positive constants $\bar\rho$ and $\ubar{\rho}$, which are the upstream and downstream density respectively, and a positive function $\ubar {u}\in C^{1,\alpha}([0,\ubar H])$, which is the downstream horizontal velocity, such that
						\begin{equation}\label{upstreambehavior}
							\|(\rho,u_1, u_2)(x_1,\cdot)- (\barho, \bar{u}(\cdot), 0)\|_{C^{1,\alpha}_{loc}([0, \bar H))}\to 0 \quad \text{as}\,\, x_1\rightarrow -\infty
						\end{equation}
						and 
						\begin{equation}\label{downstreambehavior}
							\|(\rho,u_1, u_2)(x_1,\cdot)-(\ubar \rho, \ubar{u}(\cdot), 0)\|_{C^{1,\alpha}_{loc}([0, \ubar{H}))}\to 0 \quad \text{as}\,\, x_1\rightarrow \infty.
						\end{equation} 
						Moreover, the upstream density $\bar\rho$ and the downstream density $\ubar\rho$ satisfy 
						\begin{equation*}\label{eq:bu_rho}
							\barho=\frac{Q}{\int_0^{\bar H}\bar u(x_2)dx_2}
							\quad\text{and}\quad
							\unrho=(\gamma p_e)^{\frac1{\gamma}},
						\end{equation*}
						respectively; the downstream horizontal velocity $\ubar u$ and the downstream  height $\ubar{H}$ are also uniquely determined by $Q$, $\bar u$, $\gamma$, $\bar H$ and $p_e$.
				\end{enumerate}}
				
				\item[(ii)] (Uniqueness) The Euler flow which satisfies all properties in part (i) is unique.
				
				\item[(iii)] (Critical mass flux) {$Q_c$ is the critical mass flux} for the existence of 
				subsonic jet flow in the following sense: 
				{either
					\begin{equation*}
						\sup_{\overline{\mcO}}\frac{|\mathbf{u}|^2}{c^2(\rho)}\rightarrow
						1\quad\text{as}\ Q\rightarrow Q_c+,
					\end{equation*}
					or there is no $\sigma>0$ such that for all $Q\in(Q_c-\sigma,Q_c)$, there are Euler flows satisfying
					all properties in part (i) and
					\begin{equation*}%
						\sup_{Q\in(Q_c-\sigma,Q_c)}\sup_{\overline{\mcO}}\frac{\mathbf{|u|}^2}{c^2(\rho)}<1.
				\end{equation*}}
			\end{enumerate}
		\end{thm}

		A few remarks are in order.
		
		\begin{rmk}
			{One of the most interesting features of Theorem \ref{mainthm} is that the flows} can have arbitrary large vorticity. And there is neither the sign condition nor the smallness condition  on the vorticity of the incoming flows.
		\end{rmk}
		
		\begin{rmk}
			Together with the analysis in \cite{XX3}, all results 
			in this paper work for general equation of states $p=p(\rho)$ with $p'(\rho)>0$ and $p''(\rho)>0$.
		\end{rmk}
		
		\begin{rmk}
			The condition \eqref{eq:nozzle1} of the nozzle boundary $S_1$ defined in \eqref{eq:nozzle} is sufficient for proving the existence of solutions to the jet problem. The condition \eqref{eq:nozzle2} of $S_1$ at far field is only used for the uniqueness of the solution and the existence of the critical mass flux.
		\end{rmk}
		
		\begin{rmk}
			{The far field condition \eqref{eq:asymp_up} prescribes the horizontal velocity at the upstream. 
				If, instead of \eqref{eq:asymp_up},  the Bernoulli function (which is a hyperbolic mode for steady compressible Euler system) is prescribed at the upstream, as that has been done for the nozzle flows in \cite{XX3}, we can also prove the existence of compressible jet flows under suitable assumptions for the incoming Bernoulli function.}
		\end{rmk}
		
		\begin{rmk}
			Similar arguments also work for two-dimensional  non-isentropic flows and three-dimensional axisymmetric flows (\cite{LSX2}). Furthermore,
			the ideas developed in this paper can also be used to deal with the {cavity problem} for fluid with non-zero vorticity (\cite{LSX1}).
		\end{rmk}
		
		Here we outline the main ideas and the key  points in the proof of Theorem \ref{mainthm} as follows.
		
		With simple topological structure for streamlines, one can reformulate the Euler system in terms of the Bernoulli function and the vorticity (cf. Proposition \ref{Elemmaequivalent}). By {this equivalent reformulation},  the Euler system can be reduced into a single {second order quasilinear  equation} for the stream function with a complicated memory term representing the information of the hyperbolic mode. Moreover, the equation is elliptic if and only if the flow is subsonic (cf. Lemma \ref{lem:stream_equiv}).

		A key observation is that the quasilinear elliptic equation for stream function enjoys a variational structure as in the irrotational case (cf. Lemma \ref{lem:vari_psi}). As a consequence, one can formulate {the subsonic jet problem as a domain variation problem even when the flows have non-zero vorticity.} 

		To study the free boundary regularity, one needs only to consider the rescaled equation to analyze the linear decay and non-degeneracy of solutions near the free boundary. Another key observation of this paper is that the inhomogeneous terms become small terms in the rescaled equations so that the comparison principle can be used as a basic analysis tool even when the vorticity is large. In fact, we can still obtain the Lipschitz regularity of the solution to the domain variation problem even with the large vorticity of the flows. 
		
		The regularity of the free boundary away from the orifice is obtained with the aid of the regularity theory developed for the free boundary problem {to} inhomogeneous elliptic equations {in 
			\cite{D11, DFS15} and related results.} To solve the jet problem,  a crucial step is to show that the solution obtained from the domain variation problem satisfies a monotonicity property,  which guarantees the equivalence of the Euler system and the domain variation formulation in terms of the stream function. This is {achieved} via comparison principles inspired by the proof in \cite{ACF85}. 
		
		The blowup technique plays an important role in the study of asymptotics of solutions near the free boundary. In our situation, blowup limits for solutions near the free boundary correspond to a Cauchy problem for the second order equation of the stream function. Note that {the coefficients}  for this equation depend on the regularity of vorticity in the upstream, which is usually not analytic. Hence the Cauchy-Kovalevskaya theorem, which has been used in \cite{ACF85} to deal with the problem for irrotational flows, cannot be directly used in this paper. 
		To overcome this difficulty, we adapt the unique continuation principle developed in \cite{KT01} to characterize the solutions of Cauchy problem for the associated quasilinear elliptic equation, which corresponds to flows with non-zero Lipschitz vorticity.
		
	 As long as one has the existence and uniqueness of the solution for the jet problem with sufficiently {large} incoming mass flux,  the existence of the critical mass flux can be established by the compactness arguments adapted from \cite{XX3}.
		
		The rest of this paper is organized as follows. In Section \ref{secstream}, the stream function formulations for both the Euler system and the jet problem are established. In Section \ref{secvar},  we give the variational formulation for the jet problem with truncations for domain and ellipticity. Section \ref{secreg} is devoted to the study for {regularity} of the truncated free boundary problem. The monotonicity property of the solution is established in Section \ref{secprop}, which is crucial for the equivalence of the stream function formulation and the Euler system. In order to remove the domain truncations later, we also prove some uniform estimates for {solutions of} the truncated problem in Section \ref{secprop}. In Section \ref{seccont},  the continuous fit and smooth fit of the free boundary are established. In Section \ref{secremove}, we remove the domain and subsonic truncations, and study the far fields asymptotic behavior of the solution. This {completes} the proof for the existence of solutions to the jet problem with {large} mass flux. The uniqueness of subsonic jet problem is proved in Section \ref{secunique}. Finally, the existence of a critical mass flux is established in Section \ref{SEcritical}. Some technical details are included in the appendix.
		
		In this paper, for convention the repeated indices mean the summation.
		
		\section{Stream function formulation and subsonic truncation}\label{secstream}
		In this section, we introduce the stream function formulation to reduce the Euler system into a single {second order} quasilinear equation, which is elliptic in the subsonic region and becomes singular elliptic at the sonic state, cf. Lemma \ref{lem:stream_equiv}. We also reformulate Problem \ref{pb} into a Bernoulli type free boundary problem for the quasilinear equation satisfied by the stream function, cf. Problem \ref{Pb2}. In order to deal with the possible degeneracy of the equation near the sonic state, a subsonic truncation is introduced so that the modified  equation is always uniformly elliptic.
		\subsection{The equation for the stream function}
		First, motivated by the analysis in  \cite{XX3}, one has the following equivalent formulation for the compressible Euler system.
		\begin{prop}\label{Elemmaequivalent}
			Let ${\tilde\mcO} \subset \mathbb{R}^2$ be the domain bounded by two streamlines $S_0=\{(x_1,0)| x_1\in \R\}$ and
			\[
			{\tilde S_1} :=\{(x_1, x_2)|x_1={\tilde \Theta(x_2)}, \ubar H< x_2< \bar H\},\]
			where  $0<\ubar H< \bar H<\infty$ and $\tilde \Theta:(\ubar H, \bar H)\rightarrow \R$  is a $C^1$ function with
			\[
			\lim_{x_2\to \bar H-}\tilde\Theta(x_2) =-\infty\quad\text{and}\quad  \lim_{x_2\to \ubar H+} \tilde\Theta(x_2) =\infty.
			\]
			Let $\rho:\overline{\tilde \mcO}\rightarrow (0,\infty)$ and $\bu=(u_1,u_2):\overline{\tilde\mcO}\rightarrow \R^2$ be $C^{1,1}$ in $\tilde\mcO$ and continuous up to $\p\tilde\mcO$ except finitely many points. Suppose that  $\bu$ satisfies the slip boundary condition $\bu\cdot \bn=0$ on $\partial\tilde\mcO$,   
			and $(\rho, \bu)$ satisfies the upstream asymptotics \eqref{upstreambehavior} with a positive constant $\bar \rho$ and a positive function $\bar u\in C^{1,1}([0,\bar H])$. 
			Moreover, suppose that 
			\begin{equation}\label{Ephorizontalvelocity}
				u_2< 0\quad \text{ in } \tilde\mcO.
			\end{equation}
			Then $(\rho, \bu)$ solves the Euler system \eqref{a0} in $\tilde\mcO$ if and only if $(\rho, \bu)$ satisfies
			\begin{equation}\label{eq:euler}
				\left\{
				\begin{aligned}
					&\nabla\cdot(\rho \bu)=0, \\
					&\bu \cdot \nabla \mathscr{B}(\rho, \bu) =0, \\
					&\bu\cdot \nabla \left(\frac{\omega}{\rho}\right) =0,
				\end{aligned}
				\right.
			\end{equation}
			where
			\begin{align*}
				\mathscr{B}(\rho, \bu):= \frac{|\bu|^2}{2}+h(\rho),\quad \omega:=\p_{x_1}u_2-\p_{x_2}u_1,\quad\text{and}\quad h(\rho):=\frac{\rho^{\gamma-1}}{\gamma-1}
			\end{align*}
			are the Bernoulli function, the vorticity, and the enthalpy of the flow, respectively.
		\end{prop}
	
	\begin{rmk}
		Clearly, the regularity requirement for a  classical  $(\rho, \bu)$ of the compressible Euler system \eqref{a0} is that $(\rho, \bu)\in C^{1}$ rather than $\bu\in C^{1,1}$. The assumption $\bu \in C^{1,1}$ makes sure that the last equation in \eqref{eq:euler} is satisfied in the classical sense.  In Proposition \ref{prop:equiv_sol}, we show that if $(\rho, \bu) \in C^{1,\alpha}$ is induced by the stream function associated with \eqref{eq:euler},  $(\rho, \bu)$ is still a classical solution of the original Euler system \eqref{a0}.
	\end{rmk}
		
		\begin{proof}[Proof of Proposition \ref{Elemmaequivalent}]
			It is straightforward that {if $(\rho, \bu)$ solves
				Euler system \eqref{a0} in $\tilde\mcO$ with $\rho\in C^1(\tilde \mcO)$ and $\bu\in C^{1,1}(\tilde \mcO)$,} then it satisfies
			\eqref{eq:euler}.
			
			On the other hand, it follows from the first and the third equation in \eqref{eq:euler} that
			\begin{equation*}
				\partial_{x_2}(u_1\partial_{x_1}u_1+u_{2}\partial_{x_2}u_1 +\p_{x_1}h(\rho))-\partial_{x_1}(u_1\partial_{x_1}u_2+u_2\partial_{x_2}u_{2}+\p_{x_2}h(\rho))=0.
			\end{equation*}
			Therefore, there exists a function $\Phi$ such that
			\begin{equation*}
				\p_{x_1}\Phi=u_1\partial_{x_1}u_1+u_{2}\partial_{x_2}u_1 +\p_{x_1}h(\rho), \quad
				\p_{x_2}\Phi=u_1\partial_{x_1}u_2+u_2\partial_{x_2}u_{2}+\p_{x_2}h(\rho).
			\end{equation*}
			Thus the Bernoulli's law (the second equation in \eqref{eq:euler}) is equivalent to
			\begin{equation*}\label{EtransportPotential}
				(u_1,u_2)\cdot\nabla\Phi=0.
			\end{equation*}
			This implies that $\Phi$ is a constant along each  streamline.  {Note that the far field behavior \eqref{upstreambehavior} implies} $\p_{x_2}\Phi\rightarrow 0$ as $x_1\rightarrow -\infty$.  Hence one has
			\begin{equation}\label{EupstramPotentialderivative}
				\Phi\rightarrow C \quad \text{as}\,\, x_1\rightarrow -\infty.
			\end{equation}
			On the other hand, it follows from (\ref{Ephorizontalvelocity}) that through
			each point in $\tilde\mcO$, there is one and only one streamline {which satisfies}
			\begin{equation*}
				\left\{
				\begin{array}{ll}
					\displaystyle\frac{dx_1}{ds}=u_1(x_1(s),x_2(s)),\\ \\
					\displaystyle\frac{dx_2}{ds}=u_2(x_1(s),x_2(s)),
				\end{array}
				\right.
			\end{equation*}
			and can be defined globally in the domain. Furthermore, it follows from the continuity equation (the first equation in \eqref{eq:euler}) that any streamline through some point in $\tilde\mcO$ cannot touch $\partial\tilde\mcO$. 
			{Indeed, suppose there exists a streamline through $(-x_1^0,x_2^0)$ in $\tilde\mcO$ (without loss of generality assume that  $x_1^0>0$ is sufficiently large) which intersects $S_0$. 	
				Then by virtue of the continuity equation and the slip boundary condition along each streamline, one has
				$$0=\int_{0}^{x_2^0}\rho u(-x_1^0,s)ds.$$
				\begin{center}
					\includegraphics[height=4cm]{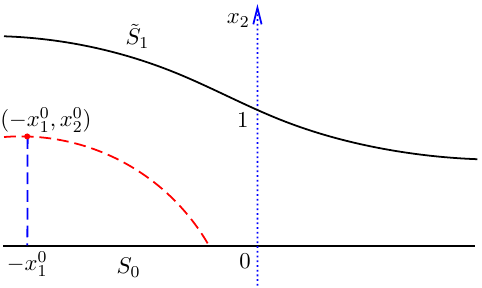}\hspace{3em}
					\includegraphics[height=4cm]{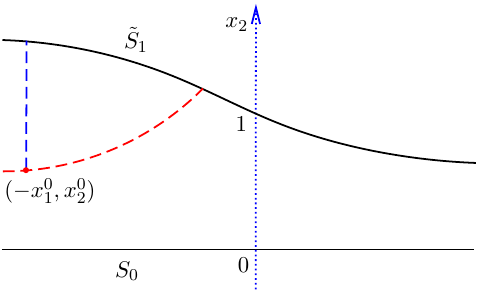}\\
					{\small Figure 2.}
				\end{center}	
				However, in view of the asymptotic behavior \eqref{upstreambehavior}, the term on the right-hand side of the above equation is positive. This contradiction implies that a streamline inside the domain cannot touch the boundary $S_0$. Similarly, one can show that a streamline through a point inside the domain cannot intersect with $\tilde S_1$.}	
			Therefore, one can conclude from \eqref{EupstramPotentialderivative} that $\Phi\equiv C$ in the whole domain $\tilde\mcO$. This implies that $\p_{x_1}\Phi=\p_{x_2}\Phi\equiv 0$ in $\tilde\mcO$, i.e.,
			\begin{equation}\label{label_10}
				u_1\partial_{x_1}u_1+u_{2}\partial_{x_2}u_1 +\p_{x_1}h(\rho)=0 \quad \text{and}\quad
				u_1\partial_{x_1}u_2+u_2\partial_{x_2}u_{2}+\p_{x_2}h(\rho)=0
			\end{equation}
			hold globally in $\tilde\mcO$. The above two equations together with the continuity equation are exactly the original Euler system \eqref{a0}.
		\end{proof}

		It follows from the continuity equation that there is a stream function $\psi$ satisfying
		\begin{align}\label{eq:psi_uv}
			\nabla \psi=(-\rho u_2, \rho u_1).
		\end{align}
		The Bernoulli's law (the second equation of \eqref{eq:euler}) shows that $\mathscr{B}(\rho,\mathbf{u})$ is conserved along each streamline. In the following proposition we show that $\mathscr{B}(\rho,\mathbf{u})$ can be expressed as a function of the stream function $\psi$ under the assumptions of Proposition \ref{Elemmaequivalent}.
		
		\begin{prop}\label{prop:Bernoulli}
			{Let a horizontal velocity $\bar u \in C^{1,1}([0,\bar H])$ satisfy \eqref{cond:u0_eps0} and a mass flux $Q>0$. 
				Suppose that $(\rho, \mathbf{u})$ is a solution to the Euler system \eqref{a0} and satisfies} the assumptions of Proposition \ref{Elemmaequivalent}. Then there is a function $\mathcal{B}:[0,Q]\rightarrow \R$, $\mathcal{B}\in C^{1,1}([0,Q])$ such that 
			\begin{align*}
				\mathscr{B}(\rho, \mathbf{u})=h(\rho)+\frac{|\nabla\psi|^2}{2\rho^2}=\mathcal{B}(\psi) \ \text{ in } \tilde\mcO.
			\end{align*}
			Denote
			\begin{equation}\label{eq:u0_eps0}
				\kappa_0:=\|{\bar u}'\|_{L^\infty([0, \bar H])}+\|{\bar u}''\|_{L^\infty([0,\bar H])}.
			\end{equation}
			Then 
			\begin{equation}\label{eq:u0_eps0_B}
				\|\mathcal{B}'\|_{L^\infty([0, Q])}\leq \frac{\kappa_0}{\bar\rho} \quad{and}\quad 
				\|\mathcal{B}''\|_{L^\infty([0,Q])}\leq \frac{\kappa_0}{\bar\rho^2\min_{x_2\in[0,\bar H]}\bar u(x_2)}, 
			\end{equation} 
			where
			\begin{equation}\label{eq:rhobar}
				\bar \rho =\frac{Q}{\int_0^{\bar H} \bar u(s) ds}.
			\end{equation}
		\end{prop}
		\begin{proof}
			In view of Proposition \ref{Elemmaequivalent} the Bernoulli function $\mathscr{B}(\rho,\mathbf{u})$ is conserved along each streamline, which is globally well-defined in $\tilde\mcO$. In particular, $\mathscr{B}(\rho, \mathbf{u})$ is uniquely determined by its value on the upstream. Let $\mfh(\psi;\bar\rho):[0,Q]\rightarrow [0,\bar H]$ be the position of the streamline at upstream where the stream function has the value $\psi$, i.e.,
			\begin{align}\label{eq:kappa}
				\psi=\bar\rho\int_0^{\mfh(\psi;\bar \rho)}\bar u(s)\ ds.
			\end{align}
			Note that $\mfh$ is well-defined as $\bar u>0$.  Here the upstream density $\bar\rho$ is uniquely determined by $Q$ and $\bar u$ from the upstream asymptotics \eqref{upstreambehavior}, i.e. $\bar \rho$ satisfies \eqref{eq:rhobar}. Since the Bernoulli function at the upstream is 
			\begin{equation}\label{def:B_upstream}
				B(x_2):=\lim_{x_1\rightarrow -\infty} \mathscr{B}(\rho,\mathbf{u})(x_1,x_2)=h(\bar\rho)+\frac{\bar u^2(x_2)}{2} \quad\text{for } x_2\in[0,\bar H],
			\end{equation}
			then we have
			\begin{align}\label{eq:psi}
				h(\rho)+\frac{|\nabla\psi|^2}{2\rho^2}=\mathcal{B}(\psi) \ \text{ in } \tilde\mcO,
			\end{align}
			where the function $\mathcal B$ is defined as
			\begin{equation}\label{defB}
				\mathcal{B}(z):=B(\mfh(z;\bar\rho))=h(\bar\rho)+\frac{\bar u^2(\mfh(z;\bar\rho))}{2}, \quad z\in [0,Q].
			\end{equation} 
			{The straightforward computations give}
			\begin{align}\label{eq:dB}
				\mathcal{B}'(z)=\frac{\bar u'(\mfh(z;\bar\rho))}{\bar\rho} \quad\text{and}\quad  \mathcal{B}''(z)=\frac{\bar u''(\mfh(z;\bar\rho))}{\bar u(\mfh(z;\bar\rho))\bar\rho^2}.
			\end{align}
			Hence \eqref{eq:u0_eps0_B} follows directly from \eqref{eq:dB} and the definition of $\kappa_0$ in \eqref{eq:u0_eps0}.
		\end{proof}
		For later purpose we extend the Bernoulli function $\mathcal{B}$ from $[0,Q]$ to $\R$ as follows: 
		firstly in view of \eqref{cond:u0_eps0}, $\bar u$ can be extended to a $C^{1,1}$ function defined on $\R$, which is still denoted by $\bar u$, such that 
		\begin{equation}\label{label_11}
		\bar u>0 \text{ on } \R, \quad {\bar u'= 0 \text{ on } (-\infty,0],} 
		\quad \bar u'\geq 0  \text{ on } [\bar H,\infty).
		\end{equation}
		Furthermore, the extension can be made such that 
		\begin{equation}\label{label_12}
		0<\bar u_\ast=\inf_{\R}\bar u\leq \bar u^*\leq \sup_{\R} \bar u =:\tilde u^\ast <\infty \quad\text{with} \quad \bar u_*:=\inf_{[0,\bar H]}\bar u, \quad \bar u^*:=\sup_{[0,\bar H]}\bar u,
		\end{equation}
		where $\tilde u^\ast$ depends on $\bar u^*$ and $\|\bar u\|_{C^{1,1}([0,\bar H])}$, and 
		\begin{equation}\label{label_13}
				\|\bar u\|_{C^{1,1}(\R)}\leq C\|\bar u\|_{C^{1,1}([0,\bar H])}.
		\end{equation}
		Consequently, using \eqref{defB} one naturally gets an extension of $\mathcal B$ to a $C^{1,1}$ function in $\R$ (still denoted by $\mathcal{B}$), which satisfies  
		\begin{equation}\label{eq:sign_B}
			{\mathcal{B}'(z)= 0 \text{ on } (-\infty, 0]} 
			\quad \text{ and } \quad \mathcal{B}'(z)\geq 0 \text{ on } [Q,\infty). 
		\end{equation}
		The function $\mathcal{B}$ is bounded from above and below, i.e.,
		\begin{align}\label{eq:B}
			0<{B}_*\leq \mathcal{B}(z)\leq {B}^*<\infty,\quad z\in \R,
		\end{align}
		where $B_*$ and $B^*$ are defined as 
		\begin{equation}\label{defB*}
			B_*:=h(\bar\rho)+\frac12(\bar u_*)^2,\quad  B^*:=h(\bar\rho)+\frac12(\tilde u^*)^2
		\end{equation}
		with 
		\begin{equation*}\label{eq:B_derivative}
			\|\mathcal{B}'\|_{L^\infty(\R)}+\|\mathcal{B}''\|_{L^\infty(\R)}\leq C (\|\mathcal{B}'\|_{L^\infty([0, Q])}+\|\mathcal{B}''\|_{L^\infty([0, Q])}).
		\end{equation*}

		Before formulating the Euler system into a quasilinear PDE for the stream function $\psi$, let us digress for the study on the flow state with a given Bernoulli constant. 
		For the flow state with given Bernoulli constant $s$, the flow
		density $\rho$ and flow speed $q$ satisfy
		\begin{equation*}
			h(\rho)+\frac{q^2}{2}=s,
		\end{equation*}
		where we recall $h(\rho)=\frac{\rho^{\gamma-1}}{\gamma-1}$ is the enthalpy of the flow.
		Therefore, the speed $q$ satisfies
		\begin{equation*}
			q=\mfq(\rho,s)= \sqrt{2(s-h(\rho))}.
		\end{equation*}
		For each {fixed $s$}, we denote the critical density and the maximum density by
		\begin{equation}\label{defrhoc}
			\varrho_c(s):=\left\{\frac{2(\gamma-1)}{\gamma+1} s\right\}^{\frac{1}{\gamma-1}}\quad\text{and}\quad   \varrho^*(s) :=\left\{(\gamma-1)s\right\}^{\frac{1}{\gamma-1}},
		\end{equation}
		respectively. For states with given Bernoulli constant $s$, note that {$s-h(\rho)\geq 0$ for $\rho\leq \varrho^\ast(s)$. Thus the flow state $q=\mfq(\rho,s)$ is well-defined when $\rho\leq \varrho^\ast(s)$.} The flow is subsonic (i.e. $\mfq(\rho,s)<c(\rho)$, where $c(\rho)$ is the sound speed defined in \eqref{eq:sound}) if and only if $\varrho_c(s)<\rho\leq \varrho^\ast(s)$. At the critical density one has $\mfq(\varrho_c(s),s)= c(\varrho_c(s))$. We denote the square of the momentum and {the square of}  critical momentum by 
		\begin{equation}\label{defF}
			\mft(\rho,s):=\rho^2 \mfq(\rho,s)^2=2\rho^2(s-h(\rho)) \ \text{ and }\ \mft_c(s):=\mft(\varrho_c(s),s)=\left\{\frac{2(\gamma-1)}{\gamma+1} s\right\}^{\frac{\gamma+1}{\gamma-1}}.
		\end{equation}
		
		\begin{rmk}\label{rmk:Q}
			Suppose that the flow $(\rho, \mathbf{u})$ has the mass flux $Q$ and satisfies the asymptotics \eqref{upstreambehavior} at the upstream with a constant density $\bar \rho$. Then necessarily $\bar \rho$ must be defined by \eqref{eq:rhobar}.  
			The flow is subsonic at the upstream if $\tilde  u^* < c(\bar \rho)=\bar\rho^{\frac{\gamma-1}{2}}$, where $\tilde u^*$ is defined in \eqref{label_12}. This can be guaranteed by letting 
			\begin{equation}\label{def:Q_*}
			Q>\tilde Q,\quad \text{where}\quad
			\tilde Q:=(\tilde u^*)^{\frac2{\gamma-1}}\int_0^{\bar H}\bar u(s)\ ds\geq Q_*:=(\bar u^*)^{\frac2{\gamma-1}}\int_0^{\bar H}\bar u(s)\ ds.
			\end{equation}

			An immediate consequence of \eqref{def:Q_*} is that the upper and lower bounds of the Bernoulli function $\mathcal{B}$ are comparable, i.e. 
			\begin{equation}\label{def:t_bound}
			B_\ast \leq \mathcal{B}(z)\leq B^\ast \leq \frac{\gamma+1}{2}B_\ast,
			\end{equation}
			where $B_\ast$ and $B^\ast$ are defined in \eqref{defB*}. Furthermore, $B_\ast$ (thus $B^\ast$) is comparable to $\bar\rho^{\gamma-1}$, i.e.
			\begin{equation}\label{eq:rhobar_B_*}
				(\gamma-1)^{-\frac{1}{\gamma-1}}\bar \rho \leq B_\ast^{\frac{1}{\gamma-1}}\leq \left(\frac{\gamma+1}{2(\gamma-1)}\right)^{\frac{1}{\gamma-1}}\bar\rho.
			\end{equation}
			As $Q=\bar\rho \|\bar u\|_{L^1([0,\bar{H}])}$, the above inequality can also be reformulated as 
			\begin{align}\label{label_7}
				(\gamma-1)^{-\frac{1}{\gamma-1}}\frac{Q}{\|\bar u\|_{L^1([0,\bar H])}}\leq B_\ast^{\frac{1}{\gamma-1}}\leq\left(\frac{\gamma+1}{2(\gamma-1)}\right)^{\frac{1}{\gamma-1}}\frac{Q}{\|\bar u\|_{L^1([0,\bar H])}}.
			\end{align}
		\end{rmk}
		
		Now we have the following lemma on the representation of density $\rho$ in terms of the stream function $\psi$ in the subsonic region.
		\begin{lem}\label{lem:density}
			Suppose the density function $\rho$ and the stream function $\psi$ satisfy the Bernoulli's law \eqref{eq:psi}.
			Then the following statements hold. 
			\begin{itemize}
				\item [(i)] The density function $\rho$ can be expressed  as a function of $|\nabla \psi|^2$ and $\psi$ in the subsonic region, i.e.
				\begin{equation}\label{eq:rho}
					\rho=\frac{1}{g(|\nabla \psi|^2, \psi)},\quad  \text{if } \rho\in (\varrho_c(\mcB(\psi)), {\varrho^*(\mcB(\psi))]},
				\end{equation}
				where $\varrho_c$ and $\varrho^*$ are functions defined in \eqref{defrhoc}, 
				and 
				$$g:\{(t,z)| 0\leq t<\mft_c(\mathcal{B}(z)), \ z\in \R\}\rightarrow \R$$
				{is a function  smooth in $t$ and $C^{1,1}$ in $z$ with $\mft_c$ defined in \eqref{defF}.} Furthermore, 
				\begin{align}\label{eq:upper_lower_g}
					\frac{1}{\varrho^*(B^*)}=:g_*\leq g(t,z)\leq g^*:= \frac{1}{\varrho_c(B_*)}.
				\end{align}
				\item [(ii)] The function $g$ satisfies the identity
				\begin{equation}\label{eq:claim1_g}
					g(t,z)^2\p_z g(t,z)= -2\mathcal{B}'(z)\p_t g(t,z), \quad t\in [0, \mft_c(\mathcal{B}(z))), \ z\in \R.
				\end{equation}
			\end{itemize}
			
		\end{lem}
		\begin{proof}
			\emph{(i)}.	From the expression \eqref{defF}, {the straightforward computations give}
			\begin{align}\label{eq:dF}
				\p_\rho \mft(\rho,s)=4\rho\left(s-\frac{\gamma+1}{2}h(\rho)\right).
			\end{align}
			Now, one can see that with $\varrho_c$ and $\varrho^*$ defined in \eqref{defrhoc}, the following statements hold:
			\begin{itemize}
				\item[(a)] $\rho\mapsto\mft(\rho,s)$ achieves its maximum $\mft_c(s)$ at $\rho=\varrho_{c}(s)$;
				\item[(b)] $\p_\rho\mft(\rho,s)<0$ when $\varrho_{c}(s)< \rho < \varrho^\ast(s)$.
			\end{itemize}
			Thus by the inverse function theorem, for each fixed $s>0$ and $\rho\in (\varrho_c(s), \varrho^\ast (s)]$, one can express $\rho$ as a function of $t:= \mft(\rho,s)\in [0,\mft_c(s))$ and $s$, i.e. $\rho=\rho(t,s)$. Let
			\begin{equation} \label{eq:branch}
				g(t,z):=\frac{1}{\rho(t,\mathcal{B}(z))}, \quad t\in [0, \mft_c(\mcB(z)))
			\end{equation}
			where $\mathcal{B}$ is the function defined in \eqref{defB}. The function $g$ is smooth in $t$ by the inverse function theorem and $C^{1,1}$ in $z$ by the $C^{1,1}$ regularity of $\mathcal{B}$ and the smooth dependence of $\rho$ on $s$. In view of Bernoulli's law \eqref{eq:psi} we have \eqref{eq:rho}.
			Consequently \eqref{eq:upper_lower_g} holds. 
			
			\emph{(ii)}. Let $\varrho(t,z):=\rho(t,\mathcal{B}(z))$.  Then from \eqref{defF} and Bernoulli's law \eqref{eq:psi} one has
			\begin{align}\label{eq:claim}
				t= 2\varrho(t,z)^2 (\mathcal{B}(z)-h(\varrho(t,z))).
			\end{align}
			Differentiating \eqref{eq:claim} with respect to $t$ and $z$ yields
			\begin{align}\label{eq:drhot}
				\frac{1}{2}=\frac{\p_t\varrho}{\varrho}\left(2\varrho^2\mathcal{B}(z)-2\varrho^2h(\varrho)-\varrho^3h'(\varrho)\right)\stackrel{\eqref{eq:claim}}{=}\frac{\p_t \varrho}{\varrho}\left(t-\varrho^{\gamma+1}\right)
			\end{align}
			and
			\begin{align}\label{eq40-1}
				-\varrho^2\mathcal{B}'(z)=\frac{\p_z \varrho}{\varrho}\left(2\varrho^2\mathcal{B}(z)-2\varrho^2h(\varrho)-\varrho^3h'(\varrho)\right)\stackrel{\eqref{eq:claim}}{=}\frac{\p_z\varrho}{\varrho}\left(t-\varrho^{\gamma+1}\right),
			\end{align}
			respectively.
			Combining  \eqref{eq:drhot} and \eqref{eq40-1} one has  
			\[
			\p_z \varrho(t,z)=-2\varrho(t,z)^2\p_t\varrho(t,z) \mathcal{B}'(z).
			\]
			This together with \eqref{eq:branch} yields \eqref{eq:claim1_g}.
		\end{proof}
		
		Finally, we derive the equation of the stream function in the subsonic region. 
		\begin{lem}\label{lem:stream_equiv}
			Let $(\rho,\bu)$ be a solution to the Euler system \eqref{eq:euler}. 
			Assume $(\rho, \bu)$ satisfies the assumptions in Proposition \ref{Elemmaequivalent}. Then in the subsonic region $|\nabla\psi|^2<\mft_c(\mathcal{B}(\psi))$, the stream function $\psi$ solves
			\begin{align}\label{eq:psi_main}
				\nabla\cdot \left(g(|\nabla \psi|^2, \psi)\nabla \psi \right)=\frac{\mathcal{B}'(\psi)}{g(|\nabla\psi|^2,\psi)},
			\end{align}
			where $g$ is defined in \eqref{eq:branch} and $\mathcal{B}$ is the Bernoulli function {
				defined in \eqref{defB}.}
			The equation \eqref{eq:psi_main} is elliptic if and only if $|\nabla\psi|^2< \mft_c(\mcB(\psi))$.
		\end{lem}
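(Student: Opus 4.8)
The plan is to derive \eqref{eq:psi_main} directly from the equivalent system \eqref{eq:euler} together with the definition of the stream function and Lemma \ref{lem:density}, and then to compute the principal symbol of the resulting quasilinear operator.

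First I would record the basic identities coming from $\nabla\psi=(-\rho u_2,\rho u_1)$, namely $\rho\bu=\nabla^\perp\psi:=(-\p_{x_2}\psi,\p_{x_1}\psi)$, so that $|\nabla\psi|^2=\rho^2|\bu|^2$ and the Bernoulli relation \eqref{eq:psi} holds. Under the hypotheses $u_2<0$ and \eqref{upstreambehavior}, Proposition \ref{Elemmaequivalent} and the analysis preceding \eqref{defB} show that the streamlines have the simple topology needed for $\mcB$ to be well-defined, that $\psi$ ranges over $[0,Q]$ in $\tilde\Omega$, and that the Bernoulli function equals $\mcB(\psi)$ for the extended $\mcB$ of \eqref{defB}. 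Provided the flow stays strictly between the critical and maximal density, Lemma \ref{lem:density} gives $\rho=1/g(|\nabla\psi|^2,\psi)$, so that $\rho\bu=\nabla^\perp\psi$ becomes $\bu=g(|\nabla\psi|^2,\psi)\nabla^\perp\psi$.

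The next step is to translate the second equation of \eqref{a0} (equivalently the first-order Bernoulli/vorticity reformulation) into an equation for $\psi$. The cleanest route: take the Euler momentum equation in the Crocco/Bernoulli form. Since $\bu\cdot\nabla\mathscr{B}=0$ and $\mathscr{B}=\mcB(\psi)$, and since along streamlines $\psi$ is constant while the momentum equation can be written as $\nabla\mathscr{B}-\bu^\perp\omega=0$ (the standard steady vorticity identity $\bu\cdot\nabla\bu=\nabla(|\bu|^2/2)-\bu^\perp\omega$), one gets $\nabla\mcB(\psi)=\mcB'(\psi)\nabla\psi=\omega\,\bu^\perp$. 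Here $\bu^\perp$ is parallel to $\nabla\psi/\rho$ up to sign, so this identifies $\omega$ in terms of $\psi$; then substituting $\omega=\p_{x_1}u_2-\p_{x_2}u_1=-\nabla\cdot(\bu^\perp)$ and $\bu=g(|\nabla\psi|^2,\psi)\nabla^\perp\psi$ yields, after a short computation, $\nabla\cdot(g(|\nabla\psi|^2,\psi)\nabla\psi)=\mcB'(\psi)/g(|\nabla\psi|^2,\psi)$. One should double-check the sign and the placement of $g$ versus $1/g=\rho$ carefully — with $\rho\bu=\nabla^\perp\psi$ the vorticity $\omega=\nabla\times\bu=\nabla\cdot\!\big(\rho^{-1}\nabla\psi\big)$ in the appropriate orientation, and the Bernoulli gradient relation then gives exactly \eqref{eq:psi_main}; the main bookkeeping hazard is keeping $\nabla^\perp$, signs of $u_2<0$, and the orientation of $\bn$ consistent.

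Finally, for the ellipticity claim I would linearize: writing the operator as $a_{ij}(\nabla\psi,\psi)\p_{ij}\psi+\text{lower order}$, one finds $a_{ij}=g\,\delta_{ij}+2g'(|\nabla\psi|^2,\psi)\,\p_i\psi\,\p_j\psi$ where $g'$ is the derivative in the first slot. The eigenvalues of $(a_{ij})$ are $g$ (on $(\nabla\psi)^\perp$) and $g+2g'|\nabla\psi|^2$ (along $\nabla\psi$). Using $t=\mcF(\rho,\psi)=2\rho^2(\mcB(\psi)-h(\rho))=|\nabla\psi|^2$ and $\rho=1/g$, differentiate implicitly to express $g'$ via $\p_\varrho\mcF$; since on the subsonic branch $\p_\varrho\mcF<0$ (property (c) in Lemma \ref{lem:density}), one gets $g>0$ and $g+2g'|\nabla\psi|^2>0$ precisely when $|\nabla\psi|^2<\mft_c(\mcB(\psi))$, where $\mft_c$ is the critical momentum squared \eqref{eq:t_cr} — i.e. exactly the subsonic condition $\rho>\varrho_c(\mcB(\psi))$ in disguise. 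I expect the genuinely delicate part to be not the ellipticity algebra but verifying that the first-order reformulation \eqref{eq:euler} can indeed be integrated back to a single divergence-form PDE, i.e. justifying that $\mathscr{B}$ is globally a function of $\psi$ (topology of streamlines) and that the density lies on the correct branch so Lemma \ref{lem:density} applies; both are handed to us by Proposition \ref{Elemmaequivalent}, Proposition \ref{propupstream} and Lemma \ref{lem:density}, so the write-up mostly amounts to assembling these pieces and doing the symbol computation.
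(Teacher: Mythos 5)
Your proof is sound and arrives at the right identity $\omega=-\mcB'(\psi)\rho$, but it does so by a route that differs from the paper in one small but interesting way. The paper's proof stays entirely within the reformulated system \eqref{eq:euler}: it invokes the third equation (transport of $\omega/\rho$ along streamlines), pushes $\omega/\rho$ back to its upstream limit $-\bar u'(\mfh(\psi,\bar\rho))/\bar\rho$, and matches this with $\mcB'$ via \eqref{eq:dB}. You instead recover the full momentum equation through Proposition \ref{Elemmaequivalent} and use the Crocco identity $\nabla\mathscr{B}=-\omega\,\bu^\perp$ as a pointwise relation, which together with $\mathscr{B}=\mcB(\psi)$ and $\nabla\psi\parallel\bu^\perp$ yields the same $\omega$. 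That works, and is arguably cleaner in that it avoids the explicit upstream limit for $\omega/\rho$; but it shifts the load onto Proposition \ref{Elemmaequivalent} (since \eqref{eq:euler} itself contains the vorticity transport, not the momentum equation), whereas the paper's version is self-contained given \eqref{eq:euler}. Note also that both routes still implicitly rely on the same upstream evaluation: knowing $\mathscr{B}=\mcB(\psi)$ with the specific $\mcB$ of \eqref{defB} already requires tracing the Bernoulli function back along each streamline to $x_1\to-\infty$, which uses the same simple-topology hypothesis $u_2<0$ that the paper uses to trace $\omega/\rho$. The sign hazard you flag is real --- the paper's convention $\nabla\psi=(-\rho u_2,\rho u_1)$ gives $\omega=-\nabla\cdot(\rho^{-1}\nabla\psi)$, not $+\nabla\cdot(\rho^{-1}\nabla\psi)$ as your choice of $\nabla^\perp$ would suggest --- so one does need to settle the orientation consistently. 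The ellipticity part of your sketch (eigenvalues $g$ and $g+2\partial_t g\,|\nabla\psi|^2$, with $\partial_t g$ read off from $\partial_\varrho\mathcal{F}$ via \eqref{eq:dt_g}, blowing up at $|\nabla\psi|^2\to\mft_c(\mcB(\psi))$) is exactly what the paper does.
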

		\begin{proof}
			Let $X(s;x)$ be the streamlines satisfying
			\begin{equation*}
				\left\{
				\begin{aligned}
					& \frac{dX}{ds} =\mathbf{u}(X(s;x)),\\
					& X(0; x)=x.
				\end{aligned}
				\right.
			\end{equation*}
			It follows from the third equation in \eqref{eq:euler} that $\omega/\rho$ is a constant along each streamline. Hence $\omega/\rho$ can be determined by the associated data in the upstream as long as the streamlines of the flows have simple topological structure, which is guaranteed by the assumption $u_2<0$. If $x\in \{\psi=z\}$, then
			\begin{align*}
				\frac{\omega}{\rho}(x)=\lim_{s \rightarrow -\infty}\frac{\omega}{\rho}(X(s;x)) =-\frac{\bar u'(\mfh(z;\bar \rho))}{\bar\rho}.
			\end{align*}
			Expressing the vorticity $\omega$ in terms of the stream function $\psi$ and using \eqref{eq:dB} one has
			\begin{align}\label{label_9}
				-\nabla\cdot \left(\frac{\nabla \psi}{\rho} \right)=\omega = -\mathcal{B}'(\psi)\rho.
			\end{align}
			In view of \eqref{eq:rho} the above equation can be rewritten into \eqref{eq:psi_main}.

			The equation \eqref{eq:psi_main} can be written in the nondivergence form as follows
			\begin{equation*}
				a^{ij}(\nabla\psi,\psi) \p_{ij}\psi + \p_z g(|\nabla \psi|^2, \psi) |\nabla\psi|^2=  
				\frac{\mathcal{B}'(\psi)}{g(|\nabla \psi|^2, \psi)},
			\end{equation*}
			where the matrix
			$$(a^{ij})=g(|\nabla \psi|^2, \psi)I_2+2\p_t g(|\nabla \psi|^2, \psi)\nabla\psi\otimes \nabla \psi$$
			is symmetric with the eigenvalues
			\begin{align*}
				\beta_0  =g(|\nabla \psi|^2, \psi)\quad \text{and}\quad
				\beta_1 =g(|\nabla \psi|^2, \psi)+2\p_t g(|\nabla \psi|^2, \psi)|\nabla\psi|^2.
			\end{align*}
				Differentiating the identity $t=\mft(\frac{1}{g(t,z)}, \mathcal{B}(z))$ gives
				\begin{equation}\label{eq:dt_g}
					\p_t g(t,z)=-\frac{g^2(t,z)}{\p_\rho \mft(\frac{1}{g(t,z)}, \mathcal{B}(z))}\quad\text{for}\quad {0\leq t} <\mft_c(\mcB(z)).
				\end{equation}
				Therefore, one has
				\begin{equation}\label{eq:ptg}
					\p_t g(t,z)\geq 0 \quad \text{and}\,\, \lim_{t \rightarrow\mft_c(\mcB(z))-}\p_t g(t,z)=+\infty.
				\end{equation}
				Thus {$\beta_0$ has uniform upper and lower bounds
				}, and $\beta_1$ has a uniform lower bound but blows up when $|\nabla\psi|^2$ approaches $\mft_c(\mcB(\psi))$. Therefore, the equation \eqref{eq:psi_main} is elliptic as long as $|\nabla \psi|^2 < \mft_c(\mcB(\psi))$, and  is singular when $|\nabla \psi|^2=\mft_c(\mcB(\psi))$. This completes the proof of the lemma.
			\end{proof}
			
			\subsection{Reformulation for the jet flows in terms of the stream function}\label{sec:setup}
			Let
			\begin{equation}\label{defLambda}
				\Lambda:=
				{\rho_e\sqrt{2B(\barH)-2h(\rho_e)}}
			\end{equation}
			be the constant momentum on the free boundary with $\rho_e :=(\gamma p_e)^{1/\gamma}$, where {$B$ is defined in \eqref{def:B_upstream} and} $p_e$ is the pressure on the free boundary as in Problem \ref{pb}. 
			From the previous analysis, {Problem \ref{pb} can be solved as long as the following
			problem in terms of the stream function is solved.}
			
			\begin{problem}\label{Pb2}
				Given {a mass flux $Q>0$ and an incoming horizontal velocity $\bar u\in C^{1,1}([0,\bar H])$ satisfying \eqref{cond:u0_eps0}.} 
				One looks for a triple $(\psi, \Gamma_\psi, \Lambda)$ satisfying 
				{$\psi\in C^{2,\alpha}(\{\psi<Q\})\cap C^{1}(\overline{\{\psi<Q\}})$ for any  $\alpha\in(0,1)$,} 
				$\partial_{x_1}\psi>0$ in $\{\psi<Q\}$ and 
				\begin{equation}\label{eq_pb2}
					\left\{
					\begin{aligned}
						&\nabla\cdot\left(g(|\nabla \psi|^2,\psi)\nabla \psi\right)=\frac{\mathcal{B}'(\psi)}{ g(|\nabla\psi|^2,\psi)} &&\text{ in } \{\psi<Q\},\\
						&\psi =0 &&\text{ on } S_0,\\
						&\psi =Q &&\text{ on } S_1 \cup \Gamma_\psi,\\
						&|\nabla \psi| =\Lambda &&\text{ on } \Gamma_\psi,
					\end{aligned}
					\right.
				\end{equation}
				where the free boundary $\Gamma_\psi:=\p\{\psi<Q\}\backslash S_1$.  Furthermore, the free boundary $\Gamma_\psi$ and the flow 
				\begin{align}\label{def:rhou_psi}
					(\rho,\mathbf u)=\left(\frac{1}{g(|\nabla \psi|^2, \psi)},\, g(|\nabla \psi|^2, \psi)\partial_{x_2} \psi,\, - g(|\nabla \psi|^2, \psi)\partial_{x_1}\psi\right)	
				\end{align}
				 are expected to satisfy the following properties.
				\begin{enumerate}
					\item The flow is subsonic in the flow region, i.e. $|\nabla\psi|^2<\mft_c(\mathcal{B}(\psi))$ in $\{\psi<Q\}$, where $\mft_c$ is defined in \eqref{defF} and the Bernoulli function $\mathcal{B}$ is defined in \eqref{defB}.
					{\item The flow satisfies the asymptotic behavior \eqref{upstreambehavior} at upstream $x_1\to-\infty$, where the upstream density $\bar\rho$ is given by \eqref{eq:rhobar}.
					\item The free boundary $\Gamma_\psi$ is given by a graph $x_1=\Upsilon(x_2)$, $x_2\in (\ubar{H}, 1]$ for some $\ubar{H}\in (0,1)$, where the function $\Upsilon$ is $C^{2,\alpha}$ for any $\alpha\in(0,1)$.}
					\item The free boundary $\Gamma_\psi$ fits the nozzle at $A=(0,1)$ continuous differentiably, i.e. $\Upsilon(1)=\Theta(1)$ and $\Upsilon'(1)=\Theta'(1)$.
					\item  For $x_1$ sufficiently large, the free boundary is also an $x_1$-graph, i.e., it can be written as $x_2=f(x_1)$ for some {$C^{2,\alpha}$} function $f$. Furthermore, at downstream $x_1\rightarrow \infty$, one has
					\[
					\lim_{x_1\rightarrow \infty}f(x_1)=\unH,\quad  \lim_{x_1\rightarrow \infty}f'(x_1)=0,\quad\text{and}\quad \lim_{x_1\rightarrow \infty}(\rho,\mathbf{u})= (\unrho,\unu,0)
					\]
					for some {positive} constant $\unrho$ and positive function $\unu=\unu(x_2)$.
				\end{enumerate}
			\end{problem}
	
			In fact, we have the following crucial observation, which asserts that $(\rho,\mathbf u)$ defined in \eqref{def:rhou_psi} is a solution of the Euler system even if $(\rho,\mathbf u)\in C^{1,\alpha}(\{\psi<Q\})$.
			\begin{prop}\label{prop:equiv_sol}
				{Given a mass flux $Q>0$} and an incoming horizontal velocity $\bar u\in C^{1,1}([0,\bar H])$ satisfying \eqref{cond:u0_eps0}.
				Assume that a function $\psi\in C^{2,\alpha}(\{\psi<Q\})\cap C^{1}(\overline{\{\psi<Q\}})$ is a {subsonic} solution of \eqref{eq_pb2}, where $\alpha\in(0,1)$. Then the flow $(\rho,\mathbf u)$ defined in \eqref{def:rhou_psi} solves the Euler system \eqref{a0} in the flow region $\{\psi<Q\}$.
			\end{prop}
			\begin{proof}
				Due to the regularity of the function $\psi$,  the functions $\rho$ and $\mathbf u:=(u_1,u_2)$ defined in \eqref{def:rhou_psi} belong to $C^{1,\alpha}(\{\psi<Q\})\cap C(\overline{\{\psi<Q\}})$. Since $(\rho,\mathbf u)$ satisfies \eqref{eq:psi_uv}, it follows from 
				the definition of the function $g$ in Lemma \ref{lem:density} (i) that in the flow region
				\begin{align*}\label{label_8}
					\frac{u_1^2+u_2^2}{2}+h(\rho)=	\frac{|\nabla\psi|^2}{2\rho^2}+h(\rho)=\mathcal{B}(\psi).
				\end{align*}
				Differentiating the above equality with respect to $x_1$ and $x_2$, respectively, one gets
				\begin{equation}\label{eq1}
					u_1\partial_{x_1}u_1+u_{2}\partial_{x_1}u_2 +\p_{x_1}h(\rho)=\mcB'(\psi)\partial_{x_1}\psi,
				\end{equation}
				and
				\begin{align}\label{eq2}
					u_1\partial_{x_2}u_1+u_2\partial_{x_2}u_{2}+\p_{x_2}h(\rho)=\mcB'(\psi)\partial_{x_2}\psi.
				\end{align}
				Note that by the quasilinear equation in \eqref{eq_pb2} one has  $\mathcal{B}'(\psi)=-(\p_{x_1}u_2-\p_{x_2}u_1)/\rho$. Thus the equalities \eqref{eq1} and \eqref{eq2} together with \eqref{eq:psi_uv} yield \eqref{label_10}. Note that $(\rho,\mathbf u)$ satisfies the continuity equation (the first equation in \eqref{a0}). Then the Euler system \eqref{a0} follows from \eqref{label_10} and the continuity equation.
				This finishes the proof of the proposition.
			\end{proof}
			
			\subsection{Subsonic truncation}\label{sec:truncation}
			One of the major difficulties to solve the equation \eqref{eq:psi_main} is that \eqref{eq:psi_main} becomes degenerate as the flows approach the sonic state. As in \cite{ACF85} our strategy here is to use a subsonic truncation so that after the truncation the equation \eqref{eq:psi_main} is always elliptic.
			
			Let $\varpi:\R\rightarrow [0,1]$ be a smooth nonincreasing {function such that
				\begin{equation*}
					\varpi(s)=\left\{
					\begin{aligned}
						&1 \quad \text{if}\,\, s\leq-1,\\
						& 0\quad \text{if}\,\, s\geq-1/2,
					\end{aligned}
					\right.\quad \text{ and }\quad  |\varpi'|
					+|\varpi''|\leq 8.
				\end{equation*}
				For $\epsilon\in (0,1/4)$, 
				let $\varpi_\epsilon(s):=\varpi((s-1)/\epsilon)$. We define $g_\epsilon:[0,\infty)\times \R\rightarrow \R$,
				\begin{equation}\label{eq:truncation_g}
					g_\epsilon(t,z):= g(t, z)\varpi_\epsilon(t/\mft_c(\mcB(z)))+ (1-\varpi_\epsilon(t/\mft_c(\mcB(z))))g^* ,
				\end{equation}
				where} $\mft_c$ is defined in \eqref{defF} and $g^*$ is the upper bound for $g$ in \eqref{eq:upper_lower_g}. The properties of $g_\epsilon$ are summarized in the following lemma.
			
			\begin{lem}\label{lem:truncation_g}
				Let $g$ be the function defined in Lemma \ref{lem:density}, and let $g_\epsilon$ be the subsonic truncation of $g$ defined in \eqref{eq:truncation_g}. Then the function $g_\epsilon(t, z)$ is smooth with respect to $t$ and {$C^{1,1}$} with respect to $z$. Furthermore, under the assumption \eqref{def:Q_*}, there exist  positive constants $c_\ast$ and $c^\ast$ depending only on $\gamma$, such that for all $(t,z)\in [0,\infty)\times \R$
				\begin{align}
					c_\ast B_*^{-\frac{1}{\gamma-1}} &\leq g_\epsilon(t,z)\leq c^\ast B_*^{-\frac{1}{\gamma-1}},\,\label{eq:g}\\
					c_\ast B_*^{-\frac{1}{\gamma-1}}&\leq g_\epsilon(t, z)+2\partial_t g_\epsilon(t, z) t\leq c^\ast \epsilon^{-1}B_*^{-\frac{1}{\gamma-1}},\label{beta1eps}\\
					|\p_zg_\epsilon(t,z)|&\leq c^\ast\kappa_0\epsilon^{-1}B_\ast^{-\frac{\gamma+1}{\gamma-1}},\quad  |t\p_z g_\epsilon(t,z)|\leq c^\ast \kappa_0\epsilon^{-1}.
					\label{eq:dzg}
				\end{align}
			\end{lem}
			\begin{proof}
				It follows from Lemma \ref{lem:density} and the definition of $\varpi_\epsilon$ that $g_\epsilon$ is smooth with respect to $t$ and {$C^{1,1}$} with respect to $z$.
				
				\emph{(i)}. Clearly, \eqref{eq:g} follows directly from the upper and lower bound for $g$ in \eqref{eq:upper_lower_g} and {the} definition of $g_\epsilon$ in \eqref{eq:truncation_g}.
				
				To show \eqref{beta1eps},  we first claim that if $0\leq t\leq\left(1-\frac{\epsilon}2\right) \mft_c(\mcB(z))$ then \begin{equation}\label{eq:g_dt_bound}
					0<\p_t g(t,z)\leq \frac{C_\gamma  g(t,z)}{\epsilon \mft_c(B_\ast)},
				\end{equation}
				{where $C_\gamma$ represents a positive constant depending only on $\gamma$, and $\mft_c$ is defined in \eqref{defF}.} In view of the expression of $\p_t g(t,z)$ in \eqref{eq:dt_g} it suffices to estimate $\p_{\rho}\mft(\rho,s)$ at $\rho=\rho(t,s)$ for $0\leq t\leq\left(1-\frac{\epsilon}2\right) \mft_c(s)$. In fact, it follows from \eqref{defrhoc} that one has  $s=\frac{\gamma+1}{2}h(\varrho_c(s))$. Hence for $0\leq t\leq\left(1-\frac{\epsilon}2\right) \mft_c(s)$  and $\rho\geq \varrho_c(s)$ it holds that
				\begin{align*}
					\partial_{\rho}\mft(\rho, s)&\stackrel{\eqref{eq:dF}}{=}2(\gamma+1)\rho\left(h(\varrho_c(s))-h(\rho)\right)\stackrel{\eqref{defF}}{=}2(\gamma+1)\rho\left(\frac{
						\mft(\rho,s)}{2\rho^2}-\frac{\mft_c(s)}{2\varrho_c(s)^2}\right)\\
					&\leq (\gamma+1)\frac{\mft(\rho,s)-\mft_c(s)}{\rho}\leq -\frac{(\gamma+1)\epsilon \mft_c(s)}{2\rho}.
				\end{align*}
				Thus from \eqref{eq:dt_g} and \eqref{def:t_bound} we conclude that 
				\begin{equation*}
					0<\p_tg(t,z)=-\frac{g^2(t,z)}{\p_\rho\mft(\rho, s)}\Big|_{\rho=\frac{1}{g(t,z)}, s=\mathcal{B}(z)}\leq \frac{2g(t,z)}{(\gamma+1)\epsilon\mft_c(\mathcal{B}(z))}\leq \frac{C_\gamma  g(t,z)}{\epsilon \mft_c(B_\ast)},
				\end{equation*}
				which is the claimed inequality \eqref{eq:g_dt_bound}. 
				With \eqref{eq:g_dt_bound} at hand, we thus have
				\begin{equation}\label{eq:g_dt}
					\begin{split}
						0\leq\p_tg_\epsilon(t,z) &= \partial_t g (t, z) \varpi_\epsilon\left(\frac{t}{\mft_c(\mcB(z))}\right)+(g(t, z)-g^*) \varpi'_\epsilon\left(\frac{t}{\mft_c(\mcB(z))}\right)\frac{1}{\mft_c(\mcB(z))}\\
						&\leq \frac{C_\gamma  g(t,z)}{\epsilon \mft_c(B_\ast)}+\frac{8g^\ast}{\epsilon \mft_c(B_\ast)} \leq \frac{C_\gamma g^\ast}{\epsilon \mft_c(B_\ast)}.
					\end{split}
				\end{equation}
				Note that $\p_tg_\epsilon(t,z)$ is supported on $\{(t,z)| 0\leq t\leq (1-\epsilon/2)\mft_c(\mathcal{B}(z)), z\in \R\}$. Thus
				\begin{align*}
					0\leq t\p_tg_\epsilon(t,z)\leq \frac{C_\gamma g^\ast \mft_c(\mcB(z))}{\epsilon \mft_c(B_\ast)}\leq \frac{C_\gamma g^\ast \mft_c(B^\ast)}{\epsilon \mft_c(B_\ast)}.
				\end{align*}
				Since $B_\ast$ and $B^\ast$ are equivalent, cf. \eqref{def:t_bound}, then the estimate \eqref{beta1eps} follows directly from  \eqref{eq:g}.
				
				{\emph{(ii).}  Direct computations yield
					\begin{equation}\label{gm_dz}
						\begin{aligned}
							&\p_z g_\epsilon (t,z)= \p_z g(t,z)\varpi_\epsilon\left(\frac{t}{\mft_c(\mcB(z))}\right) + (g^\ast-g(t,z))\varpi_\epsilon'\left(\frac{t}{\mft_c(\mcB(z))}\right) \frac{t\mft_c'(\mcB(z))\mcB'(z)}{\mft_c(\mcB(z))^2}\\
							\stackrel{\eqref{eq:claim1_g}}{=} & \left\{ -2\frac{\p_t g(t,z)}{ g(t,z)^2} \varpi_\epsilon\left(\frac{t}{\mft_c(\mcB(z))}\right)+ (g^\ast-g(t,z))\varpi_\epsilon'\left(\frac{t}{\mft_c(\mcB(z))}\right) \frac{t\mft_c'(\mcB(z))}{\mft_c(\mcB(z))^2}\right\}\mcB'(z).
						\end{aligned}
					\end{equation}
					
					Note that both sums in the expression of $\p_tg_\epsilon$, cf. \eqref{eq:g_dt}, are nonnegative. Thus we get from  the expression of $\p_zg_\epsilon$ in \eqref{gm_dz} that  
					\begin{equation}\label{eq:dzgm_dtgm}\begin{split}
							|\p_zg_\epsilon(t,z)|\leq& \max\left\{\frac{2}{g^2(t,z)},\frac{t\mathfrak t_c'(\mathcal B(z))}{\mathfrak t_c(\mathcal B(z))}\right\}|\mathcal B'(z)| \p_tg_\epsilon(t,z).
						\end{split}
					\end{equation}
					Note  from \eqref{eq:u0_eps0_B} and \eqref{eq:rhobar_B_*} that  
					\begin{equation}\label{eq:Bdz}
						|\mathcal{B}'(z)|
						\leq C_\gamma\kappa_0B_\ast^{-\frac{1}{\gamma-1}}.
					\end{equation}
					This together with the support condition of $\p_tg_\epsilon$,
					\begin{align}\label{lable_2}
						\mft_c(\mcB(z))\sim B_\ast^{\frac{\gamma+1}{\gamma-1}},\quad \mft'_c(\mcB(z))\sim B_\ast^{\frac{2}{\gamma-1}}
					\end{align}
					as well as \eqref{eq:g_dt} and \eqref{eq:g} yields the first inequality in \eqref{eq:dzg}. 
					To show the second inequality in \eqref{eq:dzg}, first we note that from \eqref{eq:dzgm_dtgm} and the support of $\p_tg_\epsilon$ one has
					\begin{equation}\label{label_1}
						\p_zg_\epsilon(t,z)=0 \quad\text{for } t\geq \mft_c(\mathcal{B}(z)).
					\end{equation}
					Then it follows from the first inequality in \eqref{eq:dzg} that 
					\begin{align*}
						\left|t\p_zg_\epsilon(t,z)\right|\leq \mft_c(\mathcal{B}(z))|\p_zg_\epsilon(t,z)|\leq \frac{C_\gamma\kappa_0}{\epsilon}.
					\end{align*}
					This finishes the proof of the lemma.}
			\end{proof}
			
			\section{Variational formulation for the free boundary problem}\label{secvar}
			\label{sec:variational}
			One of the key observations in this paper is that the equation \eqref{eq:psi_main} can be written as the Euler-Lagrange equation for a Lagrangian functional,
			and that the jet problem is equivalent to a domain variation problem. 
			
			{In Sections \ref{secvar}--\ref{seccont}, we will always assume that the incoming mass flux $Q$ satisfies  $Q>\tilde Q$, where $\tilde Q$ is defined in \eqref{def:Q_*}. }
			
			{Let $\Omega$ be  the domain bounded by $S_0$ and $S_1\cup ([0,\infty)\times \{1\})$.} Noticing that $\Omega$ is unbounded, we thus make a further approximation by considering the problems in a family of truncated domains $\Omega_{\mu, R}:=\Omega\cap \{-\mu<x_1<R\}$,
			where $\mu$ and $R$ are two large positive numbers. To define the energy functional, we set
			\begin{align}\label{Gepsilon}
				G_\epsilon(t,z):=\frac{1}{2}\int_0^t g_\epsilon(\tau, z) d\tau +\frac{1}{\gamma}\left( g_\epsilon(0,z)^{-\gamma}- g_\epsilon(0,Q)^{-\gamma}\right)
			\end{align}
			and
			\begin{equation}\label{Phiepsilon}
				\Phi_\epsilon(t,z):=-G_\epsilon(t,z)+2\p_t G_\epsilon(t,z)t.
			\end{equation}
			Given  $\psi^\sharp_{\mu,R}\in C(\p\Omega_{\mu,R})\cap H^1(\Omega_{\mu,R})$ with $0\leq \psi^\sharp_{\mu,R}\leq Q$, we look for the minimizer of the problem
			\begin{align}\label{eq:mini_truncated}
				{\inf_{\varphi\in \mathcal{K}_{\psi^\sharp_{\mu,R}}}} J^\epsilon_{\mu, R, \Lambda}(\varphi),
			\end{align}
			where 
			\begin{align}\label{eq:energy}
				J^\epsilon_{\mu, R,\Lambda} (\varphi)&:=\int_{\Omega_{\mu, R}} G_\epsilon(|\nabla \varphi|^2, \varphi) + \lambda_\epsilon^2 \chi_{\{\varphi<Q\}} \ dx,\quad \lambda_\epsilon:=\lambda_\epsilon(\Lambda) := \sqrt{\Phi_\epsilon(\Lambda^2, Q)}
			\end{align}
			with $\Lambda$ defined in \eqref{defLambda}, and where
			\begin{equation}\label{admissibletrun}
				\mathcal{K}_{\psi^\sharp_{\mu,R}}:=\{\varphi\in H^1(\Omega_{\mu,R})| \varphi=\psi^\sharp_{\mu,R} \text{ on } \p\Omega_{\mu,R}\}.
			\end{equation}
			Note that $\Phi_\epsilon(\Lambda^2,Q)>0$. 
			Indeed, with the aid of ellipticity condition  \eqref{beta1eps}, a straightforward computation shows that $t\mapsto \Phi_\epsilon(t,z)$ is monotone increasing:
			\begin{equation}\label{Phi_dt}
				0<\frac{1}{2} c_\ast B_\ast^{-\frac{1}{\gamma-1}}\leq \p_t\Phi_\epsilon(t,z)=\frac{1}{2}g_\epsilon (t,z)+\p_t g_\epsilon (t,z)t\leq c^\ast\epsilon^{-1}B_\ast^{-\frac{1}{\gamma-1}}.
			\end{equation}
			Since $\Phi_\epsilon(0,Q)=-G_\epsilon(0,Q)=0$, one immediately has $\Phi_\epsilon(t,Q)>0$ for $t>0$.
			
			The existence and the H\"{o}lder regularity of minimizers for \eqref{eq:mini_truncated} will be shown in Lemmas \ref{lem:existence} and  \ref{lem:holder}, respectively. Assuming the existence and continuity of a minimizer $\psi$, we now derive the Euler-Lagrange equation for the variational problem \eqref{eq:mini_truncated} in the open set $\{\psi<Q\}$.
			
			\begin{lem}\label{lem:vari_psi}
				Let $\psi$ be a minimizer of \eqref{eq:mini_truncated}. Assume that $\Omega_{\mu,R}\cap\{\psi< Q\}$ is open. Then $\psi$ is a weak solution to
				\begin{align}\label{ELpde}
					\nabla\cdot(g_\epsilon(|\nabla \psi|^2, \psi)\nabla \psi) - \p_zG_\epsilon (|\nabla\psi|^2,\psi)=0\quad\text{in}\,\,  \Omega_{\mu,R}\cap\{\psi< Q\}.
				\end{align}
				{Furthermore, if $|\nabla \psi|^2\leq(1-\epsilon)\mft_c(\mcB(\psi))$,} it holds that
				\begin{align}\label{dzG_expression}
					\p_zG_\epsilon(|\nabla\psi|^2,\psi)=\frac{\mathcal{B}'(\psi)}{g(|\nabla\psi|^2, \psi)}.
				\end{align}
			\end{lem}
			
			\begin{proof}
				Let $\eta$ be a smooth function compactly supported in $\Omega_{\mu,R}\cap\{\psi< Q\}$. Then
				\begin{align*}
					\frac{d}{d\vartheta}J^\epsilon_{\mu, R,\Lambda}(\psi+\vartheta\eta)\big|_{\vartheta=0}=\int_{\Omega_{\mu,R}}2\p_t G_\epsilon(|\nabla\psi|^2,\psi)\nabla \psi\cdot \nabla \eta + \p_zG_\epsilon(|\nabla \psi|^2,\psi)\eta \ dx.
				\end{align*}
				By the definition of $G_\epsilon$ in \eqref{Gepsilon}, minimizers of \eqref{eq:mini_truncated} satisfy the equation \eqref{ELpde}.
				
				Next, by the definition of $G_\epsilon$ in \eqref{Gepsilon}, one has
				$$\p_z G_\epsilon(t,z)=\frac{1}{2}\int_0^t \p_z g_\epsilon(\tau, z) d\tau-g_\epsilon(0,z)^{-(\gamma+1)}\p_zg_\epsilon(0,z).$$
				Note that 
				\begin{equation}\label{gm_g}
				g_\epsilon(t,z)=g(t,z) \quad\text{in } \mcR_\epsilon:=\{(t,z)| 0\leq t\leq(1-\epsilon)\mft_c(\mcB(z)), z\in \R\}.
				\end{equation} 
				Thus using \eqref{eq:claim1_g} one has
				\begin{align*}
					\p_zG_\epsilon(t,z)=\mathcal{B}'(z)\varrho(t,z)-\mathcal{B}'(z)\varrho(0,z)-g(0,z)^{-(\gamma+1)}\p_zg(0,z)\quad\text{in } \mcR_\epsilon.
				\end{align*}
				It follows from \eqref{eq:drhot}  that $\p_t\varrho (0,z)=-\frac{1}{2}\varrho(0,z)^{-\gamma}$. This together with \eqref{eq:claim1_g} gives
				\begin{align}
					\label{eq:dg_0}
					\p_zg(0,z)=-\mathcal{B}'(z) \varrho(0,z)^{-\gamma}=-\mathcal{B}'(z)g(0,z)^{\gamma} .
				\end{align}
				Substituting  \eqref{eq:dg_0} into the expression of $\p_zG_\epsilon$ one gets
				\begin{align}\label{eq:dzG}
					\p_zG_\epsilon(t,z)=\mathcal{B}'(z)\varrho(t,z)=\frac{\mathcal{B}'(z)}{g(t,z)} \quad \text{for }(t,z)\in \mcR_\epsilon.
				\end{align}
				This completes the proof for the lemma.
			\end{proof}
			
			Next we show that {minimizers} of \eqref{eq:mini_truncated} satisfy the free boundary condition in the following domain variation sense.
			\begin{lem}\label{lem:fbcondition}
				Let $\psi$ be a {minimizer} of \eqref{eq:mini_truncated}. Assume that $\Omega_{\mu,R}\cap\{\psi< Q\}$ is open. Then
				\begin{align*}
					\Phi_\epsilon(|\nabla\psi|^2,\psi)=\lambda_\epsilon^2 \quad\text{on }\Gamma_{\psi}:=\p\{\psi<Q\}\cap \Omega_{\mu,R}
				\end{align*}
				in the sense that
				\begin{align*}
					\lim_{s\to 0+} \int_{\p\{\psi<Q-s\}} \Big[\Phi_\epsilon(|\nabla\psi|^2,\psi)-\lambda_\epsilon^2\Big](\eta\cdot \nu) \ d \mathcal{H} ^1=0 \quad \text{for any } \eta\in C^\infty_0(\Omega_{\mu,R};\R^2),
				\end{align*}
				where $\mathcal{H}^1$ is the one-dimensional Hausdorff measure.
			\end{lem}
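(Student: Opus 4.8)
The plan is to exploit the variational structure through a \emph{domain (inner) variation} of the functional, which produces a divergence-free energy--momentum tensor whose normal component across the level sets of $\psi$ carries the free boundary condition. Fix $\eta\in C^\infty_0(\Omega_{\mu,R};\R^2)$ and, for $|\tau|$ small, let $\Psi_\tau(x):=x+\tau\eta(x)$, a diffeomorphism of $\Omega_{\mu,R}$ onto itself fixing $\p\Omega_{\mu,R}$, and put $\psi_\tau:=\psi\circ\Psi_\tau^{-1}\in\mathcal K_{\psi^\sharp}$. Changing variables back to $x$, with $t(\tau):=|\nabla\psi_\tau|^2\circ\Psi_\tau=\nabla\psi\cdot (D\Psi_\tau)^{-1}(D\Psi_\tau)^{-T}\nabla\psi=|\nabla\psi|^2-2\tau\,\p_i\eta_j\,\p_i\psi\,\p_j\psi+O(\tau^2)$ and $\det D\Psi_\tau=1+\tau\operatorname{div}\eta+O(\tau^2)$, one gets
\[
J^\epsilon_{\mu,R,\Lambda}(\psi_\tau)=\int_{\Omega_{\mu,R}}\bigl(G_\epsilon(t(\tau),\psi)+\lambda_\epsilon^2\chi_{\{\psi<Q\}}\bigr)\det D\Psi_\tau\,dx,
\]
since $\psi_\tau\circ\Psi_\tau=\psi$ and $\{\psi_\tau<Q\}=\Psi_\tau(\{\psi<Q\})$. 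Local minimality (in the $W^{1,2}$ topology, in which $\psi_\tau\to\psi$) gives $\frac{d}{d\tau}J^\epsilon_{\mu,R,\Lambda}(\psi_\tau)\big|_{\tau=0}=0$, with differentiation under the integral routine since $\psi\in H^1$. Using $\p_tG_\epsilon=\tfrac12 g_\epsilon$ this yields the weak identity
\[
\int_{\Omega_{\mu,R}}T_{ij}\,\p_i\eta_j\,dx=0\quad\text{for all }\eta\in C^\infty_0(\Omega_{\mu,R};\R^2),
\]
where
\[
T_{ij}:=\bigl(G_\epsilon(|\nabla\psi|^2,\psi)+\lambda_\epsilon^2\chi_{\{\psi<Q\}}\bigr)\delta_{ij}-2\p_tG_\epsilon(|\nabla\psi|^2,\psi)\,\p_i\psi\,\p_j\psi .
\]
No $\p_zG_\epsilon$ term appears, since the inner variation leaves the values of $\psi$ unchanged.

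Next I would localize this identity. On the open set $\{\psi<Q\}$ the truncated equation \eqref{ELpde} is uniformly elliptic, so $\psi\in C^{2,\alpha}_{loc}(\{\psi<Q\})$ by standard elliptic regularity; combining $g_\epsilon=2\p_tG_\epsilon$ with \eqref{ELpde} one verifies the pointwise identity $\p_iT_{ij}=\bigl(\p_zG_\epsilon(|\nabla\psi|^2,\psi)-\nabla\cdot(g_\epsilon(|\nabla\psi|^2,\psi)\nabla\psi)\bigr)\p_j\psi=0$ in $\{\psi<Q\}$. On the other hand, $\nabla\psi=0$ a.e.\ on $\{\psi=Q\}$ (a standard property of Sobolev functions), so there $T_{ij}=G_\epsilon(0,Q)\delta_{ij}$; crucially, the additive constant built into \eqref{Gepsilon} is exactly the one making $G_\epsilon(0,Q)=0$, hence $T\equiv0$ a.e.\ on $\{\psi=Q\}$. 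Therefore the weak identity reduces to $\int_{\{\psi<Q\}\cap\Omega_{\mu,R}}T_{ij}\p_i\eta_j\,dx=0$.

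Finally I would pass to the boundary. Write $\Omega_s:=\Omega_{\mu,R}\cap\{\psi<Q-s\}$. Since $\psi\in H^1$ the integrand $T_{ij}\p_i\eta_j$ lies in $L^1$, so dominated convergence gives $\int_{\Omega_s}T_{ij}\p_i\eta_j\,dx\to\int_{\{\psi<Q\}\cap\Omega_{\mu,R}}T_{ij}\p_i\eta_j\,dx=0$ as $s\searrow0$. For a.e.\ $s>0$ the level set $\{\psi=Q-s\}$ is a $C^1$ curve with $\nabla\psi\neq0$ on it; since $\p_iT_{ij}=0$ in $\Omega_s\subset\{\psi<Q\}$ and $\eta$ vanishes on $\p\Omega_{\mu,R}$, the divergence theorem gives $\int_{\Omega_s}T_{ij}\p_i\eta_j\,dx=\int_{\p\{\psi<Q-s\}}T_{ij}\nu_i\eta_j\,d\mathcal H^1$ with $\nu=\nabla\psi/|\nabla\psi|$ the outward unit normal. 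On $\{\psi=Q-s\}$ one has $\p_i\psi\,\p_j\psi=|\nabla\psi|^2\nu_i\nu_j$ and $\chi_{\{\psi<Q\}}=1$, so $T_{ij}\nu_i=\bigl(G_\epsilon+\lambda_\epsilon^2-2\p_tG_\epsilon|\nabla\psi|^2\bigr)\nu_j=\bigl(\lambda_\epsilon^2-\Phi_\epsilon(|\nabla\psi|^2,\psi)\bigr)\nu_j$ by the definition \eqref{Phiepsilon} of $\Phi_\epsilon$. Combining the two computations yields $\lim_{s\searrow0}\int_{\p\{\psi<Q-s\}}\bigl[\Phi_\epsilon(|\nabla\psi|^2,\psi)-\lambda_\epsilon^2\bigr](\eta\cdot\nu)\,d\mathcal H^1=0$, which is the assertion.

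The main obstacle is organizational rather than analytic: one must (i) justify the inner-variation identity for merely $H^1$ local minimizers, (ii) observe that the contact region $\{\psi=Q\}$---which genuinely has positive measure in the jet configuration---contributes nothing, which hinges precisely on the normalization $G_\epsilon(0,Q)=0$ in \eqref{Gepsilon}, and (iii) handle the slicing in $s$ (through regular values of $\psi|_{\{\psi<Q\}}$, or equivalently the reduced boundary via De Giorgi's structure theorem) so that the divergence theorem on $\Omega_s$ is legitimate and the stated limit is well defined. The interior regularity of $\psi$ in $\{\psi<Q\}$, which underlies the pointwise identity $\p_iT_{ij}=0$, is available precisely because the subsonic truncation renders \eqref{ELpde} uniformly elliptic.
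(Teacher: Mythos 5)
Your proof is correct and follows essentially the same route as the paper: an inner (domain) variation $\tau_\vartheta(x)=x+\vartheta\eta(x)$ pulled back to $\Omega_{\mu,R}$, the resulting integral identity (equivalently, the divergence-free energy--momentum tensor $T_{ij}$), the Euler--Lagrange equation \eqref{ELpde} to make $\partial_i T_{ij}=0$ in $\{\psi<Q\}$, and the divergence theorem on level sets $\{\psi=Q-s\}$. Your explicit remark that the contribution from $\{\psi=Q\}$ vanishes because $\nabla\psi=0$ a.e.\ there and $G_\epsilon(0,Q)=0$, and that this is precisely why the additive constant in \eqref{Gepsilon} was chosen, is a useful clarification that the paper leaves implicit.
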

			
			\begin{rmk}[Relation between $\lambda_\epsilon$ and $\Lambda$]\label{rmk:fbcondition}
				If the free boundary $\Gamma_\psi$ is smooth and $\psi$ is smooth near $\Gamma_\psi$, then it follows  from the monotonicity of $t \mapsto \Phi_\epsilon(t,z)$ for each $z$ and the definition of $\lambda_\epsilon$ in \eqref{eq:energy}  that
				$$|\nabla\psi|=\Lambda \quad \text{on }\Gamma_\psi.$$
				Moreover, since 
				$$\lambda_\epsilon^2=\Phi_\epsilon(\Lambda^2, Q) =\Phi_\epsilon(0, Q)+\int_0^{\Lambda^2}\p_t\Phi_\epsilon(s,Q)ds,$$
				then in view of \eqref{Phi_dt} and $\Phi_\epsilon(0,Q)=0$ one has
				\begin{align*}
					\frac{1}{2}c_\ast\Lambda^2	\leq B_\ast^{\frac{1}{\gamma-1}}\lambda_\epsilon^2\leq c^\ast \epsilon^{-1}\Lambda^2.
				\end{align*}
				Using Remark \ref{rmk:Q} we conclude that there exists a constant $C=C(\gamma,\epsilon, \|\bar u\|_{L^1([0,\bar H])})>0$ such that
				\begin{align}\label{ld_Ld}
					C^{-1}\leq \frac{Q^{\frac{1}{2}}\lambda_\epsilon}{\Lambda}\leq C.
				\end{align}
			\end{rmk}
			
			\begin{proof}[Proof of Lemma \ref{lem:fbcondition}]
				Let $\eta\in C_0^\infty(\Omega_{\mu, R}; \mathbb{R}^2)$ and  $\tau_\vartheta(x):=x+\vartheta\eta(x)$. If $|\vartheta|$ is sufficiently small, then $\tau_\vartheta$ is a diffeomorphism of $\Omega_{\mu,R}$. Let $\psi_\vartheta(y):=\psi(\tau_\vartheta^{-1}(y))$. Since $\psi_\vartheta\in \mathcal{K}_{\psi^\sharp_{\mu,R}}$ and $\psi$ is a minimizer, one has
				\begin{equation}\label{est_m1}
					\begin{aligned}
						0\leq &J^\epsilon_{\mu, R,\Lambda}(\psi_\vartheta)-J^\epsilon_{\mu,R,\Lambda}(\psi)\\
						=& \int_{\Omega_{\mu, R}}\left(G_\epsilon(|\nabla \psi (\nabla \tau_\vartheta)^{-1}|^2, \psi)+\lambda_\epsilon^2 \chi_{\{\psi<Q\}} \right)\det (\nabla \tau_\vartheta)  \\ &\quad -\int_{\Omega_{\mu, R}}\left(G_\epsilon(|\nabla \psi|^2,\psi)+\lambda_\epsilon^2\chi_{\{\psi<Q\}}\right)\\
						= &\ \vartheta\int_{\Omega_{\mu, R}}\left(G_\epsilon(|\nabla\psi|^2,\psi)+\lambda_\epsilon^2\chi_{\{\psi<Q\}}\right)\nabla\cdot \eta\\
						&\quad -2\vartheta\int_{\Omega_{\mu, R}} \p_t G_\epsilon(|\nabla \psi|^2,\psi)\nabla \psi \nabla \eta \nabla  \psi +o(\vartheta).
					\end{aligned}
				\end{equation}
				Dividing $\vartheta$ on both sides of \eqref{est_m1} and passing to the limit $\vartheta\rightarrow 0$ yield
				\begin{align}\label{eq_51.5}
					\int_{\Omega_{\mu, R}}\left(G_\epsilon(|\nabla\psi|^2,\psi)+\lambda_\epsilon^2\chi_{\{\psi<Q\}}\right)\nabla\cdot \eta  -2\int_{\Omega_{\mu, R}} \p_tG_\epsilon(|\nabla \psi|^2,\psi)\nabla \psi \nabla \eta \nabla  \psi =0.
				\end{align}
				Note that
				\begin{equation*}
					\begin{aligned}
						&\left(G_\epsilon(|\nabla\psi|^2,\psi)+\lambda_\epsilon^2\right)\nabla\cdot \eta\\
						=&\ \nabla\cdot \left[\left(G_\epsilon(|\nabla\psi|^2,\psi)+\lambda_\epsilon^2\right)\eta\right]-2\p_tG_\epsilon \nabla\psi D^2\psi\eta - \p_zG_\epsilon \nabla\psi\cdot \eta
					\end{aligned}
				\end{equation*}
				and
				\[
				\nabla \psi \nabla\eta\nabla\psi+ \nabla\psi D^2\psi \eta= \nabla(\eta\cdot \nabla\psi)\cdot \nabla\psi.
				\]
				Using the divergence theorem for \eqref{eq_51.5} we have
				\begin{align*}
					0&=\lim_{s\to 0+}\int_{\Omega_{\mu, R}\cap\{\psi<Q-s\}}\left[\left(G_\epsilon(|\nabla\psi|^2,\psi)+\lambda_\epsilon^2\right)\nabla\cdot \eta -2 \p_t G_\epsilon(|\nabla \psi|^2,\psi)\nabla \psi \nabla \eta \nabla  \psi\right] \\
					&=\lim_{s\to 0+}\int_{\p\{\psi<Q-s\}} \left[G_\epsilon(|\nabla\psi|^2,\psi) - 2\p_t G_\epsilon(|\nabla\psi|^2,\psi)  |\nabla\psi|^2+\lambda_\epsilon^2\right](\eta\cdot \nu)\\
					&\quad +\lim_{s\to 0+}\int_{\Omega_{\mu, R}\cap\{\psi<Q-s\}}\left[2\nabla\cdot(\p_t G_\epsilon(|\nabla\psi|^2,\psi)  \nabla \psi)-\p_zG_\epsilon (|\nabla\psi|^2,\psi) \right](\nabla\psi\cdot\eta)\\
					&=\lim_{s\to 0+}\int_{\p\{\psi<Q-s\}} \left[-\Phi_\epsilon(|\nabla\psi|^2,\psi)+\lambda_\epsilon^2\right](\eta\cdot \nu),
				\end{align*}
				where the equation  \eqref{ELpde} for $\psi$ in the open set $\{\psi<Q\}$ has been used to get the last equality. This finishes the proof of the lemma.
			\end{proof}

			\section{The existence and regularity for the free boundary problem}\label{secreg}
			In this section we study the existence and  regularity of the minimizers, as well as the regularity of the free boundary away from the nozzle.

			\subsection{Existence of minimizers}
			For the ease of notations in the rest of this section, let
			\begin{equation}\label{label_3}
				\mcD:=\Omega_{\mu, R} =\Omega\cap\{-\mu<x_1<R\},
				\quad
				\mcG(\bp,z):=G_\epsilon(|\bp|^2,z), 
				\quad \mcJ:=J^\epsilon_{\mu,R,\Lambda},
				\quad \lambda :=\lambda_\epsilon.
			\end{equation}
			Note that $\mcD$ is a bounded Lipschitz domain in $\R^2$ and is contained in the infinite strip $\R\times [0,\bar H]$, {$\mcG:\R^2\times \R\rightarrow \R$ is smooth in $\bp$ and $C^{1,1}$} in $z$ (with further properties summarized in Proposition \ref{prop:assumption} below), and $\lambda$ is a positive constant. The minimization problem \eqref{eq:mini_truncated}--\eqref{admissibletrun} can be rewritten as finding minimizers of
			\begin{equation}\label{eq:energy_new}
				\mcJ(\varphi):=\int_{\mcD} \mcG(\nabla\varphi, \varphi) + \lambda^2 \chi_{\{\varphi<Q\}}\ dx,
			\end{equation}
			over the admissible set
			\begin{equation}\label{admissibleK}
				\mathcal{K}_{\psi^\sharp}:=\{\varphi\in H^1(\mcD)| \varphi=\psi^\sharp \text{ on } \p \mcD\}.
			\end{equation}
			Here $\psi^\sharp:\overline{\mcD}\rightarrow \R$ is given, $\psi^\sharp\in H^1(\mcD)\cap C(\p\mcD)$ and $0\leq \psi^\sharp\leq Q$ on $\p \mcD$.

			Properties of $\mcG$ are summarized in the following proposition.
			
			\begin{prop}\label{prop:assumption}
				Let $G_\epsilon$ be defined in \eqref{Gepsilon} and $\mcG$ be defined in \eqref{label_3}, then the following properties hold.
				\begin{enumerate}
					\item[(i)] {There exist  positive constants  $\mfb_\ast=c_\ast B_\ast^{-\frac{1}{\gamma-1}}$ and $\mfb^\ast=c^\ast B_\ast^{-\frac{1}{\gamma-1}}$ with $c_\ast$ and $c^\ast$ depending only on $\gamma$,  such that}
					\begin{align}
						\mfb_*|\bp|^2 &\leq p_i \partial_{p_i} \mcG (\bp,z) \leq \mfb^*|\bp|^2,\label{eq:convex}\\
						\mfb_* |\mathbf\xi |^2 &\leq {\xi_i\partial_{p_ip_j} \mcG(\bp,z)\xi_j}
						\leq \mfb^* \epsilon^{-1} |\mathbf\xi|^2 \quad\text{for all }\mathbf\xi\in \R^2.\label{eq:convex0}
					\end{align}
					\item[(ii)] One has
					\begin{equation}\label{supportG}
						\begin{split}
							&\p_z\mcG(\bp,z)= 0 \quad\text{in } \{(\bp, z): \bp\in\R^2, z\in (-\infty,0]\},\\
							&\p_z\mcG(\bp,z)\geq 0 \quad \text{in } \{(\bp, z): \bp\in\R^2, z\in [Q,\infty)\}.
						\end{split}
					\end{equation}
					\item[(iii)] There exist constants
					\begin{equation}\label{delta}
						\delta':=\epsilon^{-1}C_\gamma \kappa_0, \quad 
						\delta:= \delta' B_\ast^{-\frac{1}{\gamma-1}}\left(
						B_\ast^{-\frac12} +\kappa_0 B_\ast^{-1}+\bar u_\ast^{-1}\right),	
					\end{equation}
					where $\kappa_0$ is the constant defined in \eqref{eq:u0_eps0} and $C_\gamma>0$ is a constant depending only on $\gamma$, such that
					\begin{align}
						&\epsilon^{-1}|\p_z \mcG(\bp,z)|+|\bp\cdot \p_{\bp z} \mcG(\bp,z)|\leq \delta', \quad |\p_{\bp z}\mcG(\bp,z)|+|\p_{zz}\mcG(\bp,z)|\leq \delta, \label{eq:upper_pzzG}\\
						&\mcG(\mathbf 0,Q)=0,\quad \mcG(\bp,z)\geq {\frac{\mfb_*}2}|\bp|^2-
						C_\gamma\kappa_0
						{\min\{Q, (Q-z)_+\}}.\label{eq:com_energy}
					\end{align}
				\end{enumerate}
			\end{prop}
			\begin{proof}
				\emph{(i).} In view of the definition of $G_\epsilon$ in \eqref{Gepsilon}, straightforward computations yield
				\begin{equation*}
					\begin{split}
						p_i \p_{p_i} \mcG(\bp,z) &= g_\epsilon(|\bp|^2,z)|\bp|^2,\\
						\p_{p_ip_j} \mcG(\bp,z) &= g_\epsilon(|\bp|^2, z)\delta_{ij} + 2\p_t g_\epsilon(|\bp|^2,z) p_ip_j.
					\end{split}
				\end{equation*}
				Thus from \eqref{eq:g} and \eqref{beta1eps} one immediately gets \eqref{eq:convex}--\eqref{eq:convex0}. 
				
				\emph{(ii).} From the definition of $\mcG$ we have
				\begin{equation}\label{mcG_dz}
					\begin{split}
						\p_z \mcG(\bp,z)&=\frac{1}{2}\int_0^{|\bp|^2} \p_z g_\epsilon( \tau,z)d\tau - \left(g_\epsilon^{-\gamma-1}\p_z g_\epsilon\right)(0,z) \\
						&\stackrel{\eqref{eq:dg_0}}{=}\frac{1}{2}\int_0^{|\bp|^2} \p_z g_\epsilon( \tau,z)d\tau + \frac{\mathcal{B}'(z)}{ g_\epsilon(0, z)}.
					\end{split}
				\end{equation}
				This, together with explicit expression for $\p_zg_\epsilon$ in \eqref{gm_dz} and an integration by parts yields that
				\begin{equation*}\label{dzG}
					\begin{split}
						\p_z \mcG(\bp,z)&=\mathcal{B}'(z)\int_0^{|\bp|^2}\left(
						-1+\frac{g(\tau,z)}2
						(g^\ast - g(\tau,z)) \frac{\tau \mft_c'(\mathcal{B}(z))}{\mft_c(\mathcal{B}(z))} \right)\frac{\varpi'_\epsilon(\tau/\mft_c(\mathcal{B}(z)))}{\mft_c(\mathcal{B}(z))g(\tau,z)} \ d\tau\\
						&+\frac{\mathcal{B}'(z)}{g(|\bp|^2, z)}\varpi_\epsilon(|\bp|^2/\mft_c(\mathcal{B}(z))).
					\end{split}
				\end{equation*}
				We claim that for $\tau\in [0,\mft_c(\mathcal{B}(z)))$ one has
				\begin{align*}
					0\leq \frac{g(\tau,z)}2(g^\ast-g(\tau,z)) \frac{\tau \mft_c'(\mathcal{B}(z))}{\mft_c(\mathcal{B}(z))} \leq 1.
				\end{align*}
				Indeed, from the expression of $\mft_c(s)$ in \eqref{defF} and recalling \eqref{def:t_bound} we have that 
				\begin{align*}
					0\leq \frac{\tau \mft_c'(\mathcal{B}(z))}{\mft_c(\mathcal{B}(z))}\leq 2(c_\gamma B^\ast)^{\frac{2}{\gamma-1}}\leq 2\left(\frac{\gamma+1}2\right)^{\frac{2}{\gamma-1}}(c_\gamma B_\ast)^{\frac{2}{\gamma-1}}
				\end{align*}
				where $c_\gamma:=\frac{2(\gamma-1)}{\gamma+1}$. On the other hand,
				\begin{align*}
					0\leq 4g(\tau,z)(g^\ast-g(\tau,z))\leq (g^\ast)^2 \stackrel{\eqref{eq:upper_lower_g}}{=} \frac{1}{\varrho_c^2(B_\ast)} \stackrel{\eqref{defrhoc}}{=}(c_\gamma B_\ast)^{-\frac{2}{\gamma-1}}.
				\end{align*}
				Thus the claim holds after combining the above two inequalities.
				With the claim at hand we infer from $\varpi'\leq 0$, $\varpi\geq 0$ and $g\geq 0$ that  $\p_z \mcG(\bp, z)$ is a positive multiple of $\mathcal{B}'(z)$. {In view of \eqref{eq:sign_B} we}  conclude that \eqref{supportG} holds true. 
				
				\emph{(iii).} \emph{Proof for \eqref{eq:upper_pzzG}:} It follows from \emph{(ii)} as well as \eqref{eq:dB} and \eqref{eq:upper_lower_g} that 
				\begin{align*}
					|\p_z\mcG(\bp,z)|\leq 16|\mathcal{B}'(z)|\frac{\epsilon \mft_c(B^\ast)}{\epsilon \mft_c(B_\ast)g_\ast}\leq C_\gamma \kappa_0.
				\end{align*}
				A direct differentiation of $\p_z\mcG$ gives
				\begin{align*}
					\p_{p_iz}\mcG(\bp,z) = p_i \p_zg_\epsilon (|\bp|^2,z).
				\end{align*}
				Thus by the second inequality in  \eqref{eq:dzg},
				\begin{align*}
					|\bp\cdot \p_{\bp z}\mcG(\bp,z)|=|\bp|^2|\p_zg_\epsilon(|\bp|^2,z)|\leq c^\ast\kappa_0\epsilon^{-1}.
				\end{align*}
				This gives the first inequality in \eqref{eq:upper_pzzG}.
				
				By the first inequality in \eqref{eq:dzg}, \eqref{lable_2} and \eqref{label_1}, we get 
				\begin{align*}
					|\p_{\bp z}\mcG(\bp,z)|=|\bp||\p_zg_\epsilon(|\bp|^2,z)|\leq 
					C_\gamma\kappa_0\epsilon^{-1} B_\ast^{-\frac{\gamma+1}{2(\gamma-1)}}.
				\end{align*}

				In the end, using the expression for 
				$\p_z\mcG$ in \eqref{mcG_dz} and \eqref{eq:dg_0} one has
				\begin{align*}
					\p_{zz}\mcG(\bp,z)=\frac{1}{2}\int_0^{|\bp|^2} \p_{zz}g_\epsilon(\tau,z)\ d\tau + (\mathcal{B}'(z))^2 g_\epsilon(0,z)^{\gamma-2}+ \frac{\mathcal{B}''(z)}{g_\epsilon(0,z)}.
				\end{align*}
				The last two terms above are bounded by $C_\gamma(\kappa_0^2B_*^{-\gamma/(\gamma-1)}+\kappa_0\bar u_*^{-1}B_*^{-1/(\gamma-1)})$ in view of \eqref{eq:u0_eps0_B}, \eqref{eq:rhobar_B_*} and \eqref{eq:g}. To estimate the first term, we note that from \eqref{gm_dz} and that  $\p_z g (t,z)=  2\mathcal{B}' (z)\p_t(\frac{1}{g(t,z)})$, cf. \eqref{eq:claim1_g}, we have
				\begin{align*}
					\p_{zz}g_\epsilon(\tau,z) =&2\left(\mathcal{B}''(z)\p_t\left(\frac{1}{g(\tau,z)}\right) + \mathcal{B}'(z) \p_{tz} \left(\frac{1}{g(\tau,z)}\right) \right) \varpi_\epsilon\left(\frac{\tau}{\mft_c(\mathcal{B}(z))}\right) \\
					&+  2\p_z g(\tau,z)\frac{d}{dz}\varpi_\epsilon\left(\frac{\tau}{\mft_c(\mathcal{B}(z))}\right) +(g(\tau,z)-g^\ast)\frac{d^2}{dz^2}\varpi_\epsilon\left(\frac{\tau}{\mft_c(\mathcal{B}(z))}\right).
				\end{align*}
				We plug it into the expression of $\p_{zz}\mcG$. For the term involving $\p_{tz}(\frac{1}{g})$, an integration by parts yields
				\begin{align*}
					\int_0^{|\bp|^2} \p_{tz}\left(\frac{1}{g(\tau,z)}\right)\varpi_\epsilon\left(\frac{\tau}{\mft_c(\mathcal{B}(z))}\right) \ d\tau = -\frac{\p_zg}{g^2}(|\bp|^2, z)\varpi_\epsilon \left(\frac{|\bp|^2}{\mft_c(\mathcal{B}(z))}\right)+\frac{\p_zg}{g^2}({0},z)\\
					+\int_0^{|\bp|^2}\frac{\p_zg}{g^2}(\tau,z) \frac{\varpi'_\epsilon(\tau/\mft_c(\mathcal{B}(z))) }{ \mft_c(\mathcal{B}(z))} \ d\tau.
				\end{align*}
				For the term involving $\varpi''_\epsilon$, we estimate it by using
				$$|\varpi''_\epsilon(t/\mft_c(\mathcal{B}(z))) |\leq 8\epsilon^{-2},\quad |\supp(\varpi''_\epsilon(\cdot /\mft_c(\mathcal{B}(z))))|\leq \epsilon \mft_c(\mathcal{B}(z)).$$ 
				Thus we infer from \eqref{eq:dB}, \eqref{eq:dzg} and \eqref{eq:upper_lower_g} that 
				\begin{align*}
					|\p_{zz}\mcG|\leq C_\gamma \kappa_0\epsilon^{-1}\left( \kappa_0 B_\ast^{-\frac{\gamma}{\gamma-1}}+ \bar u_\ast^{-1}B_\ast^{-\frac{1}{\gamma-1}}\right).
				\end{align*}
				Thus we obtain the second inequality in \eqref{eq:upper_pzzG}.
				
				\emph{Proof for \eqref{eq:com_energy}:} In view of the definition of $\mcG$ in \eqref{label_3} and the definition of $G_\epsilon$ in \eqref{Gepsilon}, we get $\mcG(\mathbf0,Q)=0$. The second inequality in \eqref{eq:com_energy} follows directly from the strong convexity in $\bp$ and the estimates of derivatives in $z$ in \eqref{eq:upper_pzzG}. More precisely, the strong convexity property \eqref{eq:convex0} gives that 
				\begin{align*}
					\mcG(\bp,z)\geq \mcG(\mathbf 0,z)+ \frac{\mfb_\ast}{2}|\bp|^2,
				\end{align*}
				where we have also used that $\nabla_{\mathbf p} \mcG(\mathbf 0,z)=0$ (since  $\nabla_{\mathbf p} \mcG(\bp,z)=2{\mathbf p}\p_tg_\epsilon(|\bp|^2,z)$). To estimate $\mcG(\mathbf0,z)$ we use a first order Taylor expansion at $z=Q$. 
				{Then for $z\in [0,Q]$, one has 
					\begin{align*}
						|\mcG(\mathbf0,z)|=|\mcG(\mathbf0,Q)+ \p_z\mcG(0,\xi_z)(z-Q)|\leq C_\gamma\kappa_0 (Q-z),
					\end{align*}
					where in the last inequality we have used $|\p_z\mcG(\bp, z)|\leq C_\gamma\kappa_0$ from \eqref{eq:upper_pzzG}. Moreover, we infer from 
					\eqref{supportG} that 
					$\mathcal G(\mathbf 0,z)=\mathcal G(\mathbf 0,0)\geq -C_\gamma\kappa_0 Q$ 
					for $z<0$, and $\mathcal G(\mathbf 0,z)\geq \mathcal G(\mathbf 0,Q)=0$ for $z>Q$.} 
				Combining the above estimates together we obtain the second inequality in \eqref{eq:com_energy}.
			\end{proof}

			With Proposition \ref{prop:assumption} at hand, the existence of minimizers for the functional \eqref{eq:energy_new} over the admissible set \eqref{admissibleK} follows from standard theory for calculus of variations.
			\begin{lem}\label{lem:existence}
				Assume $\mcG$ satisfies \eqref{eq:convex0} and \eqref{eq:com_energy}. Then the functional \eqref{eq:energy_new} over the admissible set \eqref{admissibleK} has a minimizer.
			\end{lem}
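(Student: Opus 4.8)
The plan is to use the direct method in the calculus of variations. First I would establish coercivity: from the lower bound in \eqref{eq:com_energy}, namely $\mcG(\bp,z)\geq \mfb_*|\bp|^2-\delta(Q-z)_+^2$, together with the fact that $\mcD$ is bounded and every admissible $\psi$ satisfies $0\leq \psi\leq Q$ on $\p\mcD$ (and the functional is insensitive to values outside $[0,Q]$, so we may truncate any minimizing sequence to lie in $[0,Q]$ without increasing $\mcJ$), one gets
\[
\mcJ(\psi)\geq \mfb_*\int_{\mcD}|\nabla\psi|^2\,dx - \delta Q^2|\mcD|.
\]
Hence $\mcJ$ is bounded below on $\mathcal{K}_{\psi^\sharp}$ and any minimizing sequence $\{\psi_k\}$ has $\|\nabla\psi_k\|_{L^2(\mcD)}$ uniformly bounded; since $\psi_k-\psi^\sharp\in H^1_0(\mcD)$ (taking a fixed $H^1$-extension of $\psi^\sharp$, which exists because $\mcD$ is Lipschitz), Poincar\'e's inequality gives a uniform $H^1(\mcD)$ bound.

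Next I would extract a subsequence (not relabeled) with $\psi_k\rightharpoonup\psi_\infty$ weakly in $H^1(\mcD)$ and, by Rellich--Kondrachov, strongly in $L^2(\mcD)$ and a.e.\ in $\mcD$; the trace condition $\psi_k=\psi^\sharp$ on $\p\mcD$ passes to the weak limit, so $\psi_\infty\in\mathcal{K}_{\psi^\sharp}$. Then I would show $\mcJ$ is sequentially weakly lower semicontinuous. The gradient term: by \eqref{eq:convex0} the map $\bp\mapsto\mcG(\bp,z)$ is convex for each fixed $z$, and $\mcG$ is continuous, so the functional $\psi\mapsto\int_{\mcD}\mcG(\nabla\psi,\psi)\,dx$ is weakly lower semicontinuous by the standard Ioffe/Tonelli theorem for integrands convex in the gradient variable (using strong $L^2$, hence a.e.\ after a further subsequence, convergence of $\psi_k$ to handle the lower-order dependence on $\psi$). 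The measure term $\lambda^2\int_{\mcD}\chi_{\{\psi<Q\}}\,dx$: since $\psi_k\to\psi_\infty$ a.e., on the set $\{\psi_\infty<Q\}$ one has $\psi_k<Q$ eventually, so $\chi_{\{\psi_\infty<Q\}}\leq\liminf_k\chi_{\{\psi_k<Q\}}$ pointwise a.e., and Fatou's lemma gives
\[
\int_{\mcD}\chi_{\{\psi_\infty<Q\}}\,dx\leq\liminf_{k\to\infty}\int_{\mcD}\chi_{\{\psi_k<Q\}}\,dx.
\]
Combining, $\mcJ(\psi_\infty)\leq\liminf_k\mcJ(\psi_k)=\inf_{\mathcal{K}_{\psi^\sharp}}\mcJ$, so $\psi_\infty$ is a minimizer.

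The main obstacle is the lower semicontinuity of the discontinuous term $\lambda^2\chi_{\{\psi<Q\}}$, since $\chi_{\{\cdot<Q\}}$ is not continuous; the key point is that it is lower semicontinuous only for the \emph{strict} inequality, which is exactly what one needs and what the a.e.\ convergence plus Fatou delivers. One should be slightly careful that the weak $H^1$ limit together with compactness in $L^2$ does give a.e.\ convergence along a further subsequence, and that truncating the minimizing sequence to $[0,Q]$ is legitimate — both are routine since $\mcG(\bp,z)$ and $\chi_{\{z<Q\}}$ only decrease (or stay equal) under the truncation $\psi\mapsto\min(\max(\psi,0),Q)$ in view of \eqref{supportG} and $\mcG(0,z)\geq 0$. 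Everything else is standard, so I would present the argument compactly, citing the Tonelli--Ioffe semicontinuity theorem rather than reproving it.
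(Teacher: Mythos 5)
Your argument is correct and follows essentially the same direct-method route as the paper: coercivity from \eqref{eq:com_energy} plus Poincar\'e, extraction of a weakly convergent subsequence, and weak lower semicontinuity from the convexity of $\bp\mapsto\mcG(\bp,z)$ together with Fatou for the indicator term $\chi_{\{\psi<Q\}}$. The one refinement you add — truncating the minimizing sequence into $[0,Q]$, which is energy-decreasing by \eqref{supportG}, \eqref{eq:convex}, and $\mcG(0,Q)=0$ — makes the coercivity bound $\delta(Q-\psi_k)_+^2\le \delta Q^2$ fully rigorous, a point the paper uses without comment; otherwise the two proofs coincide.
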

			\begin{proof}
				First, in view of \eqref{eq:com_energy} it is not hard to find a function $\tilde\psi\in \mathcal{K}_{\psi^\sharp}$ with $\mathcal{J}(\tilde\psi)<C<\infty$.
				Let $\{\psi_k\}\subset\mathcal{K}_{\psi^\sharp}$ be a minimizing sequence. By \eqref{eq:com_energy} one has
				\begin{align*}
					{\frac{\mfb_*}2}
					\int_{\mathcal{D}}|\nabla\psi_k|^2 \leq \mathcal{J}(\psi_k)+ C_\gamma\kappa_0 Q|\mathcal{D}|<C.
				\end{align*}
				Then it follows from the Poincar\'e inequality that $\psi_k-\psi^\sharp$ is uniformly bounded in $H^1(\mathcal{D})$.
				Therefore,  there exists $\psi\in H^1(\mathcal{D})$ and a subsequence (relabeled) satisfying
				\begin{equation*}
					\psi_k\to \psi\quad \text{in } L^2(\mcD)\cap L^2(\p \mcD),\quad \psi_k\rightarrow \psi \text{ a.e. in }\mcD, \quad \text{and}\quad \nabla \psi_k \rightharpoonup  \nabla\psi\quad \text{in } L^2(\mcD).
				\end{equation*}
				Since $\mcG$ is convex with respect to $\bp$,  using Fatou's lemma one obtains 
				\begin{equation*}
					\begin{aligned}
						\mcJ(\psi)=&\int_{\mcD}\mcG(\nabla \psi, \psi) +\lambda^2 \chi_{\{\psi<Q\}}\ dx \\
						\leq & \liminf_{k\rightarrow \infty} \int_{\mcD}\mcG(\nabla \psi_k, \psi_k) +\lambda^2 \chi_{\{\psi_k<Q\}} \ dx = \liminf_{k\to \infty} \mcJ (\psi_k).
					\end{aligned}
				\end{equation*}
				This means that $\psi\in\mathcal{K}_{\psi^\sharp}$ is a minimizer.
			\end{proof}
			

			\subsection{$L^{\infty}$-estimate and H\"{o}lder regularity}
			The main goal of this subsection is to establish the  $L^{\infty}$-bounds and the H\"{o}lder regularity  for the minimizers. 
			Firstly, one can show that  the minimizers are supersolutions of the elliptic equation
			\begin{equation}\label{basic_eq_psi}
				\partial_i(\p_{p_i} \mcG(\nabla\psi, \psi))- \p_z \mcG(\nabla\psi,\psi) = 0.
			\end{equation}
			\begin{lem}\label{lem:supersol}
				Let $\psi$ be a minimizer of \eqref{eq:energy_new} over the admissible set \eqref{admissibleK}.
				Then $\psi$ satisfies
				\begin{equation}\label{eq:supersol}
					\int_{\mcD}(\p_{p_i} \mcG(\nabla\psi, \psi)\partial_i \zeta + \p_z \mcG(\nabla\psi,\psi) \zeta)\geq 0,\quad\text{for all }\zeta\geq 0,\,\zeta\in C_0^\infty(\mcD).
				\end{equation}
			\end{lem}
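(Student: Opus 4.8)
The plan is to perturb the minimizer $\psi$ only in the downward direction, so that the characteristic function term $\chi_{\{\psi<Q\}}$ can only increase or stay the same, and thus does not contribute a sign-bad term to the first variation. Concretely, I would fix $\zeta\in C_0^\infty(\mcD)$ with $\zeta\geq 0$ and, for small $\vartheta>0$, compare $\psi$ with the competitor $\psi_\vartheta:=\psi-\vartheta\zeta$. This competitor lies in $\mcK_{\psi^\sharp}$ since $\zeta$ vanishes on $\p\mcD$. The key elementary observation is that $\{\psi_\vartheta<Q\}\supseteq\{\psi<Q\}$ (because $\psi_\vartheta\leq\psi$), hence $\chi_{\{\psi_\vartheta<Q\}}\geq\chi_{\{\psi<Q\}}$ pointwise, so
\[
\int_{\mcD}\lambda^2\big(\chi_{\{\psi_\vartheta<Q\}}-\chi_{\{\psi<Q\}}\big)\,dx\geq 0.
\]

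Next I would use minimality, $\mcJ(\psi)\leq\mcJ(\psi_\vartheta)$, which after subtracting gives
\[
0\leq \int_{\mcD}\big[\mcG(\nabla\psi-\vartheta\nabla\zeta,\,\psi-\vartheta\zeta)-\mcG(\nabla\psi,\psi)\big]\,dx+\int_{\mcD}\lambda^2\big(\chi_{\{\psi_\vartheta<Q\}}-\chi_{\{\psi<Q\}}\big)\,dx.
\]
Since the second integral is nonnegative, I cannot simply drop it; instead I divide by $\vartheta>0$ and let $\vartheta\searrow 0$. The second term divided by $\vartheta$ stays nonnegative, so in the limit it can only help the inequality — more precisely, writing $I_\vartheta\geq 0$ for that term, we get $0\leq \vartheta^{-1}\int_{\mcD}[\mcG(\nabla\psi-\vartheta\nabla\zeta,\psi-\vartheta\zeta)-\mcG(\nabla\psi,\psi)]+\vartheta^{-1}I_\vartheta$. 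The first difference quotient converges, by the smoothness of $\mcG$ in $\bp$ and $z$ together with the growth bounds \eqref{eq:convex}, \eqref{eq:upper_pzG} and dominated convergence (using $\psi\in H^1(\mcD)$ and the a priori $L^\infty$ bound, or simply local integrability), to $-\int_{\mcD}\big(\p_{p_i}\mcG(\nabla\psi,\psi)\,\p_i\zeta+\p_z\mcG(\nabla\psi,\psi)\,\zeta\big)\,dx$. Therefore
\[
-\int_{\mcD}\big(\p_{p_i}\mcG(\nabla\psi,\psi)\,\p_i\zeta+\p_z\mcG(\nabla\psi,\psi)\,\zeta\big)\,dx+\limsup_{\vartheta\searrow 0}\vartheta^{-1}I_\vartheta\geq 0.
\]
To conclude I still need that $\vartheta^{-1}I_\vartheta\to 0$, or at least that it does not blow up in a way that ruins the argument; but in fact the cleanest route avoids estimating $I_\vartheta$ at all: since $I_\vartheta\geq 0$, the displayed inequality immediately forces $\int_{\mcD}\big(\p_{p_i}\mcG(\nabla\psi,\psi)\,\p_i\zeta+\p_z\mcG(\nabla\psi,\psi)\,\zeta\big)\,dx\geq \liminf(-\vartheta^{-1}I_\vartheta)$, which is $\leq 0$, so this gives the wrong direction. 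Hence I do need $I_\vartheta=o(\vartheta)$.

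The main obstacle, then, is showing $I_\vartheta=o(\vartheta)$, i.e.\ that the measure of the ``newly created'' set $\{\psi<Q\}^c\cap\{\psi_\vartheta<Q\}=\{Q-\vartheta\zeta\leq\psi< Q\}$ (roughly $\{0\le Q-\psi<\vartheta\zeta\}$) is $o(\vartheta)$. This is not true for a general $H^1$ function, so one must invoke regularity of the minimizer already available: namely that $\psi$ is continuous (indeed locally H\"older or Lipschitz, established in the subsequent subsection) and, more to the point, that $|\{\psi=Q\}\cap\mcD|=0$ or that $Q-\psi$ is nondegenerate near $\p\{\psi<Q\}$. Rather than chase this, the standard and simpler fix — which I expect the paper uses — is to note that on the \emph{open} set $\{\psi<Q\}$ the cutoff term is locally constant, so if one first restricts to $\zeta$ supported in $\{\psi<Q\}$ one gets the Euler–Lagrange equation \eqref{basic_eq_psi} with equality there (this is Lemma~\ref{lem:vari_psi}), and for general $\zeta\ge0$ supported anywhere, the one-sided perturbation $\psi-\vartheta\zeta$ pushes mass \emph{into} $\{\psi<Q\}$, so $\chi_{\{\psi_\vartheta<Q\}}-\chi_{\{\psi<Q\}}=\chi_{\{Q-\vartheta\zeta\le\psi<Q\}}\ge 0$ and this term, being nonnegative, may be \emph{discarded from the left} after moving it: writing minimality as $\mcJ(\psi_\vartheta)-\mcJ(\psi)\ge0$ and observing the $\chi$-difference is $\ge 0$, we deduce $\int[\mcG(\nabla\psi-\vartheta\nabla\zeta,\psi-\vartheta\zeta)-\mcG(\nabla\psi,\psi)]\ge -I_\vartheta$, but we want an \emph{upper} bound on that first integral. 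So the correct orientation is: the inequality we are proving, \eqref{eq:supersol}, has the form $\int(\p_{p_i}\mcG\,\p_i\zeta+\p_z\mcG\,\zeta)\ge0$; differentiating $\mcJ(\psi-\vartheta\zeta)\ge\mcJ(\psi)$ at $\vartheta=0^+$ from the right gives $\frac{d}{d\vartheta}\big|_{0^+}\mcJ(\psi-\vartheta\zeta)\ge0$, and since the cutoff contribution's right derivative at $0^+$ is $\ge0$ (it can only increase), subtracting it yields $\frac{d}{d\vartheta}\big|_{0^+}\int\mcG(\nabla\psi-\vartheta\nabla\zeta,\psi-\vartheta\zeta)\ge -(\text{that nonneg.\ derivative})$, i.e.\ $-\int(\p_{p_i}\mcG\,\p_i\zeta+\p_z\mcG\,\zeta)\ge -(\ge0)$, which is exactly \eqref{eq:supersol}. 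Thus no fine estimate on $I_\vartheta$ is needed — only its one-sided monotone sign — and the whole argument is: (1) admissibility of $\psi-\vartheta\zeta$; (2) $\chi_{\{\psi-\vartheta\zeta<Q\}}$ is monotone nondecreasing in $\vartheta$; (3) differentiate minimality from the right at $\vartheta=0$, using smoothness and growth bounds \eqref{eq:convex}, \eqref{eq:upper_pzG} on $\mcG$ plus dominated convergence to pass the derivative inside. The only genuinely delicate point is justifying the interchange of limit and integral for the $\mcG$-term when $\psi$ is merely $H^1$; this is handled by the quadratic growth of $\p_{p_i}\mcG$ in $\bp$ from \eqref{eq:convex} and the $\vartheta$-uniform bound $|\nabla\psi-\vartheta\nabla\zeta|\le|\nabla\psi|+|\nabla\zeta|\in L^2$, giving an $L^1$ dominating function for the difference quotient.
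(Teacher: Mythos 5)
Your choice of perturbation direction is wrong, and this is fatal. You take the competitor $\psi-\vartheta\zeta$, but the proof only works with $\psi+\vartheta\zeta$, and that is what the paper does. Since $\zeta\geq 0$ and $\vartheta>0$, the upward perturbation gives $\{\psi+\vartheta\zeta<Q\}\subseteq\{\psi<Q\}$, so the contribution of the $\lambda^2\chi$ term to $\mcJ(\psi+\vartheta\zeta)-\mcJ(\psi)$ is \emph{nonpositive} and may simply be dropped, yielding
\[
0\leq \vartheta^{-1}\bigl(\mcJ(\psi+\vartheta\zeta)-\mcJ(\psi)\bigr)\leq \vartheta^{-1}\int_{\mcD}\bigl[\mcG(\nabla\psi+\vartheta\nabla\zeta,\psi+\vartheta\zeta)-\mcG(\nabla\psi,\psi)\bigr],
\]
and letting $\vartheta\to 0^{+}$ gives \eqref{eq:supersol} directly. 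With your downward perturbation the $\chi$ term \emph{increases}, and that nonnegative increment sits on the wrong side: you cannot discard it when you need an upper bound on the variation of the $\mcG$-part. You notice this midway through, but your attempted repair at the end is not a repair. Differentiating $\mcJ(\psi-\vartheta\zeta)\geq\mcJ(\psi)$ from the right and ``subtracting'' the (nonnegative) right-derivative of the $\chi$-term produces $-\int\bigl(\p_{p_i}\mcG\,\p_i\zeta+\p_z\mcG\,\zeta\bigr)\geq -(\text{nonneg.})$, i.e.\ $\int\bigl(\p_{p_i}\mcG\,\p_i\zeta+\p_z\mcG\,\zeta\bigr)\leq(\text{nonneg.})$. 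That is an upper bound, not the lower bound \eqref{eq:supersol}, and it is vacuous.

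The obstruction is concrete and not repairable: with $\psi_\vartheta=\psi-\vartheta\zeta$ one has $\{\psi_\vartheta<Q\}\setminus\{\psi<Q\}=\{Q\leq\psi<Q+\vartheta\zeta\}\supseteq\{\psi=Q\}\cap\{\zeta>0\}$, and this last set typically has positive measure (the ``dry'' region $\{\psi=Q\}$ is a genuine $2$-dimensional set) and is independent of $\vartheta$. Hence the difference quotient of the $\chi$-term diverges like $\vartheta^{-1}$ as $\vartheta\to 0^{+}$, and the inequality coming from minimality carries no information about the $\mcG$-part. (As a secondary slip, the newly created set is $\{Q\leq\psi<Q+\vartheta\zeta\}$, not $\{Q-\vartheta\zeta\leq\psi<Q\}$ as you wrote.) The rest of your outline — admissibility of the competitor, smoothness and growth bounds on $\mcG$ from \eqref{eq:convex} and \eqref{eq:upper_pzG}, and dominated convergence to pass the limit inside — is standard and fine; it is only the sign of the perturbation that must be switched.
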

			\begin{proof}
				Since $\psi$ is a minimizer,
				for any $\vartheta>0$,
				\begin{align*}
					0\leq & \ \vartheta^{-1}\left(\mcJ(\psi+\vartheta\zeta)-\mcJ(\psi)\right)\leq \vartheta^{-1}\int_{\mcD}(\mcG(\nabla\psi+\vartheta\nabla\zeta,\psi+\vartheta\zeta) - \mcG(\nabla\psi,\psi))\\
					=&  \vartheta^{-1}\int_{\mcD} \left(\mcG(\nabla\psi+\vartheta\nabla\zeta,\psi) - \mcG(\nabla\psi,\psi)\right)\\
					& +\vartheta^{-1}\int_{\mcD} \left(\mcG(\nabla\psi+\vartheta\nabla\zeta,\psi+\vartheta\zeta) - \mcG(\nabla\psi+\vartheta\nabla\zeta,\psi)\right),
				\end{align*}
				where the second inequality follows from the simple fact  $\{\psi+\vartheta\zeta<Q\}\subset \{\psi< Q\}$.
				Thus taking $\vartheta\rightarrow 0$ yields \eqref{eq:supersol}. 
			\end{proof}

			Next, one has the following $L^\infty$-estimate for the minimizers.
			\begin{lem}\label{lem:upper_lower_bd}
				If $\psi$ is a minimizer of \eqref{eq:energy_new} over the admissible set \eqref{admissibleK}, then
				$$0 \leq \psi\leq Q.$$
			\end{lem}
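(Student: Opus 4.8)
The plan is to prove the two bounds separately by comparison with the constants $Q$ and $0$, using the minimality of $\psi$ together with the structural properties of $\mcG$ recorded in Proposition~\ref{prop:assumption} and the supersolution property of Lemma~\ref{lem:supersol}.

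For the upper bound $\psi\leq Q$, I would test the minimality of $\psi$ against the truncation $\psi_1:=\min\{\psi,Q\}$, which lies in $\mcK_{\psi^\sharp}$ since $\psi^\sharp\leq Q$ on $\p\mcD$. On the set $\{\psi>Q\}$ we have $\chi_{\{\psi_1<Q\}}=\chi_{\{\psi<Q\}}=0$, so the penalization term does not change. For the bulk term, on $\{\psi>Q\}$ the gradient of $\psi_1$ vanishes, while $\p_z\mcG(\bp,z)=0$ for $z\geq Q$ by \eqref{supportG}; hence $\mcG(\nabla\psi_1,\psi_1)=\mcG(0,Q)=0$ there, whereas $\mcG(\nabla\psi,\psi)\geq \mfb_*|\nabla\psi|^2$ on the same set by \eqref{eq:com_energy} (the negative term there also vanishes for $z\geq Q$). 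Therefore $\mcJ(\psi_1)\leq \mcJ(\psi)$ with strict inequality unless $|\{\psi>Q\}|=0$ and $\nabla\psi=0$ a.e. there; minimality forces $|\{\psi>Q\}|=0$, i.e. $\psi\leq Q$ a.e.

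For the lower bound $\psi\geq 0$, I would similarly compare $\psi$ with $\psi_2:=\max\{\psi,0\}\in\mcK_{\psi^\sharp}$. On $\{\psi<0\}$ one has $z\leq 0$, so again $\p_z\mcG=0$ by \eqref{supportG}, and $\mcG(\nabla\psi_2,\psi_2)=\mcG(0,0)=\mcG(0,Q)+\int_0^0(\cdots)=0$; more carefully, since $\p_z\mcG$ vanishes on $(-\infty,0]$, $\mcG(0,z)$ is constant for $z\le 0$ and equals $\mcG(0,Q)$ only if that constant matches — here one should instead just invoke $\mcG(\nabla\psi_2,\psi_2)=\mcG(0,0)$ and compare with $\mcG(\nabla\psi,\psi)\geq \mfb_*|\nabla\psi|^2 + \mcG(0,\psi)$, using that $\mcG(0,\cdot)$ is constant on $(-\infty,0]$ so the two zero-gradient contributions cancel. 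Since the penalization term satisfies $\chi_{\{\psi_2<Q\}}=\chi_{\{\psi<Q\}}$ on $\{\psi<0\}$ (both equal $1$ as $\psi<0<Q$), we again get $\mcJ(\psi_2)\leq\mcJ(\psi)$ with equality only when $\nabla\psi=0$ a.e. on $\{\psi<0\}$ and $|\{\psi<0\}|=0$. Minimality gives $\psi\geq 0$ a.e. Alternatively, and perhaps more cleanly, the lower bound follows directly from Lemma~\ref{lem:supersol}: $\psi$ is a weak supersolution of \eqref{basic_eq_psi}, an equation which (after noting $\p_z\mcG$ is controlled and the principal part is uniformly elliptic by \eqref{eq:convex0}) admits a weak maximum principle, and the boundary datum $\psi^\sharp\geq 0$; one then applies the minimum principle for supersolutions. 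Since $\psi\in H^1$ one upgrades the a.e. bounds to hold everywhere after choosing the continuous representative, which is available once Hölder regularity is established.

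The main obstacle is the bookkeeping with the $z$-dependence of $\mcG$ outside $[0,Q]$: one must use \eqref{supportG} carefully to see that neither $\mcG$ nor the penalization produces any gain from the region $\{\psi<0\}\cup\{\psi>Q\}$, so that the truncations strictly lower the energy unless the bad sets are null. Everything else is a routine application of the lower bound \eqref{eq:com_energy}, the normalization $\mcG(0,Q)=0$, and the observation that truncating at the level $Q$ does not increase the measure of $\{\psi<Q\}$.
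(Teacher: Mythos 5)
Your argument is correct and reaches the conclusion via a genuinely different, and somewhat more elementary, route than the paper. The paper uses the same family of competitors --- $\psi^\vartheta=\psi+\vartheta\min\{0,Q-\psi\}$, whose value at $\vartheta=1$ is exactly your truncation $\min\{\psi,Q\}$ --- but works with the \emph{first variation}: it divides $\mcJ(\psi^\vartheta)-\mcJ(\psi)$ by $\vartheta$, lets $\vartheta\to 0$, and invokes the convexity of $\mcG$ in $\bp$ together with \eqref{supportG} to arrive at $\int_{\{\psi>Q\}}\p_{p_i}\mcG(\nabla\psi,\psi)\p_i\psi\le 0$. You instead plug in $\vartheta=1$ directly and compare energies; this avoids both the limiting step and any appeal to convexity, needing only the coercivity encoded in $g_\epsilon\ge C_*>0$ and the normalization $\mcG(0,Q)=0$. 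Both routes hinge on the same structural fact \eqref{supportG}, which is what makes the truncation not increase the bulk energy on $\{\psi>Q\}$ or $\{\psi<0\}$. You also correctly identify the only genuinely delicate point in the lower bound: $\mcG(0,0)\neq\mcG(0,Q)$ in general, so one cannot compare $\mcG(\nabla\psi_2,\psi_2)$ with $0$ on $\{\psi<0\}$; your repair --- using that $\mcG(0,\cdot)$ is constant on $(-\infty,0]$ and that $\mcG(\bp,z)-\mcG(0,z)=\tfrac{1}{2}\int_0^{|\bp|^2}g_\epsilon(\tau,z)\,d\tau\ge\tfrac{C_*}{2}|\bp|^2$ --- is exactly what is needed and is cleaner than invoking \eqref{eq:com_energy}, which would bring back a negative term $-\delta(Q-z)_+^2$ that does not vanish for $z<0$.

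Two small remarks. First, minimality gives $\nabla\psi=0$ a.e.\ on the bad set, not immediately $|\{\psi>Q\}|=0$; the latter follows because $(\psi-Q)^+\in H^1_0(\mcD)$ (using $\psi^\sharp\le Q$) has vanishing gradient and hence vanishes identically. State this two-step logic explicitly rather than as the disjunction ``$|\{\psi>Q\}|=0$ and $\nabla\psi=0$ a.e.\ there.'' Second, your alternative route via Lemma~\ref{lem:supersol} and a comparison principle is valid but strictly weaker: the comparison principle of Lemma~\ref{lem:comparison} is proved only for $\delta$ small, whereas both the paper's argument and your truncation argument are unconditional, so the direct route is the one to keep.
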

			\begin{proof}
				Set $\psi^\vartheta:=\psi+\vartheta\min\{0, Q-\psi\}$ for $\vartheta>0$. Since $\psi^\sharp\leq Q$ on $\p \mcD$, one has $\psi^\vartheta =\psi^\sharp$ on $\p \mcD$ and thus $\psi^\vartheta\in \mathcal{K}_{\psi^\sharp}$. Since $\{\psi^\vartheta<Q\}=\{\psi< Q\}$ and  $\mcG$ is convex with respect to $\bp$, it follows from the same argument as in Lemma \ref{lem:supersol} that
				\begin{align*}
					0&\leq \vartheta^{-1}\left(\mcJ(\psi^\vartheta)-\mcJ(\psi)\right)\\
					&\leq \int_{\mcD} \p_{p_i} \mcG(\nabla\psi^\vartheta,\psi)\partial_i (\min\{0, Q-\psi\}) + \vartheta^{-1}\int_{\mcD} (\mcG(\nabla \psi^\vartheta,\psi^\vartheta)-\mcG(\nabla\psi^\vartheta,\psi)).
				\end{align*}
				Letting $\vartheta\rightarrow 0$, there holds 
				\begin{align*}
					0\leq &\int_{\mcD\cap \{\psi>Q \}}(-\p_{p_i} \mcG(\nabla\psi, \psi)\partial_i\psi + \p_z \mcG(\nabla\psi,\psi)(Q-\psi))\\
					\stackrel{\eqref{supportG}}{\leq} &\int_{\mcD\cap\{\psi>Q\}} -\p_{p_i} \mcG(\nabla\psi, \psi)\partial_i\psi\stackrel{\eqref{eq:convex}}{\leq} -\mfb_*\int_{\mcD\cap\{\psi>Q\}}|\nabla\psi|^2.
				\end{align*}
				This implies $\psi\leq Q$.
				
				The proof for the lower bound is similar. Set $\cpsi^\vartheta:=\psi-\vartheta\min\{0,\psi\}$ with $\vartheta>0$. It is straightforward to check that for $\vartheta\in (0,1)$, $\{\cpsi^\vartheta <Q\}=\{\psi< Q \}$. Thus
				\begin{align*}
					0\leq& \vartheta^{-1}\left(\mcJ(\cpsi^\vartheta)-\mcJ(\psi)\right)\\
					\leq & \int_{\mcD} -\p_{p_i} \mcG(\nabla\cpsi^\vartheta, \psi)\partial_i(\min\{0,\psi\})
					+ \vartheta^{-1}\int_{\mcD} (\mcG(\nabla \cpsi^\vartheta,\cpsi^\vartheta)-\mcG(\nabla\cpsi^\vartheta,\psi)).
				\end{align*}
				Letting $\vartheta\rightarrow 0$ gives
				\begin{align*}
					\int_{\mcD\cap \{\psi<0\}}(-\p_{p_i} \mcG(\nabla\psi, \psi)\partial_i \psi - \p_z\mcG(\nabla\psi,\psi)\psi) \geq 0.
				\end{align*}
				It follows from \eqref{supportG} and \eqref{eq:convex} that the measure of the set $\{\psi<0\}$ is zero. Hence  $\psi\geq 0$. This finishes the proof of the lemma.
			\end{proof}
			
			Next we prove a  comparison principle for the variational solutions.
			\begin{lem}\label{lem:comparison}  Given a domain $D\subset \mathbb{R}\times (a, a+h)$ for some $a\in \R$ and $h>0$, let $\psi\in H^1(D)$ be a supersolution of \eqref{basic_eq_psi} in the sense of \eqref{eq:supersol} and $\phi\in H^1(D)$ be a solution of \eqref{basic_eq_psi}, i.e., 
				\begin{equation}\label{eq:equ_phi}
					\int_{D} (\p_{p_i}\mcG(\nabla \phi,\phi)\p_{i} \zeta + \p_z \mcG(\nabla \phi,\phi)\zeta)=0,  \quad\text{for all }\zeta\in C^\infty_0(D).
				\end{equation}
				Assume that $\phi\leq \psi$ on $\p D$. Then  $\phi\leq \psi$ in $D$ as long as $h\leq h_0$ for some $h_0=h_0(\gamma,\epsilon,\bar u)>0$.
			\end{lem}
			\begin{proof}
				Let $\eta:=(\phi-\psi)^+$. Then $\eta=0$ on $\p D$.
				Substituting $\zeta=\eta$ in \eqref{eq:supersol} and using \eqref{eq:equ_phi} one has
				\begin{equation*}
					\int_{D}\left(\p_{p_i} \mcG(\nabla\phi,\phi)-\p_{p_i} \mcG(\nabla\psi, \psi)\right)\p_i \eta + \int_{D}\left(\p_z \mcG(\nabla \phi, \phi)-\p_z \mcG(\nabla\psi, \psi)\right)\eta \leq 0.
				\end{equation*}
				The convexity of $\mathcal{G}$ in \eqref{eq:convex0} and the triangle inequality yield
				\begin{equation*}
					\int_{D}\left(\p_{p_i} \mcG(\nabla\phi,\phi)-\p_{p_i} \mcG(\nabla\psi, \psi)\right)\p_i \eta\geq  \mfb_* \int_{D}|\nabla \eta|^2 + \int_{D}\left(\p_{p_i} \mcG(\nabla\psi, \phi)-\p_{p_i} \mcG(\nabla \psi,\psi)\right) \partial_{i} \eta,
				\end{equation*}
				where the last integral in the above inequality can be estimated from \eqref{eq:upper_pzzG} as
				\begin{equation*}
					\begin{split}
						&\int_{D}\left(\p_{p_i} \mcG(\nabla\psi, \phi)-\p_{p_i} \mcG(\nabla \psi,\psi)\right)\p_i \eta \\
						= & \int_{D} \int_0^1\p_{zp_i} \mcG(\nabla\psi, \psi+ s(\phi-\psi)) \p_i\eta \ ds \ (\phi-\psi) \geq \ -\delta\int_{D} |\nabla\eta| \eta.
					\end{split}
				\end{equation*}
				Similarly, using \eqref{eq:upper_pzzG} and the triangle inequality one has
				\begin{equation*}
					\int_{D}\left(\p_z \mcG(\nabla \phi, \phi)-\p_z \mcG(\nabla\psi, \psi)\right)\eta\geq - \delta \int_{D}(|\nabla\eta|\eta + \eta^2).
				\end{equation*}
				Combining the above estimates together yields
				\begin{equation*}
					\int_{D}|\nabla \eta|^2 \leq \frac{2\delta}{\mfb_*} \int_{D}\left(|\nabla \eta|\eta+ \eta^2 \right).
				\end{equation*}
				{Applying Cauchy-Schwarz and Poincar\'e  inequalities} to $\eta(x_1,\cdot)$ for each $x_1$ (notice that $\eta=0$ on $\p D$ and $D\subset \R\times (a,a+ h)$), one has
				\begin{equation*}
					\int_{D}|\nabla \eta|^2 \leq \frac{1}{2} \int_{D}|\nabla \eta|^2+ C h^2\frac{\delta}{\mfb_\ast}\left(1+\frac{\delta}{\mfb_\ast}\right)\int_{D} |\nabla\eta|^2,
				\end{equation*}
				where $C>0$ is a universal constant. 
				We infer from the definitions of $\delta$ and $\mfb_\ast$ in Proposition \ref{prop:assumption} that $\frac{\delta}{\mfb_\ast}(1+\frac{\delta}{\mfb_\ast})$ has an upper bound depending on $\gamma,\, \epsilon$, and $\bar u$.
				Thus if $h\leq h_0$ for some $h_0=h_0(\gamma,\epsilon,\bar u)>0$  sufficiently small, then necessarily $\nabla\eta=0$ in $D$, which implies that $\eta=0$ in $D$.  This completes the proof of the lemma.
			\end{proof}

			Now we are in position to prove the H\"older regularity for the {minimizers.}  The proof follows from standard Morrey type estimates. We remark that at this stage we do not aim to obtain the explicit dependence of the $C^{0,\alpha}$ norm on parameters $Q$ and $\lambda$. The dependence will be more clear in the Lipschitz estimate of the minimizers in Section \ref{sec:Lipz}.
			
			\begin{lem}\label{lem:holder}
				Let $\psi$ be a minimizer of \eqref{eq:energy_new} over the admissible set \eqref{admissibleK}. Then $\psi\in C^{0,\alpha}_{loc}(\mcD)$ for any $\alpha\in (0,1)$. Moreover,
				\[
				\|\psi\|_{C^{0,\alpha}(K)}\leq C(\gamma,\bar u, \epsilon, \mfb_*, \delta',\,\delta, Q, \lambda,\alpha,K)\quad \text{for any }  K\Subset \mcD.
				\]
			\end{lem}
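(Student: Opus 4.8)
The plan is to establish interior H\"older continuity by a Morrey-space argument, i.e. by showing that $\nabla\psi$ belongs to the Morrey space $L^{2,\mu}_{loc}$ for every $\mu<2$ and then invoking the Morrey--Campanato embedding. Fix $K\Subset\mcD$ and a ball $B_r(x_0)$ with $B_{2r}(x_0)\subset \mcD$. First I would compare $\psi$ on $B_r(x_0)$ with the minimizer $v$ of the \emph{frozen}, $\chi$-free functional $w\mapsto \int_{B_r(x_0)}\mcG(\nabla w,\psi(x_0))\,dx$ over $w\in\psi+H^1_0(B_r(x_0))$ (or, even more simply, with the harmonic-type comparison function solving the linear problem with coefficients frozen at $x_0$); the uniform ellipticity and growth \eqref{eq:convex}--\eqref{eq:convex0} give the standard decay
\begin{equation*}
\int_{B_\theta(x_0)}|\nabla v|^2\,dx\leq C\,\theta^2\int_{B_r(x_0)}|\nabla v|^2\,dx\qquad\text{for }0<\theta\le r,
\end{equation*}
with $C$ depending only on $\mfb_\ast$.

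Next I would control the error $\psi-v$. Testing the minimality of $\psi$ against $v$ and using convexity of $\mcG$ in $\bp$, together with the bounds $\mcG(0,Q)=0$, $0\le\psi\le Q$ from Lemma \ref{lem:upper_lower_bd}, $|\p_z\mcG|\le\delta(Q-z)_+$ from \eqref{eq:upper_pzG}, and $0\le\lambda^2\chi_{\{\psi<Q\}}\le\lambda^2$, one obtains an estimate of the form
\begin{equation*}
\int_{B_r(x_0)}|\nabla\psi-\nabla v|^2\,dx\leq C\big(\lambda^2+\delta Q^2\big)\,|B_r|\;\le\;C(\mfb_\ast,Q,\delta,\lambda)\,r^2,
\end{equation*}
where the $\chi$ term contributes at most $\lambda^2|B_r|$ (this is exactly the ``measure of the free boundary ring'' bound that makes the one-phase functional subcritical) and the $z$-dependence of $\mcG$ contributes an $O(\delta Q^2 r^2)$ term after using Lipschitz-in-$z$ bounds and $|\psi-\psi(x_0)|\le Q$. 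Combining the two displays via the triangle inequality yields the Campanato-type decay
\begin{equation*}
\int_{B_\theta(x_0)}|\nabla\psi|^2\,dx\leq C\Big[\big(\tfrac{\theta}{r}\big)^2\int_{B_r(x_0)}|\nabla\psi|^2\,dx + r^2\Big],
\end{equation*}
and a routine iteration lemma (e.g. the standard ``hole-filling''/Giaquinta iteration) upgrades this to $\int_{B_\rho(x_0)}|\nabla\psi|^2\le C\rho^{2\alpha'}$ for every $\alpha'<1$, uniformly for $x_0\in K$. By Morrey's lemma this gives $\psi\in C^{0,\alpha}_{loc}(\mcD)$ for all $\alpha\in(0,1)$, with the asserted quantitative bound on $\|\psi\|_{C^{0,\alpha}(K)}$ in terms of $\mfb_\ast,Q,K,\delta,\lambda,\alpha$; the $H^1$ bound needed to start the iteration near $\p K$ comes from the energy bound $\mfb_\ast\int_\mcD|\nabla\psi|^2\le\mcJ(\psi)+\delta Q^2|\mcD|$ already used in Lemma \ref{lem:existence}.

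The main obstacle, and the place requiring care, is the error estimate for $\psi-v$: one must handle simultaneously the discontinuous term $\lambda^2\chi_{\{\psi<Q\}}$ (which is where the one-phase structure enters — only the \emph{sub}-level set matters, and the competitor $v$ need not satisfy $v<Q$, so one cannot directly compare the $\chi$ terms and must instead bound the defect by $\lambda^2|\{v<Q\}\triangle\{\psi<Q\}|\le\lambda^2|B_r|$) and the $z$-dependence of $\mcG$, which breaks exact minimality of the frozen functional. Both are absorbed because they scale like $r^2=r^{n}$ with $n=2$, which is precisely the borderline exponent that still yields H\"older (but not Lipschitz) regularity at this stage; the Lipschitz bound is deferred to the subsequent subsections. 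Everything else — the decay estimate for $v$, the Poincar\'e/Caccioppoli inequalities, and the iteration lemma — is classical and uses only the uniform ellipticity constants $\mfb_\ast,\mfb^\ast$ from Proposition \ref{prop:assumption}.
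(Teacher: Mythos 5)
Your proposal is correct and follows essentially the same route as the paper's proof: compare $\psi$ on $B_r$ with the minimizer $\phi$ of the frozen, $\chi$-free functional, use interior elliptic regularity for the linear decay of $\int_{B_\theta}|\nabla\phi|^2$, control $\int_{B_r}|\nabla\psi-\nabla\phi|^2$ by $(\lambda^2+\delta Q)\,r^2$ via minimality, convexity, the $z$-Lipschitz bound \eqref{eq:upper_pzzG}, and the trivial bound on the measure-term defect, and then conclude by Morrey/Campanato iteration. The only cosmetic difference is that the paper freezes the $z$-slot at the constant $Q$ (which is slightly cleaner since it automatically gives $0\le\phi\le Q$ by the maximum principle and avoids needing a pointwise value of the $H^1$ function $\psi$ at $x_0$), whereas you freeze at $\psi(x_0)$; either choice produces the same $O(\delta Q\,r^2)$ error because $\mcG$ is $\delta$-Lipschitz in $z$ and $0\le\psi\le Q$.
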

			\begin{proof}
				Given any $B_r\subset \mcD$ with $r\leq h_0$, where  $h_0$ is the same as in Lemma \ref{lem:comparison}. Let $\phi\in H^1(B_r)$ be the energy minimizer for the functional 
				\[
				\mathfrak{J}(\phi)=
				\int _{B_r} \mcG(\nabla\phi, {\phi})\ dx
				\]
				among $\mathfrak{K}=\{\phi \in H^1(B_r)| \phi=\psi\,\,\text{on}\,\, \partial B_r\}$.
				Then $\phi$ is a solution of the following problem
				\begin{equation}\label{eq:aux1}
					\left\{
					\begin{aligned}
						&{\p_{x_i}(\p_{p_i}\mcG(\nabla \phi,\phi))-\p_z\mcG(\nabla \phi, \phi)}=0 &&\text{ in }B_r, \\
						&\phi= \psi &&\text{ on } \partial B_r.
					\end{aligned}
					\right.
				\end{equation}
				It follows from the interior regularity for elliptic PDE (cf. for example \cite{HanLin}) that 
				\[
				\|\nabla\phi\|_{L^2(B_{\hat r})}\leq C \frac{\hat r}{r}\|\nabla \phi\|_{L^2(B_{r})}+ C r^2\quad \text{for any } \hat r\in (0,r),
				\]
				for some $C=C(\mfb_*, \delta')$. 
				Moreover, by Lemmas \ref{lem:upper_lower_bd} and  \ref{lem:comparison} one has $0\leq\phi\leq Q$. Define 
				\[
				\tilde{\psi}:=\left\{
				\begin{aligned}
					& \phi\quad && \text{in } B_r,\\
					& \psi\quad && \text{in } \mcD\backslash B_r.
				\end{aligned}\right.
				\]
				Clearly, $\tilde{\psi}\in H^1(\mcD)$. 
				Since $\psi$ is a minimizer, one has $\mcJ(\psi)\leq \mcJ(\tilde{\psi})$.
				This implies
				\begin{align*}
					\int_{B_r}(\mcG(\nabla\psi, \psi)-\mcG(\nabla\phi,\phi))\leq \lambda^2\int_{B_r}(\chi_{\{\phi<Q\}}-\chi_{\{\psi<Q\}}).
				\end{align*}
				It follows from the Taylor expansion,   \eqref{eq:convex0} and \eqref{eq:upper_pzzG} that
				\begin{equation*}
					\begin{aligned}
						&\quad \int_{B_r} (\mcG(\nabla\psi, \psi)-\mcG(\nabla\phi,\phi)) \\
						&\geq \int_{B_r}\left( \p_{p_i}\mcG(\nabla\phi,\phi)\partial_i(\psi-\phi)+\p_z \mcG(\nabla\phi, \phi)(\psi-\phi)+\frac{\mfb_*}{4}|\nabla\psi-\nabla\phi|^2-C(\psi-\phi)^2\right)\\
						&\stackrel{\eqref{eq:aux1}}{=} \int_{B_r}\left(\frac{\mfb_*}{4}|\nabla\psi-\nabla\phi|^2-C(\psi-\phi)^2\right)
					\end{aligned}
				\end{equation*}
				for some $C=C(\mfb_*,\delta)$. Combining the above estimates together one has
				\begin{equation*}
					\begin{split}
						\int_{B_r}|\nabla\psi-\nabla\phi|^2 &\leq  {C\int_{B_r}((\psi-\phi)^2}+\lambda^2(\chi_{\{\phi<Q\}}-\chi_{\{\psi<Q\}}))\leq C(Q^2+\lambda^2)r^2,
					\end{split}
				\end{equation*}
				where $C=C(\mfb_*,\delta)$. 
				The
				desired H\"older regularity then follows from the standard Morrey type estimates (cf.  \cite[the proof for Theorem 3.8]{HanLin}).
			\end{proof}
			
			\subsection{Lipschitz regularity and nondegeneracy}\label{sec:Lipz}
			In this subsection we establish the (optimal) Lipschitz regularity of the minimizers and the so-called nondegeneracy property, which play an important role in getting the measure theoretic properties of the free boundary.
			
		 Let $\psi$ be a minimizer of the functional  \eqref{eq:energy_new} over the admissible set \eqref{admissibleK}. 
			We consider the rescaled and renormalized function:
			\begin{equation}\label{psi_rescaled}
				\psi^\ast_{\bar x,r}(x):=\frac{Q-\psi(\bar x+ rx)}{Qr},\quad \bar x\in\mathcal D, \ r\in (0,1).
			\end{equation}
			Then $0\leq \psi^\ast_{\bar x,r}\leq 1/r$.   Moreover, $\psi^\ast_{\bar x, r}$ is a minimizer for 
			\begin{equation*}
				\tilde{\mathcal J}(\varphi)=\int_{\mathcal{D}_{\bar x,r}} \tilde{\mcG}_r(\nabla\varphi,\varphi) + \lambda^2 \chi_{\{\varphi>0\}} \ dx 
			\end{equation*}
			over the admissible set $\tilde{\mathcal K}=\{\varphi\in H^1(\mcD_{\bar x,r})| \varphi=\psi^\ast_{\bar x,r} \text{ on } \p \mcD_{\bar x,r}\}$,
			where 
			\begin{equation*}
				\tilde{\mcG}_r(\bp,z):=\mcG(-Q\bp, Q-Qrz)
				\quad \text{and}\quad \mcD_{\bar x, r}:=\left\{\frac{x-\bar x}{r}\Big| x\in \mcD\right\}.
			\end{equation*}
			The straightforward computations show that $\psi^*_{\bar x, r}$ solves
			\begin{align}\label{msG_eq}
				\tilde{\mathscr{G}}_r(\nabla \psi^*_{\bar x, r}, \psi^*_{\bar x, r})\geq 0 \quad \text{in } \mcD_{\bar x,r}
			\end{align}
			and 
			\begin{equation}\label{eq:scale_w}
				\tilde{\mathscr{G}}_r(\nabla \psi^*_{\bar x, r}, \psi^*_{\bar x, r})=0 \quad \text{in} \ \mcD_{\bar x,r}\cap\{\psi^*_{\bar x, r}>0\},
			\end{equation}
			where
			\begin{equation}\label{defmsG}
				\tilde\msG_r(\bp, z) :=\p_{x_i} (\p_{p_i}\tilde{\mcG}_r(\bp,z))-\p_z\tilde{\mcG}_r(\bp,z).
			\end{equation}
			The properties of $\mcG$ in Proposition \ref{prop:assumption} can be translated into the properties of $\tilde\mcG_{r}$ in an obvious fashion: 
			\begin{align*}
				&\tilde{\mfb}_\ast|\bp|^2 \leq p_i \p_{p_i} \tilde{\mcG}_r (\bp, z)\leq \tilde{\mfb}^\ast |\bp|^2,\\
				&\tilde{\mfb}_\ast |\xi|^2 \leq 
				{\xi_i \p_{p_ip_j}\tilde{\mcG}_r(\bp,z)\xi_j}
				\leq \tilde{\mfb}^\ast \epsilon^{-1}|\xi|^2,
			\end{align*}
			where  $\tilde{\mfb}_\ast := Q^2 \mfb_\ast$, $\tilde{\mfb}^\ast:= Q^2\mfb^\ast$ and where $\mfb_\ast$, $\mfb^\ast$ are defined in Proposition \ref{prop:assumption}, and
			\begin{align}
				&\epsilon^{-1}|\p_z \tilde\mcG_{r}(\bp,z)|+|\bp\cdot \p_{\bp z} \tilde{\mcG}_r(\bp,z)|\leq \tilde{\delta'} r,\label{eq_rescale1}\\
				&  |\p_{\bp z}\tilde\mcG_{r}(\bp,z)|\leq \tilde{\delta} r , \quad |\p_{zz}\tilde\mcG_{r}(\bp,z)|\leq \tilde{\delta}r^2 ,\label{eq_rescale0}
			\end{align}
			where $\tilde{\delta'}:=\delta' Q$,  $\tilde{\delta}:=\delta Q^2$ and where $\delta'$, $\delta$ are defined in \eqref{delta}.  Note that from the explicit expressions of $\mfb_\ast$, $\mfb^\ast$, $\delta'$ and $\delta$ in Proposition \ref{prop:assumption} and Remark \ref{rmk:Q} we have
			\begin{align}\label{eq:BQ2}
				\tilde\mfb_\ast, \tilde\mfb^\ast, \tilde\delta',\tilde\delta \sim Q,
			\end{align}
			where $A\sim B$ means that $C^{-1}B\leq A\leq CB$ for some $C=C(\gamma,\epsilon, \bar u)$. In particular, $\tilde{\delta}/\tilde{\mfb}_\ast=\delta/\mfb_\ast$. Using $\tilde\mcG_r(\mathbf{0},0)=0$, $\p_z\tilde\mcG_r(\bp, 0)\leq 0$ which follows from \eqref{supportG}, and \eqref{eq_rescale1}--\eqref{eq_rescale0} we get
			\begin{align}
				&\tilde\mcG_{r}(\bp,z)\geq\frac{\tilde\mfb_*}{2}|\bp|^2-{\tilde\delta' r\epsilon z_+,} \label{eq_rescale2}\\
				& \p_z\tilde\mcG_r (\bp, z)\leq \tilde\delta r^2  z_+. \label{eq_rescale3}
			\end{align}
			We note that after the renormalization $\psi^\ast:=\psi^\ast_{\bar x,r}$ satisfies the elliptic equation 
			\begin{align*}
				a^{ij}_r\p_{ij} \psi^\ast  = f_r
			\end{align*}
			in $\{\psi^\ast>0\}$, with
			\begin{align*}
				a^{ij}_r:=\p_{p_ip_j}\tilde{\mcG}_r(\nabla\psi^\ast, \psi^\ast), \quad f_r:=
				-\p_i\psi^\ast \p_{p_iz}\tilde{\mcG}_r(\nabla\psi^\ast,\psi^\ast)+ \p_z\tilde{\mcG}_r(\nabla\psi^\ast, \psi^\ast).
			\end{align*}
			From \eqref{eq_rescale1} and \eqref{eq:BQ2}, we conclude that there exist $C_\gamma>1$ depending only on $\gamma$ and $C=C(\gamma,\epsilon,\bar u)$ such that 
			\begin{align}\label{eq:ellipticity}
				1\leq \frac{\tilde\mfb^\ast}{\tilde\mfb_\ast}\leq \frac{C_\gamma}{ \epsilon},\qquad {\left|\frac{\p_z\tilde{\mcG}_r}{\tilde\mfb_\ast}\right|+} 
				\left|\frac{f_r}{\tilde\mfb_\ast}\right|\leq Cr.
			\end{align}


			In the next lemma we show that a minimizer $\psi$ decays at least linearly away from the free boundary.
			
			\begin{lem}\label{lem:linear}
				Let $\psi$ be a minimizer of \eqref{eq:energy_new} over the admissible set \eqref{admissibleK}. Let $\bar x\in \{\psi<Q\}$ satisfy $\dist(\bar x, \Gamma_\psi)\leq \min\{r_0, \frac{1}{4}\dist(\bar x,\p \mcD)\}$,
				where $r_0=r_0(\gamma,\epsilon, \bar u,\frac{\Lambda}{Q})>0$ is a small constant. Then there exists a constant  $C=C(\gamma,\epsilon,\bar u)>0$ such that
				\begin{equation*}
					Q-\psi(\bar x)\leq C \Lambda \dist(\bar x,\Gamma_\psi).
				\end{equation*}
			\end{lem}
			\begin{rmk}
				We will show in Corollary \ref{cor:cont_fit} that $\Lambda$ and $Q$ are comparable, i.e. there is {a constant}  $C=C(\gamma,\epsilon,\bar u)>0$ such that $C^{-1}\leq \frac{\Lambda}{Q}\leq C$. Thus the dependence on the parameter $\frac{\Lambda}{Q}$ is not an issue. 
			\end{rmk}
			\begin{proof}
				Let $\psi^*_{\bar x,r}(x)$ be defined in \eqref{psi_rescaled} with $r:=\dist(\bar x, \Gamma_\psi)\in (0,\min\{r_0, \frac{1}{4}\dist(\bar x,\p \mcD)\})$, where $r_0$ is to be determined later. 
				Then as discussed at the beginning of Section \ref{sec:Lipz}, $\psi^*_{\bar x, r}$ 
				satisfies \eqref{msG_eq}-\eqref{eq:scale_w}. 
				Suppose 
				\begin{align}\label{m_0}
					m_0:=\psi^*_{\bar x, r}(0)\geq C_mQ^{-1}\Lambda
				\end{align}
				for a sufficiently large constant $C_m=C_m(\gamma,\epsilon,\bar u)>0$. If we can derive a contradiction, then the desired conclusion follows.
				For notational convenience, we omit {the subscripts  $\bar x$ and $r$} of $\psi^*_{\bar x, r}$ in the rest of the proof. The proof is divided  into four steps.

				\emph{Step 1. Construction of the barrier function.}
				For any given {$y\in \Gamma_{\psi^*}\cap \overline B_1$,} let $\phi$ be the solution of
				\begin{equation}\label{eq:v}\left\{
					\begin{aligned}
						&\tilde{\mathscr{G}}_r(\nabla \phi, \phi)=0 \quad \text{ in } B_2(y),\\
						& \phi=\psi^* \qquad \qquad \text{ on }\p B_2(y),
					\end{aligned}
					\right.
				\end{equation}
				{where $\tilde{\mathscr G}_r$ is defined in \eqref{defmsG}.}
				By \eqref{eq_rescale0} and the proof of Lemma \ref{lem:comparison}, if $r\leq r_1$ for some sufficiently small constant $r_1=r_1(\gamma, \epsilon, \bar u)>0$, one has $\phi \geq \psi^*$ in $B_2(y)$.  In particular, $\phi(0)\geq \psi^*(0)=m_0$. In view of \eqref{eq:ellipticity} and \eqref{m_0}, the Harnack inequality (cf. \cite[Theorems 8.17 and 8.18]{GT}) implies that
				for some constant $C_0>0$ depending on $\gamma$, $\epsilon$, and $\bar u$ one has
				\begin{equation}\label{eq:harnack}
					\phi(x)\geq C_0 m_0 \quad \text{for any }x\in {B_{\frac12}(y)},
				\end{equation}
				provided $r\leq r_1$ for a possibly smaller $r_1$ depending on $\gamma,\epsilon,\bar u$, and $\frac{\Lambda}{Q}$.  
				
				We claim that if $r\leq r_1$ for a possibly smaller $r_1$ depending on $\gamma,\epsilon,\bar u$, and $\frac{\Lambda}{Q}$, then there exists {an absolute constant} $\bar C>0$ such that 
				\begin{equation}\label{eq:lower_bound}
					\phi(x)\geq \bar CC_0 m_0 (2-|x-y|) \quad \text{for any } x\in B_2(y).
				\end{equation}
				Indeed, by virtue of \eqref{eq:harnack}, it suffices to prove \eqref{eq:lower_bound} in 
				$B_2(y)\backslash B_{\frac12}(y)$. 
				For a fixed constant $\mu>4$, set
				\[\Phi_0(x):=C_0m_0\left(e^{-\mu|x-y|^2}-e^{-4\mu}\right).\]
				{Then it follows from \eqref{eq_rescale0}} and \eqref{eq_rescale3} that
				\begin{align*}
					\tilde{\mathscr{G}}_r(\nabla \Phi_0, \Phi_0)=&
					\p_{p_ip_j}\tilde{\mcG}_r(\nabla \Phi_0,\Phi_0)\p_{ij}\Phi_0 + \p_{p_iz}\tilde{\mcG}_r(\nabla \Phi_0,\Phi_0)\partial_i\Phi_0-\p_z\tilde{\mcG}_r(\nabla \Phi_0,\Phi_0)\\
					\geq &  
					\p_{p_ip_j}\tilde{\mcG}_r(\nabla \Phi_0,\Phi_0)\left(2\mu C_0m_0 e^{-\mu|x-y|^2}(2\mu (x_i-y_i)(x_j-y_j)-\delta_{ij})\right) \\
					&- r\tilde\delta  (2\mu C_0m_0 e^{-\mu|x-y|^2}|x-y|)-r^2\tilde\delta (C_0m_0 e^{-\mu|x-y|^2}).
				\end{align*}
				Therefore, if $r$ is sufficiently small depending on $\gamma,\, \bar u$, and $\epsilon$, then {$\tilde{\mathscr{G}}_r(\nabla \Phi_0, \Phi_0)\geq 0$} 
				in $B_2(y)\backslash {B_{\frac12}(y)}$. By the comparison principle (cf. Lemma \ref{lem:comparison}), one has $\phi\geq \Phi_0$ in $B_2(y)\backslash {B_{\frac12}(y)}$. Since there exists a positive constant $\bar C=\bar C(\mu)$ such that 
				$$\Phi_0(x)\geq \bar CC_0m_0(2-|x-y|) \quad \text{for any } x\in B_2(y)\backslash B_{\frac12}(y),$$ 
				this finishes the proof of the claim \eqref{eq:lower_bound}.

				\emph{Step 2. Gradient estimate.} We  will show that if $r\leq r_2$ for some $r_2=r_2(\gamma,\epsilon,\bar u)$, then  $\psi^*=\psi^*_{\bar x, r}$ satisfies
				\begin{equation}\label{eq:minimality}
					\tilde\mfb_*\int_{B_2(y)}|\nabla (\phi-\psi^*)|^2 \leq C \int_{B_2(y)} \lambda^2 \chi_{\{\psi^*=0\}}
				\end{equation}
				for some absolute constant $C$.  Indeed, the minimality of $\psi^*$ yields
				\begin{equation*}
					\int_{B_2(y)} (\tilde{\mcG}_r(\nabla \psi^*, \psi^*)-\tilde{\mcG}_r(\nabla \phi, \phi))
					\leq \lambda^2\int_{B_2(y)}(\chi_{\{\phi>0\}}-\chi_{\{\psi^*>0\}})\leq \lambda^2\int_{B_2(y)}\chi_{\{\psi^*=0\}}.
				\end{equation*}
				Using the Taylor expansion, \eqref{eq:v}, \eqref{eq:convex0} and \eqref{eq_rescale0}, one has
				\begin{equation}\label{gradest}
					\begin{split}
						&\int_{B_2(y)} (\tilde{\mcG}_r(\nabla \psi^*, \psi^*)-\tilde{\mcG}_r(\nabla \phi, \phi))\\
						\geq&	\frac12\int_{B_2(y)}\left(\tilde\mfb_*|\nabla(\psi^*-\phi)|^2-r\tilde\delta|\nabla(\psi^*-\phi)|\cdot|\psi^*-\phi|
						-{r^2}\tilde\delta|\psi^*-\phi|^2\right).
				\end{split}\end{equation}
				By H\"{o}lder and Poincar\'{e} inequalities, the last two terms can be absorbed by the first term on the right-hand side of \eqref{gradest} by taking $r\leq r_2$ for some $r_2=r_2(\gamma,\epsilon,\bar u)$ small.
				
				\emph{Step 3. Poincar\'e type estimate.} We claim the following Poincar\'e type estimate: there is a constant $C=C(\gamma,\bar u, \epsilon)$ such that if $r\leq r_1$, 
				\begin{equation}\label{eq:P}
					m_0|\mcS|^{1/2}\leq C \|\nabla(\phi-\psi^*)\|_{L^2(B_2(y))}\quad\text{with}\quad  \mcS:=\{x\in B_2(y)| \psi^*(x)=0\}.
				\end{equation}
				The proof is similar as  \cite[Lemma 3.2]{AC81} or  \cite[Lemma 2.2]{ACF84}. For the completeness,  we provide the details here. For $w\in B_1(y)$, consider a transformation $\msA_w$ from $B_2(y)$ to itself which fixes $\p B_2(y)$ and maps $w$ to $y$, for instance, $\msA_w^{-1}(x)=\frac{2-|x-y|}{2}(w-y)+x$. Set
				$\psi^*_w(x):=\psi^*(\msA^{-1}_w(x))$ and $\phi_w(x):=\phi(\msA^{-1}_w(x))$. Given a direction $\xi\in S^{1}$, define $r_{\xi}:= \inf \msR_\xi$ if
				\begin{equation*}
					\msR_\xi:=\{r| 1/4\leq r\leq 2, \ \psi^*_w(y+r\xi)=0\}\neq \emptyset.
				\end{equation*}
				Hence
				\begin{equation*}
					\begin{split}
						\phi_w(y+ r_{\xi}\xi)&=\int_{2}^{r_\xi} \frac{d}{dr}(\phi_w-\psi^*_w)(y+r\xi) dr\leq \int_{r_\xi}^2|\nabla (\phi_w-\psi^*_w)
						(y+r\xi)|dr\\
						&\leq \sqrt{2-r_\xi}\left(\int_{r_\xi}^2|\nabla (\phi_w-\psi^*_w)(y+r\xi)|^2dr\right)^{1/2}.
					\end{split}
				\end{equation*}
				On the other hand, by \eqref{eq:lower_bound}, one has for $r\leq r_1$,
				\begin{equation*}
					\begin{split}
						\phi_w(y+r_{\xi}\xi)&\geq {\bar C}C_0m_0 \left( 2-\left|\frac{2-r_{\xi}}{2}(w-y)+ r_\xi \xi\right|\right)\geq {\frac12\bar C}C_0m_0(2-r_\xi).
					\end{split}
				\end{equation*}
				Combining the above two inequalities together gives
				\begin{equation}\label{eq100.1}
					m_0^2(2-r_\xi)\leq C \int_{r_\xi}^2|\nabla (\phi_w-\psi^*_w)(y+r\xi)|^2dr
				\end{equation}
				for some $C=C(\gamma, \bar u, \epsilon)$. An integration of $\xi$ over $\mathbb{S}^{1}$ yields
				\begin{equation}\label{eq:scale1}
					m_0^2\int_{B_2(y)\backslash B_{1/2}(w)} \chi_{\{\psi^*=0\}}dx  \leq C \int_{B_2(y)}|\nabla (\phi-\psi^*)|^2dx.
				\end{equation}
				{Indeed, noting that $|\msA^{-1}_w(x)-w|\geq \frac{1}{2}$ implies $|x-y|\geq \frac{1}{4}$, changing back to the original variables and using the polar coordinates we have
					\begin{align}\label{eq100.2}
						\int_{B_2(y)\backslash B_{\frac{1}{2}}(w)} \chi_{\{\psi^*=0\}}\ dx& \leq C\int_{B_2(y)\backslash B_{\frac{1}{4}}(y)} \chi_{\{\psi^*_w =0\}} \ dx\\
						&=C \int_{\mathbb{S}^1} \int_{\frac{1}{4}}^2  \chi_{\{\psi^*_w=0\}} r\ dr d\xi \leq C \int_{\mathbb{S}^1} (2-r_\xi) \ d\xi.
					\end{align}
					On the other hand, it holds that 
					\begin{align}\label{eq100.3}
						\int_{\mathbb{S}^1}\int_{r_\xi}^2|\nabla (\phi_w-\psi^*_w)(y+r\xi)|^2drd\xi \leq C \int_{B_2(y)} |\nabla (\phi-\psi^*)|^2 dx.
					\end{align}
					Combining  \eqref{eq100.1}, \eqref{eq100.2}, and \eqref{eq100.3} yields	 \eqref{eq:scale1}.}
				A further integration over $w\in B_1(y)$ yields
				\begin{equation*}
					m_0^2 |\mcS|\leq C\int_{B_2(y)}|\nabla (\phi-\psi^*)|^2dx.
				\end{equation*}
				This is the desired estimate \eqref{eq:P}.
				
				\emph{Step 4. Decay estimate.} Combining \eqref{eq:minimality} and \eqref{eq:P} from Steps 2 and 3 gives,  {if $r\leq r_0=\min\{r_1,r_2\}$, then  for some $C=C(\gamma, \bar u, \epsilon)$,}
				\begin{equation*}
					m_0|\mcS|^{1/2} \leq \frac{C\lambda}{(\tilde \mfb_*)^{1/2}} |\mcS|^{1/2}\leq \frac{C\lambda}{Q^{1/2}} |\mcS|^{1/2},
				\end{equation*}
				where \eqref{eq:BQ2} has been used in the second inequality.
				If $|\mcS|>0$, then $m_0\leq C\lambda Q^{-1/2}$. By \eqref{ld_Ld} and \eqref{m_0} (where the constant $C_m$ is sufficiently large), one gets a contradiction. If $|\mcS|=0$, then $\psi^*=\phi$ a.e. in $B_2(y)$. By the interior regularity theory for elliptic equations (\cite{GT}),  $\psi^*$ and $\phi$ are continuous. Thus $\psi^*=\phi$ pointwise in $B_2(y)$. This, however, contradicts with the fact that $y$ is a free boundary point. This completes the proof of the lemma. 
			\end{proof}
			
			Lemma \ref{lem:linear} together with the elliptic estimates away from the free boundary yields the Lipschitz regularity of the minimizers.
			
			\begin{prop}\label{thm:Lipschitz}
				{Let $\psi$ be a minimizer of \eqref{eq:energy_new} over the admissible set \eqref{admissibleK}, then $\psi\in C^{0,1}_{loc}(\mcD)$.} Moreover, for any connected domain $K\Subset \mcD$ containing a free boundary point, the Lipschitz constant of $\psi$ in $K$ is estimated by $C\Lambda$, where $C$ depends on $\gamma,\,\epsilon,\,\bar u,\, \frac{\Lambda}{Q},\, K$, and $\dist(K,\p\mathcal{D})$.
			\end{prop}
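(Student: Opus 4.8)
The plan is to reduce the statement to an interior gradient bound near the free boundary, which I would obtain by rescaling the linear growth estimate of Lemma~\ref{lem:linear}, and to combine it with standard elliptic regularity away from $\Gamma_\psi$. Since $0\le\psi\le Q$ by Lemma~\ref{lem:upper_lower_bd}, and since on the open set $\{\psi<Q\}$ the minimizer solves the divergence--form equation \eqref{basic_eq_psi} (this is the Euler--Lagrange equation \eqref{ELpde}), which is uniformly elliptic by \eqref{eq:convex0} with smooth coefficients in $\bp$ and $C^2$ coefficients in $z$, while $\nabla\psi=0$ a.e.\ on $\{\psi=Q\}$, the only delicate point is the size of $\nabla\psi$ near $\Gamma_\psi$. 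On any ball with $B_{2d}(x_0)\Subset\mcD$ and $B_{2d}(x_0)\cap\Gamma_\psi=\emptyset$, classical interior $C^{1,\alpha}$ estimates for such quasilinear equations (using \eqref{eq:convex}--\eqref{eq:convex0} and \eqref{eq:upper_pzG}--\eqref{eq:upper_pzzG}, cf.\ \cite{GT,Giaquinta83}) bound $\|\nabla\psi\|_{L^\infty(B_d(x_0))}$; this already yields $\psi\in C^{0,1}_{loc}(\mcD)$ once the free boundary is handled.

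The core estimate I would prove is: if $x_0\in\{\psi<Q\}$ with $d:=\dist(x_0,\Gamma_\psi)\le\min\{1,c_1\dist(x_0,\partial\mcD)\}$ for a suitable small absolute constant $c_1$, then $|\nabla\psi(x_0)|\le C\lambda$ with $C=C(\mfb_*,\mfb^*,\delta)$. First I would apply Lemma~\ref{lem:linear} at the points of the concentric ball $B_{d/2}(x_0)$: the choice of $c_1$ guarantees that every $y\in B_{d/2}(x_0)$ satisfies both $\dist(y,\Gamma_\psi)\le\frac32 d$ and the distance hypothesis of Lemma~\ref{lem:linear}, so that, writing $\psi^*:=Q-\psi$ as in \eqref{eq:w}, one gets $\psi^*\le C\lambda d$ on $B_{d/2}(x_0)$ (note also that $B_{d/2}(x_0)\subset\{\psi^*>0\}$, so that $\psi^*$ solves there the homogeneous version of \eqref{eqpsi*}). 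Next I would rescale and normalize by $\lambda$: in the notation of \eqref{eq:scale_w}, set $\phi:=\psi^*_{x_0,d/2,\lambda^{-1}}$, i.e.\ $\phi(x)=\frac{2}{\lambda d}\psi^*(x_0+\tfrac d2 x)$ for $x\in B_1$, so that $0\le\phi\le C$ on $B_1$ and $\phi$ solves \eqref{eq:scale_w} with integrand $\tilde\mcG_{d/2,\lambda^{-1}}$. This rescaled integrand satisfies \eqref{eq:convex}--\eqref{eq:convex0} with the \emph{same} constants $\mfb_*,\mfb^*$, and by \eqref{eq_rescale1} --- using $d\le1$ and the lower bound $\lambda\ge C^{-1}\Lambda$ from Remark~\ref{rmk:fbcondition} --- its lower--order terms are bounded by $C\delta$; hence for $\delta$ small $\phi$ solves on $B_1$ a uniformly elliptic divergence--form equation with uniformly small, smooth lower--order data. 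Applying the interior $W^{2,2}$ and gradient estimates for quasilinear uniformly elliptic equations (obtained after differentiating the equation, cf.\ \cite{GT,Giaquinta83}) gives $|\nabla\phi(0)|\le C(\mfb_*,\mfb^*,\delta)(\|\phi\|_{L^\infty(B_1)}+1)\le C$; undoing the scaling, $\nabla\phi(0)=\lambda^{-1}\nabla\psi^*(x_0)=-\lambda^{-1}\nabla\psi(x_0)$, whence $|\nabla\psi(x_0)|\le C\lambda$.

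The main obstacle is exactly this rescaling step. A crude argument would bound $|\nabla\psi(x_0)|$ only by $d^{-1}\sup_{B_{d/2}(x_0)}\psi^*\le d^{-1}Q$, which blows up as $x_0\to\Gamma_\psi$; it is the \emph{linear} growth $\sup_{B_{d/2}(x_0)}\psi^*\le C\lambda d$ supplied by Lemma~\ref{lem:linear} that makes the normalization by $\lambda^{-1}$ legitimate and absorbs the scale, and one must still check that the rescaled equation \eqref{eq:scale_w} stays uniformly elliptic with uniformly small lower--order terms --- this is where $d\le 1$, the smallness of $\delta$, and the lower bound on $\lambda$ enter. Finally, given a connected $K\Subset\mcD$ containing a free boundary point, I would cover $\overline K$ by finitely many balls: on those contained in a fixed neighbourhood of $\Gamma_\psi$ the core estimate gives the gradient bound $C\lambda$, while on the remaining ones, which stay a definite distance from $\Gamma_\psi$, the interior estimate of the first paragraph applies; collecting these bounds over the cover of the connected set $K$ yields a Lipschitz constant of the asserted form $C\lambda$ with $C$ depending on $\mfb_*$, $\mfb^*$, $\delta$, $K$, and $\mcD$.
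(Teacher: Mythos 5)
The core idea — rescale the linear growth of Lemma~\ref{lem:linear} and invoke interior estimates for the rescaled divergence--form problem — is the same one the paper uses, and you correctly identify that step as the heart of the matter. However, there are two points that need attention.

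First, a minor but real wrinkle in the rescaling: you take $a=\lambda^{-1}$ in $\psi^*_{x_0,d/2,a}$, so that the rescaled function is $O(1)$. With this choice, \eqref{eq_rescale1} gives $|\p_z\tilde\mcG_{r,a}|\le ra\,\delta|z|=\tfrac{d}{2\lambda}\,\delta|z|$, which is controlled by $C\delta$ only if $\lambda$ is bounded below by a fixed constant. Remark~\ref{rmk:fbcondition} only gives $\lambda\ge C^{-1}\Lambda$, and $\Lambda$ carries no a~priori lower bound in the statement of the proposition. The paper instead keeps $a=1$: then the rescaled $\psi^{x,*}$ is bounded by $C\lambda$ \emph{and} $|\p_z\tilde\mcG_{r,1}|\le r\delta|z|\le C\delta\lambda$, so that the whole right-hand side of the interior gradient estimate is automatically $\lesssim\lambda$ without any lower bound on $\lambda$ or $\delta\lesssim\lambda$. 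You should drop the $\lambda^{-1}$ normalization and let the $\lambda$--proportionality come out of the fact that $\psi^*$ is $O(\lambda d)$, not out of a manual renormalization.

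Second, and more significantly, the final assembly over $K$ has a gap. On the balls that stay a definite distance from $\Gamma_\psi$, the interior $C^{1,\alpha}$ estimate from your first paragraph only bounds $|\nabla\psi|$ in terms of $\sup(Q-\psi)$, which a~priori is $Q$, not $C\lambda$. To convert this to the asserted $C\lambda$ Lipschitz bound you must first propagate the smallness $Q-\psi\le C\lambda$ from a neighbourhood of $\Gamma_\psi$ (where Lemma~\ref{lem:linear} gives it) throughout $K$. This is where the paper uses the Harnack inequality \cite[Theorem 8.17]{GT} for the nonnegative supersolution $Q-\psi$ together with a chaining argument over the connected set $K$; it is precisely the reason the hypotheses ask $K$ to be connected and to contain a free boundary point. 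Without this step, your covering argument yields only local Lipschitz regularity, not the quantitative $C\lambda$ bound on all of $K$.
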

			\begin{proof}
				{\it Step 1.} {Let $x\in \{\psi< Q\}\cap \mcD$. If $d(x):= \dist(x,\Gamma_\psi)\geq \frac14\min\{r_0,\,\dist(x,\p \mcD)\}$ where $r_0$ is given in Lemma \ref{lem:linear}, then the standard interior elliptic estimate applied to $\psi$ gives that $|\nabla\psi(x)|\leq CQ\dist(x,\p\mathcal{D})^{-1}$ for some $C=C(\gamma,\bar u, \epsilon)$. 
					If $d(x)\leq \frac14\min\{r_0,\,\dist(x,\p \mcD)\}$}, then let
				\begin{equation*}
					\psi^{*}_{x,r}(y):=\frac{Q-\psi(x+ r y)}{rQ}, \quad r:=d(x).
				\end{equation*}
				By Lemma \ref{lem:linear}, there is a constant $C=C(\gamma,\epsilon,\bar u)>0$ such that
				$$0\leq \psi^{*}_{x,r}(y)\leq \frac{C\Lambda}{Q} \quad\text{for any}\,\, y\in B_1.$$
				Note that $\psi^{*}_{x,r}$ satisfies \eqref{eq:scale_w} in $B_1$.
				By virtue of \eqref{eq:convex0}, $\p_i\psi^{*}_{x,r}$ solves  a uniformly  elliptic equation of divergence form and thus  is $C^{0,\alpha}$ by the De Giorgi-Nash-Moser  estimate (\cite{GT}). Hence for some $C=C(\gamma,\bar u, \epsilon)$ we have
				\begin{equation*}
					|\nabla \psi^{*}_{x,r}(0)| \leq C\left(\|\psi^{*}_{x,r}\|_{L^\infty(B_1)}+
					\|\p_z\tilde{\mcG}_r/\tilde\mfb_\ast\|_{L^\infty(B_1)}
					\right)\leq C\left(\frac{\Lambda}{Q}+r\right).
				\end{equation*}
				Here the upper bound for $|\p_z\tilde{\mcG}_r|$ in \eqref{eq:ellipticity} has been used to get the last inequality. This shows that $|\nabla\psi(x)|\leq C\Lambda+CQ$. Combining the above two cases we conclude that $\psi\in C^{0,1}_{loc}(\mathcal{D})$. Furthermore, $|\nabla\psi(x)|\leq C\Lambda$, where $C=C(\gamma,\epsilon,\bar u, \frac{\Lambda}{Q})$. 
				
				{\it Step 2.} If $K\Subset \mcD$ is connected and $K$ contains a free boundary point, then it follows from the Harnack inequality (\cite[Theorem 8.17]{GT}), the connectedness of $K$, and Lemma \ref{lem:linear} that
				\begin{equation*}
					Q-\psi\leq C\Lambda \quad \text{ in } K
				\end{equation*}
				for some $C$ depending on $\gamma,\epsilon,\bar u, \frac{\Lambda}{Q}, K$, and $\dist(K,\partial \mathcal{D})$. Thus similar arguments as in the second case of (i) give that $|\nabla \psi(x)|\leq C \Lambda$. 
			\end{proof}
			
			\begin{rmk}\label{rmk:up_to_bdry}
				By the boundary estimate for the elliptic equations, $\psi$ is Lipschitz up to the $C^{1,\alpha}$ portion $\Sigma\subset \p \mcD$ as long as the boundary data $\psi^\sharp\in C^{0,1}(\mcD\cup \Sigma)$. Moreover, if $K$ is a subset of $\overline{\mcD}$ with $K\cap \p \mcD$ being $C^{1,\alpha}$, $K\cap \mcD$ is connected, and $K$ contains a free boundary point, then $|\nabla\psi|\leq C\Lambda$ in $K$, where $C$ depends on $\gamma,\epsilon,\bar u, \frac{\Lambda}{Q}, K$ and the $C^{1,\alpha}$ norm of $K\cap \p\mcD$.
			\end{rmk}
			
			The next lemma investigates the nondegeneracy of the free boundary, whose proof relies the minimality of the energy functional and the construction of suitable barrier functions. 
			This lemma combined with Lemma \ref{lem:linear} gives the exact linear decay of a minimizer away from the free boundary. Moreover, the nondegeneracy plays an important role in deriving the measure theoretic properties of the free boundary. 
			
			\begin{lem}\label{lem:nondeg}
				Let $\psi$ be a minimizer of \eqref{eq:energy_new} over the admissible set \eqref{admissibleK}. Then for any $p>1$ and any $0<r<1$, there exist {positive constants $c_r=c_r(\gamma,\epsilon,\bar u, r,p)$ small 
					and $R_0=R_0(\gamma,\epsilon,\bar u, r)\in (0,1)$, 
					such that for any $B_{R}(\bar x)\subset \mathcal D$ with $R\leq \min\{R_0, c_r\frac{\Lambda}{Q}\}$}, 
				if
				\begin{equation*}
					\frac{1}{R}\left( 
					\dashint_{B_R(\bar x)}|Q-\psi|^p\right)^{1/p} \leq c_r\Lambda,
				\end{equation*}
				then $\psi=Q$ in $B_{rR}(\bar x)$.
			\end{lem}
			\begin{proof}
				As in Lemma \ref{lem:linear}, let 
				$$\psi_{\bar x,R}^*(x):=\frac{Q-\psi(\bar x+Rx)}{QR}$$ 
				for $R\in(0,R_0)$, where $R_0\in(0,1)$ is a constant  to be determined later. 
				Then $\psi_{\bar x,R}^*$ satisfies \eqref{msG_eq}--\eqref{eq:scale_w} (with $r$ being replaced by $R$). Now in view of the scaling we need to prove that if 
				\begin{equation*}
					\left(
					\dashint_{B_1(0)}|\psi_{\bar x,R}^*|^p\right)^{1/p} \leq c_r\frac{\Lambda}{Q},
				\end{equation*}
				then $\psi_{\bar x,R}^*=0$ in $B_{r}$. From now on, we drop the subscripts $\bar x$ and $R$ of $\psi_{\bar x,R}^*$ without ambiguity. By the $L^\infty$-estimate for the subsolution of \eqref{eq:scale_w} (cf. \cite[Theorem 8.17]{GT}), for any $r\in (0,1)$ there is a constant $C_\ast>0$ depending on $\gamma,\,\epsilon, \,\bar u,\, r$, and $p$ such that
				\begin{equation}\label{def M_r}
					\begin{split}
					\sup_{B_{\sqrt{r}}} \psi^*&\leq \frac{C_\ast}4 \left(\|\psi^*\|_{L^p(B_1)}+\|
						\p_z\tilde{\mcG}_R
						/\tilde \mfb_\ast \|_{L^\infty(B_1)}\right)
						\stackrel{\eqref{eq:ellipticity}}{\leq} \frac{C_\ast}2 \left(c_r\frac{\Lambda}{Q} +  R\right)\\
						&\leq {C_\ast c_r\frac{\Lambda}{Q}:=M_r,}
					\end{split}
				\end{equation}
				where in the last inequality we have used that $R\leq c_r\frac{\Lambda}Q$ by assumption.
				It suffices to show that if $c_r$ is sufficiently small, then $\psi^*=0$ in $B_r$. This is proved in two steps.
				
				\emph{Step 1. Upper bound of the energy.}  {Let $C_*$ in \eqref{def M_r} be sufficiently large. We claim that if $R\leq \min\{R_0, c_r\frac{\Lambda}{Q}\}$} for some $R_0=R_0(\epsilon, \gamma,\bar u, r)$, then there exists a constant  $\tilde{C}=\tilde{C}({r})>0$ such that 
				\begin{equation}\label{eq:up0}
					\int_{B_{r}} (\tilde{\mcG}_R(\nabla \psi^*, \psi^*) + \lambda^2\chi_{\{\psi^*>0\}}) \leq \tilde{C}\tilde\mfb_\ast M_r \int_{\p B_r} \psi^*.
				\end{equation}
				The proof is based on the construction of a suitable energy competitor. Let
				\begin{align*}
					\Psi(x):=\begin{cases}
						{M_r} \frac{e^{-\mu r^2}-e^{-\mu|x|^2}}{e^{-\mu r^2}-e^{-\mu r}} &\quad \text{for}\,\, x\in B_{\sqrt{r}}\backslash B_r,\\
						0 &\quad  \text{for}\,\, x\in B_r.
					\end{cases}
				\end{align*}
				Similar as in the proof for Lemma \ref{lem:linear}, {if $\mu=\mu(r)$ is sufficiently large and  $R\leq R_0$ for some $R_0=R_0(\epsilon, \gamma,\bar u, r)>0$ sufficiently small,} then there exists {a constant}  $C_1=C_1({r})>0$ such that
				\begin{equation}\label{eq:u0}
					\partial_{x_i}( \p_{p_i} \tilde{\mcG}_R(\nabla \Psi, \Psi))\leq 
					{-C_1\tilde \mfb_\ast M_r} \quad
					\text{in }B_{\sqrt{r}}\backslash B_r.
				\end{equation}
				Let $\phi:=\min\{\psi^*,\Psi\}$. Note that $\phi=\psi^*$ on $\p B_{\sqrt{r}}$, $\phi \equiv 0$ in $B_r$, and $\{x\in B_{\sqrt{r}}|\phi(x)> 0\}\subset \{x\in B_{\sqrt{r}}|\psi^*(x)> 0\}$.  This combined with the fact that $\psi^*$ is an energy minimizer in $B_{\sqrt{r}}$ gives
				\begin{equation*}\label{eq:wu00}
					\int_{B_{r}} (\tilde{\mcG}_R(\nabla \psi^*, \psi^*) + \lambda^2\chi_{\{\psi^*>0\}}) \leq  \int_{B_{\sqrt{r}}\backslash B_{r}} (\tilde{\mcG}_R(\nabla \phi, \phi)- \tilde{\mcG}_R(\nabla \psi^*, \psi^*)).
				\end{equation*}
				By the convexity of $\bp\mapsto \tilde{\mcG}_R(\bp,z)$ one has
				\begin{equation*}\label{eq:wu0}
					\begin{split}
						&\int_{B_{\sqrt{r}}\backslash B_{r}} (\tilde{\mcG}_R(\nabla \phi, \phi)- \tilde{\mcG}_R(\nabla \psi^*, \psi^*))\\
						\leq &\int_{B_{\sqrt{r}}\backslash B_{r}} \left[-\p_{p_i} \tilde{\mcG}_R(\nabla \phi, \phi)\partial_i(\psi^*-\phi) - (\psi^*-\phi)\int_0^1\p_z \tilde{\mcG}_R(\nabla \psi^*, \phi+s(\psi^*-\phi)) ds \right]\\
						\leq & \int_{(B_{\sqrt{r}}\backslash B_{r})\cap \{\psi^*>\Psi\}}\left[ -\p_{p_i} \tilde{\mcG}_R(\nabla \Psi, \Psi)\partial_i(\psi^*-\Psi) + {\tilde\delta' R\epsilon} (\psi^*-\Psi)\right],
					\end{split}
				\end{equation*}
				where the last inequality follows from the bound for $\p_z\tilde{\mcG}_R$ in \eqref{eq_rescale1}. 
				Furthermore, multiplying \eqref{eq:u0} by $(\psi^*-\Psi)_+$ and integrating by parts (noting that $(\psi^*-\Psi)_+=\psi^*$ on
				$\p B_r$ and $(\psi^*-\Psi)_+=0$ on $\p B_{\sqrt{r}}$) one has
				\begin{equation*}\label{label_4}
					\begin{split}
						&\int_{(B_{\sqrt{r}}\backslash B_{r})\cap \{\psi^*>\Psi\}}-\p_{p_i} \tilde{\mcG}_R(\nabla \Psi, \Psi)\partial_i (\psi^*-\Psi)\\
						\leq & -\int_{\p B_r}\psi^* \p_{p_i} \tilde{\mcG}_R(\nabla \Psi, \Psi)\cdot \nu_i
						-\int_{(B_{\sqrt{r}}\backslash B_r) \cap \{\psi^*>\Psi\}} {C_1\tilde\mfb_\ast M_r 
						}(\psi^*-\Psi).
					\end{split}
				\end{equation*}
				{Since the constant $C_*$ in \eqref{def M_r} is sufficiently large, then we infer from $\tilde \delta',\,\tilde\mfb_\ast\sim Q$ (cf. \eqref{eq:BQ2}) and  $R\leq c_r\frac{\Lambda}{Q}$ that}
				\begin{equation*}
					\int_{B_{r}} (\tilde{\mcG}_R(\nabla \psi^*, \psi^*) + \lambda^2\chi_{\{\psi^*>0\}}) \leq -\int_{\p B_r}\psi^*\p_{p_i} \tilde{\mcG}_R(\nabla \Psi, \Psi) \nu_i.
				\end{equation*}
				In view of the expression $|\nabla\Psi|$ on $\p B_r$ as well as \eqref{eq:convex}, there is a constant  $\tilde{C}=\tilde{C}({r})>0$ such that
				\[
				-\p_{p_i} \tilde{\mcG}_R(\nabla \Psi,\Psi) \nu_i \leq \tilde{C} \tilde\mfb_\ast M_r.
				\]
				Combining the above two estimates we obtain the desired estimate \eqref{eq:up0}.
				
				\emph{Step 2. Lower bound of the energy and conclusion.} 
				First, by the trace estimate there exists {a constant} $C=C(r)>0$ such that
				\begin{equation*}\label{traceineq}
					\int_{\p B_r} \psi^*\leq C\left(\int_{B_r} \psi^* + \int_{B_r}|\nabla \psi^*|\right).
				\end{equation*}
				The fact $0\leq \psi^* \leq M_r$ in $B_r$ and the  Cauchy-Schwarz inequality  yield
				\begin{align*}\label{Cauchyineq}
					\int_{B_r}\psi^* \leq M_r \int_{B_r}\chi_{\{\psi^*>0\}}
					\quad \text{and}\quad 
					\int_{B_r}|\nabla\psi^*|\leq \int_{B_r}\left(\frac{Q^{1/2}}{\lambda}|\nabla\psi^*|^2 + \frac{\lambda}{Q^{1/2}}\chi_{\{\psi^*>0\}}\right).
				\end{align*}
				We infer from the above two estimates that
				\begin{equation}\label{label_5}
					\int_{\p B_r}\psi^* \leq \frac{CQ^{1/2}}{\lambda}\int_{B_r}|\nabla \psi^*|^2 + \frac{C}{\lambda}\left(\frac{M_r}{\lambda}+\frac{1}{Q^{1/2}}\right)\int_{B_r} \lambda^2\chi_{\{\psi^*>0\}}.
				\end{equation}
				{On the other hand, since by \eqref{def M_r} and \eqref{ld_Ld} 
				\begin{equation}\label{label_6}
					R\leq M_r=C_\ast c_r\frac{\Lambda}{Q}\leq Cc_r\frac{\lambda}{Q^{1/2}}
				\end{equation} 
				and since $\tilde\delta',\,\tilde \mfb_\ast\sim Q$,} it follows from  \eqref{eq_rescale2} that
				\begin{equation}\label{eq:1}\begin{split}
						0\leq\int_{B_{r}}|\nabla\psi^*|^2
						\leq& \frac2{\tilde\mfb_*}\int_{B_{r}}(\tilde{\mcG}_R(\nabla\psi^*,\psi^*)+{\tilde\delta' R\epsilon} M_r\chi_{\{\psi^*>0\}})\\
						\leq&\frac{C(1+c_r^2)}{Q}\int_{B_{r}}(\tilde{\mathcal{G}}_R(\nabla\psi^*,\psi^*)+\lambda^2 \chi_{\{\psi^*>0\}}),
					\end{split}
				\end{equation} 
				where $C=C(\gamma,\epsilon, \bar u, r,p)$. 
				Combining the estimates \eqref{label_5} and \eqref{eq:1} together yields
				\begin{equation}\label{eq:up1}
					\begin{split}
						\int_{\p B_r}\psi^* \leq& \frac{C(1+c_r^2)}{\lambda Q^{1/2}}\int_{B_r} (\tilde{\mcG}_R(\nabla\psi^*,\psi^*)+\lambda^2\chi_{\{\psi^*>0\}})\\
						&+\frac{C}{\lambda}\left(\frac{M_r}{\lambda}+\frac{1}{Q^{1/2}}\right)\int_{B_r} \lambda^2\chi_{\{\psi^*>0\}}.
					\end{split}
				\end{equation}
				From \eqref{eq:up0}, \eqref{eq:up1} and $\tilde \mfb_\ast\sim Q$, there exists a constant $C=C(r)$  such that
				\begin{equation*}
					\begin{split}
						\int_{B_{r}} (\tilde{\mcG}_R(\nabla \psi^*, \psi^*) + \lambda^2\chi_{\{\psi^*>0\}}) \leq& C(1+c_r^2)\frac{M_rQ^{1/2} }{\lambda}\int_{B_r}\left(\tilde{\mcG}_R(\nabla \psi^*, \psi^*) + \lambda^2\chi_{\{\psi^*>0\}}\right)\\
						&+CQ \frac{M_r}{\lambda}\left(\frac{M_r}{\lambda}+\frac{1}{Q^{1/2}}\right)\int_{B_r} \lambda^2\chi_{\{\psi^*>0\}}.
					\end{split}
				\end{equation*}
				Using \eqref{label_6} again, we can conclude that if $c_r\leq \bar c$ for some $\bar c$ sufficiently small depending on $\gamma,\epsilon, \bar u, r$, and $p$, then
				\[
				\int_{B_r} (\tilde{\mcG}_R(\nabla \psi^*, \psi^*) + \lambda^2 \chi_{\{\psi^*>0\}})=0.
				\]
				{In view of \eqref{eq:1}, one gets} $\psi^*\equiv 0$ in $B_r$. Hence the proof of the lemma is complete.
			\end{proof}

			\subsection{Measure theoretic properties and regularity of the free boundary}

			From the Lipschitz regularity (Proposition \ref{thm:Lipschitz}) and nondegeneracy of the solutions (Lemma \ref{lem:nondeg}), one has the following measure theoretic properties of the free boundary. We recall that the free boundary is 
			\begin{align*}
				\Gamma_{\psi}:=\p\{\psi<Q\}\cap \mathcal{D}.
			\end{align*}
			
			\begin{prop}\label{thm:meas_fb}
				Let $\psi$ be a minimizer of \eqref{eq:energy_new} over the admissible set \eqref{admissibleK}. Then the following statements hold. 
				\begin{itemize}
					\item [(1)] $\mathcal{H}^{1}(\Gamma_\psi\cap K)<\infty$ for any $K\Subset \mcD$.
					\item [(2)] There is a Borel {function} $\varsigma_\psi$ such that
					\begin{equation*}
						\p_{x_i}( \p_{p_i} \mcG(\nabla \psi,\psi)) - \p_z \mcG(\nabla\psi,\psi)= \varsigma_\psi {\mathcal{H}^{1}}\lfloor \Gamma_\psi.
					\end{equation*}
					\item [(3)] There exist positive constants $c$ and $C$ depending on $\gamma,\,\epsilon$, $\bar u$, and   small constant $r_0=r_0(\gamma,\epsilon,\bar u, \frac{\Lambda}{Q})>0$, such that for any $B_r(x)\subset \mcD$ with $x\in \Gamma_\psi$ and $r\leq r_0$, one has
					\begin{equation*}
						c \leq \frac{|B_r(x)\cap \{\psi<Q\}|}{|B_r(x)|} \leq C.
					\end{equation*}
					\item [(4)] For any $K\Subset \mathcal{D}$, there exist positive constants $c$ and $C$ depending on $\gamma,\epsilon,\bar u$, and $\dist(K,\mcD)$, such that for every ball $B_r(x)\subset K$ with $x\in \Gamma_\psi\cap K$ and $r\leq r_0$, where $r_0$ is the same constant as in (3), one has
					\begin{equation*}
						\begin{split}
							c \left(1+\frac{\Lambda}{Q}\right)\leq \varsigma_\psi\leq C\left(1+\frac{\Lambda}{Q}\right), \quad c\left(1+\frac{\Lambda}{Q}\right)r\leq \mathcal{H}^{1}(B_r(x)\cap \Gamma_\psi)\leq C\left(1+\frac{\Lambda}{Q}\right) r.
						\end{split}
					\end{equation*}
				\end{itemize}
			\end{prop}
			The proof is similar as that for \cite[Theorems 2.8 and  3.2]{ACF84}, so we omit the details here.
			
			In order to get the regularity of the free boundary, we need to derive some strengthened estimates for $|\nabla \psi|$ near the free boundary. For that we use the additional symmetry property that the  function $\mcG$ is radial in $\bp$, i.e. 
			\begin{align}\label{eq:radial}
				\mcG(\bp, z)=G_\epsilon(|\bp|^2,z).
			\end{align}
			We also recall from \eqref{Phiepsilon} that
			\begin{align}\label{eq:Phi_1}
				\Phi(|\bp|^2,z):=\Phi_\epsilon(|\bp|^2,z)=
				p_i\p_{p_i}\mcG(\bp,z)-\mcG(\bp,z),\quad \Phi(\Lambda^2, Q)=\lambda^2, \quad \Phi(0, Q)=0.
			\end{align}

			\begin{lem}\label{lem:gradu}
				Let $\psi$ be a minimizer of  \eqref{eq:energy_new} over the admissible set \eqref{admissibleK}, where the function $\mcG$ satisfies \eqref{eq:convex}--\eqref{eq:com_energy} in Proposition \ref{prop:assumption} and  \eqref{eq:radial}. Then for $K\Subset \mcD$, there exist constants $\alpha=\alpha(\gamma,\epsilon,\bar u)\in (0,1)$,  $r_0=r_0(\gamma,\epsilon,\bar u, \frac{\Lambda}{Q})\in (0,1)$,
				and $C=C(\gamma,\epsilon,\bar u, \frac{\Lambda}{Q}, K)>0$ such that for any ball $B_{2r}(x)\subset K$ with $x\in \Gamma_\psi$ and $r\in (0,r_0)$, one has
				\begin{align*}
					\sup_{B_r(x)}|\nabla \psi|^2
					\leq \Lambda^2 + C\Lambda^2 r^\alpha.
				\end{align*}
			\end{lem}
			\begin{proof}
				The proof is similar as \cite[Theorem 4.1]{ACF84}. It follows from the equation of $\psi$ and the Bernstein estimate (\cite[Chapter 15]{GT}) that $w:=|\nabla \psi|^2$ satisfies
				\begin{align*}
					-\mathfrak a^{ij}\p_{ki}\psi\p_{kj}\psi+ \frac{1}{2} \p_i(\mathfrak a^{ij}\p_j w) + \p_z\mathfrak a^{ij} \p_{ij}\psi w - \frac{1}{2} \p_i \psi\p_z \mathfrak a^{ij} \p_j w + \p_k (\mfb \p_k \psi) -\mfb\Delta \psi=0
				\end{align*}
				in $\{\psi<Q\}$, where 
				\begin{align*}
					\mathfrak a^{ij}:=\p_{p_ip_j}\mcG(\nabla \psi, \psi),\quad
					\mfb:=\p_{p_i z} \mcG (\nabla \psi,\psi)\p_i\psi - \p_z \mcG(\nabla \psi, \psi).
				\end{align*}
				We infer from the radial symmetry of $\mcG$ in \eqref{eq:radial} that
				\begin{align*}
					\p_z\mathfrak a^{ij}(\bp,z)=2\p_{tz}G_\epsilon(|\bp|^2,z) \delta_{ij}+ 4\p_{ttz}G_\epsilon(|\bp|^2,z) p_ip_j.
				\end{align*}
				This combined with $2|\bp|^2\p_{tz}G_\epsilon(|\bp|^2,z)=\bp\cdot \p_{\bp z}\mcG(\bp,z)$ gives that
				\begin{align*}
					&\quad\p_z \mathfrak a^{ij} \p_{ij}\psi w -  \frac{1}{2}\p_i \psi\p_z \mathfrak a^{ij} \p_j w-\mfb\Delta\psi\\
					&=2\p_{tz}G_\epsilon (|\nabla\psi|^2,\psi)\left(|\nabla\psi|^2\Delta\psi - \p_i\psi\p_k\psi\p_{ik}\psi\right)-\mfb\Delta\psi\\
					&=-\nabla\psi\cdot\p_{\bp z}\mcG (\nabla\psi,\psi)\frac{\nabla\psi}{|\nabla\psi|}D^2\psi\frac{(\nabla\psi)^t}{|\nabla\psi|}+\p_z\mcG(\nabla\psi,\psi)\Delta\psi.
				\end{align*}
				Thus by Young's inequality,
				\begin{align*}
					&\quad |\p_z \mathfrak a^{ij} \p_{ij}\psi w -  \frac{1}{2}\p_i \psi\p_z \mathfrak a^{ij} \p_j w-\mfb\Delta\psi |\\
					&\leq \frac{1}{\mfb_\ast}\left(|\p_z\mcG(\nabla\psi,\psi)|^2+ |\nabla\psi\cdot \p_{\bp z}\mcG(\nabla\psi,\psi)|^2\right)+ \frac{\mathfrak b_\ast}{2}|D^2\psi|^2.
				\end{align*}
				Since 
				\begin{align*}
					\mathfrak a^{ij} \p_{ki}\psi\p_{kj}\psi \geq \mathfrak b_\ast |D^2\psi|^2,
				\end{align*}
				which follows from the convexity of the functional, cf. \eqref{eq:convex0}, we conclude that $w$ is a subsolution to the uniformly elliptic linear equation
				\begin{align*}
					\mathcal{L}w:=\frac{1}{2} \p_i(\mathfrak a^{ij}\p_j w) \geq -\mathfrak f - \p_i \mathfrak g^i, 
				\end{align*}
				where 
				\begin{align*}
					\mathfrak f=\frac{1}{\mathfrak b_\ast}\left(|\p_z\mcG(\nabla\psi,\psi)|^2+ |\nabla\psi\cdot \p_{\bp z}\mcG(\nabla\psi,\psi)|^2\right),\quad \mathfrak g^i = \mathfrak b \p_i \psi.
				\end{align*}
				We infer from  \eqref{eq:upper_pzzG} that for some $C=C(\gamma,\epsilon,\bar u)$ the inhomogeneities $\mathfrak f$ and $\mathfrak g^i$ satisfy
				\begin{align*}
					\|\mathfrak f\|_{L^\infty(B_{2r}(x))}\leq C\mfb_\ast^{-1},\quad  \|\mathfrak g^i\|_{L^\infty(B_{2r}(x))}\leq C \|\nabla\psi\|_{L^\infty(B_{2r}(x))}.
				\end{align*}
				For $\vartheta{>0}$, let
				\begin{equation*}
					W_\vartheta := \left\{
					\begin{aligned}
						&(|\nabla\psi|^2-\Lambda^2-\vartheta)^+,\,\,&\text{  in }\{\psi<Q\},\\
						& 0,\,\,&\text{ in }\{\psi=Q\}.
					\end{aligned}
					\right.
				\end{equation*}
				Note that $W_\vartheta$ is supported away from the free boundary (the proof is similar to that in \cite[Lemma 3.4]{ACF84}). Thus from the interior regularity of $\psi$ and the expression of $W_\vartheta$ one immediately has $W_\vartheta\in C^{0,1}(B_r(x))$.  Denote $W_\vartheta^*(r):=\sup_{B_r(x)} W_\vartheta$. Then $W^*_\vartheta(r)-W_\vartheta$ satisfies
				\begin{equation*}
					\left\{
					\begin{aligned}
						&\mathcal{L}(W^*_\vartheta(r)-W_\vartheta)\leq  \mathfrak f+\p_i\mathfrak g^i  &&\text{in } B_{2r}(x),\\
						& W^*_\vartheta(r)-W_\vartheta=W^*_\vartheta(r) &&\text{in } B_{2r}(x) \cap \{\psi=Q\}.
					\end{aligned}
					\right.
				\end{equation*}
				Using \cite[Theorem 8.18]{GT} with $p\in [1,\infty)$ one has for some $C=C(\gamma,\epsilon,\bar u)>0$ and $k(r):=\mfb_\ast^{-2} r+\mfb_\ast^{-1}\|\nabla\psi\|_{L^\infty(B_{2r}(x))} r^2$,
				\begin{align*}
					C\left(\inf_{B_{r/2}(x)}[W^*_\vartheta(r)-W_\vartheta]+k(r)\right)\geq  r^{-\frac{2}{p}}\|W^*_\vartheta-W_\vartheta\|_{L^p(B_r(x))}. 
				\end{align*}
				From the positive density property of $B_r(x)\cap \{\psi=Q\}$ (cf.  Proposition \ref{thm:meas_fb}(3)) we infer that for some $C'=C'(\gamma,\epsilon,\bar u)>0$,
				\begin{align*}
					r^{-\frac{2}{p}}\|W^*_\vartheta-W_\vartheta\|_{L^p(B_r(x))}  \geq  C'W^*_\vartheta(r).
				\end{align*} 
				Taking $\vartheta\rightarrow 0$ and a rearrangement of the above two inequalities yield for any $r\in (0,r_0)$
				\begin{align*}
					W^*_0\left(\frac{r}{2}\right)=\sup_{B_{\frac{r}{2}}(x)} W_0\leq \eta W^*_0(r)+k(r),\quad \eta=1-\frac{C'}{C}\in (0,1).
				\end{align*}
				It then follows from the iteration lemma (cf. eg. Lemma 8.23 in \cite{GT}) that for any $\mu\in (0,1)$ 
				\begin{align*}
					W^*_0(r)\leq C W^*_0(r_0)\left(\frac{r}{r_0}\right)^\alpha+ k(r^\mu r_0^{1-\mu}),
				\end{align*}
				for some $C=C(\gamma,\epsilon,\bar u)$ and $\alpha=\alpha(\gamma,\epsilon,\bar u, \mu)\in (0,1)$. Recalling the Lipschitz bound for $\psi$ in Proposition \ref{thm:Lipschitz} and noting that $\mfb_\ast\sim Q^{-1}$ by the expression of $\mfb_\ast$ in Proposition \ref{prop:assumption} and \eqref{label_7}, we obtain the desired estimate.
			\end{proof}
			
			\begin{rmk}\label{rmk_gradu}
				In Lemma \ref{lem:gradu}, if $B_{2r}(x)\subset B_{2R}(x)\subset K$ with $R\in (0,r_0)$, then there exists {a constant} $C=C(\gamma,\epsilon,\bar u, \frac{\Lambda}{Q},K)$ such that
				$$\sup_{B_r(x)}|\nabla\psi|^2\leq \Lambda^2+C\Lambda^2\left(\frac rR\right)^{\alpha}.$$
				This follows from applying Lemma \ref{lem:gradu} to $\psi_{x,R}(y)=\psi(x+Ry)/R$.
			\end{rmk}
			
			{Similar arguments as in \cite[Theorem 4.3]{ACF84}} give the following gradient estimate.
			\begin{prop}\label{prop:gradu2}
				Let $\psi$ be a minimizer of   \eqref{eq:energy_new} over the admissible set \eqref{admissibleK}, where the function $\mcG$ satisfies \eqref{eq:convex}--\eqref{eq:com_energy} in Proposition \ref{prop:assumption} and  \eqref{eq:radial}.  Then for any $B_{2r}(x)\subset K \Subset \mcD$ with $x\in \Gamma_\psi$ and $r\in (0,r_0)$, where $r_0$ is the same as in Lemma \ref{lem:gradu}, one has
				\begin{align*}
					\dashint_{B_r(x)\cap \{\psi<Q\}} \left(\Lambda^2-|\nabla\psi|^2\right)^+\leq C\Lambda^2 |\ln r|^{-1},
				\end{align*}
				where $C=C(\gamma,\epsilon,\bar u, \frac{\Lambda}{Q}, K)>0$ is a constant.
			\end{prop}
			
			\begin{proof}
				Without loss of generality,
				assume that $x$ is the origin. For any constant $\vartheta>0$ and any $\eta\in C_0^\infty(\mcD)$ with $\eta\geq 0$, the function $\psi_{\vartheta,*} :=\min\{\psi+\vartheta \eta, Q\}$ is admissible, so that $\mcJ(\psi)\leq \mcJ(\psi_{\vartheta,*})$. 
				{Thus
					\begin{align}\label{eq:2d_lambda}
						\int_{\{Q-\vartheta\eta\leq\psi<Q\}} \lambda^2  \leq \int_{\mcD}  (\mcG(\nabla\psi_{\vartheta,*}, \psi_{\vartheta,*})-\mcG(\nabla\psi,\psi)).
					\end{align}
					Denote $\tilde\psi_{\vartheta}:=\psi_{\vartheta,*}-\psi$. Then the integrand on the right hand side of \eqref{eq:2d_lambda} can be written as
					\begin{align*}
						&\mcG(\nabla\psi_{\vartheta,*}, \psi_{\vartheta,*})-\mcG(\nabla\psi,\psi) \\
						=& \mcG(\nabla \psi_{\vartheta,*}, \psi)-\mcG(\nabla\psi,\psi) + \mcG(\nabla\psi_{\vartheta,*},\psi_{\vartheta,*})-\mcG(\nabla\psi_{\vartheta,*},\psi)\\
						=&\p_{p_i}\mcG(\nabla\psi,\psi)\partial_i \tilde\psi_{\vartheta} 
						+ \int_0^1 d \tau \int_0^\tau \partial_i\tilde\psi_{\vartheta}\p_{p_ip_j}\mcG(\nabla\psi+ s\nabla\tilde\psi_{\vartheta},\psi)\partial_j\tilde\psi_{\vartheta} ds\\
						&+\tilde\psi_{\vartheta}\int_0^1\p_z\mcG(\nabla\psi_{\vartheta,*}, \psi+s\tilde\psi_{\vartheta}) ds.
					\end{align*}
					In the set $\{\psi+\vartheta\eta<Q\}$ one has $\tilde\psi_{\vartheta}=\vartheta \eta$. Hence by \eqref{eq:convex0}, it holds that
					\begin{align*}
						&\int_{\{\psi+\vartheta\eta<Q\}}\int_0^1 d\tau \int_0^\tau \p_i\tilde\psi_{\vartheta}\p_{p_ip_j}\mcG(\nabla\psi+ s\nabla\tilde\psi_{\vartheta},\psi)\p_j\tilde\psi_{\vartheta}ds
						\leq  \mfb^\ast\vartheta^2\int_{\{\psi+\vartheta\eta<Q\}}|\nabla\eta|^2.
					\end{align*}
					In the set $\{\psi+\vartheta\eta\geq Q\}$ one has $\psi_{\vartheta,*}=Q$ so that $\tilde\psi_{\vartheta}=Q-\psi$, thus
					\begin{align*}
						&\int_{\{\psi+\vartheta\eta\geq Q\}}\int_0^1 d\tau \int_0^\tau \partial_i\tilde\psi_{\vartheta}\p_{p_ip_j}\mcG(\nabla\psi+ s\nabla\tilde\psi_{\vartheta},\psi)\partial_j\tilde\psi_{\vartheta} ds\\
						= & \int_{\{\psi+\vartheta\eta\geq Q\}}\int_0^1 d\tau \int_0^\tau \partial_i\psi\p_{p_ip_j}\mcG((1-s)\nabla\psi,\psi)\partial_j\psi ds.
					\end{align*}
					On the other hand,
					\begin{align*}
						\mcG(\mathbf0,\psi)= \mcG(\nabla\psi,\psi)-\p_{p_i}\mcG(\nabla\psi,\psi)\partial_i\psi +\int_0^1 d\tau\int_0^\tau \partial_i\psi\p_{p_ip_j}\mcG((1-s)\nabla\psi,\psi) \partial_j\psi ds,
					\end{align*}
					and $\mcG(\mathbf0,\psi)=-(Q-\psi)\int_0^1 \p_z\mcG(\mathbf0,\psi+s(Q-\psi)) ds$, which follows from  $\mcG(\mathbf0,Q)=0$. Thus 
					\begin{align*}
						&\int_{\{\psi+\vartheta\eta\geq Q\}}\int_0^1 d\tau \int_0^\tau \partial_i\psi\p_{p_ip_j}\mcG((1-s)\nabla\psi,\psi)\partial_j\psi ds \\
						=&\int_{\{\psi+\vartheta\eta\geq Q\}} (\p_{p_i}\mcG(\nabla\psi,\psi)\partial_i\psi
						-\mcG(\nabla\psi,\psi))\\ &-\int_{\{\psi+\vartheta\eta\geq Q\}}(Q-\psi)\int_0^1 \p_z\mcG(\mathbf 0,\psi+s(Q-\psi)) ds.
					\end{align*}
					Note that $\tilde\psi_{\vartheta}\leq \vartheta\eta$. Combining the above inequalities and using the $L^\infty$-bound for $\p_z\mcG$ in \eqref{eq:upper_pzzG} (where $\epsilon<1/4$) yield
					\begin{equation}\label{eq113.5}
						\begin{aligned}
							&\int_{\mathcal D}  (\mcG(\nabla\psi_{\vartheta,*}, \psi_{\vartheta,*})-\mcG(\nabla\psi,\psi)) \\
							\leq &\int_{\mcD} \p_{p_i}\mcG(\nabla\psi,\psi)\partial_i\tilde\psi_{\vartheta}
							+\mfb^*\vartheta^2\int_{\{\psi+\vartheta\eta<Q\}}|\nabla\eta|^2\\ 
							&+\int_{\{\psi+\vartheta\eta\geq Q\}} (\p_{p_i}\mcG(\nabla\psi,\psi)\partial_i\psi-\mcG(\nabla\psi,\psi))+2\delta'\vartheta\int_{\mcD}\eta.
						\end{aligned}
					\end{equation}
					Using the equation \eqref{basic_eq_psi} in $\mcD \cap \{\psi<Q\}$,  $\tilde\psi_{\vartheta}
					=0$ in $\{\psi=Q\}$ and $\tilde\psi_{\vartheta}\leq \vartheta\eta$, one has
					\begin{equation*}
						\begin{aligned}
							\int_{\mcD} \p_{p_i}\mcG(\nabla\psi,\psi)\partial_i\tilde\psi_{\vartheta}= -\int_{\mcD} \p_z \mcG(\nabla\psi,\psi)\tilde\psi_{\vartheta}\leq \delta'\vartheta\int_{\mcD}\eta.
						\end{aligned}
					\end{equation*}
					Thus, in view of \eqref{eq:Phi_1}, the estimate \eqref{eq113.5} together with \eqref{eq:2d_lambda} gives 
					\begin{align*}
						\int_{\{Q-\vartheta\eta\leq\psi<Q\}}\Phi(\Lambda^2,Q) \leq \int_{\{Q-\vartheta\eta\leq\psi<Q\}} \Phi(|\nabla\psi|^2,\psi) + \mfb^* \vartheta^2\int_{\{\psi+\vartheta\eta<Q\}}|\nabla\eta|^2 + 3\delta'\vartheta\int_{\mcD}\eta.
				\end{align*}}
				Now for $0<r<\hat r<R$ with $R$ such that $B_{2R}\subset K$, let
				\begin{equation*}
					\eta(x):= 1\text{ in } B_{r},\quad \eta(x):=\frac{\ln(\frac{\hat r}{|x|})}{\ln (\frac{\hat r}{r})} \text{ in } B_{\hat r}\backslash B_{r},\quad \eta(x):=0 \text{ in } \R^2\backslash B_{\hat r}.
				\end{equation*}
				Let $\vartheta:=C_0\Lambda r$, where $C_0=C_0(\gamma,\epsilon,\bar u)>0$ is a constant such that $Q-\psi\leq C_0\Lambda r=\vartheta\eta$ in $B_{r}$ (cf. Lemma \ref{lem:linear}). Then one has 
				\begin{equation*}\label{eq:3}
					\begin{aligned}
						&\int_{B_{r}\cap \{\psi<Q\}}\left(\Phi(\Lambda^2,Q)-\Phi(|\nabla\psi|^2,\psi)\right)^+\\
						\leq &\int_{B_{\hat r}\cap \{\psi+\vartheta\eta\geq Q\}}\left(\Phi(|\nabla\psi|^2,\psi)-\Phi(|\Lambda|^2,Q)\right)^+ 
						+ C\mfb^\ast\Lambda^2r^2\left(\ln(\hat r/r)\right)^{-1} + C\Lambda r\hat r^2,
					\end{aligned}
				\end{equation*}
				for some $C=C(\gamma,\epsilon,\bar u)>0$. Note that it follows from \eqref{eq:Phi_1} that 
				\begin{align*}
					\p_t \Phi(|\bp|^2, z)=\frac{1}{2} 
					\frac{p_ip_j}{|\bp|^2}\p_{p_ip_j}\mcG(\bp,z),\quad \p_z\Phi(|\bp|^2,z)=
					p_i\p_{p_iz}\mcG(\bp,z)-\p_z\mcG(\bp,z).
				\end{align*} 
				Thus it follows from \eqref{eq:convex0} and \eqref{eq:upper_pzzG} that 
				$$\frac12\mfb_\ast\leq\p_t \Phi(|\bp|^2, z)\leq \frac12\mfb^\ast \quad\text{and}\quad |\p_z\Phi(|\bp|^2, z)|\leq 2\delta'.$$ 
				Hence one gets 
				\begin{equation*}
					\frac12\mfb_\ast(\Lambda^2-|\nabla\psi|^2)^+\leq
					\left(\Phi(\Lambda^2,Q)-\Phi(|\nabla\psi|^2,\psi)\right)^++2\delta'(Q-\psi),
				\end{equation*} 
				and 
				\begin{equation*}\label{eq:2}\begin{aligned}
						\left(\Phi(|\nabla\psi|^2,\psi)-\Phi(\Lambda^2,Q)\right)^+ &\leq \frac12\mfb^\ast \left(|\nabla\psi|^2-\Lambda^2\right)^+ + 2\delta' (Q-\psi)\\
						&\leq C\mfb^\ast\Lambda^2\left(\frac{\hat r}{R}\right)^\alpha+ 2\delta'(Q-\psi),
				\end{aligned}\end{equation*} 
				for some $C=C(\gamma,\epsilon,\bar u,\frac{\Lambda}{Q},K)>0$, where Remark \ref{rmk_gradu} is used to get the last inequality.
				Note that $\mfb_\ast,\,\mfb^\ast\sim Q^{-1}$ by the expressions of $\mfb_\ast$ and $\mfb^\ast$ in Proposition \ref{prop:assumption} and \eqref{label_7}. 
				Then combining the above inequalities together and using $Q-\psi\leq \vartheta \eta\leq C_0\Lambda r$ yield 
				\begin{align*}
					\frac{1}{r^2}\int\limits_{B_{r}\cap \{\psi<Q\}}(\Lambda^2-|\nabla\psi|^2)^+ \leq C\Lambda^2 \left(\frac{\hat r}{r}\right)^2 \left(\frac{\hat r}{R}\right)^\alpha +\frac{C\Lambda^2}{\ln(\hat r/r)}+\frac{C\Lambda^2\hat r^2}{r},
				\end{align*}
				where $C=C(\gamma,\epsilon,\bar u,\frac{\Lambda}{Q}, K)$. 
				Taking $R=\sqrt{\hat{r}}$ and $\hat{r}=r^{s}$ with some $s\in (\frac{2}{2+\frac{\alpha}{2}}, 1)$ we obtain the desired estimate.
			\end{proof}
			
			An immediate consequence of Proposition \ref{prop:gradu2} is the following corollary.
			\begin{cor}\label{cor:plane_blowup}
				Let $\psi$ be a minimizer of  \eqref{eq:energy_new} over the admissible set \eqref{admissibleK},  where the function $\mcG$ satisfies \eqref{eq:convex}--\eqref{eq:com_energy} in Proposition \ref{prop:assumption} and  \eqref{eq:radial}. Let $\bar x\in \Gamma_\psi$ be any free boundary point. Then every blowup limit of $\psi$ at $\bar x$ 
				{is a half plane solution with slope $\Lambda$ in a neighborhood of the origin.}
			\end{cor}
			The proof for Corollary \ref{cor:plane_blowup} is the same as that for \cite[Corollary 4.4]{ACF84}, which we do not repeat here.
			
			With Corollary \ref{cor:plane_blowup} at hand, one can make use of the improvement of flatness arguments, which is rather standard now,  to show that the free boundary is locally a $C^{1,\alpha}$ graph for some $\alpha\in (0,1)$. For the proof we refer to \cite[Section 5]{ACF84} for the case of minimizers of the energy $\int_{\mcD}\mcG(\nabla \psi) $, and \cite{D11, DFS15} for a different argument which does not rely on energy minimality and also allows to deal with the inhomogeneous terms.

			Higher regularity of the free boundary follows from \cite[Theorem 2]{KN}. For the completeness, we state the result here and omit the proof.
			\begin{prop}
				Let $\psi$ be a minimizer of \eqref{eq:energy_new} over the admissible set \eqref{admissibleK},  where the function $\mcG$ satisfies \eqref{eq:convex}--\eqref{eq:com_energy} in Proposition \ref{prop:assumption} and  \eqref{eq:radial}. Then the free boundary $\Gamma_\psi$ is locally $C^{k+1,\alpha}$ if $\mcG(\bp,z)$ is $C^{k, \alpha}$ in its components, and it is locally real analytic if $\mcG(\bp,z)$ is real analytic.
			\end{prop}
			
			\begin{rmk}
				Under our assumption that $\bar u$ is $C^{1,1}([0,\bar{H}])$, the function $\mathcal{G}(\bp,z)$ is $C^{1,1}$ in its components. Thus in our case we have that the free boundary is locally $C^{2,\alpha}$ for any $\alpha\in (0,1)$. 
			\end{rmk}
			

			\section{Fine Properties for the free boundary problem}\label{secprop}
			The major property of solutions obtained  in this section is that the streamlines, in particular the free boundary, are graphs. This is one of the key ingredients to justify the equivalence {of} Problem \ref{pb} and Problem \ref{Pb2}.
			
			The key step towards establishing the graph property for the streamlines is to show that the stream function is monotone in the $x_1$ variable. For that we take a specific boundary value on $\p\Omega_{\mu, R}$ for the truncated problem. 
			
			In order to state the dependence of solutions on the parameters clearly and make use of the special structure of the equation \eqref{ELpde}, we resume to use the notations introduced in Section \ref{sec:variational}.
			\begin{center}
				\includegraphics[height=5cm, width=10cm]{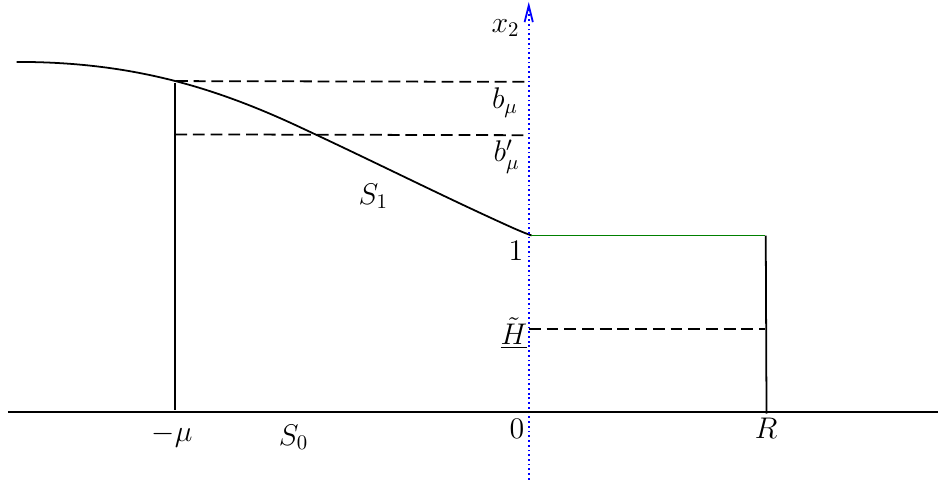}\\
				{\small Figure 3. The truncated domain $\Omega_{\mu, R}$.}
			\end{center}
			
			To describe the boundary data on $\p\Omega_{\mu,R}$, some notations are needed. {Let $s\in (\frac{1}{2},1)$ be a fixed constant and $b_\mu\in(1,\bar H)$ be such that $\Theta(b_\mu)=-\mu$, where $\Theta$ is defined in \eqref{eq:nozzle}. Choose a point $(-\mu, b'_\mu)$ with $1<b'_\mu< b_\mu$ and $k_\mu:=b_\mu-b'_\mu$ small (which is to be determined by Lemmas \ref{lem:bdry_sub_super} and  \ref{lem:linear_bd}). Define $\psi^\dag(x_2)$ as the minimizer of $$\int_{0}^{\tilde{\ubar H}}G_\epsilon(|v'|^2,v)$$ in the admissible set $K^\dag:=\{v\in C^{0,1}([0,\tilde{\ubar H}];\mathbb R)| v(0)=0, v(\tilde{\ubar H})=Q\}$, where $\tilde{\ubar H}\in(0,1)$ is small enough such that $(\psi^\dag)'(\tilde{\ubar H})>\Lambda$.  Properties of these energy minimizers are studied in Appendix (cf. Lemma \ref{lem:variation_sol}). 
				Set
				\begin{equation}\label{eq:bdry_datum}
					\psi^\sharp_{\mu, R}(x_1, x_2):=\left\{
					\begin{aligned}
						&0 && \hbox{if } x_1=-\mu, \,\, 0<x_2<b'_\mu,\\
						&Q\left(\frac{x_2-b'_\mu}{k_\mu}\right)^{1+s} && \hbox{if } x_1=-\mu, \,\, b'_\mu\leq x_2\leq b_\mu,\\
						&\psi^\dag(x_2) &&\hbox{if } x_1=R, \,\, 0<x_2<\tilde{\ubar H},\\
						&Q &&\hbox{if } x_1=R, \,\, x_2\geq \tilde{\ubar H} ,\\
						&0 &&\hbox{if } x_2=0,\\
						&Q &&\hbox{if } (x_1, x_2)\in S_1\cup \left([0,R]\times \{1\}\right).
					\end{aligned}
					\right.
				\end{equation}
				From the expression for $\psi_{\mu,R}^\sharp$ and the properties of $\psi^\dag$ in Lemma \ref{lem:variation_sol}, especially $0\leq \psi^\dag\leq Q$, it is not hard to see that  
				$\psi^\sharp_{\mu, R}$ is continuous and {satisfies} $0\leq \psi^\sharp_{\mu, R}\leq Q$.
				
				\begin{lem}\label{lem:bdry_sub_super}
					The function $\psi^\sharp_{\mu, R}(R,\cdot)$ is a supersolution to \eqref{ELpde} in $\Omega_{\mu, R}$.  Moreover, if $k_\mu$ is sufficiently small depending on $\gamma$ and $\bar u$, then $\psi^\sharp_{\mu, R}(-\mu, \cdot)$ is a subsolution to \eqref{ELpde} in $\Omega_{\mu, R}$. 
				\end{lem}
				\begin{proof}
					Let $\psi(x_1, x_2):=\psi_{\mu, R}^\sharp(R, x_2):\R\times \R_+\rightarrow \R$. {By} \eqref{supportG} $Q$ is a supersolution to \eqref{ELpde}, and by Lemma \ref{lem:variation_sol}(ii) $\psi^{\dag}(x_2)$ is a solution {to \eqref{ELpde}} for $x_2\in(0,\tilde{\ubar H})$ with  $\frac{d}{dx_2}\psi^{\dag}(\tilde{\ubar H})>0$. Then straightforward computations yield
					\[
					\int_{\Omega_{\mu,R}} g_\epsilon(|\nabla \psi|^2, \psi) \nabla \psi \cdot \nabla \eta +\p_z G_\epsilon(|\nabla\psi|^2, \psi)\eta\geq  0 \quad\text{for any } \eta\in C^{\infty}_0(\Omega_{\mu,R}),\, \eta\geq 0.
					\]
					This means that $\psi^\sharp_{\mu,R}(R,\cdot)$ is a supersolution in $\Omega_{\mu, R}$.
					
					Let $\varphi(x_2):=Q\left(\frac{x_2-b'_\mu}{k_\mu}\right)^{1+s}$. Note that $\psi^\sharp_{\mu,R}({-\mu}, x_2)=\varphi(x_2)$ when $x_2\in [b'_\mu,b_\mu]$ and $\psi^\sharp_{\mu,R}({-\mu},x_2)=0$ when $x_2\in [0,b'_\mu]$.  We claim that $\varphi$ is a subsolution to \eqref{ELpde} on $(b'_\mu,b_\mu)$, i.e.
					\begin{equation}\label{eq:subsol_bdry}
						(g_\epsilon(|\varphi'|^2, \varphi) \varphi')'-\p_z G_\epsilon(|\varphi'|^2, \varphi)> 0 \quad\text{on } (b'_\mu, b_\mu).
					\end{equation}
					Note that $\psi^\sharp_{\mu,R}(-\mu,\cdot)\in C^{1,s}_{loc}(\R)$ and $0$ is a solution 
					to \eqref{ELpde} by \eqref{supportG}. We 
					can conclude from \eqref{eq:subsol_bdry} that $\psi^\sharp_{\mu, R}(-\mu,\cdot)$ is a subsolution to \eqref{ELpde} in $\R\times (0,b_\mu)\supseteq \Omega_{\mu, R}$. Hence it remains to verify \eqref{eq:subsol_bdry}. 
					{The straightforward computations give
						\begin{equation*}
							\begin{split}
								(g_\epsilon(|\varphi'|^2, \varphi) \varphi')'=&\left(g_\epsilon + 2 \p_t g_\epsilon (\varphi')^2 \right)\varphi'' +\p_z g_\epsilon (\varphi')^2 \\
								=&\left(g_\epsilon + 2 \p_t g_\epsilon (\varphi')^2\right)\frac{(1+s)sQ}{k_\mu^2} \left(\frac{x_2-b'_\mu}{k_\mu}\right)^{-1+s} + \p_z g_\epsilon (\varphi')^2.
							\end{split}
						\end{equation*}
						It follows from \eqref{beta1eps}, \eqref{eq:dzgm_dtgm}--\eqref{lable_2} and \eqref{eq:upper_lower_g}, as well as \eqref{eq:upper_pzzG} that 
						\begin{align*}
							g_\epsilon+2\p_t g_\epsilon |\varphi'|^2\geq C_\gamma B_*^{-\frac1{\gamma-1}} >0,\quad |\p_zg_\epsilon|\leq C_\gamma\kappa_0 B_*^{\frac1{\gamma-1}}\p_t g_\epsilon,\quad 
							|\p_z G_\epsilon|\leq C_\gamma\kappa_0,
						\end{align*}
						where $C_\gamma$ is a constant  depending only on $\gamma$, and $\kappa_0$ is defined in \eqref{eq:u0_eps0} depending only on $\bar u$. 
						Therefore, if $k_\mu=k_\mu(\gamma,\bar u)$ is sufficiently small}, the function $\varphi$ satisfies \eqref{eq:subsol_bdry} on $(b'_\mu,b_\mu)$. 
				\end{proof}

				The next lemma shows that minimizers of \eqref{eq:mini_truncated} must stay between $\psi^\sharp_{\mu, R}(-\mu,\cdot)$ and $\psi^\sharp_{\mu, R}(R,\cdot)$. This in particular implies that for a minimizer $\psi$,  $\partial_{x_1} \psi$ is nonnegative at the two sides $x_1=-\mu$ and $x_1=R$ of $\Omega_{\mu,R}$.
				
				\begin{lem}\label{lem:linear_bd}
					Let $\psi$ be a minimizer of the truncated problem \eqref{eq:mini_truncated} in $\Omega_{\mu, R}$ with the boundary value $\psi^\sharp_{\mu, R}$ defined in \eqref{eq:bdry_datum}. 
					If $k_\mu$ is sufficiently small depending on $\gamma,\,\bar u$, and $\epsilon$, then  
					we have
					\begin{equation*}
						{\psi^\sharp_{\mu, R} (-\mu, x_2) < \psi(x_1,x_2) 
							\leq \psi^\sharp_{\mu, R}(R, x_2)} 
						\quad\text{for all } (x_1,x_2)\in \Omega_{\mu, R}.
					\end{equation*}
				\end{lem}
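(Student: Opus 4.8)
The plan is to establish the two-sided bound by a comparison argument, using the fact that the boundary datum $\psi^\sharp_{\mu,R}$ is constructed precisely so that $\psi^\sharp_{\mu,R}(-\mu,\cdot)$ is a subsolution of \eqref{ELpde} and $\psi^\sharp_{\mu,R}(R,\cdot)$ is a supersolution (Lemma \ref{lem:bdry_sub_super}). The strategy mimics the proof of $0\le \psi\le Q$ in Lemma \ref{lem:upper_lower_bd}: one perturbs the minimizer toward the relevant barrier and exploits minimality together with the comparison principle Lemma \ref{lem:comparison}. Throughout, $\kappa_0$ (hence $\delta$, via \eqref{eq:u0_eps0} and Proposition \ref{prop:assumption}(ii)) is taken small enough to apply Lemma \ref{lem:comparison} and Lemma \ref{lem:bdry_sub_super}.

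First I would prove the lower bound $\psi(x_1,x_2) > \psi^\sharp_{\mu,R}(-\mu,x_2)$. Let $\underline\psi(x_1,x_2):=\psi^\sharp_{\mu,R}(-\mu,x_2)$; by Lemma \ref{lem:bdry_sub_super} it is a subsolution of \eqref{ELpde} in $\Omega_{\mu,R}$, and on $\p\Omega_{\mu,R}$ one checks from \eqref{eq:bdry_datum} that $\underline\psi\le \psi^\sharp_{\mu,R}=\psi$ (here one uses $0\le \psi^\dag\le Q$ on the right face, $\underline\psi\equiv 0$ on $\{x_2=0\}$, and $\underline\psi\le Q$ on $S_1\cup([0,R]\times\{1\})$). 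Consider the competitor $\psi^\vartheta := \psi + \vartheta\min\{0,\psi-\underline\psi\}$ for $\vartheta\in(0,1)$. Since $\psi=\psi^\sharp_{\mu,R}\ge\underline\psi$ on $\p\Omega_{\mu,R}$, we have $\psi^\vartheta=\psi^\sharp_{\mu,R}$ on $\p\Omega_{\mu,R}$, so $\psi^\vartheta\in\mathcal{K}_{\psi^\sharp_{\mu,R}}$; and since $\underline\psi\le Q$ with equality only where $\psi$ would also be forced up, one verifies $\{\psi^\vartheta<Q\}=\{\psi<Q\}$, so the measure term in $\mcJ$ is unchanged. Expanding $\vartheta^{-1}(\mcJ(\psi^\vartheta)-\mcJ(\psi))\ge 0$ and letting $\vartheta\to 0$ as in Lemma \ref{lem:upper_lower_bd} yields, after an integration by parts using that $\underline\psi$ is a subsolution,
\begin{align*}
0\le \int_{\Omega_{\mu,R}\cap\{\psi<\underline\psi\}}\Big[\p_{p_i}\mcG(\nabla\underline\psi,\underline\psi)-\p_{p_i}\mcG(\nabla\psi,\psi)\Big]\p_i(\psi-\underline\psi) + \Big[\p_z\mcG(\nabla\underline\psi,\underline\psi)-\p_z\mcG(\nabla\psi,\psi)\Big](\psi-\underline\psi)^-;
\end{align*}
the convexity estimate \eqref{eq:convex0} and the bounds \eqref{eq:upper_pzzG}, combined with the Poincaré inequality on $\Omega_{\mu,R}\subset\R\times[0,\bar H]$ exactly as in Lemma \ref{lem:comparison}, force $(\psi-\underline\psi)^-\equiv 0$, i.e. $\psi\ge\underline\psi$. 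The symmetric argument with $\overline\psi(x_1,x_2):=\psi^\sharp_{\mu,R}(R,x_2)$, a supersolution, and the competitor $\psi+\vartheta\max\{0,\psi-\overline\psi\}$, gives $\psi\le\overline\psi$.

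To upgrade the non-strict inequalities to strict ones, I would invoke the strong maximum principle / Harnack inequality. The difference $\psi-\underline\psi\ge 0$ satisfies, in the open set $\Omega_{\mu,R}\cap\{\psi<Q\}\cap\{\underline\psi<Q\}$, a linear uniformly elliptic equation (obtained by subtracting the equations for $\psi$ and $\underline\psi$ and writing the difference of the nonlinearities in divergence form with bounded measurable coefficients, using \eqref{eq:convex0} and \eqref{eq:upper_pzzG}); by the strong maximum principle, if $\psi-\underline\psi$ vanishes at an interior point it vanishes on the connected component, which would contradict the boundary behavior at $x_1=R$ where $\psi^\dag>0=\underline\psi$ on $\{0<x_2<1\}$. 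In the region where $\psi$ or $\underline\psi$ equals $Q$ one argues separately using $\psi<Q$ wherever $\underline\psi<Q$ near those faces and the geometry of $\Omega_{\mu,R}$; the same reasoning applied to $\overline\psi-\psi\ge 0$ gives the strict upper bound. \textbf{The main obstacle} I anticipate is the bookkeeping needed to confirm that the competitors $\psi^\vartheta$ genuinely lie in $\mathcal{K}_{\psi^\sharp_{\mu,R}}$ and leave the set $\{\psi<Q\}$ unchanged — this hinges on the precise piecewise definition of $\psi^\sharp_{\mu,R}$ in \eqref{eq:bdry_datum} and on checking the inequalities $\underline\psi\le\psi^\sharp_{\mu,R}\le\overline\psi$ on every face of $\p\Omega_{\mu,R}$ — together with the passage from the weak (non-strict) comparison to strictness near the corners of the truncated domain, where the boundary is only Lipschitz and the standard Hopf-type arguments must be applied with care.
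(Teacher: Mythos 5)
Your approach for the \emph{lower} bound is fine in spirit and in fact somewhat heavier than what the paper does: after observing that $\psi$ is a supersolution (Lemma \ref{lem:supersol}) and $\underline\psi:=\psi^\sharp_{\mu,R}(-\mu,\cdot)$ is a subsolution (Lemma \ref{lem:bdry_sub_super}) with $\underline\psi\le\psi$ on $\p\Omega_{\mu,R}$, one can just invoke the (weak and then strong) comparison principle directly, without going through the variational perturbation at all — which is essentially what the paper does, restricting to $D=\Omega_{\mu,R}\cap\{b'_\mu<x_2<b_\mu\}$ and citing the strong maximum principle of \cite{R18}; your perturbation argument, after unwinding, reproves the supersolution inequality of Lemma \ref{lem:supersol} and then runs the proof of Lemma \ref{lem:comparison}.

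There is a \emph{genuine gap in your upper bound}. First, the competitor $\psi^\vartheta=\psi+\vartheta\max\{0,\psi-\overline\psi\}$ only increases $\psi$ where it already exceeds the barrier, so the resulting variational inequality, after letting $\vartheta\to0$, again merely yields the supersolution inequality $\int \p_{p_i}\mcG(\nabla\psi,\psi)\p_i\zeta+\p_z\mcG(\nabla\psi,\psi)\zeta\ge0$ for $\zeta=(\psi-\overline\psi)^+\ge 0$, which is already Lemma \ref{lem:supersol}. If instead you try the natural ``decrease toward the barrier'' competitor $\psi-\vartheta(\psi-\overline\psi)^+$, the set $\{\psi^\vartheta<Q\}$ strictly enlarges wherever $\psi=Q$ and $\overline\psi<Q$, so the measure term no longer cancels and the variational inequality gives no useful information. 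The deeper obstruction is structural: $\psi$ is only a \emph{supersolution} globally (its kink across the free boundary, where $|\nabla\psi|=\Lambda>0$, has the wrong sign), and $\overline\psi$ is also a supersolution; a comparison principle in the form ``supersolution $\le$ supersolution'' simply does not hold. Moreover, if you try to work instead in the open set $\{\psi<Q\}$ where $\psi$ is a genuine solution, the boundary comparison $\psi\le\overline\psi$ fails a priori on the free boundary (there $\psi=Q$ while $\overline\psi=\psi^\dag<Q$ whenever $x_2<\tilde{\ubar H}$), and the location of the free boundary is exactly what is unknown. The paper avoids this by a sliding argument: translate $\Psi^{(\tau)}=\psi^\dag+\tau$ downward until the first contact $\tau_*$, then show $\tau_*>0$ is impossible — if the contact point is interior to the fluid region one uses the strong comparison principle; if it is a free boundary point $(\bar x_1,\bar x_2)$ one uses the boundary Hopf lemma together with the free boundary condition to get $\Lambda=|\nabla\psi(\bar x)|>|\nabla\Psi^{(\tau_*)}(\bar x_2)|=\Lambda\bar x_2^{-s}>\Lambda$, a contradiction because $\bar x_2<1$. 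This use of the free boundary flux condition $|\nabla\psi|=\Lambda$ is the step that cannot be replaced by a pure energy-competitor argument, and it is the essential new idea you are missing.
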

				\begin{proof}
					The proof is divided into two steps.
					
					{\it Step 1. Lower bound.} Let $k_\mu$ be  sufficiently small such that in $D:=\Omega_{\mu, R}\cap \{b'_\mu< x_2< b_\mu\}$, $\psi^\sharp_{\mu,R}(-\mu, \cdot)$ is a strict subsolution (cf. \eqref{eq:subsol_bdry}) and the comparison principle (cf. Lemma \ref{lem:comparison}) holds. Note that $\psi^\sharp_{\mu,R}(-\mu, \cdot)\leq \psi$ on $\p D$. 
					Thus we infer from the comparison principle and the strong maximum principle (cf. \cite[Theorem 3.5]{GT}) that $\psi^\sharp_{\mu, R}(-\mu, x_2)< \psi(x_1,x_2)$ in $D$. Since $\psi^\sharp_{\mu, R}(-\mu,x_2)\equiv 0$ if $x_2\in [0,b'_\mu]$, one has $\psi^\sharp_{\mu, R}(-\mu, x_2)< \psi(x_1,x_2)$ in $\Omega_{\mu,R}$.
					
					{\it Step 2. Upper bound.} {Denote $\Psi^{(\tau)}(x_2):= \psi^\sharp_{\mu, R}(R,x_2+\tau)$ for $\tau>0$. It follows from the proof of Lemma \ref{lem:bdry_sub_super} that $\Psi^{(\tau)}$ is also a supersolution in $\Omega_{\mu,R}$.
						Furthermore, we have $\psi\leq Q=\Psi^{(\tau)}$ if $\tau>\tilde{\ubar H}$.
						Let
						\[
						\tau_*:=\inf\{ \tau| \psi\leq \Psi^{(\tau)}\}
						\]
						be the smallest constant such that the graph of $\psi$ touches that of $\Psi^{(\tau)}$. We claim $\tau_*=0$. Suppose by contradiction that $\tau_*>0$. By the boundary condition of $\psi$ and the strong maximum principle, the graphs of $\Psi^{(\tau_*)}$ and $\psi$ must touch at a free boundary point, say $(\bar{x}_1,\tilde{\ubar H}-\tau_*)$. Then by the Hopf lemma (cf. 
						\cite[Lemma 3.4]{GT}), one has
						\begin{equation*}
							\Lambda = |\nabla \psi (\bar x_1,\tilde{\ubar H}-\tau_*)| > |(\Psi^{(\tau_*)})^\prime(\tilde{\ubar H}-\tau_*)|
							>\Lambda,
						\end{equation*}
						where the last inequality is due to the choice of $\tilde{\ubar H}$. This contradiction implies that $\tau_*=0$.} Hence the proof of the lemma is completed.
				\end{proof}

				The next proposition concerns the uniqueness of the minimizer and the monotonicity of the minimizer with respect to $x_1$.
				\begin{prop}\label{prop:mono}
					Under the same assumptions as in Lemma \ref{lem:linear_bd}, $\psi$ is the unique minimizer of \eqref{eq:mini_truncated}. Furthermore, $\p_{x_1}\psi\geq 0$ in $\Omega_{\mu,R}$.
				\end{prop}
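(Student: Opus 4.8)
The plan is to derive uniqueness and monotonicity together from a single sliding comparison, in the spirit of \cite{ACF85}. I first record the lattice structure of minimizers: if $\psi_1,\psi_2$ are two minimizers over $\mathcal{K}_{\psi^\sharp_{\mu,R}}$, then $\max(\psi_1,\psi_2)$ and $\min(\psi_1,\psi_2)$ are again admissible, and since $\mcG(\nabla\psi,\psi)$ is evaluated pointwise while $\chi_{\{\psi<Q\}}$ splits under $\max/\min$, one has the a.e.\ identity
\[
\mcG(\nabla\max,\max)+\mcG(\nabla\min,\min)=\mcG(\nabla\psi_1,\psi_1)+\mcG(\nabla\psi_2,\psi_2),
\]
and likewise for the $\chi$-term, so that $\mcJ(\max)+\mcJ(\min)=\mcJ(\psi_1)+\mcJ(\psi_2)$. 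Minimality then forces both $\max(\psi_1,\psi_2)$ and $\min(\psi_1,\psi_2)$ to be minimizers. The same is true on any Lipschitz subdomain of the strip $\R\times[0,\bar H]$; in particular restrictions of minimizers to subdomains are minimizers, and since $\mcJ$ is translation invariant, if $\tilde\psi$ is a minimizer over $\mathcal{K}_{\psi^\sharp_{\mu,R}}$ then $\tilde\psi_h:=\tilde\psi(\cdot+he_1)$ is a minimizer on $\Omega_{\mu,R}-he_1$ for every $h>0$.

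The core step will be the following sliding comparison: for every minimizer $\psi$, every minimizer $\tilde\psi$ over $\mathcal{K}_{\psi^\sharp_{\mu,R}}$, and every $h>0$, one has $\psi\le\tilde\psi_h$ on $\overline{D_h}$, where $D_h:=\Omega_{\mu,R}\cap(\Omega_{\mu,R}-he_1)$. To prove it I would first check the ordering on $\partial D_h$: on $S_0$ both functions vanish and on the horizontal portion $\{x_2=1\}$ both equal $Q$, while on the image of $\{x_1=-\mu\}$, on the image of $\{x_1=R\}$ and on the translated nozzle wall $\psi<\tilde\psi_h$ \emph{strictly}, using Lemma~\ref{lem:linear_bd} (which gives $\psi^\sharp_{\mu,R}(-\mu,\cdot)<\tilde\psi<\psi^\sharp_{\mu,R}(R,\cdot)$ for every minimizer), $0\le\psi\le Q$ (Lemma~\ref{lem:upper_lower_bd}), and the fact that $\psi<Q$ at interior points of the nozzle region. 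Assuming $N:=\{\psi>\tilde\psi_h\}\cap D_h\neq\emptyset$, the function $\psi-\tilde\psi_h$ attains its positive maximum over $\overline N$ at some $\bar x$; the strict boundary inequalities confine $\overline N\cap\partial D_h$ to $S_0\cup\{x_2=1\}$, where $\psi-\tilde\psi_h\le 0$, so $\bar x$ is an interior point of $D_h$. If both $\psi(\bar x)<Q$ and $\tilde\psi_h(\bar x)<Q$, then both solve \eqref{ELpde} near $\bar x$, so $\psi-\tilde\psi_h$ solves a linear elliptic equation whose zeroth order coefficient is $O(\delta)$, and Lemma~\ref{lem:comparison} together with the strong maximum principle (valid for $\delta$ small) propagates the interior maximum to $\partial D_h$, a contradiction. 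Since $\psi(\bar x)>\tilde\psi_h(\bar x)$ with both $\le Q$, the only remaining possibility is $\psi(\bar x)=Q>\tilde\psi_h(\bar x)$; the interior of $\{\psi=Q\}$ is excluded in the same way (a positive constant in $(0,Q)$ cannot solve \eqref{ELpde}), so $\bar x\in\Gamma_\psi$. At such a point one compares $\psi$ from above with the smooth function $\tilde\psi_h+(\psi-\tilde\psi_h)(\bar x)$, and combines the Hopf lemma on $\{\psi<Q\}$ with the free boundary condition $|\nabla\psi|=\Lambda$ (Remark~\ref{rmk:fbcondition}) exactly as in Step~2, Case~2 of the proof of Lemma~\ref{lem:linear_bd}, to reach a contradiction. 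Hence $N=\emptyset$.

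Granting the core step, taking $\tilde\psi=\psi$ gives $\psi(x)\le\psi(x+he_1)$ on $D_h$ for all $h>0$, and letting $h\downarrow 0$ yields $\p_{x_1}\psi\ge 0$ in $\Omega_{\mu,R}$, i.e.\ the monotonicity. For uniqueness, if $\psi_1,\psi_2$ are two minimizers then the core step with $\psi=\psi_1$, $\tilde\psi=\psi_2$ gives $\psi_1\le\psi_2(\cdot+he_1)$ on $D_h$ for every $h>0$; letting $h\downarrow 0$ gives $\psi_1\le\psi_2$, and exchanging the roles of $\psi_1$ and $\psi_2$ gives $\psi_2\le\psi_1$, so $\psi_1=\psi_2$. (Note that the $h>0$ version is all that is needed, so the strict boundary inequalities of Lemma~\ref{lem:linear_bd} are available throughout.)

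The hardest part is the free boundary touching case at $\bar x\in\Gamma_\psi$ with $\tilde\psi_h(\bar x)<Q$. Because $\psi$ and $\tilde\psi_h$ have different free boundaries, this cannot be treated by a comparison in a single fixed domain: one must decompose $D_h$ according to the coincidence sets $\{\psi=Q\}$ and $\{\tilde\psi_h=Q\}$ and rule out, by hand, a first contact lying on one free boundary but in the positivity set of the other. I expect this to require the one-sided gradient bound at the free boundary (Lemma~\ref{lem:gradu}, Proposition~\ref{prop:gradu2}), the nondegeneracy (Lemma~\ref{lem:nondeg}), the $C^{1,\alpha}$ regularity of $\Gamma_\psi$, the free boundary condition, and the strict boundary inequalities propagated from Lemma~\ref{lem:linear_bd}, assembled along the lines of \cite{ACF85}.
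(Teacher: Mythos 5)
You correctly identify the lattice property of minimizers (that $\max$ and $\min$ of two minimizers with appropriate boundary data are again minimizers, exactly equality \eqref{eq:maxmin} in the paper) and you correctly set up the sliding comparison $\psi \le \tilde\psi_h$. But you then abandon the lattice property and try to establish the sliding comparison by a direct case analysis of first-touching points. This is precisely where your plan develops a genuine gap, and it is a gap the paper's own argument is designed to avoid.

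The paper's proof works differently after establishing \eqref{eq:maxmin}. It restricts attention to the connected component of $\{\psi<Q\}$ containing $\{x_1=-\mu\}$, starts from the strict inequality $\psi^{(k)}<\psi$ near that edge, and supposes strict inequality fails inside the component. One then finds a ball $B\Subset\Omega_{\mu,R}$ with $\psi^{(k)}<\psi$ in $B$ and a point $\bar x\in\partial B$ with $\psi^{(k)}(\bar x)=\psi(\bar x)<Q$. Both functions solve \eqref{ELpde} near $\bar x$, so the boundary Hopf lemma gives $\partial_\nu(\psi^{(k)}-\psi)(\bar x)>0$, which forces a gradient jump — a kink — in $\max\{\psi^{(k)},\psi\}$ across $\bar x$. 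But $\max\{\psi^{(k)},\psi\}$ is itself a minimizer by \eqref{eq:maxmin} and has value $<Q$ at $\bar x$, so by interior regularity of minimizers it is $C^{1,\alpha}$ near $\bar x$. Contradiction. Connectedness of $\{\psi<Q\}$ (every component must meet $\partial\Omega_{\mu,R}$, and the trace of $\{\psi<Q\}$ on $\partial\Omega_{\mu,R}$ is a single arc by the choice of $\psi^\sharp_{\mu,R}$) then upgrades this to all of $\{\psi<Q\}$ and the free boundary touching case simply never arises.

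In your plan you instead look at the interior maximum of $\psi-\tilde\psi_h$ over $\overline N$. Two problems follow. First, in the interior case where both functions are $<Q$, you invoke ``the strong maximum principle (valid for $\delta$ small)'' for the linear equation satisfied by the difference. That equation has a zeroth-order coefficient of size $O(\delta)$ with no sign; the classical strong maximum principle requires $c\le 0$, and while smallness helps for the \emph{weak} comparison (Lemma~\ref{lem:comparison}), it does not automatically restore the strong version at a positive interior maximum. This step is not obviously correct as stated. Second, and more seriously, you explicitly leave unresolved the case $\bar x\in\Gamma_\psi$ with $\tilde\psi_h(\bar x)<Q$, noting only that it ``cannot be treated by a comparison in a single fixed domain'' and listing ingredients you ``expect'' to need. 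The analogy you draw with Step~2, Case~2 of Lemma~\ref{lem:linear_bd} does not carry over: there the competitor $\Psi^{(\tau_\ast)}$ is an explicit one-dimensional supersolution with a \emph{known} gradient $\Lambda(\bar x_2)^{-s}>\Lambda$ on the free boundary, whereas your competitor $\tilde\psi_h+c$ carries no such a priori bound on $|\nabla\tilde\psi_h(\bar x)|$, and (because the equation has a genuine $z$-dependence) $\tilde\psi_h+c$ is not even a solution of the same quasilinear equation. So the Hopf-plus-free-boundary-condition contradiction you sketch does not close.

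To repair the plan, use the lattice identity you already proved: after showing $J(\psi)=J(\max\{\psi,\tilde\psi_h\})$, conclude $\max\{\psi,\tilde\psi_h\}$ is a minimizer and hence $C^{1,\alpha}$ wherever it is strictly less than $Q$, and derive the contradiction from the Hopf kink at a first-touching point \emph{inside} $\{\psi<Q\}$; then use connectedness of $\{\psi<Q\}$ to conclude. This removes both the free-boundary touching case and the strong-maximum-principle issue, and is exactly the route taken in the paper.
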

				\begin{proof}
					{\it Step 1. Nonnegativity of $\partial_{x_1}\psi$.}
					For $k>0$ small, denote
					$$\psi^{(k)}(x_1,x_2):=\psi(x_1-k, x_2), \quad \text{for}\,\, (x_1,x_2)\in \Omega^{k}_{\mu,R}:=\{(x_1+k,x_2)| (x_1,x_2)\in \Omega_{\mu,R}\}.$$
					Since $\psi$ is a minimizer for $J^\epsilon_{\mu,R,\Lambda}$ over $\mathcal{K}_{\psi^\sharp_{\mu,R}}$, then $\psi^{(k)}$ is a minimizer for $J^{\epsilon,k}_{\mu, R,\Lambda}$ in $\mathcal{K}_{\psi^{\sharp,(k)}_{\mu,R}}$. Here $J^{\epsilon, k}_{\mu, R,\Lambda}$ is defined by performing the corresponding translation for $J_{\mu, R,\Lambda}^\epsilon$. For ease of notations, in the rest of the proof we denote
					$$J:=J^{\epsilon}_{\mu, R,\Lambda},\quad 
					J^k:=J^{\epsilon, k}_{\mu, R,\Lambda}\quad\text{and}\quad \mathcal{K}:=\mathcal{K}_{\psi^\sharp_{\mu,R}},\quad\mathcal{K}^k:= \mathcal{K}_{\psi^{\sharp,(k)}_{\mu,R}}.$$  
					By Lemma \ref{lem:linear_bd}, one has
					\begin{equation*}
						\min\{\psi^{(k)}, \psi\} \in \mathcal{K}^{k}\quad\text{and}\quad  \max\{\psi^{(k)}, \psi\}\in \mathcal{K}.
					\end{equation*}
					Since the energy functional  $J$ depends only on $\nabla\psi$ and $\psi$, it is easy to verify that
					\begin{equation*}
						J^k(\psi^{(k)}) + J(\psi) = J^{k}(\min\{\psi^{(k)}, \psi\}) + J(\max\{\psi^{(k)}, \psi\}).
					\end{equation*}
					This, together with the minimality of $\psi^{(k)}$ and $\psi$, gives
					\begin{equation}\label{eq:maxmin}
						J(\psi)=J(\max\{\psi^{(k)},\psi\}).
					\end{equation}

					{Our aim is to show that $\psi^{(k)}\leq\psi$ in} $\Omega_{\mu,R}\cap \Omega_{\mu,R}^{(k)}$. By Lemma \ref{lem:linear_bd} and the continuity of $\psi$, one has $\psi^{(k)}< \psi$ near $\{x_1=-\mu+k\}\cap \Omega_{\mu,R}$. If the strict inequality was not true in the connected component of $\Omega_{\mu,R}\cap \{\psi<Q\}$ containing $\{x_1=-\mu+k\}$, then one would find a ball $B\Subset \Omega_{\mu,R}$ and $\bar x\in \p B$ with $\psi(\bar x)<Q$ such that
					\begin{equation*}
						\psi^{(k)}<\psi \ \text{ in } B, \quad \psi^{(k)}(\bar x)=\psi(\bar x).
					\end{equation*}
					By the Hopf lemma
					one has
					\begin{equation*}
						\lim_{x\rightarrow \bar x, \ x\in B}\p_\nu (\psi^{(k)}-\psi)(x)>0,
					\end{equation*}
					where $\nu $ is the outer unit normal of $\p B$ at $\bar x$. Thus there exists a curve through $\bar x$ such that along this curve $\max\{\psi^{(k)},\psi\}$ is not $C^1$ at $\bar x$. On the other side, since $\max\{\psi^{(k)},\psi\}$ is a minimizer by \eqref{eq:maxmin} and $\max\{\psi^{(k)},\psi\}(\bar x)<Q$, then by the interior regularity for the minimizers, {$\max\{\psi^{(k)},\psi\}$ is $C^{2,\alpha}$ around} $\bar x$. This is however a contradiction.
					
					Note that every component $\{\psi<Q\}$ has to touch $\p \Omega_{\mu,R}$ (otherwise it would violate the strong maximum principle). Moreover, from the construction of the boundary datum $\psi^\sharp_{\mu,R}$ one has that $\{\psi<Q \}\cap \p\Omega_{\mu,R}$ is a connected arc.  Hence $\{\psi<Q\}\cap \Omega_{\mu,R}$ consists only one component. Consequently, $\psi^{(k)}<\psi$ in $\Omega_{\mu,R}\cap \{\psi< Q\}$. Taking $k\rightarrow 0+$ yields $\p_{x_1}\psi\geq 0$.
					
					{\it  Step 2.  Uniqueness.} Assume that $\psi_1$ and $\psi_2$ are two minimizers with the same boundary data. Let
					$ \psi_1^{(k)}(x_1, x_2):=\psi_1(x_1-k,x_2)$ for $k>0$.
					The same argument as above gives that $\psi_1^{(k)}\leq \psi_2$ in $\Omega_{\mu,R}$. Taking $k\rightarrow 0+$ yields $\psi_1\leq \psi_2$. Similarly, it holds that $\psi_2\leq \psi_1$. Thus one has $\psi_1=\psi_2$.
				\end{proof}
				
				An important consequence of the property $\p_{x_1}\psi\geq 0$ obtained from Proposition \ref{prop:mono} is that, 
				{the free boundary is an $x_2$-graph, which is proved in Proposition \ref{thm:graph} below. 
					Let
					\begin{align}\label{eq:free_bdry_defi}
						\Gamma_{\mu,R,\Lambda}:=\p\{\psi <Q\}\cap \{(x_1,x_2)| (x_1,x_2)\in (-\mu, R]\times (0,1)\}
					\end{align}
					denote those free boundary points which lie strictly below $\{x_2=1\}$. 
					In view of the boundary value $\psi_{\mu,R}^\sharp(R,\cdot)$ in \eqref{eq:bdry_datum}, $\Gamma_{\mu, R,\Lambda}$ is not empty.}
				
				Before we prove that the free boundary is a graph, let us digress for the proof of the following non-oscillation property.

				\begin{lem}[Non-oscillation]
					\label{lem:nonoscillation}
					Let $D$ be a domain in $\Omega_{\mu, R}\cap \{\psi<Q\}$ bounded by two disjoint arcs $\Gamma_1$ and $\Gamma_1'$ of the free boundary and by $\{x_1=k_1\}$ and $\{x_1=k_2\}$. Suppose that $\Gamma_1$ ($\Gamma_1'$) lies in $\{k_1<x_1<k_2\}$ with endpoints $(k_1, l_1)$ and $(k_2,l_2)$ ($(k_1, l_1')$ and $(k_2,l_2')$). Suppose that $\dist((0,1), D)\geq c_0$ for some $c_0>0$ and $|k_1-k_2|\leq 1$.  Then
					\begin{equation}\label{eq:osc0}
						|k_1-k_2|\leq C \max\{|l_1-l_1'|, |l_2-l_2'|\}
					\end{equation}
					for some $C$ depending on $\gamma,\epsilon, \bar u, \frac{\Lambda}{Q},c_0$.
				\end{lem}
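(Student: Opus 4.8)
The plan is to prove the non-oscillation estimate \eqref{eq:osc0} by a comparison argument against an explicit barrier, in the spirit of the analogous lemma in \cite{ACF85}. The geometric picture is that the domain $D$ is pinched between two free-boundary arcs $\Gamma_1$ and $\Gamma_1'$, and if $D$ were very long in the $x_1$-direction while the endpoint separations $|l_1-l_1'|$ and $|l_2-l_2'|$ were small, then $\psi$ would be trapped in a thin, long channel of width comparable to $\max\{|l_1-l_1'|,|l_2-l_2'|\}$ on which $\psi=Q$ on the lateral boundary and $|\nabla\psi|=\Lambda$ there as well; such a configuration forces $\psi$ to stay very close to $Q$, contradicting the linear nondegeneracy (Lemma \ref{lem:nondeg}) at interior free-boundary points. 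The condition $\dist((0,1),D)\geq c_0$ guarantees that $D$ stays away from the nozzle tip so that the equation \eqref{ELpde} holds in a uniformly elliptic, interior fashion throughout a neighbourhood of $\overline D$, and that all constants in Lemma \ref{lem:linear} and Lemma \ref{lem:nondeg} are available with $\delta$ small.

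First I would set $w:=\psi^\ast=Q-\psi\geq 0$, which satisfies the inequality \eqref{eqpsi*} in $D$ with equality in $\{w>0\}$, and which vanishes on the two lateral free arcs. Set $d:=\max\{|l_1-l_1'|,|l_2-l_2'|\}$ and suppose for contradiction that $|k_1-k_2|>Cd$ for a large constant $C$ to be chosen. The key step is to build a supersolution $\Psi$ of the form $\Psi(x)=\lambda\,\omega(\mathrm{dist}(x,\Gamma_1\cup\Gamma_1'))$ (or more concretely, after flattening the channel, a one-variable profile of the vertical distance to the two arcs, multiplied by a slowly varying cutoff in $x_1$) with $\Psi\geq w$ on $\partial D$, so that the comparison principle (Lemma \ref{lem:comparison}, applied to the rescaled equation as in \eqref{eq:scale_w}) gives $w\leq \Psi$ in $D$. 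Since the channel has width $\lesssim d$ on the bulk of its length, $\Psi\lesssim \lambda d$ there, hence $w\lesssim \lambda d$ at points well inside $D$ and far from the two cross-sections $\{x_1=k_i\}$; such points exist precisely because $|k_1-k_2|$ was assumed large. On the other hand, because these interior free-boundary arcs are genuine (so that $w$ does not vanish identically on any ball meeting them), the nondegeneracy Lemma \ref{lem:nondeg} forces $\frac1R(\fint_{B_R}|w|^p)^{1/p}\geq c_r\lambda$ on balls $B_R$ of a fixed radius $R\sim c_0$ centered near the free boundary inside $D$; combining with $w\lesssim \lambda d$ on such a ball yields $c_r\lambda\lesssim \lambda d$, i.e. $d\gtrsim 1$, and then $|k_1-k_2|\leq \mathrm{diam}(\Omega_{\mu,R}\cap\{\dist(\cdot,(0,1))\geq c_0\})\lesssim C\cdot 1 \lesssim Cd$, contradicting $|k_1-k_2|>Cd$ once $C$ is large. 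Chasing the constants, this is exactly \eqref{eq:osc0} with $C=C(B_\ast,\gamma,\epsilon,c_0)$.

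The main obstacle, and the step requiring the most care, is the construction of the barrier $\Psi$ and the verification that it is a genuine supersolution of the inhomogeneous, quasilinear, $z$-dependent equation \eqref{ELpde} on the curved channel $D$: one must control the curvature of the free arcs $\Gamma_1,\Gamma_1'$ (available from the $C^{1,\alpha}$ regularity of the free boundary established in Section \ref{secreg}), handle the lower-order and $\p_z$ terms using the smallness bounds \eqref{eq:upper_pzG}--\eqref{eq:upper_pzzG} on $\p_z\mcG$, and absorb the cross-terms coming from $\nabla\psi$ near $\Gamma_1$ where $|\nabla\psi|\to\Lambda$. A clean way to organize this is to change variables to straighten $\Gamma_1\cup\Gamma_1'$ (using the graph property already available, or a local flattening), reducing to a strip of variable width $h(x_1)\leq d$, and to take $\Psi$ built from the product of $\lambda$ times the (rescaled) distance to the walls with an exponential cutoff $e^{-\nu(x_1-k_1)}$ as in the barriers of Lemma \ref{lem:linear} and Lemma \ref{lem:nondeg}; the exponential gains a definite negative contribution in the elliptic operator that dominates the error terms once $\delta$ is small, exactly as in \eqref{eq:u0} above. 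Modulo this computation the argument is routine, and it parallels the non-oscillation lemma of \cite{ACF85} with the extra lower-order terms controlled by Proposition \ref{prop:assumption}.
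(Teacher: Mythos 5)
Your proposal takes a genuinely different route from the paper. The paper's proof is a one-line flux balance: integrate the divergence-form equation \eqref{ELpde} over $D$, use $\partial_\nu\psi=\Lambda$ and $g_\epsilon\geq C_\ast$ on the free arcs to bound the flux from below by $C_\ast\Lambda\big(\mathcal{H}^1(\Gamma_1)+\mathcal{H}^1(\Gamma_1')\big)$, bound the flux through the two vertical cross-sections $\{x_1=k_i\}$ by $C\Lambda\max\{|l_1-l_1'|,|l_2-l_2'|\}$ using the interior Lipschitz and $C^{1,\alpha}$ estimates, absorb the bulk term $\int_D\partial_zG_\epsilon$ via \eqref{eq:upper_pzG}, and conclude from $|k_1-k_2|\leq\mathcal{H}^1(\Gamma_1)+\mathcal{H}^1(\Gamma_1')$. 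Your barrier-plus-nondegeneracy scheme is a legitimate idea for non-oscillation statements, but as written it has two gaps that the flux argument avoids automatically.

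First, and most seriously, your argument rests on the assertion that ``the channel has width $\lesssim d$ on the bulk of its length,'' where $d=\max\{|l_1-l_1'|,|l_2-l_2'|\}$. The hypotheses of the lemma only control the separation of $\Gamma_1$ and $\Gamma_1'$ at the two cross-sections $x_1=k_1$ and $x_1=k_2$; nothing prevents the two arcs from bulging far apart in the interior of the strip. Consequently neither the supersolution $\Psi$ built from $\operatorname{dist}(x,\Gamma_1\cup\Gamma_1')$ nor the linear-growth bound from Lemma~\ref{lem:linear} yields $\psi^\ast\lesssim\lambda d$ at an interior free-boundary point, and the intended contradiction with the nondegeneracy Lemma~\ref{lem:nondeg} never gets off the ground. (There is also a secondary issue with applying Lemma~\ref{lem:nondeg}: the ball $B_R(\bar x)$ you would use may spill out of $D$ into parts of $\{\psi<Q\}$ where you have no smallness of $\psi^\ast$, so the averaged quantity need not be $\lesssim\lambda d/R$ even when the channel is narrow.)

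Second, your final step derives $d\gtrsim 1$ from nondegeneracy and then concludes $|k_1-k_2|\lesssim d$ by bounding $|k_1-k_2|$ by $\operatorname{diam}\big(\Omega_{\mu,R}\cap\{\operatorname{dist}(\cdot,(0,1))\geq c_0\}\big)$. That diameter grows with $\mu$ and $R$, so the constant you produce depends on the truncation parameters. The lemma, however, asserts a constant depending only on $B_\ast$, $\gamma$, $\epsilon$, $c_0$, and uniformity in $\mu,R$ is precisely what is needed later when passing to the limit $\mu,R\to\infty$. The paper's flux identity delivers exactly this uniformity, since every term in the balance is controlled by interior estimates independent of $\mu,R$. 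I would suggest replacing the barrier construction with the divergence-theorem computation; it is shorter, makes no use of nondegeneracy, and its constants have the right dependence.
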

				\begin{center}
					\includegraphics[height=5cm, width=10cm]{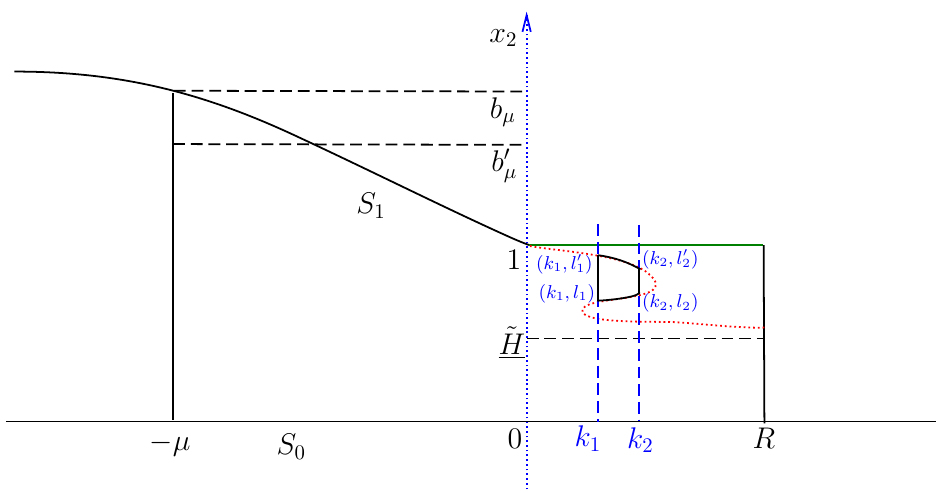}\\
					{\small Figure 4.}
				\end{center}
				\begin{proof}
					The proof is inspired by  \cite[Lemma 4.4]{ACF85}. Note that we can assume $\max\{|l_1-l'_1|, |l_2-l'_2|\}\leq 1$, because otherwise the conclusion is trivial. Integrating the equation
					\eqref{ELpde} in $D$ gives
					\begin{equation*}
						\begin{aligned}
							&\,\, \int_{\Gamma_1\cup \Gamma_1'} g_\epsilon(|\nabla\psi|^2, \psi)\p_\nu \psi + \int_{\p D\cap (\{x_1=k_1\}\cup \{x_1=k_2\})} g_\epsilon(|\nabla\psi|^2, \psi)\p_\nu \psi \\
							=&\,\, \int_{D} \p_z G_\epsilon(|\nabla\psi|^2,\psi).
						\end{aligned}
					\end{equation*}
					Since $\p_\nu\psi= \Lambda$ along $\Gamma_1$ and $\Gamma'_1$, and since \eqref{eq:g} and Remark \ref{rmk:Q}, there is a constant  $\bar C=\bar C(\gamma, \bar u)>0$ such that
					\begin{align*}
						\quad \int_{\Gamma_1\cup \Gamma_1'} g_\epsilon(|\nabla\psi|^2, \psi)\p_\nu \psi \geq \bar C \frac{\Lambda}{Q} (\mathcal{H}^1(\Gamma_1)+\mathcal{H}^1(\Gamma_1')).
					\end{align*}
					Applying the interior Lipschitz regularity (Proposition \ref{thm:Lipschitz}) and the boundary estimate for $\psi$ (cf. Remark \ref{rmk:up_to_bdry}) yields
						\begin{equation*}
							\left |\int_{\p D\cap (\{x_1=k_1\}\cup \{x_1=k_2\})} g_\epsilon (|\nabla\psi|^2, \psi)\p_\nu \psi\right|\leq C\frac{\Lambda}{Q} \max\{|l_1-l_1'|, |l_2-l_2'|\},
						\end{equation*}
						where $C$ depends on $\gamma,\epsilon, \bar u, \frac{\Lambda}{Q}, c_0$. 
						By \eqref{eq:upper_pzzG} and Young's inequality one has 
						\begin{equation*}
							\left|\int_D\p_z G_\epsilon(|\nabla\psi|^2,\psi)\right|\leq \delta'|D|\leq \frac{2(\delta')^2}{\bar C}\frac{Q}{\Lambda} \max\{|l_1-l_1'|, |l_2-l_2'|\}+\frac{\bar C}{2}\frac{\Lambda}{Q}|k_1-k_2|.
						\end{equation*}
						Combining the above estimates together and using the simple relation $|k_1-k_2|\leq \mathcal{H}^1(\Gamma_1)+\mathcal{H}^1(\Gamma'_1)$ {we get \eqref{eq:osc0}.} 
				\end{proof}
				
				The following result on the uniqueness of the Cauchy problem for the elliptic equations,  plays a crucial role in the proof for the graph property of the free boundary as well as the study of the various blowup limits.
				
				\begin{lem}\label{lem:uni}
					Let $D$ be a bounded connected piecewise $C^{2,\alpha}$ domain in $\R^2$, and let $\Sigma$ be a nonempty open set of $\p D$ which is contained in a $C^{2,\alpha}$ boundary piece. Let $\psi, \tilde{\psi}\in H^{1}_{loc}(D)$ be two solutions to the Cauchy problem
					\begin{equation*}
						\left\{
						\begin{aligned}
							&\partial_{x_i}( \p_{p_i}\mathcal{G}(\nabla\psi, \psi))-\mathcal{W}(\nabla\psi, \psi)=0 &\text{ in } D,\\
							&\psi=Q, \ \p_{\nu }\psi =\Lambda &\text{ on } \Sigma,
						\end{aligned}
						\right.
					\end{equation*}
					where $Q$ and $\Lambda$ are constants. Assume that $\mathcal{G}$ is uniformly elliptic (cf. \eqref{eq:convex}--\eqref{eq:convex0}) and that $\p_{p_ip_jp_k}\mathcal{G}$, $\p_{p_ip_jz} \mathcal{G}$, {$\p_{p_i z}\mathcal{G}$, 
					$\p_{p_i}\mathcal{W}$,} $\p_z\mathcal{W}\in L^\infty$. 
					Then $\tilde{\psi}=\psi$.
				\end{lem}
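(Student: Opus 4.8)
The plan is to reduce the problem to a classical uniqueness-of-Cauchy-problem statement for linear uniformly elliptic equations, via the standard trick of subtracting the two solutions and writing the resulting equation in divergence form with bounded measurable coefficients. Concretely, set $w:=\psi-\tilde\psi$. Using the fundamental theorem of calculus along the segment joining $(\nabla\tilde\psi,\tilde\psi)$ to $(\nabla\psi,\psi)$, one has
\[
\p_{p_i}\mathcal{G}(\nabla\psi,\psi)-\p_{p_i}\mathcal{G}(\nabla\tilde\psi,\tilde\psi)=a^{ij}(x)\,\p_j w + b^i(x)\,w,
\]
\[
\mathcal{W}(\nabla\psi,\psi)-\mathcal{W}(\nabla\tilde\psi,\tilde\psi)=c^i(x)\,\p_i w + d(x)\,w,
\]
where
\[
a^{ij}(x):=\int_0^1 \p_{p_ip_j}\mathcal{G}\big(\nabla\tilde\psi+s\nabla w,\ \tilde\psi+sw\big)\,ds,
\]
and $b^i,c^i,d$ are the analogous integrals of $\p_{p_iz}\mathcal{G}$, $\p_{p_i}\mathcal{W}$, $\p_z\mathcal{W}$ respectively. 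By \eqref{eq:convex0} the matrix $(a^{ij})$ is uniformly elliptic with ellipticity constants $\mfb_*,\mfb^*$, and by the hypotheses $\p_z\mathcal{G},\p_p\mathcal{W},\p_z\mathcal{W}\in L^\infty$ together with \eqref{eq:convex0} (to bound $b^i$, noting $b^i$ involves $\p_{p_iz}\mathcal{G}=\p_z(\p_{p_i}\mathcal{G})$, which is controlled once one also knows $\p_{zp_i}\mathcal{G}\in L^\infty$ — this holds in our setting by \eqref{eq:upper_pzzG}) the coefficients $b^i,c^i,d$ are bounded. Hence $w$ is a $W^{1,2}_{\mathrm{loc}}(D)$ solution of a linear uniformly elliptic equation
\[
\p_{x_i}\!\big(a^{ij}\p_j w + b^i w\big)-c^i\p_i w - d\,w=0\quad\text{in }D,
\]
with bounded measurable coefficients, and $w=0$, $\p_\nu w=0$ on the nonempty open set $\Sigma\subset\p D$.

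Next I would invoke the unique continuation property for such equations. In dimension two, for uniformly elliptic divergence-form equations with bounded measurable coefficients, a solution vanishing to infinite order (in particular vanishing with its conormal derivative on an open boundary piece, after which one extends by zero across $\Sigma$) must vanish identically on the connected domain $D$; this follows from the strong unique continuation results available in the plane (e.g. via quasiconformal reduction to the Laplacian, or Alessandrini–Magnanini / Bers–Nirenberg type theory), and is exactly the tool used in the Alt–Caffarelli–Friedman framework the paper builds on. The mechanism is: extend $w$ by $0$ to a slightly larger domain $\tilde D\supset D$ across $\Sigma$, using that both Cauchy data vanish so the extension is still a weak solution of an elliptic equation of the same type across $\Sigma$; then $w$ vanishes on the open set $\tilde D\setminus\overline D$, hence by unique continuation $w\equiv 0$ on the connected set $\tilde D$, so $\psi=\tilde\psi$ in $D$.

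The step I expect to be the main obstacle — or at least the one requiring the most care — is justifying that the extension of $w$ by zero across $\Sigma$ is genuinely a weak solution of the elliptic equation in the enlarged domain, and that one is in a regularity class where unique continuation actually applies. One must check that the conormal condition $a^{ij}\p_j w\,\nu_i + b^i w\,\nu_i = 0$ on $\Sigma$ is the correct natural boundary operator so that no distributional layer appears on $\Sigma$ after extension; since $w=0$ on $\Sigma$ the $b^i w\nu_i$ term drops and the conormal derivative reduces to $a^{ij}\p_j w\,\nu_i$, which we are told vanishes (this is the content of $\p_\nu\psi=\p_\nu\tilde\psi=\Lambda$ together with $\psi=\tilde\psi=Q$ on $\Sigma$, interpreting $\p_\nu$ as the relevant conormal derivative; if $\p_\nu$ denotes the Euclidean normal derivative one uses in addition that $w=0$ along $\Sigma$ forces the full gradient of $w$ to vanish on $\Sigma$, killing the conormal term as well). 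Granting this, the reflection/extension is standard and the conclusion follows. I would also remark that the connectedness of $D$ is essential and is part of the hypothesis, and that the two-dimensionality is what makes the unique continuation cheap; in higher dimensions one would instead require Lipschitz (or better) regularity of the leading coefficients, which is why the statement is phrased for $\R^2$.
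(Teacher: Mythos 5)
Your overall strategy is the same as the paper's: subtract the two solutions, linearize by the fundamental theorem of calculus to get a divergence-form linear elliptic equation for $w=\psi-\tilde\psi$ with coefficients built from $\p_{p_ip_j}\mcG$, $\p_{zp_i}\mcG$, $\p_{p_i}\mcW$, $\p_z\mcW$, extend $w$ by zero across $\Sigma$, and invoke unique continuation on the enlarged connected domain. The two places where you diverge from the paper, and where your argument is weaker as written, are the following. First, the paper begins by upgrading the regularity of $\psi,\tilde\psi$ to $C^{1,\alpha}(D\cup\Sigma)$ using the boundary elliptic estimate (it cites \cite[Lemma 1.7]{ACF84}); this is not cosmetic, because without it the Cauchy data on $\Sigma$ only make sense weakly and the claim that the zero extension is a genuine $W^{1,2}$ solution across $\Sigma$ (no distributional layer on $\Sigma$) has no rigorous content. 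You flag this issue yourself but then write ``granting this\ldots,'' which leaves the one nontrivial step open. Second, you appeal to the two-dimensional unique continuation theory for bounded measurable leading coefficients (Bers--Nirenberg/Alessandrini via quasiconformal maps); the paper instead notes that the $C^{1,\alpha}$ regularity of $\psi,\tilde\psi$ together with smoothness of $\mcG$ makes the leading coefficients $a^{ij}$ Lipschitz, and then invokes Koch--Tataru \cite[Theorem 1]{KT01}, which is dimension-independent. Your route avoids needing Lipschitz coefficients and is genuinely more elementary \emph{in the plane}, but it is also less portable (the authors re-use exactly this lemma for 3D axisymmetric problems in a companion paper), and you would still need some boundary regularity to justify the extension step and to make sense of the pointwise Neumann datum. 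So the proposal is essentially the paper's proof modulo the missing boundary-regularity step and the substitution of a 2D-specific unique continuation theorem for the general one.
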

				\begin{proof}
					The proof relies on the unique continuation property for the uniformly elliptic equations with Lipschitz coefficients. First, it follows from  
					{the boundary regularity  for the elliptic equation (\cite[Lemma 1.7]{ACF84}) that $\psi, \tilde\psi\in C^{2,\alpha}(D\cup\Sigma)$.} 
					Let $\varphi:=\tilde\psi-\psi$. Then $\varphi$ solves
					\begin{equation*}
						\left\{
						\begin{aligned}
							&\partial_i (\mathfrak a^{ij} \partial_j \varphi)= \tilde{\mfb}^i \partial_i \varphi -\partial_i(\mfb^i\varphi)+\mfc \varphi
							\quad \text{in } D \\
							& \varphi=\partial_\nu\varphi=0\quad \text{on}\,\, \Sigma,
						\end{aligned}
						\right.
					\end{equation*}
					where
					\begin{align*}
						&\mathfrak a^{ij}(x):=\int_0^1 \partial_{p_ip_j} \mathcal{G}(\nabla\tilde\psi(x)+s\nabla(\psi-\tilde\psi)(x), \tilde\psi(x)+s(\psi-\tilde\psi)(x))ds,\\
						&\mfb^i(x):=\int_0^1 \partial_{p_i z} \mathcal{G}(\nabla\tilde\psi(x)+s\nabla(\psi-\tilde\psi)(x), \tilde\psi(x)+s(\psi-\tilde\psi)(x))ds,\\
						&\tilde{\mfb}_i(x):=\int_0^1 \partial_{p_i} \mathcal{W}(\nabla\tilde\psi(x)+ s\nabla(\psi-\tilde\psi)(x),\tilde\psi(x)+s(\psi-\tilde\psi)(x))ds,\\
						&\mfc(x):=\int_0^1 \partial_z \mathcal{W}(\nabla\tilde\psi(x)+ s\nabla(\psi-\tilde\psi)(x),\tilde\psi(x)+s(\psi-\tilde\psi)(x))ds.
					\end{align*}
					The Cauchy data on $\Sigma$ allow to extend $\varphi$ into a solution in a neighborhood $U\supset \Sigma$ by setting $\varphi=0$ outside of $U\cap D$. It follows from the assumptions on $\mathcal{G}$ and $\mathcal{W}$ that $\mfb^i$, $\tilde{\mfb}^i$ and $\mfc\in L^\infty$, 
					{and $\mathfrak a^{ij}$ is uniformly elliptic and $C^{1,\alpha}$ regular.} 
					Thus by the  unique continuation property, cf. for example  {\cite[Theorem 1.1]{KT01},} one has $\varphi \equiv 0$ in $U\cup D$. This means that $\psi=\tilde{\psi}$ in $U$. Since $D$ is connected, using chain of balls argument one has $\psi=\tilde\psi$ in $D$.
				\end{proof}

				Now we are ready to show that the free boundary is a graph.
				\begin{prop}\label{thm:graph}
					Let $\Gamma_{\mu,R,\Lambda}$ be as in \eqref{eq:free_bdry_defi}. 
					{
						Then it can be represented as a graph of a function of $x_2$, i.e., there exists a function $\Upsilon_{\mu, R, \Lambda}$ and 
						$l_{\mu,R,\Lambda}\in[\tilde{\ubar{H}},1)$} such that
					\begin{align*}
						\Gamma_{\mu, R, \Lambda}=\{(x_1, x_2)| x_1=\Upsilon_{\mu,R,\Lambda}(x_2), \,\, x_2\in (l_{\mu,R,\Lambda}, 1)\}.
					\end{align*}
					Furthermore, $\Upsilon_{\mu,R,\Lambda}$ is continuous, $-\mu<\Upsilon_{\mu,R,\Lambda}\leq R$ and $\lim_{x_2\rightarrow 1-}\Upsilon_{\mu,R,\Lambda}(x_2)$ exists.
				\end{prop}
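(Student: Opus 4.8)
The plan is to deduce the graph property from the monotonicity $\p_{x_1}\psi\ge 0$ of Proposition \ref{prop:mono}. Since $\psi$ is continuous with $0\le\psi\le Q$ (Lemmas \ref{lem:upper_lower_bd} and \ref{lem:holder}), for each $x_2\in(0,1)$ the set $\{x_1\in(-\mu,R]:\psi(x_1,x_2)<Q\}$ is downward closed in $x_1$, hence an interval $(-\mu,\Upsilon_{\mu,R,\Lambda}(x_2))$; it is nonempty because $\psi(-\mu,x_2)=\psi^\sharp_{\mu,R}(-\mu,x_2)=0<Q$ on $(0,1)$ by \eqref{eq:bdry_datum}. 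This defines $\Upsilon_{\mu,R,\Lambda}:(0,1)\to(-\mu,R]$, and openness of $\{\psi<Q\}$ makes it lower semicontinuous. The outflow datum localizes the free boundary: if $x_2<\tilde{\ubar{H}}$ then $\psi(R,x_2)=\psi^\dag(x_2)<Q$, so by monotonicity the whole slice lies in $\{\psi<Q\}$ and carries no free boundary point; with $l_{\mu,R,\Lambda}:=\inf\{x_2\in(0,1):\Upsilon_{\mu,R,\Lambda}(x_2)<R\}$ (the lowest height carrying a free boundary point) this gives $l_{\mu,R,\Lambda}\ge\tilde{\ubar{H}}$, and $\Gamma_{\mu,R,\Lambda}=\emptyset$ when $\tilde{\ubar{H}}\ge 1$. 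Finally, a point of $\Gamma_{\mu,R,\Lambda}$ at height $x_2$ cannot have first coordinate $<\Upsilon_{\mu,R,\Lambda}(x_2)$ (it would be interior to $\{\psi<Q\}$); if it had first coordinate $>\Upsilon_{\mu,R,\Lambda}(x_2)$, being a limit of points of $\{\psi<Q\}$ at nearby heights, it would force a failure of upper semicontinuity of $\Upsilon_{\mu,R,\Lambda}$. So the statement reduces to showing $\Upsilon_{\mu,R,\Lambda}$ upper semicontinuous (hence continuous), which yields $\Gamma_{\mu,R,\Lambda}=\{x_1=\Upsilon_{\mu,R,\Lambda}(x_2):x_2\in(l_{\mu,R,\Lambda},1)\}$ and the bounds $-\mu<\Upsilon_{\mu,R,\Lambda}\le R$.

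I would then prove upper semicontinuity by contradiction. If it fails at $c\in(0,1)$, there are $x_2^n\to c$ with $\Upsilon_{\mu,R,\Lambda}(x_2^n)\to L>\Upsilon_{\mu,R,\Lambda}(c)$; since $\Upsilon_{\mu,R,\Lambda}\le R$ we get $\Upsilon_{\mu,R,\Lambda}(c)<R$, so both $(\Upsilon_{\mu,R,\Lambda}(c),c)$ and the limit point $(L,c)$ are free boundary points. If $(L,c)$ is interior (i.e. $L<R$), then by the regularity of Section \ref{secreg} $\Gamma_\psi$ is a $C^{1,\alpha}$ arc near it, along which $\nabla\psi=\Lambda\nu$ with $\nu$ the outer unit normal of $\{\psi<Q\}$ and, by monotonicity plus continuity of $\nabla\psi$ up to $\Gamma_\psi$, $\nu_1=\Lambda^{-1}\p_{x_1}\psi\ge 0$; a short case analysis on the tangent at $(L,c)$, using that every horizontal slice of $\{\psi<Q\}$ is a single interval, shows this is incompatible with $L>\Upsilon_{\mu,R,\Lambda}(c)$ unless $\Gamma_\psi$ ``turns back'' in $x_1$ near height $c$, and such a turning-back produces a domain $D$ as in the non-oscillation Lemma \ref{lem:nonoscillation} with $\dist((0,1),D)\ge c_0>0$ (because $c<1$) whose horizontal extent is bounded below by $L-\Upsilon_{\mu,R,\Lambda}(c)>0$ while the vertical displacements of its endpoints tend to $0$, contradicting \eqref{eq:osc0}. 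If instead the limit point lies on the outflow face $\{x_1=R\}$, I would compare $\psi$ near that face with the explicit profile built from $\psi^\dag$ and invoke the unique-continuation Lemma \ref{lem:uni} (which likewise excludes $\Gamma_\psi$ touching the fixed boundary $S_1\cup([0,R]\times\{1\})$ tangentially). The last assertion, existence of $\lim_{x_2\to1^-}\Upsilon_{\mu,R,\Lambda}$, amounts to ruling out oscillation of $\Upsilon_{\mu,R,\Lambda}$ near the nozzle mouth $A=(0,1)$; where the oscillation stays away from $A$ this is again Lemma \ref{lem:nonoscillation}, and near $A$, where that estimate degenerates, one uses a barrier adapted to the $C^{1,\alpha}$ nozzle $S_1$ to trap $\Upsilon_{\mu,R,\Lambda}$ between two profiles having a common one-sided limit at $x_2=1$; this dovetails with the continuous fit of Section \ref{seccont}.

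The hard part is the behaviour at the corner $A=(0,1)$: away from it, ``monotonicity $\Rightarrow$ interval slices'' together with the $C^{1,\alpha}$ regularity of $\Gamma_\psi$ and the non-oscillation estimate \eqref{eq:osc0} handle everything, but since $\dist((0,1),D)$ can tend to zero near $A$ one must replace the non-oscillation estimate by a geometry-dependent barrier along $S_1$, and the unique-continuation Lemma \ref{lem:uni} is what prevents the free boundary from meeting $S_1$ (or the outflow face) in a degenerate tangential fashion that the barrier argument cannot control.
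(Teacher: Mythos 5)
Your framework is right---use the monotonicity $\p_{x_1}\psi\ge 0$ from Proposition~\ref{prop:mono} to define $\Upsilon_{\mu,R,\Lambda}$, note lower semicontinuity, localize the free boundary via the outflow datum---but your argument for continuity (i.e.\ upper semicontinuity of $\Upsilon_{\mu,R,\Lambda}$) takes a genuinely different and incomplete route from the paper's, and has a gap.

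The paper's continuity argument is short and decisive. Suppose $\Upsilon_{\mu,R,\Lambda}$ has a jump at $\bar x_2$, with the nearby values of $\Upsilon_{\mu,R,\Lambda}$ accumulating at $\Upsilon_{\mu,R,\Lambda}(\bar x_2)+\delta$ for some $\delta>0$. Because $\p_{x_1}\psi\geq 0$, the entire \emph{horizontal segment} $[\bar x_1,\bar x_1+\delta]\times\{\bar x_2\}$ (with $\bar x_1=\Upsilon_{\mu,R,\Lambda}(\bar x_2)$) lies on $\p\{\psi<Q\}$: points on it have $\psi=Q$ yet are approached by $\{\psi<Q\}$ at heights $x_2\to\bar x_2$. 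Along this segment $\psi$ carries the overdetermined Cauchy data $\psi=Q$, $\p_\nu\psi=\Lambda$, and there is a one-dimensional solution $\psi_{1d}(x_2)$ of the same Cauchy problem on the corresponding side. Lemma~\ref{lem:uni} (unique continuation for the elliptic Cauchy problem) then forces $\psi\equiv\psi_{1d}$ throughout, which is absurd for the truncated-domain solution. That is the whole proof.

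Your alternative route---lean on $C^{1,\alpha}$ interior regularity of $\Gamma_\psi$, argue about the sign of $\nu_1$, identify a ``turning back'' of $\Gamma_\psi$, and then quote the non-oscillation Lemma~\ref{lem:nonoscillation}---does not close. First, monotonicity of $\psi$ in $x_1$ already rules out $\Gamma_\psi$ ``turning back''; what a jump of $\Upsilon_{\mu,R,\Lambda}$ actually produces is a flat horizontal piece, not an oscillation. Second, Lemma~\ref{lem:nonoscillation} needs a sub-domain of $\{\psi<Q\}$ bounded by \emph{two disjoint arcs} of $\Gamma_\psi$ between two vertical lines; that structure is available only when the jump is approached from both sides of $\bar x_2$. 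A \emph{one-sided jump} (say $\Upsilon_{\mu,R,\Lambda}(x_2)\to\Upsilon_{\mu,R,\Lambda}(\bar x_2)$ from below, and $\to\Upsilon_{\mu,R,\Lambda}(\bar x_2)+\delta$ from above) is geometrically admissible a priori, and in that case there is no such domain $D\subset\{\psi<Q\}$ and the non-oscillation estimate gives you nothing. Your ``short case analysis on the tangent'' and ``barrier adapted to the $C^{1,\alpha}$ nozzle'' are left unworked precisely at the point where the argument must carry the full weight; the paper's horizontal-segment plus Lemma~\ref{lem:uni} device handles all cases uniformly and eliminates the elaborate case analysis entirely.

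On the final assertion, the existence of $\lim_{x_2\to 1^-}\Upsilon_{\mu,R,\Lambda}(x_2)$, the paper simply cites Lemma~\ref{lem:nonoscillation}, and you correctly note the subtlety: the non-oscillation estimate degenerates as $\dist((0,1),D)\to 0$. Your instinct to supplement it with a barrier near $A$ is reasonable, though in the paper's development the continuous/smooth fit in Section~\ref{seccont} ultimately pins the one-sided limit down. This part of your proposal is sound in spirit; the substantive gap is the one-sided jump case above.
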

				\begin{proof}
					It follows from
					Proposition \ref{prop:mono} that $\p_{x_1}\psi\geq 0$. Hence there is a function $\Upsilon_{\mu,R,\Lambda}(x_2)$ with values in $[-\mu, R]$ such that for any $x_2\in (0,1)$,
					\begin{equation*}
						\psi(x_1,x_2)<Q \text{ if and only if } x_1<\Upsilon_{\mu,R,\Lambda}(x_2).
					\end{equation*}
					Since $\psi(-\mu,x_2)=0$ for $x_2\in [0,b'_\mu]$ with $b'_\mu>1$, then $\Upsilon_{\mu,R,\Lambda}(x_2)>-\mu$.
					
					It follows from Lemma \ref{lem:linear_bd} that 
					{one has
						\begin{equation*}
							\psi(x_1, x_2)\leq \psi_{\mu, R}^{\sharp}(R,x_2)<Q \quad\text{in } \R\times [0,\tilde{\ubar{H}}).
						\end{equation*}
						This implies that there is no free boundary points in the strip $\R\times [0,\tilde{\ubar{H}})$. Thus there is a constant $l_{\mu,R,\Lambda}\in [\tilde{\ubar{H}},1)$} such that the free boundary is a graph $x_1=\Upsilon_{\mu,R,\Lambda}(x_2)$ for $x_2\in (l_{\mu,R,\Lambda},1)$.
					
					Finally, one can also show that $\Upsilon_{\mu,R,\Lambda}$ is continuous. Clearly, $\Upsilon_{\mu,R,\Lambda}$ is lower semi-continuous. Suppose there is a jump point $\bar x_2$ such that
					\[
					\lim_{x_2\rightarrow \bar x_2}\Upsilon_{\mu,R,\Lambda}(x_2)=\Upsilon_{\mu, R,\Lambda}(\bar x_2)+\tau
					\]
					for some $\tau>0$. Let $\bar x_1:=\Upsilon_{\mu,R,\Lambda}(\bar x_2)$. Since $\p_{x_1}\psi\geq 0$, the line segment $[\bar x_1,\bar x_1+\tau]\times \{\bar x_2\}$ belongs to the free boundary. Therefore, one can find an upper or lower neighborhood $U$ of 
					$[\bar x_1, \bar x_1+\tau]\times \{\bar x_2\}$,
					such that $\psi$ solves the quasilinear equation \eqref{ELpde} in $U$, and $\psi=Q$, $\p_{\nu}\psi =\Lambda$ on 
					$[\bar x_1,\bar x_1+\tau]\times \{\bar x_2\}$,  
					where $\nu$ is the {outer} unit normal. Note the Cauchy problem
					\begin{equation}
						\left\{
						\begin{aligned}
							&\nabla\cdot(g_\epsilon(|\nabla \psi|^2, \psi)\nabla \psi) - \p_zG_\epsilon (|\nabla\psi|^2,\psi)=0 &&\text{in } \R\times [0,\bar x_2],\\
							&\psi=Q, \quad \p_{x_2}\psi=\Lambda  &&\text{on } x_2=\bar x_2
						\end{aligned}
						\right.
					\end{equation}
					admits a one-dimensional solution $\psi_{1d}(x)=\psi_{1d}(x_2)$, which is the solution of ODE 
					\begin{equation*}
						\left\{
						\begin{aligned}
							&(g_\epsilon(|\psi_{1d}'|^2, \psi_{1d})\psi_{1d}')' - \p_zG_\epsilon (|\psi_{1d}'|^2,\psi_{1d})=0 &&\text{on } [0,\bar x_2],\\
							&\psi_{1d}=Q, \quad \psi_{1d}'=\Lambda &&\text{at } x_2=\bar x_2.
						\end{aligned}
						\right.
					\end{equation*}
					The existence of the one-dimensional solution follows from the standard Picard-Lindelöf theorem using that $g_\epsilon(t,z)+2t\p_tg_\epsilon(t,z)>0$ (the ellipticity assumption). By the uniqueness of the Cauchy problem (cf. Lemma \ref{lem:uni}), one has $\psi=\psi_{1d}$ in $\Omega_{\mu,R}$. This is however impossible. Thus $\Upsilon_{\mu,R,\Lambda}$ is continuous. The existence of $\lim_{x_2\rightarrow 1-}\Upsilon_{\mu,R,\Lambda}(x_2)$ follows from the non-oscillation lemma, cf. Lemma \ref{lem:nonoscillation}.
				\end{proof}

				\section{Continuous fit and smooth fit}\label{seccont}
				In the first part of this section, we show the continuous fit property: given a nozzle and incoming data $(\bar\rho, \bar u)$ at the upstream, there exists a momentum  $\Lambda>0$ along the free boundary, such that the associated 
				{free boundary attaches the orifice} 
				of the nozzle. Subsequently, the free boundary joins the nozzle wall in a continuous differentiable fashion via a blowup argument. 
				
				\subsection{Continuous fit}\label{sec:cont_fit}
				In this subsection, $\mu, R$, and $\epsilon$ are fixed. 
				To emphasize the dependence on the parameter $\Lambda$, we let $\psi_\Lambda:=\psi_{\mu, R,\Lambda}$ denote the minimizer of the truncated problem \eqref{eq:mini_truncated}, and  $\Gamma_\Lambda:=\Gamma_{\mu, R, \Lambda}$ be the free boundary defined in \eqref{eq:free_bdry_defi} with
				\begin{equation*}
					\Gamma_\Lambda= \{(x_1, x_2)| x_1=\Upsilon_\Lambda(x_2):=\Upsilon_{ \mu, R,\Lambda}(x_2) \text{ for } x_2\in (l_\Lambda, 1)\text{ with } l_\Lambda: =l_{\mu, R, \Lambda}\}.
				\end{equation*}

				The following lemma shows that $\psi_\Lambda$ and $\Upsilon_\Lambda$ depend on $\Lambda$ continuously.
				
				\begin{lem}\label{lem:conv}
					If $\Lambda_n\rightarrow \Lambda$, then $\psi_{\Lambda_n}\rightarrow \psi_\Lambda$ uniformly in $\Omega_{\mu, R}$ and $\Upsilon_{\Lambda_n}(x_2)\rightarrow \Upsilon_{\Lambda}(x_2)$ for each $x_2\in (l_{\Lambda}, 1]$.
				\end{lem}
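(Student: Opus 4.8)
The plan is to prove the convergence in two steps, first establishing uniform convergence of the minimizers $\psi_{\Lambda_n}\to\psi_\Lambda$, then deducing pointwise convergence of the free-boundary graphs. For the first step I would argue by compactness. Since all minimizers satisfy the uniform bounds $0\le\psi_{\Lambda_n}\le Q$ (Lemma \ref{lem:upper_lower_bd}) and, by Proposition \ref{thm:Lipschitz} together with Remark \ref{rmk:up_to_bdry}, are uniformly Lipschitz on $\overline{\Omega_{\mu,R}}$ with constant controlled by $C\lambda_\epsilon(\Lambda_n)\le C'$ (using $\Lambda_n\to\Lambda$ so the $\lambda_\epsilon(\Lambda_n)$ are bounded, and recalling the boundary datum $\psi^\sharp_{\mu,R}$ is fixed), the Arzel\`a–Ascoli theorem yields a subsequence converging uniformly to some $\psi_\ast\in C^{0,1}(\overline{\Omega_{\mu,R}})$ with $\psi_\ast=\psi^\sharp_{\mu,R}$ on $\partial\Omega_{\mu,R}$ and with $\nabla\psi_{\Lambda_n}\rightharpoonup\nabla\psi_\ast$ weakly in $L^2$. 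I would then show $\psi_\ast$ is the minimizer of $\mathcal{J}^\epsilon_{\mu,R,\Lambda}$: lower semicontinuity of $\int\mcG(\nabla\psi,\psi)$ handles the gradient term, while for the measure term $\lambda_\epsilon(\Lambda_n)^2\chi_{\{\psi_{\Lambda_n}<Q\}}$ one uses $\lambda_\epsilon(\Lambda_n)\to\lambda_\epsilon(\Lambda)$ (continuity of $\Lambda\mapsto\Phi_\epsilon(\Lambda^2,Q)$) and the fact that $\chi_{\{\psi_\ast<Q\}}\le\liminf\chi_{\{\psi_{\Lambda_n}<Q\}}$ a.e.\ — here the nondegeneracy (Lemma \ref{lem:nondeg}) is used to rule out that $\{\psi_\ast<Q\}$ loses mass in the limit, so in fact $\chi_{\{\psi_{\Lambda_n}<Q\}}\to\chi_{\{\psi_\ast<Q\}}$ in $L^1_{loc}$. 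To pass the minimality inequality $\mathcal{J}^\epsilon_{\mu,R,\Lambda_n}(\psi_{\Lambda_n})\le\mathcal{J}^\epsilon_{\mu,R,\Lambda_n}(v)$ to the limit for a fixed competitor $v$, one only needs continuity of the energy in $\Lambda$, which is immediate. By the uniqueness of the minimizer (Proposition \ref{prop:mono}) one concludes $\psi_\ast=\psi_\Lambda$, and since every subsequence has a further subsequence converging to the same limit, the full sequence converges: $\psi_{\Lambda_n}\to\psi_\Lambda$ uniformly.

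For the second step, fix $x_2\in(l_\Lambda,1]$. Recall from Proposition \ref{thm:graph} that $\psi_\Lambda(x_1,x_2)<Q$ iff $x_1<\Upsilon_\Lambda(x_2)$, and likewise for each $\Lambda_n$. Given $\delta>0$ small enough that $(\Upsilon_\Lambda(x_2)-\delta,x_2)\in\Omega_{\mu,R}$ and this point lies in $\{\psi_\Lambda<Q\}$, nondegeneracy (Lemma \ref{lem:nondeg}) gives a definite gap: $\psi_\Lambda(\Upsilon_\Lambda(x_2)-\delta,x_2)\le Q-c\lambda_\epsilon(\Lambda)\delta$ for some $c>0$; uniform convergence then forces $\psi_{\Lambda_n}(\Upsilon_\Lambda(x_2)-\delta,x_2)<Q$ for $n$ large, hence $\Upsilon_{\Lambda_n}(x_2)\ge\Upsilon_\Lambda(x_2)-\delta$. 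For the reverse bound, at $(\Upsilon_\Lambda(x_2)+\delta,x_2)$ one has $\psi_\Lambda=Q$, but a strict inequality $\psi_{\Lambda_n}<Q$ there together with the nondegeneracy applied to $\psi_{\Lambda_n}$ at a nearby free-boundary point would produce a ball where $\psi_{\Lambda_n}$ stays a definite amount below $Q$, contradicting uniform convergence to $\psi_\Lambda$ which equals $Q$ on the whole set $\{x_1\ge\Upsilon_\Lambda(x_2)\}$ near that point (one should also invoke the measure-theoretic lower density of $\{\psi_\Lambda=Q\}$ from Proposition \ref{thm:meas_fb}(3) to ensure this "definite amount below $Q$" region has positive measure overlapping a region where $\psi_\Lambda\equiv Q$). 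Hence $\limsup_n\Upsilon_{\Lambda_n}(x_2)\le\Upsilon_\Lambda(x_2)+\delta$. Letting $\delta\to0$ gives $\Upsilon_{\Lambda_n}(x_2)\to\Upsilon_\Lambda(x_2)$.

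The main obstacle I anticipate is the handling of the measure term $\lambda_\epsilon^2\chi_{\{\psi<Q\}}$ under the limit — more precisely, showing that the coincidence sets $\{\psi_{\Lambda_n}=Q\}$ do not collapse or spread in a way that breaks either lower semicontinuity of the energy (needed in Step 1) or the two-sided control of $\Upsilon_{\Lambda_n}$ (needed in Step 2). This is exactly where nondegeneracy (Lemma \ref{lem:nondeg}) and the uniform positive-density estimates (Proposition \ref{thm:meas_fb}) must be brought to bear, in the same spirit as the continuity-of-the-free-boundary arguments in \cite{ACF85}; the rest is a routine compactness-plus-uniqueness argument. A minor point to be careful about is the endpoint $x_2=1$: there $\Upsilon_\Lambda(1)$ is interpreted as $\lim_{x_2\to1_-}\Upsilon_\Lambda(x_2)$ (which exists by Proposition \ref{thm:graph}), and one reduces to the interior case by a further limiting argument using the non-oscillation estimate of Lemma \ref{lem:nonoscillation} uniformly in $n$.
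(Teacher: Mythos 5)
Your Step 1 (uniform convergence of the minimizers via compactness, identification of the limit as a minimizer, and uniqueness to pass from subsequences to the full sequence) follows the same route as the paper, and your argument for interior $x_2\in(l_\Lambda,1)$ (one-sided estimates via uniform convergence of $\psi_{\Lambda_n}$ together with monotonicity and nondegeneracy) is also consistent in spirit with what the paper does there.

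The genuine gap is your treatment of the endpoint $x_2=1$, which you label ``a minor point'' and propose to settle by ``a further limiting argument using the non-oscillation estimate of Lemma \ref{lem:nonoscillation} uniformly in $n$.'' This cannot work as stated, because the non-oscillation constant in Lemma \ref{lem:nonoscillation} depends on $c_0=\dist((0,1),D)$ and blows up as $D$ approaches the mouth $A=(0,1)$; so non-oscillation cannot give continuity of $\Upsilon_{\Lambda_n}$ near $x_2=1$ \emph{uniformly in $n$}, and the attempted interchange of limits $x_2\to 1^-$ and $n\to\infty$ is not justified. More importantly, the non-oscillation argument only covers one regime. The paper splits the endpoint into three cases according to the sign of $\iota:=\lim_n\Upsilon_{\Lambda_n}(1)-\Upsilon_\Lambda(1)$ and the sign of $\Upsilon_\Lambda(1)$. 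Only case (iii) ($\iota>0$, $\Upsilon_\Lambda(1)>0$) is handled by non-oscillation; cases (i) ($\iota<0$) and (ii) ($\iota>0$, $\Upsilon_\Lambda(1)<0$) are handled by showing that $\psi_\Lambda$ would then satisfy an overdetermined Cauchy problem along a horizontal segment (verified in Step~3, which is itself nontrivial: one must pass the Neumann condition through the limit from above and from below) and then invoking the Cauchy uniqueness / unique continuation result of Lemma \ref{lem:uni} to force $\psi_\Lambda$ to be one-dimensional, a contradiction. Your proposal contains no analogue of these Cauchy-problem cases (and never mentions Lemma \ref{lem:uni}), so the convergence $\Upsilon_{\Lambda_n}(1)\to\Upsilon_\Lambda(1)$ is not established by your argument in the regimes $\iota<0$ or $\Upsilon_\Lambda(1)\le 0$ — precisely the regimes needed later for the intermediate-value argument in Corollary \ref{cor:cont_fit}.
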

				\begin{proof} The proof consists of three steps.

					{\it Step 1. Convergence of $\psi_{\Lambda_n}$}. This is similar to \cite{Friedman82}, but for completeness we sketch the proof here. For any subsequence $\psi_{\Lambda_n}\rightarrow \psi_{\Lambda}$ weakly in $H^{1}$ and strongly in $C^{0,\alpha}_{loc}$, where $\psi_{\Lambda_n}$ is a minimizer of the truncated problem \eqref{eq:mini_truncated} with $\Lambda$ substituted by $\Lambda_n$, one has $\p\{\psi_{\Lambda_n}<Q\}\rightarrow \p\{\psi_\Lambda<Q\}$ locally in the Hausdorff distance, $\chi_{\{\psi_{\Lambda_n}<Q\}}\rightarrow \chi_{\{\psi_{\Lambda}<Q\}}$ in $L^1_{loc}$ and $\nabla \psi_{\Lambda_n}\rightarrow \nabla \psi_\Lambda$ a.e. (\cite[Lemma 3.6 in Chapter 3]{Friedman82}). Thus $\psi_\Lambda$ is a minimizer of  \eqref{eq:mini_truncated}.  Then it follows from the uniqueness of the minimizer (cf. Proposition \ref{prop:mono}) that $\psi_{\Lambda_n}\to \psi_{\Lambda}$.
					
					{\it Step 2. Convergence of $\Upsilon_{\Lambda_n}$.} For each $x_2\in (l_{\Lambda}, 1)$, it follows from the uniform convergence of $\psi_{\Lambda_n}$ to $\psi_{\Lambda}$ and Proposition \ref{thm:Lipschitz} that $\Upsilon_{\Lambda_n}(x_2)\to \Upsilon_\Lambda(x_2)$. It remains to show $\Upsilon_{\Lambda_n}\rightarrow \Upsilon_\Lambda$ at $x_2=1$, which will be shown by contradiction. Assume that $\lim_{n\rightarrow \infty}\Upsilon_{\Lambda_n}(1)\neq \Upsilon_{\Lambda}(1)$. Let 
					$$\iota:=\min\left\{\max\{\lim_{n\rightarrow\infty}\Upsilon_{\Lambda_n}(1)-\Upsilon_{\Lambda}(1), -1\}, 1\right\}.$$Now there are three cases to be analyzed in detail.
					
					\emph{Case (i): $\iota<0$.}  First, we claim that $\psi_\Lambda$ solves
					\begin{equation}\label{eq:cauchy}
						\begin{cases}
							\nabla\cdot (g(|\nabla \psi|^2,\psi)\nabla \psi) - \p_z G(|\nabla \psi|^2,\psi)=0 &\text{ in } D_{\iota,\sigma_0},\\
							\psi =Q,\quad \lim_{x_2\rightarrow 1-}\p_{x_2} \psi =\Lambda & \text{ on } \overline{D_{\iota,\sigma_0}}\cap \{x_2=1\},
						\end{cases}
					\end{equation}
					where $D_{\iota,\sigma_0}:=(\Upsilon_\Lambda(1)+\frac{3}{4}\iota, \Upsilon_\Lambda(1)+\frac{1}{4}\iota)\times (1-\sigma_0,1)$ for some small $\sigma_0>0$.
					The proof for the claim is given in Step 3. Assume that $\psi_\Lambda$ solves \eqref{eq:cauchy}, then it follows from  Lemma \ref{lem:uni} (cf. also the proof for Proposition \ref{thm:graph}) that  $\psi_\Lambda$ must be  the one-dimensional solution, i.e.  $\psi_\Lambda(x)=\psi_{1d}(x_2)$. This is however impossible, since it contradicts to the fact that $\psi_\Lambda$ solves the jet problem, for which the domain depends on $x_1$ as well.
					
					\emph{Case (ii): $\iota>0$ and $\Upsilon_\Lambda(1)<0$.} Similar as for case (i),  $\psi_\Lambda$ solves
					\begin{equation*}
						\begin{cases}
							\nabla\cdot (g(|\nabla \psi|^2,\psi)\nabla \psi) - \p_z G(|\nabla \psi|^2,\psi)=0 \,\, &\text{ in } D^+_{\iota,\sigma_0}\\
							\psi =Q,\quad \lim_{x_2\rightarrow 1+}\p_{x_2} \psi =\Lambda \,\, &\text{ on } \overline{D^+_{\iota,\sigma_0}}\cap \{x_2=1\},
						\end{cases}
					\end{equation*}
					where $D^+_{\hat\iota,\sigma_0}:=
					{(\Upsilon_\Lambda(1)+\frac{1}{4}\hat\iota,  \Upsilon_\Lambda(1)+\frac{3}{4}\hat\iota)}\times (1,1+\sigma_0)$, $\hat\iota:=\min\{\iota, -\Upsilon_\Lambda(1)\}$,  and $\sigma_0>0$ is a small constant depending on $\iota$ and the nozzle. Using Lemma \ref{lem:uni} again, we have $\psi_\Lambda$ is a 
					one-dimensional solution. This leads to a contradiction.
					
					\emph{Case (iii): $\iota>0$ and $\Upsilon_\Lambda(1)\geq0$.} In this case we will use the non-oscillation lemma (cf. Lemma \ref{lem:nonoscillation}) to get a  contradiction. First, note that Lemma \ref{lem:nonoscillation} also holds if one of the arcs $\Gamma_1$ or $\Gamma'_1$ is on the horizontal boundary $(0,R)\times\{1\}$, and $\lim_{x_2\rightarrow 1-}\p_{x_2}\psi \geq \Lambda$ along $\Gamma_1$ (or $\Gamma'_1$).
					Next, denote $I_\iota:=(\Upsilon_\Lambda(1)+\frac{\iota}{4}, \Upsilon_\Lambda(1)+\frac{3\iota}{4})\times \{1\}$. By considering  the variation 
					$\tau_\vartheta (x) = x + \vartheta \eta(x)$ in the proof for Lemma \ref{lem:fbcondition}, where $\vartheta\geq 0$ and the diffeomorphism $\eta$ satisfies $\eta\cdot e_2\leq 0$ on $I_\iota$ and $\eta =0$ on $\p\Omega_{\mu,R}\backslash \left((\Upsilon_\Lambda(1), \Upsilon_\Lambda(1)+\iota)\times \{1\}\right)$, and using similar computations as in the proof for Lemma \ref{lem:fbcondition}, we have
					\begin{equation}\label{eq:cont_fit_grad}
						\lim_{x_2\rightarrow 1-} \p_{x_2}\psi_{n} \geq \Lambda\quad \text{on } I_\iota,
					\end{equation}
					provided that $n$ is sufficiently large. Since $\Upsilon_{\Lambda_n}(1)\rightarrow \Upsilon_{\Lambda}(1)+\iota$ for $\iota>0$, it follows from Proposition \ref{thm:graph} that given any neighborhood of $I_\iota$, there is a sufficiently large $n$ such that the graph of the function $x_1=\Upsilon_{\Lambda_n}(x_2)$ crosses the neighborhood. Applying Lemma \ref{lem:nonoscillation} to the region bounded
					by $x_1=\Upsilon_\Lambda+\frac{\iota}{4}$, $x_1=\Upsilon_\Lambda(1)+\frac{3\iota}{4}$,  $I_\iota$,  and $\{x_1=\Upsilon_{\Lambda_n}(x_2)\}$ then leads to a contradiction to the fact that the neighborhood of $I_\iota$ can be chosen arbitrarily small. The proof for case (iii) is complete.
					
					\emph{Step 3. Proof for the claim that $\psi_\Lambda$ solves \eqref{eq:cauchy} if $\iota<0$}.  It follows from Lemma \ref{lem:nonoscillation} that there exists a $\sigma_0=\sigma_0(\iota)>0$, such that for any open subset $\mcU \Subset D_{\iota,\sigma_0}$ one has $\psi_{\Lambda_n}<Q$ in $\mcU$ for sufficiently large $n$. Thus as $n\to \infty$ the limit  $\psi_\Lambda$ satisfies the  equation in \eqref{eq:cauchy} in $D_{\iota,\sigma_0}$. The Dirichlet boundary condition $\psi_\Lambda=Q$ follows from $\p_{x_1}\psi_\Lambda\geq 0$ and the uniform convergence of $\psi_{\Lambda_n}$ to $\psi_\Lambda$ in $D_{\iota,\sigma_0}$. Thus one  needs only  to show that the Neumann condition holds.
					
					\emph{(a).} We first claim that $\lim_{x_2\rightarrow 1-}\p_{x_2}\psi_\Lambda\geq \Lambda$. For simplicity, denote $\psi_n:=\psi_{\Lambda_n}$ and $\psi:=\psi_{\Lambda}$. Fix $x^{(0)}\in\{x_2=1\}\cap \overline{D_{\iota,\sigma_0}}$ and $r>0$ such that $B_r(x^{(0)})\cap \{x_2<1\}\subset D_{\iota,\sigma_0}$. For any nonnegative function $\eta \in C^\infty_{0}(B_r(x^{(0)}))$, one has
					\begin{equation*}
						\begin{split}
							\Lambda_n g(\Lambda_n^2,Q)\int_{\p\{\psi_{n}<Q\}} \eta &= \int_{\{\psi_n<Q\}} g(|\nabla\psi_n|^2,\psi_n)\nabla\psi_n\cdot\nabla\eta + \p_z G(|\nabla\psi_n|^2,\psi_n)\eta \\
							&\rightarrow \int_{\{\psi<Q\}} g(|\nabla\psi|^2,\psi)\nabla\psi\cdot\nabla\eta + \p_z G(|\nabla\psi|^2,\psi)\eta,
						\end{split}
					\end{equation*}
					where $\chi_{\{\psi_n<Q\}}\rightarrow \chi_{\{\psi<Q\}}$ in $L^1(B_r(x^{(0)}))$,  $\psi_n\to \psi$ in $C^{0,\alpha}(B_r(x^{(0)}))$, and $\nabla\psi_n\to \nabla\psi$ a.e. have been used. It follows from the lower semi-continuity of the BV-norm that
					\begin{equation*}
						\int_{\p\{\psi<Q\}} \eta \leq \liminf_{n\rightarrow \infty}\int_{\p\{\psi_n<Q\}} \eta.
					\end{equation*}
					Combining the above two inequalities together we obtain
					\begin{equation*}
						\Lambda g(\Lambda^2,Q)\int_{\p\{\psi<Q\}} \eta \leq  \int_{\{\psi<Q\}} g(|\nabla\psi|^2,\psi)\nabla\psi\cdot\nabla\eta + \p_z G(|\nabla\psi|^2,\psi)\eta.
					\end{equation*}
					Since $B_r(x^{(0)})\cap \p\{\psi<Q\} =B_r(x^{(0)})\cap \{x_2=1\}$, the solution $\psi$ is $C^{1}$ up to the boundary. Therefore, an  integration by parts  gives
					\begin{equation*}
						\int_{\{\psi<Q\}} g(|\nabla\psi|^2,\psi)\nabla\psi\cdot\nabla\eta + \p_z G(|\nabla\psi|^2,\psi)\eta= \int_{\p\{\psi<Q\}}\p_{x_2}\psi g(|\p_{x_2}\psi|^2,Q) \eta.
					\end{equation*}
					Thus
					\begin{equation*}
						\Lambda g(\Lambda^2,Q)\int_{\p\{\psi<Q\}} \eta \leq \int_{\p\{\psi<Q\}}\p_{x_2}\psi g(|\p_{x_2}\psi|^2,Q) \eta.
					\end{equation*}
					Since $\eta$ is arbitrary and $s\mapsto s g(s^2, Q)$ is monotone increasing for $s\geq 0$,  we have
					\begin{equation*}
						\p_{x_2}\psi \geq \Lambda \quad \text{on } \p\{\psi<Q\}\cap B_r(x^{(0)}).
					\end{equation*}
					
					\emph{(b).} The arguments for $\lim_{x_2\rightarrow 1-}\p_{x_2}\psi \leq \Lambda$ on $\overline{D_{\iota,\sigma_0}}\cap \{x_2=1\}$ are similar as those in \cite[Theorem 6.1 in Chapter 3]{Friedman82}. We only outline the proof here for completeness.
					For each $\bar{x}\in \overline{D_{\iota,\sigma_0}}\cap \{x_2=1\}$ and $r>0$ sufficiently small, by the uniform convergence of $\psi_n$ to $\psi$ as well as the convergence of the free boundaries, one can find a sequence of domains $\{K_n\}$ such that: (i) $\psi_n<Q$  in  $K_n\cap B_r(\bar{x})$,
					and $\p K_n\cap B_r(\bar{x})$  is given by a $C^{1,\alpha}$  graph, which converges to the free boundary $\p\{\psi<Q\}\cap B_r(\bar{x})=\{x_2=1\}\cap B_r(\bar{x})$ in $C^{1,\alpha}$ as $n\rightarrow \infty$; (ii) there is a sequence of points $\{x^{(n)}\}\subset \p K_n\cap B_r(\bar{x})$, such that $\psi_n(x^{(n)})=Q$ for each $n$ and $x^{(n)}\rightarrow \bar{x}$ as $n\rightarrow \infty$.
					Then let $\phi_n$ be the solution to the Dirichlet problem
					\begin{equation*}
						\begin{cases}
							\nabla\cdot (g(|\nabla \phi_n|^2,\phi_n)\nabla \phi_n) - \p_z G(|\nabla \phi_n|^2,\phi_n)=0 \quad\text{in }  K_n\cap B_r(\bar{x}),\\
							\phi_n=  Q \quad \text{on } \p K_n\cap B_r(\bar x),\quad \phi_n=\psi_n \quad \text{ on } \p B_r(\bar{x}) \cap K_n.
						\end{cases}
					\end{equation*}
					By the comparison principle ({Lemma \ref{lem:comparison}}), $\phi_n\geq \psi_n$ in $K_n\cap B_r(\bar{x})$ and thus 
					$$\p_{\nu^{(n)}} \phi_n(x^{(n)})\leq \p_{\nu^{(n)}} \psi_n(x^{(n)})=\Lambda_n,$$ 
					where $\nu^{(n)}:=\nu(x^{(n)})$ is the unit outer normal of $\partial K_n$ at $x^{(n)}$. By the  boundary  estimate for elliptic equations one has $\phi_n\rightarrow \psi$ in $C^{1,\alpha}(B_{r/2}(\bar{x})\cap \overline{K_n})$. This gives 
					$$\p_{x_2} \psi ( \bar{x})=\lim_{n\rightarrow \infty} \p_{\nu^{(n)}} \phi_n(x^{(n)})\leq \lim_{n\rightarrow \infty} \Lambda_n=\Lambda.$$ 
					The proof for \eqref{eq:cauchy} is thus complete.
				\end{proof}
				
				The following lemma is a consequence of the nondegeneracy property of the solution (Lemma \ref{lem:nondeg}) as well as the graph property of the free boundary. 
				\begin{lem}\label{lem:continuous}
					There exists a constant $C=C(\gamma,\epsilon,\bar u)>0$  such that the following statements hold:
					
					\rm{(i)} If $\Lambda\geq {CQ}$, then {
						$\Upsilon_\Lambda(1)<0$.}
					
					\rm{(ii)} If $\Lambda\leq C^{-1}Q$, then $\Upsilon_\Lambda(1)>0$.
				\end{lem}
				\begin{proof}
					The proof is inspired by  \cite[Lemma 5.2]{ACF85}.
					
					\emph{(i).} We claim that $\Lambda/Q$ cannot be too large if $\Upsilon_\Lambda(1)\geq0$. Since $\Gamma_{\Lambda}$ connects $(\Upsilon_\Lambda(1), 1)$ to $(R,l_\Lambda)$ (note that $l_\Lambda<1$), there exist  $x^{(0)}\in \Gamma_{\Lambda}$ and $r'\in (0,R_0)$ ($R_0=R_0(\gamma,\epsilon,\bar u)$ is the constant in Lemma \ref{lem:nondeg} with $r=1/2$) independent of $\Lambda$ or $Q$ such that $B_{r'}(x^{(0)})\subset \Omega_{\mu, R}\cap \{x_2 >\hat\sigma_0/2\}$ or $B_{r'}(x^{(0)})\subset \{x_1>0, x_2>\hat\sigma_0/2\}$ for some small $\hat\sigma_0>0$. If $c_{1/2}\Lambda/Q\leq R_0$,  where $c_{1/2}$ is the constant in Lemma \ref{lem:nondeg} depending on $\gamma,\,\epsilon$ and $\bar u$, then we are done. If $c_{1/2}\Lambda/Q>R_0$, then it follows from the nondegeneracy lemma (Lemma \ref{lem:nondeg}) that
					\begin{align*}
						\frac{Q}{r'}\geq \frac{1}{r'}\left(\frac{1}{|B_{r'}(x^{(0)})|}\int_{B_{r'}(x^{(0)})}|Q-\psi|^2\right)^{1/2}\geq c_{1/2}\Lambda.
					\end{align*}
					Since $r'$ is fixed, there is a contradiction if ${\Lambda}/Q$ is large.
					
					
					\emph{(ii).}  Suppose that $\Upsilon_\Lambda(1)\leq 0$, we will prove that ${\Lambda}/Q$ cannot be too small. Similar to that in \cite{ACF85}, {let 
						\begin{equation*}
							\eta_\vartheta(x_1):=\begin{cases}
								\vartheta \exp\left(\frac{(x_1-\frac{R}{2})^2}{(x_1-\frac{R}{2})^2-\hat\sigma^2}\right) &\text{ if } \left|x_1-\frac{R}{2}\right|< \hat\sigma,\\
								0 &\text{ otherwise},
							\end{cases}
						\end{equation*}
						where $\vartheta>0$, and $\hat\sigma=\hat \sigma(\gamma,\epsilon,\bar u)>0$ small such that the comparison principle holds in $\R\times(0,2\hat\sigma)$ (cf. Lemma \ref{lem:comparison}).}  
					One can increase $\vartheta$ till the first time $\vartheta_0$ such that the graph of $x_2=\eta_{\vartheta_0}(x_1)$ touches the free boundary at some $x^{(0)}\in \Gamma_{\Lambda}$.
					Let $K:=\Omega_{\mu, R}\cap \{(x_1,x_2)| x_2< \eta_{\vartheta_0}(x_1)\}$ and $\phi$ be a solution to the following problem
					\begin{equation*}
						\begin{cases}
							\nabla\cdot ( g(|\nabla \phi|^2, \phi)\nabla\phi)- \p_z G(|\nabla \phi|^2, \phi)=0 \quad \text{in } K,\\
							\phi =Q \quad \text{on } \{x_2=\eta_{\vartheta_0}(x_1)\},\quad \phi =0 \quad \text{on } \{x_2=0\}.
						\end{cases}
					\end{equation*}
					It follows from {the comparison principle}    that $\phi\geq \psi_\Lambda$ in $K$. Hence
					\begin{align*}
						\p_\nu \phi(x^{(0)})\leq \p_\nu \psi_\Lambda(x^{(0)})=\Lambda.
					\end{align*}
					On the other side, using the linear asymptotics of $\phi$ at $x^{(0)}$, which can be obtained by solving the problem for the blowup limit of $\phi/Q$ at $x^{(0)}$, one has
					\begin{align}\label{estphinu2}
						\p_\nu \phi (x^{(0)})\geq C Q
					\end{align}
					for some $C=C(\gamma,\epsilon,\bar u)>0$. 
					Combining the above two inequalities together gives ${\Lambda}/Q \geq C$. This means ${\Lambda}/Q$ cannot be too small.
				\end{proof}
				
				As a consequence of Lemmas \ref{lem:conv} and \ref{lem:continuous}, one has the following continuous fit property for the free boundary. 
				\begin{cor}\label{cor:cont_fit}
					There exists a $\Lambda=\Lambda(\mu, R)>0$ such that $\Upsilon_\Lambda(1)=0$. Furthermore, 
					$\Lambda$ satisfies $C^{-1} Q \leq \Lambda(\mu, R)\leq C Q$ for positive constant $C$ depending on $\gamma$, $\epsilon$, and $\bar u$, 
					but independent of $\mu$ and $R$.
				\end{cor}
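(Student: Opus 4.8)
The plan is to carry out a continuity-in-$\Lambda$ argument in the spirit of \cite{ACF85}, using only the two lemmas immediately preceding this corollary. Throughout fix $\mu$, $R$, $\epsilon$ and write $\Upsilon_\Lambda(1):=\lim_{x_2\to 1^-}\Upsilon_\Lambda(x_2)$, which exists by Proposition \ref{thm:graph} when $\Gamma_\Lambda\ne\emptyset$. Define
\[
\Lambda^\ast:=\sup\{\Lambda>0:\Upsilon_\Lambda(1)>0\}.
\]
First I would check that $0<\Lambda^\ast<\infty$: the set on the right stays away from $0$ because Lemma \ref{lem:continuous}(ii) gives $\Upsilon_\Lambda(1)>0$ for all small $\Lambda$, and it is bounded above because Lemma \ref{lem:continuous}(i) gives $\Upsilon_\Lambda(1)<0$ for all large $\Lambda$.

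Next I would invoke continuous dependence on the parameter. By Lemma \ref{lem:conv}, $\Lambda\mapsto\Upsilon_\Lambda(1)$ is continuous. Choosing $\Lambda_n\nearrow\Lambda^\ast$ with $\Upsilon_{\Lambda_n}(1)>0$ and $\widetilde\Lambda_n\searrow\Lambda^\ast$ with $\Upsilon_{\widetilde\Lambda_n}(1)\le 0$ — both possible by the definition of the supremum — and passing to the limit yields $\Upsilon_{\Lambda^\ast}(1)\ge 0$ and $\Upsilon_{\Lambda^\ast}(1)\le 0$, hence $\Upsilon_{\Lambda^\ast}(1)=0$. Thus $\Lambda(\mu,R):=\Lambda^\ast$ does the job. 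One point to verify along the way is that $\Gamma_{\Lambda^\ast}\ne\emptyset$, so that $\Upsilon_{\Lambda^\ast}(1)$ really records a free-boundary branch issuing from the nozzle mouth $A=(0,1)$; this follows since the configurations $\widetilde\Lambda_n$ have nonempty free boundary strictly below $\{x_2=1\}$ and these free boundaries converge (Lemma \ref{lem:conv}).

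For the two-sided bound I would revisit the quantitative estimates already contained in the proof of Lemma \ref{lem:continuous} and apply them at $\Lambda=\Lambda^\ast$. Since $\Upsilon_{\Lambda^\ast}(1)=0$ and $\Gamma_{\Lambda^\ast}$ joins $(0,1)$ to $(R,l_{\Lambda^\ast})$ with $l_{\Lambda^\ast}<1$, one can center a ball $B_r(x^{(0)})$ of a radius $r$ independent of $\mu,R$ at a free-boundary point inside $\Omega_{\mu,R}\cap\{x_2>\hat\sigma_0/2\}$; the nondegeneracy estimate of Lemma \ref{lem:nondeg} together with Remark \ref{rmk:fbcondition} then forces $\Lambda^\ast\le CQ$. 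Conversely, the barrier construction in the proof of Lemma \ref{lem:continuous}(ii), which only uses $\Upsilon_\Lambda(1)\le 0$, gives $\Lambda^\ast\ge C^{-1}Q$. In both estimates the constants depend only on $B_\ast$, $\gamma$ and $\barH$ — through the structural bounds on $G_\epsilon$ from Proposition \ref{prop:assumption} and through the fixed geometric data $r$, $\hat\sigma_0$ determined by the nozzle — and not on $\mu$ or $R$.

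The main obstacle is the care needed at the critical value $\Lambda^\ast$: one must ensure that the limiting configuration does not degenerate (empty free boundary, or fit point escaping to $x_1=R$) and that the nondegeneracy ball used for the upper bound can be placed at a positive distance from the corner $A$, uniformly in $\mu,R$. This is precisely where the geometry of $S_1$ and the uniform ellipticity/structure of the truncated equation must be used; modulo these points the corollary is a short consequence of the continuity lemma.
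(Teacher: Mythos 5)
Your proposal is correct and follows essentially the same route the paper intends: the paper states the corollary as ``a consequence of Lemmas \ref{lem:conv} and \ref{lem:continuous}'' with no further argument, and you supply precisely the intermediate-value and quantitative steps that ``consequence'' conceals. One small tightening: to guarantee that you can pick $\Lambda_n\nearrow\Lambda^\ast$ with $\Upsilon_{\Lambda_n}(1)>0$, you should first use continuity (Lemma \ref{lem:conv}) together with $\Upsilon_{\widetilde\Lambda_n}(1)\le 0$ for $\widetilde\Lambda_n\searrow\Lambda^\ast$ to exclude $\Upsilon_{\Lambda^\ast}(1)>0$, since the defining set $\{\Lambda:\Upsilon_\Lambda(1)>0\}$ could a priori contain its own supremum; and it is worth stating the convention that $\Gamma_\Lambda=\emptyset$ is counted as $\Upsilon_\Lambda(1)>0$ so that the set is well-defined before one invokes Lemma \ref{lem:continuous}. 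Your handling of the nonemptiness of $\Gamma_{\Lambda^\ast}$ and of the $\mu,R$-independence of the constants in the two-sided bound (via the nondegeneracy ball of Lemma \ref{lem:nondeg} and the barrier in the proof of Lemma \ref{lem:continuous}(ii)) is the right thing to check and is correct.
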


				\subsection{Smooth fit}\label{secsmooth}
				
				With the continuous fit property at hand, one can argue along the same lines as  \cite[Theorem 6.1]{ACF85} to conclude that the free boundary {$\Gamma_\psi:=\partial\{\psi<Q\}\cap\Omega_{\mu,R}$} 
				fits the outlet of the nozzle in a $C^1$ fashion.

				\begin{prop}\label{prop:smooth_fit}
					Let $\psi:=\psi_{\mu,R,\Lambda}$ be the minimizer of the truncated problem \eqref{eq:mini_truncated} with $\Lambda$ as in Corollary \ref{cor:cont_fit}.
					Then $S_1\cup \Gamma_\psi$ is $C^1$ in a neighbourhood of the point $A=(0,1)$, and $\nabla\psi$ is continuous in a $\{\psi<Q\}$-neighborhood of $A$.
				\end{prop}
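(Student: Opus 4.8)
The strategy is to adapt the blow-up argument of Alt--Caffarelli--Friedman \cite[Theorem 6.1]{ACF85} to the present quasilinear, inhomogeneous setting, exploiting the Lipschitz regularity (Proposition \ref{thm:Lipschitz}), the nondegeneracy (Lemma \ref{lem:nondeg}), the free boundary condition in the domain-variation sense (Lemma \ref{lem:fbcondition}), and the Cauchy uniqueness result (Lemma \ref{lem:uni}). Recall from Proposition \ref{prop:assumption} that the lower-order terms are controlled by $\delta=C\epsilon^{-1}\kappa_0$, so that after rescaling they disappear in the limit; thus blow-up limits at $A$ satisfy the \emph{homogeneous} equation $\p_{x_i}(\p_{p_i}\mcG_0(\nabla\psi))=0$ with $\mcG_0$ having constant ($z$-independent) coefficients, and inherit the one-phase free boundary condition $|\nabla\psi|=\Lambda$ on the free boundary.

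First I would set up the blow-up sequence. Since $\psi$ is Lipschitz near $A$ and vanishes (after subtracting $Q$) on $S_1\cup\Gamma_\psi$, for $r_n\searrow 0$ define $\psi_n(x):=(Q-\psi(A+r_nx))/r_n$; these are uniformly Lipschitz, hence (along a subsequence) converge in $C^{0,\alpha}_{loc}$ and weakly in $W^{1,2}_{loc}$ to some $\psi_0\geq 0$, with the domains $\{\psi_n>0\}$ and their boundaries converging locally in Hausdorff distance, $\chi_{\{\psi_n>0\}}\to\chi_{\{\psi_0>0\}}$ in $L^1_{loc}$, and $\nabla\psi_n\to\nabla\psi_0$ a.e. (as in \cite[Lemma 3.6]{Friedman82}). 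Because $S_1\in C^{1,\alpha}$, the rescaled fixed boundary $\frac{1}{r_n}(S_1-A)$ converges to the half-line $L=\{x\cdot\nu_A\le 0,\ x\ \text{on the tangent line of}\ S_1\ \text{at}\ A\}$ in the direction of the inner normal $\nu_A$. Using the continuous fit $\Upsilon_\psi(1)=0$ (Corollary \ref{cor:cont_fit}) one sees the free boundary $\Gamma_\psi$ also limits onto a ray emanating from the origin. The nondegeneracy Lemma \ref{lem:nondeg} guarantees $\psi_0\not\equiv 0$ and that $0\in\p\{\psi_0>0\}$, while the domain-variation free boundary condition passes to the limit to give that $\psi_0$ is a global solution of the one-phase problem in a half-space-type domain, linear along its free boundary with slope $\Lambda$, and vanishing on the fixed wall $L$.

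The core of the argument is the classification of this blow-up limit: I would show $\psi_0$ must be the planar solution $\psi_0(x)=\Lambda(x\cdot e)^+$ for a single unit vector $e$, which forces the free ray and the wall ray to be \emph{tangent} at the origin (i.e.\ $e=\nu_A$ and the two rays together form a straight line through $0$), and hence $S_1\cup\Gamma_\psi$ is differentiable at $A$ with matching tangents. This is where the main obstacle lies: in the irrotational flat-wall case of \cite{ACF85} one can use reflection and the explicit structure of harmonic (or $p$-harmonic) functions in a half-plane, whereas here the coefficients $\p_{p_ip_j}\mcG_0$ are only constant along the \emph{limiting} gradient, so the equation linearizes to a constant-coefficient uniformly elliptic operator only after one knows $\psi_0$ is essentially one-dimensional. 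I would handle this by: (a) first treating the case where the wall and free rays are \emph{not} tangent, in which $\psi_0$ lives in a true cone of opening $<\pi$ or $>\pi$, and deriving a contradiction either from the non-oscillation Lemma \ref{lem:nonoscillation} (the free boundary would have to oscillate, contradicting the uniform estimate) or from a Hopf/comparison argument at the corner showing the slope $\Lambda$ cannot be attained; and (b) once the rays are known to be tangent, invoking the Cauchy uniqueness Lemma \ref{lem:uni} exactly as in the proof of Proposition \ref{thm:graph} to conclude $\psi_0$ is the one-dimensional profile, hence planar. Since the blow-up limit is independent of the subsequence, this yields that $\Gamma_\psi$ approaches $A$ tangentially to $S_1$; a standard iteration (geometric decay of the flatness of the free boundary in dyadic annuli around $A$, using the improvement-of-flatness machinery of \cite{DFS15} which handles the inhomogeneous lower-order terms) upgrades this to $C^1$ regularity of $S_1\cup\Gamma_\psi$ near $A$. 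Finally, continuity of $\nabla\psi$ up to $A$ in $\{\psi<Q\}$ follows from the boundary $C^{1,\alpha}$ estimates for the quasilinear equation \eqref{ELpde} on the now-$C^1$ domain together with the uniform slope $|\nabla\psi|=\Lambda$ on the free part, matching the known value of $\nabla\psi$ along $S_1$ by the continuous fit.
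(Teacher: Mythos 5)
Your high-level strategy (blow-up at $A$, classification of the limit, tangency) is the same as what the paper does, and your observation that the rescaled equation loses its $z$-dependence so that blow-up limits satisfy the irrotational equation $\nabla\cdot(g_\epsilon(|\nabla\psi|^2,Q)\nabla\psi)=0$ is exactly the point the paper makes. However, your execution of the classification step departs from the paper's, and this is where a genuine gap appears.

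The paper's route, following \cite[Theorem~6.1]{ACF85}, is a hodograph-style compactness argument. One blows up along sequences $x_m\to A$ with $\psi(x_m)<Q$ (not only at $A$ itself), considers the set $S$ of all limit points of $\nabla\psi(x_m)$, and shows that $S\subset\ell\cup C_\Lambda$ where $\ell$ is the line through the origin normal to the tangent of $S_1$ at $A$ and $C_\Lambda=\{|p|=\Lambda\}$. The key tool is that any blow-up limit $w$ of the scaled functions satisfies: either $\nabla w$ is constant or $x\mapsto\nabla w(x)$ is an open map near the origin. This dichotomy is proved via the unique continuation property for the limit equation. One then deduces $\nabla\psi_{0,A}$ must be the single point of $\ell\cap C_\Lambda$ in the upper half plane, which simultaneously gives the tangent direction of $S_1\cup\Gamma_\psi$ at $A$ and the continuity of $\nabla\psi$ up to $A$. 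Your proposal instead attacks the blow-up limit directly: rule out non-tangent cones by non-oscillation or a Hopf-at-the-corner comparison, and then identify a straight tangent with Cauchy uniqueness (Lemma~\ref{lem:uni}). Part (b) of your plan is sound and is indeed the same device used in Proposition~\ref{thm:graph}; but part (a) is a gap. The non-oscillation lemma (Lemma~\ref{lem:nonoscillation}) controls oscillation between two arcs of the free boundary inside the flow region; it does not directly say anything about the opening angle of the cone formed by the wall ray and the free ray. And a Hopf/corner argument for the quasilinear operator in a cone of opening $\neq\pi$ requires more than a sketch: when the opening exceeds $\pi$ the gradient blows up at the vertex, and when it is less than $\pi$ the gradient degenerates there, but translating either into a contradiction with $|\nabla\psi_0|=\Lambda$ on the free ray needs barriers adapted to the cone and to the quasilinear structure, which you have not supplied. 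The paper's open-mapping argument is specifically designed to bypass this cone analysis. You should either carry out the corner barriers in full or replace step (a) by the ACF85 open-mapping dichotomy (which, in the present setting, requires only that the limit equation be uniformly elliptic and have the unique continuation property---both of which you have via $g_\epsilon$ being $z$-independent at $z=Q$).
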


				\begin{proof}
					Suppose that $\{x_m\}$ is a sequence of points in $\{\psi<Q\}$ which converges to $A$. Define the rescaled functions
					\begin{align*}
						\psi_{r,x_m}(x):=Q-\frac{Q-\psi(x_m+r x)}{r}, \quad r\in (0,1).
					\end{align*}
					Then $\psi_{r, x_m}$ satisfies the quasilinear equation 
					\begin{align*}
						\nabla\cdot \left(g_\epsilon(|\nabla\psi_{r, x_m}|^2, Q-r(Q-\psi_{r, x_m})) \nabla \psi_{r, x_m}\right)- r\p_z G_\epsilon(|\nabla\psi_{r, x_m}|^2, Q-r(Q-\psi_{r, x_m}))=0 
					\end{align*}
					in $\{\psi_{r,x_m}<Q\}$. 
					Since $\p_zG_\epsilon\in L^\infty$, then any blowup limit along a subsequence $\psi_{r_m, x_m}$ satisfies
					\begin{align}\label{eq:limit_A}
						\nabla \cdot (g_\epsilon(|\nabla\psi|^2 , Q)) \nabla \psi)=0.
					\end{align}
					Note that \eqref{eq:limit_A} is the same as the governing equation for the irrotational flows. 
					Thus with this rescaling property at hand, one can use the compactness arguments in \cite[Theorem 6.1]{ACF85} together with the unique continuation property for the limiting equation \eqref{eq:limit_A} to conclude Proposition \ref{prop:smooth_fit}.
				\end{proof}

						\section{Removal of the truncations  and the far fields behavior}\label{secremove}
						
						In this section, we first remove the domain and subsonic truncations, and then {study the far fields behavior of the jet flows at the upstream and downstream. Consequently, we get subsonic jet flows which satisfy all properties} in Problem \ref{Pb2}.
						
						\subsection{Remove the domain truncations}
						The main goal of this subsection is to remove the truncations of the domain by letting $\mu, R\to \infty$, which is a consequence of the uniform estimates (with respect to $\mu$ and $R$) of the solutions in Section \ref{secreg} and Section \ref{sec:cont_fit}. Hence one gets a solution in $\Omega$, which is the domain bounded by $S_0$ and $S_1\cup ([0,\infty)\times \{1\})$. Moreover, the solution inherits the properties of solutions in the truncated domains. The convergence  and the properties of the limit solution are summarized in the following proposition.
						
						\begin{prop}\label{prop:domain_truncation}
							Let the nozzle boundary $S_1$ defined in \eqref{eq:nozzle} satisfy \eqref{eq:nozzle1}. Given an incoming  horizontal velocity  $\bar u\in C^{1,1}([0,\bar H])$ satisfying \eqref{cond:u0_eps0} and a mass flux $Q>\tilde Q$ where $\tilde Q$ is defined in \eqref{def:Q_*}.
							Let $\psi_{\mu,R,\Lambda}$ be the minimizer of the problem \eqref{eq:mini_truncated} 
							with  $\psi^\sharp_{\mu,R}$  defined in \eqref{eq:bdry_datum}. Then for any $\mu_j, R_j\rightarrow \infty$, there is a subsequence (still labelled by $\mu_j$ and $R_j$) such that
							$\Lambda_j:=\Lambda(\mu_j, R_j)\rightarrow \Lambda_\infty$ for some $\Lambda_\infty\in (0,\infty)$ and $\psi_{\mu_j, R_j, \Lambda_j}\rightarrow \psi_\infty$ in $C^{0,\alpha}_{loc}(\Omega)$ for any $\alpha\in (0,1)$.
							Furthermore, the following properties hold.
							\begin{itemize}
								\item [(i)] The function {$\psi:=\psi_\infty$} is a local minimizer for the energy functional, i.e., for any $D\Subset \Omega$,  one has  $J^\epsilon(\psi)\leq J^\epsilon(\varphi)$ for all  $\varphi=\psi$ on {$\partial D$}, where
								\begin{align*}
									J^\epsilon(\varphi):=\int_{D} G_\epsilon(|\nabla\varphi|^2,\varphi)+\lambda_{\epsilon,\infty}^2 \chi_{\{\varphi<Q\}}\ dx 
									\quad\text{and} \quad \lambda_{\epsilon,\infty}:=\sqrt{\Phi_\epsilon(\Lambda_\infty^{{2}},Q)}.
								\end{align*}
								{In particular, $\psi$ solves 
								\begin{equation}\label{eq_limiting_sol}
									\left\{
									\begin{aligned}
										&\nabla\cdot\left(g_\epsilon(|\nabla \psi|^2,\psi)\nabla \psi\right)-\p_z G_\epsilon (|\nabla \psi|^2,\psi)=0 &&\text{ in } \mathcal{O},\\
										&\psi =0 &&\text{ on } S_0,\\
										&\psi =Q &&\text{ on } S_1 \cup \Gamma_{\psi},\\
										&|\nabla \psi| =\Lambda_\infty &&\text{ on } \Gamma_{\psi},
									\end{aligned}
									\right.
								\end{equation}
								where $\mathcal{O}:=\Omega\cap \{\psi<Q\}$ is the flow region, $\Gamma_{\psi}:=\p\{\psi<Q\}\backslash S_1$ is the free boundary, and  $\p_zG_\epsilon(|\nabla\psi|^2,\psi)$ satisfies \eqref{dzG_expression} in the subsonic region $|\nabla\psi|^2\leq (1-\epsilon)\mft_c(\mathcal{B}(\psi))$.}
								\item [(ii)] {$\psi$ is in $C^{2,\alpha}(\mathcal{O})\cap C^1(\overline{\mathcal{O}})$ for any $\alpha\in(0,1)$.} Furthermore, it satisfies $\p_{x_1}\psi\geq 0$ in $\Omega$.
								\item[(iii)] The free boundary $\Gamma_{\psi}$ is given by the graph $x_1=\Upsilon(x_2)$ for some function $\Upsilon$, where $\Upsilon$ is $C^{2,\alpha}$ for any $\alpha\in (0,1)$ as long as it is finite.
								\item[(iv)] At the orifice $A=(0,1)$ one has $\lim_{x_2\rightarrow 1-}\Upsilon(x_2)=0$. Furthermore, $S_1\cup\Gamma_{\psi}$ is $C^1$ around $A$, in particular, $\Theta'(1)=\lim_{x_2\rightarrow 1-}\Upsilon'(x_2)$.
								\item[(v)] There is a constant $\ubar H\in (0,1)$ such that $\Upsilon(x_2)$ is finite if and only if $x_2\in (\ubar H, 1]$, and $\lim_{x_2\rightarrow \ubar H+} \Upsilon(x_2)=\infty$. Furthermore, there exists an $\bar R>0$ sufficiently large, such that $\Gamma_{\psi}\cap \{x_1>\bar R\}= \{(x_1, f(x_1)) | \bar R<x_1<\infty\}$ for some $C^{2,\alpha}$ function $f$ and $\lim_{x_1\rightarrow \infty}f'(x_1)=0$.
							\end{itemize}
						\end{prop}
						
						
						\begin{proof}
							The proof for (i)--(iii) is based on standard compactness arguments (cf. Step 1 in the proof for Lemma \ref{lem:conv}), the regularity of the minimizer and the free boundary  in Section \ref{secreg} and Proposition \ref{prop:smooth_fit}, as well as the properties of $\psi_{\mu_j, R_j, \Lambda_j}$ in 
							Section \ref{secprop} and Section \ref{sec:cont_fit}. We do not repeat it here. 
							
							\emph{(iv).} The continuous fit property follows from Proposition \ref{thm:graph},  Lemma \ref{lem:conv}, and Corollary \ref{cor:cont_fit}. The smooth fit property follows from Proposition \ref{prop:smooth_fit}.
							
							\emph{(v).} Let $I:=\{x_2\in (0,1)| 0<\Upsilon(x_2)<\infty\}$. Since $\Upsilon$ is continuous, $I$ is open. For each $\bar x_2\in I$ one has $\Upsilon(x_2)<\infty$ for all $x_2\in (\bar x_2, 1)$ ({cf. \cite[Lemma 5.4]{ACF85} and Lemma \ref{lem_nondegeneracy}). Let $\ubar H:=\inf\{x_2| x_2\in I\}$. It follows from Proposition \ref{thm:graph} that $\ubar H<1$.} We claim that
							\[
							\Upsilon(x_2)=\infty\quad \text{for all }0<x_2\leq \ubar H.
							\] Indeed, if $\Upsilon(\ubar H)<\infty$, there must be an $\tilde{x}_2<\ubar H$ such that $\Upsilon(\tilde{x}_2)<\infty$. Otherwise, $\psi:=\psi_\infty$ would solve the following Cauchy problem
							\begin{equation*}
								\begin{cases}
									\nabla \cdot(g_\epsilon(|\nabla\psi|^2,\psi)\nabla\psi)-\p_zG_\epsilon(|\nabla\psi|^2,\psi)=0  &\text{ in }  (\Upsilon(\ubar H), \infty)\times (0, \ubar H),\\
									\psi=Q, \quad \p_{x_2}\psi = \Lambda &\text{ on } (\Upsilon(\ubar H),\infty)\times \{\ubar H\}.\\
								\end{cases}
							\end{equation*}
							It follows from  Lemma \ref{lem:uni} that $\psi=\psi_{1d}$ is a one-dimensional solution and thus the free boundary is the horizontal line $\R\times\{\ubar H\}$. This leads to a contradiction. The fact that $\Upsilon(\tilde{x}_2)<\infty$ implies that $\Upsilon(x_2)<\infty$ for all $x_2\in (\tilde{x}_2,1)$ including $\ubar H$. Thus $\ubar H$ is an interior point of $I$, which is a contradiction. Hence  $\Upsilon(x_2)<\infty$ for all $x_2\in (\ubar H,1)$ and $\Upsilon(x_2)=\infty$ for all $x_2\in (0,\ubar H]$. Since $\lim_{x_2\rightarrow \ubar H+}\Upsilon(x_2)=\infty$, the flatness condition (cf. p. 43 in \cite{ACF84}) is satisfied when $x_1>\bar R$ for sufficiently large $\bar R$. This implies that the free boundary is an $x_1$-graph {so that it can be expressed as} $x_2=f(x_1)$ for  $x_1>\bar R$, where $f$ is a $C^{2,\alpha}$ function. In addition, one has $f'(x_1)\rightarrow 0$ as $x_1\rightarrow \infty$.
						\end{proof}
						
						{The next proposition shows the positivity of  $\partial_{x_1}\psi_\infty$, which plays an important role in the equivalence between the stream function formulation and the original Euler system.}
					
					\begin{prop}\label{prop:positive_psi}
						Let $\psi:=\psi_\infty$ be a solution obtained in Proposition \ref{prop:domain_truncation}. Then $\p_{x_1}\psi>0$ in $\mcO:= \Omega\cap \{\psi<Q\}$.
					\end{prop}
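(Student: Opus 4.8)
The plan is to promote the non-strict monotonicity $\p_{x_1}\psi\geq 0$ from Proposition \ref{prop:domain_truncation}(ii) to the strict inequality inside the flow region by a strong maximum principle / Hopf lemma argument applied to the difference quotient $v:=\p_{x_1}\psi$, combined with a careful treatment of the free boundary (where $\psi=Q$) and of the nozzle wall. First I would note that $\psi$ solves the uniformly elliptic quasilinear equation \eqref{ELpde} in the open set $\mcO=\Omega\cap\{\psi<Q\}$, and since the coefficients of \eqref{ELpde} are smooth in $\nabla\psi$ and $C^2$ in $\psi$, interior Schauder estimates give $\psi\in C^{2,\alpha}_{loc}(\mcO)$; moreover by Remark \ref{rmk:up_to_bdry} and Proposition \ref{prop:smooth_fit} the gradient is continuous up to the $C^{1,\alpha}$ portions of $S_0$, $S_1$ and $\Gamma_{\psi_\infty}$. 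Differentiating \eqref{ELpde} formally in $x_1$ shows that $v=\p_{x_1}\psi$ is a weak solution of a linear uniformly elliptic equation in divergence form with bounded (indeed Hölder) coefficients,
\begin{equation*}
\p_i\bigl(a^{ij}(x)\p_j v\bigr)+\p_i\bigl(\mfb^i(x) v\bigr)-\mfc(x)v=0\quad\text{in }\mcO,
\end{equation*}
with $a^{ij}=\p_{p_ip_j}\mcG(\nabla\psi,\psi)$, $\mfb^i=\p_{zp_i}\mcG(\nabla\psi,\psi)$, $\mfc=\p_{zz}\mcG(\nabla\psi,\psi)$, exactly as in the proof of Lemma \ref{lem:gradu}. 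Since $v\geq 0$ in $\mcO$, the strong maximum principle (after the usual change of sign / absorbing the zeroth order term, valid because $\delta$ is small so the relevant constants are controlled) gives the dichotomy: in each connected component of $\mcO$, either $v\equiv 0$ or $v>0$.

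Next I would rule out $v\equiv 0$ on any component. Suppose $\p_{x_1}\psi\equiv 0$ on a component $\mcO'$ of $\mcO$; then $\psi=\psi(x_2)$ there, so $\psi$ is a one-dimensional solution on a horizontal strip, and its level set $\{\psi<Q\}$ cannot depend on $x_1$. But by Proposition \ref{prop:domain_truncation}(iii)--(v) the free boundary of $\psi_\infty$ is the genuine $x_2$-graph $x_1=\Upsilon(x_2)$ with $\lim_{x_2\to l+}\Upsilon(x_2)=\infty$ and $\lim_{x_2\to 1-}\Upsilon(x_2)=0$, so $\{\psi_\infty<Q\}$ does depend on $x_1$; this is the same contradiction used in the proof of Proposition \ref{thm:graph} and in Case (i) of Lemma \ref{lem:conv}, invoking the uniqueness of the Cauchy problem Lemma \ref{lem:uni}. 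More precisely, if $\mcO'$ touches the free boundary along a segment, Lemma \ref{lem:uni} forces $\psi$ to coincide with the one-dimensional profile $\psi_{1d}$ globally, contradicting the shape of $\Gamma_{\psi_\infty}$; if $\mcO'$ is an interior component not touching $\Gamma_{\psi_\infty}$, the strong maximum principle together with the boundary conditions $\psi=0$ on $S_0$, $\psi=Q$ on $S_1$ already forbids a nontrivial $x_1$-independent subregion. Hence $v>0$ on every component of $\mcO$, i.e. $\p_{x_1}\psi>0$ in $\mcO$.

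Two technical points deserve care and I expect the interface behavior to be the main obstacle. First, the differentiated equation for $v$ only holds in the interior $\mcO$; near $\Gamma_{\psi_\infty}$ one must use the $C^{1,\alpha}$-regularity of the free boundary (Proposition \ref{prop:domain_truncation}(iii)) and the overdetermined conditions $\psi=Q$, $|\nabla\psi|=\Lambda_\infty$ there, so that $\psi\in C^{1,\alpha}$ up to $\Gamma_{\psi_\infty}$ and the Hopf lemma can be applied at free boundary points if needed — but in fact the cleanest route is to work entirely in the open set $\mcO$ and only use the free-boundary geometry to produce the contradiction with $v\equiv 0$, avoiding any delicate boundary Hopf argument at $\Gamma_{\psi_\infty}$. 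Second, one should confirm that $\mcO$ is connected (this follows from the arguments in the proof of Proposition \ref{prop:mono}: every component of $\{\psi<Q\}$ must reach $\p\Omega$, and $\{\psi<Q\}\cap\p\Omega$ is a connected arc by the construction of the boundary data and the asymptotics), so that the dichotomy need only be resolved once. Assembling these pieces yields $\p_{x_1}\psi>0$ throughout $\mcO$, which is precisely the statement.
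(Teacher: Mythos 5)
Your argument is correct and follows essentially the same route as the paper: differentiate the equation \eqref{ELpde} in $x_1$, observe that $v=\p_{x_1}\psi\geq 0$ solves a linear uniformly elliptic equation in divergence form in $\mcO$, invoke the strong maximum principle to obtain the dichotomy $v\equiv 0$ or $v>0$, and rule out the former by noting that $x_1$-independence of $\psi$ contradicts the established geometry of the free boundary (the paper phrases this as ``all streamlines are horizontal,'' which is however a contradiction). The additional care you take over connectivity of $\mcO$ and the appeal to Lemma \ref{lem:uni} is a useful fleshing-out of a step the paper leaves terse, but it is not a different method.
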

					\begin{proof}
						It follows from part (ii) of Proposition \ref{prop:domain_truncation}  that $\p_{x_1}\psi\geq 0$ in $\mcO$.  The strict inequality can be proved by the strong maximum principle. Indeed,
						assume that $\p_{x_1}\psi(x^{(0)})=0$ for some 
						$x^{(0)}\in \mcO$. 
						Let $V:=\p_{x_1}\psi$. Then $V \geq 0$ in $\mcO$ and it {satisfies
							\begin{equation*}\label{Weq}
								\p_i(\mathfrak a^{ij}\p_jV)+ \partial_{i} (\mfb^i\p_iV)-\mfb^i \p_i V -\mfc V=0 \quad\text{in }\mcO,
							\end{equation*}
							where
							\begin{align*}
								\mathfrak a^{ij}:=\p_{p_ip_j}G_\epsilon(|\nabla\psi|^2,\psi) ,\quad \mfb^i:=\p_{p_iz} G_{\epsilon}(|\nabla\psi|^2,\psi),\quad \mfc:= \p_{zz}{G}_\epsilon(|\nabla\psi|^2,\psi).
							\end{align*}
							The strong maximum principle (cf. \cite[Theorem 3.5]{GT})} gives that  $V=0$ in $\mcO$, and hence all streamlines are horizontal.  This is however a contradiction. Hence $V>0$ in $\mcO$ and thus the proof of the proposition is completed.
					\end{proof}

						\subsection{Remove the subsonic truncation when the mass flux is large}
						This subsection devotes to the removal of the subsonic truncation. 
						We aim to show that for any given $\epsilon\in (0,1/4)$, if $Q=Q(\epsilon)$ is suitably large, then one always has $|\nabla\psi_\infty|^2\leq(1-\epsilon)\mft_{c}(\mcB(\psi_\infty))$ in the flow region, where $\mft_{c}$ defined in \eqref{defF} is the square of critical momentum. This means that  the subsonic truncation introduced in Section \ref{sec:truncation} can be removed.
						
						\begin{prop}\label{thm:subsonic}
							Let $\psi:=\psi_\infty$ be a limiting solution in Proposition \ref{prop:domain_truncation} and $\tilde Q$ be as in  \eqref{def:Q_*}. 
							Suppose $Q>Q^*$ for some ${Q}^*\geq \tilde Q$ sufficiently large depending on $\bar u$, $\gamma$, $\epsilon$ and the nozzle, then
							\begin{equation}\label{removetrun}
								|\nabla \psi|^2\leq(1-\epsilon)\mft_c(\mcB(\psi)).
							\end{equation}
						\end{prop}
						\begin{proof}
							In the flow region $\mathcal O:=\Omega\cap\{\psi<Q\}$, $\psi$ satisfies the following equation of nondivergence form 
							\begin{equation*}\label{eqnondiv}
								a_\epsilon^{ij}(\nabla\psi,\psi) \p_{ij}\psi=F_\epsilon(|\nabla \psi|^2, \psi),
							\end{equation*}
							where 
							$$F_\epsilon(|\nabla \psi|^2, \psi)=-\p_z g_\epsilon(|\nabla \psi|^2, \psi) |\nabla\psi|^2+\p_zG_\epsilon(|\nabla \psi|^2, \psi),$$ 
							and the matrix
							$$(a_\epsilon^{ij})=g_\epsilon(|\nabla \psi|^2, \psi)I_2+2\p_t g_\epsilon(|\nabla \psi|^2, \psi)\nabla\psi\otimes \nabla \psi$$
							is symmetric with the eigenvalues
							\begin{align*}
								\beta_{0,\epsilon}  =g_\epsilon(|\nabla \psi|^2, \psi)\quad \text{and}\quad
								\beta_{1,\epsilon} =g_\epsilon(|\nabla \psi|^2, \psi)+2\p_t g_\epsilon(|\nabla \psi|^2, \psi)|\nabla\psi|^2.
							\end{align*}
							
							It follows from Lemma \ref{lem:truncation_g}, \eqref{eq:upper_pzzG} and \eqref{label_7} that there exists a constant $C_\gamma>1$ depending only on $\gamma$ such that 
							\begin{equation}\label{subsonic_eq}
								1\leq \frac{\beta_{1,\epsilon}}{\beta_{0,\epsilon}}\leq C_\gamma \epsilon^{-1} 
								\quad\text{and}\quad
								\frac{|F_\epsilon|}{\beta_{0,\epsilon}}\leq \frac{C_\gamma\kappa_0 Q}{\epsilon \|\bar u\|_{L^1([0,\bar H])}}.
							\end{equation}
							With \eqref{subsonic_eq} at hand, one can use similar arguments as in \cite[Propisition 3]{XX3} to get   
							\begin{equation}\label{eq:subsonic}
								\|\psi\|_{C^1(\mathcal O)}\leq C\bigg(1+Q+\Big\|\frac{F_\epsilon}{\beta_{0,\epsilon}}\Big\|_{L^{\infty}(\mathbb R\times[0,Q])}\bigg)\leq C(1+Q)
							\end{equation}
							where $C$ depends on $\gamma,\epsilon, \bar u$ and the nozzle. 
							Note that the right-hand side of \eqref{eq:subsonic} is $O(Q)$ for $Q\gg 1$,  and $\mft_c(\mathcal B(\psi))\geq C(\gamma,\bar u) Q^{\gamma+1}$, which simply follows from the definition of $\mft_c$ in \eqref{defF} and \eqref{label_7}. Since $\gamma>1$, we obtain the desired conclusion. 
						\end{proof}
						\begin{rmk}\label{rmk_remove_subsonic}
						{Given $\epsilon\in(0,1/4)$, let $\psi$ be a solution of the elliptic equation in \eqref{eq_limiting_sol} which satisfies the estimate \eqref{removetrun}. In view of \eqref{gm_g} and the expression of $\partial_zG_\epsilon$ in the subsonic region, cf. \eqref{dzG_expression}, the equation in \eqref{eq_limiting_sol} is exactly the equation in \eqref{eq_pb2}.}
						\end{rmk}
						\subsection{Far fields behavior of the jet}
						{This subsection is about the asymptotic behavior of {a uniformly subsonic jet flow} as $x_1\to\pm\infty$.}

							\begin{prop}\label{prop:asymptotic}
							 {Let the nozzle boundary $S_1$ defined in \eqref{eq:nozzle} satisfy \eqref{eq:nozzle1}. 
							 Given an incoming  horizontal velocity  $\bar u\in C^{1,1}([0,\bar H])$ satisfying \eqref{cond:u0_eps0}. 
							 Let $\psi$ be a solution of \eqref{eq_limiting_sol} with positive constants $Q>Q_*$ and $\Lambda:=\Lambda_\infty$, where $Q_*$ is defined in \eqref{def:Q_*}. Assume that $\psi$ and the free boundary $\Gamma_\psi$ satisfy the properties (ii)-(v) in Proposition \ref{prop:domain_truncation} and  \eqref{removetrun}.} Then for any $\alpha\in(0,1)$, as $x_1\rightarrow -\infty$,
								\begin{equation}\label{eqbarpsi}
								\psi(x_1,x_2)\to\bar \psi(x_2):=\bar\rho\int_0^{x_2}\bar u(s)ds \quad \text{in } C_{loc}^{2,\alpha}([0,\bar H)),
								\end{equation}
							 where $\bar\rho$ is defined in \eqref{eq:rhobar}; as $x_1\rightarrow \infty$, 
								\begin{equation}\label{ubarpsi}
									\psi(x_1,x_2)\to\ubar\psi(x_2):=\ubar\rho\int_0^{x_2}\ubar u(s)ds \quad \text{in } C_{loc}^{2,\alpha}([0,\ubar H)),
								\end{equation}  
								where $\ubar\rho>0$ is the downstream density, $\ubar u\in C^{1,\alpha}([0,\ubar H])$ is the downstream (positive) horizontal velocity, and $\ubar H>0$ is the downstream asymptotic height. Furthermore, the downstream states $(\ubar{\rho},\ubar{u},\ubar H)$ are uniquely determined by $\gamma$, $\epsilon$, $\bar u$, $Q$ and $\Lambda$.
							\end{prop}
							\begin{proof}
								\emph{Step 1. Upstream asymptotic behavior.} Let $\psi^{(-n)}(x_1,x_2):=\psi(x_1-n, x_2)$, $n\in\mathbb Z$. Since the nozzle is asymptotically horizontal with the height $\bar H$, there exists a subsequence  $\psi^{(-n)}$ (relabeled) converging to a function $\hat\psi$ in $C_{loc}^{2,\alpha}(\R\times[0,\bar H))$ for any $\alpha\in(0,1)$, where $\hat\psi$ {satisfies \eqref{removetrun} and} solves the Dirichlet problem in the infinite strip
								\begin{equation}\label{blowuplimitpb}
									\left\{
									\begin{aligned}
										&\nabla \cdot(g_\epsilon(|\nabla\hat\psi|^2,\hat\psi)\nabla\hat\psi)-\p_zG_\epsilon(|\nabla\hat\psi|^2,\hat\psi)=0
										\quad  \text{ in }  \R\times (0, \bar H),\\
										&\hat\psi=0  \text{ on }\R\times \{0\},\quad \hat\psi=Q\text{ on } \R\times \{\bar H\}.
									\end{aligned}
									\right.
								\end{equation}
								Using a standard shifting argument (cf. Lemma \ref{lem:variation_sol} for the one-dimensional case and \cite[Lemma 7.3]{ACF85} for the two-dimensional unbounded case), one can show that \eqref{blowuplimitpb} has a unique solution. More precisely, a similar argument as in Lemma \ref{lem:variation_sol}(i) gives  $0<\hat\psi<Q$ in $\R\times (0,\bar H)$. Assume $\hat\psi_1$ and $\hat\psi_2$ are two solutions of \eqref{blowuplimitpb}. Define $\hat\psi_{2,k}(x_1,x_2):=\hat\psi_2(x_1,x_2+k)$ for $k\in[0,\bar H)$. Then $\hat\psi_1$ and $\hat\psi_{2,k}$ are solutions of \eqref{blowuplimitpb} in $\R\times(0,\bar H-k)$. Thus with the shifting method as in \cite[Lemma 7.3]{ACF85} one has $\hat\psi_1\leq\hat\psi_{2,0}=\hat\psi_2$, and vice versa. This means \eqref{blowuplimitpb} has a unique solution. 
								{Since $\hat\psi$ satisfies \eqref{removetrun} and the function $\bar\psi$ defined in \eqref{eqbarpsi} satisfies the  equation in \eqref{eq_pb2}, by Remark \ref{rmk_remove_subsonic} one has $\hat\psi(x_1,x_2)=\bar \psi(x_2)$ in $\R\times[0,\bar H]$.} This proves the asymptotic behavior of the flows at the upstream.
								
								\emph{Step 2. Downstream asymptotic behavior.}
								Let $\psi^{(n)}(x_1, x_2):=\psi(x_1+n, x_2)$, $n\in\mathbb{Z}$.
									By the $C^{2,\alpha}$ regularity of the free boundary and  the boundary  regularity for elliptic equations, there exists a subsequence $\psi^{(n)}$ (relabeled) converging to a function $\ubar{\psi}$ in $C_{loc}^{2,\alpha}(\R\times[0,\ubar H))$, where $\ubar\psi$ {satisfies \eqref{removetrun} and solves} 
									\begin{equation}\label{pb_downstream}
										\begin{cases}
											\nabla \cdot(g_\epsilon(|\nabla\ubar{\psi}|^2,\ubar{\psi})\nabla\ubar{\psi})-\p_zG_\epsilon(|\nabla\ubar{\psi}|^2,\ubar{\psi})=0  &\text{ in }  \R\times (0, \ubar H),\\
											\ubar{\psi}=0 &\text{ on } \R\times \{0\},\\
											\ubar{\psi}=Q, \quad \p_{x_2}\ubar{\psi} = \Lambda &\text{ on } \R\times \{\ubar H\}.
										\end{cases}
									\end{equation}
									For given $Q$ and $\Lambda>0$, there is a unique one-dimensional solution to the above problem. Furthermore, the height $\ubar H$ is uniquely determined by $Q$ and $\Lambda$. Indeed, for $k>0$, consider the variational problem
									\[
									\inf_{\varphi\in \mcK^\flat_k}\int_0^k G_\epsilon(|\varphi'|^2,\varphi),
									\]
									where 
									$$\mcK^\flat_k =\{\varphi \in C^{0,1}([0,k];\R)| \varphi(0)=0, \varphi(k)=Q\}.$$ 
									It follows from the direct method in the calculus of variations that there exists a minimizer ${\varphi}_k$. Moreover, by the Hopf lemma  $\Lambda(k):=\lim_{s\rightarrow k-}{\varphi}'_k(s)$ is strictly monotone decreasing in $k$ (cf. Lemma \ref{lem:variation_sol}(iii)). Thus there exists a unique $k_{\Lambda}>0$ such that $\Lambda(k_{\Lambda})=\Lambda$ and necessarily $k_{\Lambda}=\ubar H$ (otherwise by the Hopf lemma $\p_{x_2}\ubar\psi(x_1,\ubar H)\neq \p_{x_2}\ubar\varphi(x_1,k_{\Lambda})$ for $x_1\in\R$, where $\ubar\varphi(x_1,x_2):=\varphi_{k_{\Lambda}}(x_2)$, then there is a contradiction). 
									As the unique continuation (cf. Lemma \ref{lem:uni}) implies that the Cauchy problem \eqref{pb_downstream} has a unique solution, one has $\ubar\psi(x_1,x_2)=\varphi_{\ubar H}(x_2)$ in $\R\times[0,\ubar H]$. Thus one can conclude that  $\psi$ converges to a unique one-dimensional function $\ubar\psi$ as $x_1\rightarrow \infty$ with a unique asymptotic height {$\ubar H>0$}. 
								
									Since $\ubar\psi=\ubar\psi(x_2)$ and since $\ubar\psi'>0$ on $[0,\ubar H]$ by Lemma \ref{lem:variation_sol}(ii), using \eqref{eq:psi_uv} one has $u_2\to0$ as $x_1\to\infty$. Consequently the downstream density 
								$$\ubar \rho:=\frac1{g(|\ubar\psi'|^2,\ubar\psi)}$$ 
								must be a constant (cf. \cite[Remark 1.1]{CDXX}), where $g$ is the function defined in \eqref{eq:branch}. Then using \eqref{eq:psi_uv} again yields the downstream horizontal velocity $\ubar u(x_2)=\ubar \psi'(x_2)/\ubar\rho$ for $x_2\in[0,\ubar H]$, which is always positive since $\ubar\psi'>0$ on $[0,\ubar H]$. Hence one has the expression of $\ubar\psi$ in \eqref{ubarpsi}. The regularity of $\ubar u$ follows from the regularity of $\ubar \psi$.  
								This finishes the proof of the proposition.
							\end{proof}
					{In view of Propositions \ref{prop:domain_truncation}--\ref{prop:asymptotic}, we have proved the existence of solutions to {Problem} \ref{Pb2} when the mass flux $Q$ is sufficiently large. Then the existence of solutions to Problem \ref{pb} follows from Proposition \ref{prop:equiv_sol}.}
					
					\section{Uniqueness}\label{secunique}
					In this section we prove the uniqueness of the solution to Problem \ref{Pb2}. 
					\begin{prop}\label{propunique}
					{Let the nozzle boundary $S_1$ defined in \eqref{eq:nozzle} satisfy \eqref{eq:nozzle1} and \eqref{eq:nozzle2}. Given an incoming  horizontal velocity  $\bar u\in C^{1,1}([0,\bar H])$ satisfying \eqref{cond:u0_eps0} and a mass flux $Q>Q_*$, where $Q_*$ is defined in \eqref{def:Q_*}. 
					If $(\psi_i, \Gamma_i, \Lambda_i)$ ($i=1$, $2$) are two  uniformly subsonic solutions to Problem \ref{Pb2}, then they must be the same.}
					\end{prop} 
					
					\begin{proof} Let $(\unrho_i, \unu_i, \unH_i)$, $i=1,2$, be the associated downstream asymptotic states. The proof is divided into two steps.
						
						\textit{Step 1. Proof for $\Lambda_1=\Lambda_2$.} Without loss of generality, assume that $\Lambda_1 <\Lambda_2$. Since $\bar u$ and $Q$ are fixed, in view of Step 2 in Proposition \ref{prop:asymptotic} and Lemma \ref{lem:variation_sol}(iii) one has $\ubar H_1>\ubar H_2$. 
						Let $\mcO_i$ be the domain bounded by $S_0$, $S_1$, and $\Gamma_{i}$.
						Since $\unH_1>\unH_2$ and that $S_1\cup \Gamma_i$ is an $x_2$-graph, one can find a $k\geq0$ such that the domain $\mcO_1^{(k)}=\{(x_1, x_2)|(x_1-k, x_2)\in \mcO_1\}$  contains $\mcO_2$. Let $k_*$ be the smallest number such that $\mcO_1^{(k_*)}$ contains $\mcO_2$ and they touch at some point $(x_1^\ast, x_2^\ast)\in \Gamma_2$. Let $\psi_1^{(k)}(x_1,x_2):= \psi_1(x_1-k, x_2)$. 
						{We claim $\psi_1^{(k_*)}\leq \psi_2$  in $\mcO_2$. In fact, define $\psi_{2,\tau}(x_1,x_2):=\psi_2(x_1,x_2+\tau)$ for $\tau\geq0$. Let  $\tau_*$ be the smallest constant such that $\psi_1^{(k_*)}\leq \psi_{2,\tau_*}$ in $\mathcal O_2$. Then $\tau_*\in[0,\bar H)$. Suppose $\tau_*>0$. Note that $\psi_1^{(k_*)}$ and $\psi_2$ have the same asymptotic behavior as $x_1\to-\infty$, and $\psi_1^{(k_*)}<\psi_2$ as $x_1\to\infty$ by the strong maximum principle (similarly as in Lemma \ref{lem:variation_sol}(iv)). Thus $\psi_1^{(k_*)}<\psi_{2,\tau_*}$ as $x_1\to\pm\infty$, so that there exists a point $(\hat x_1,\hat x_2)\in\mathcal O_2$ such that $\psi_1^{(k_*)}(\hat x_1,\hat x_2)=\psi_{2,\tau_*}(\hat x_1,\hat x_2)<Q$, which contradicts the strong  maximum principle. Thus $\tau_*=0$ and consequently $\psi_1^{(k_*)}\leq \psi_2$  in $\mcO_2$. 
							Note that 
							$\psi_1^{(k_*)}(x_1^\ast, x_2^\ast)= \psi_2(x_1^\ast, x_2^\ast)$. Then by the Hopf lemma we have
							\begin{equation*}
								\Lambda_1=\frac{\partial \psi^{(k_*)}_1}{\partial \nu}(x_1^\ast, x_2^\ast) >\frac{\partial\psi_2}{\partial \nu}(x_1^\ast, x_2^\ast) =\Lambda_2.
							\end{equation*}
							This leads to a contradiction. Hence one has $\Lambda_1=\Lambda_2$.}
						\begin{center}
							\includegraphics[height=5cm, width=10cm]{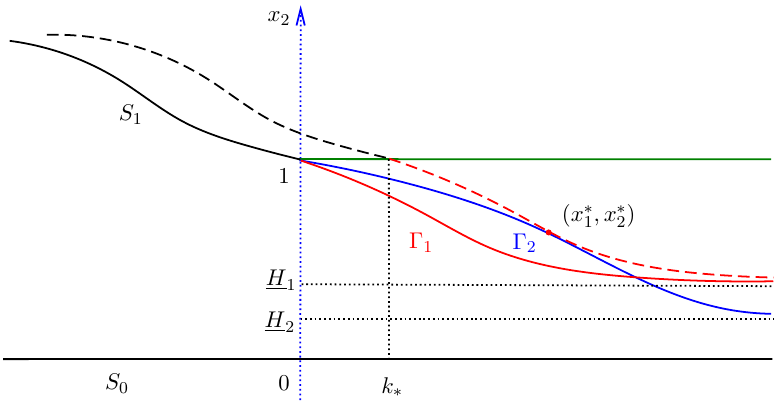}\\
							{\small Figure 5.}
						\end{center}

						\textit{Step 2. Proof for $\psi_1=\psi_2$ and $\Gamma_1=\Gamma_2$.} 
						Suppose that $\Gamma_1\neq \Gamma_2$,  where $\Gamma_i=(\Upsilon_i( x_2), x_2) (i=1,2)$. Without loss of generality, assume that there exists an $\tilde{x}_2$ such {that $\Upsilon_1(\tilde{x}_2)<\Upsilon_2(\tilde{x}_2)$. Since the nozzle boundary $S_1$ satisfies the monotone condition  \eqref{eq:nozzle2}, there exists a point $(\mu_*,0)$ ($\mu_*<0$) such that 
							\begin{itemize}
								\item [(i)] $S_1$ lies above the line $l_*$ connecting $(\mu_*,0)$ to $(0,1)$ and $S_1$ is starshaped with respect to $(\mu_*,0)$; 
								\item[(ii)] the free boundaries $\Gamma_1$ and $\Gamma_2$ lie below $l_*$.
							\end{itemize} 
							Without loss of generality, we assume $\mu_*=0$. Note that from Step 1 and Proposition \ref{prop:asymptotic} one has $\unH_1=\unH_2$. Similar as in Step 1, there exists $\tilde{k}_*\geq 0$ such that the free boundary of $\psi_{2,\tilde{k}_*}(x_1,x_2):=\psi_2(x_1,x_2+\tilde{k}_*)$ touches the free boundary of $\psi_1$  
							at some point $(\tilde{x}_1^\ast, \tilde{x}_2^\ast)\in \Gamma_1$. 
							By the strong maximum principle one can deduce  $\psi_1\leq\psi_{2,\tilde{k}_*}$ in $\mathcal O_1$.  Moreover, by the Hopf lemma, one has
							\begin{equation*}
								\Lambda_1=\frac{\partial \psi_1}{\partial \nu}(\tilde{x}_1^\ast, \tilde{x}_2^\ast) >\frac{\partial\psi_{2,\tilde{k}_*}}{\partial \nu}(\tilde{x}_1^\ast, \tilde{x}_2^\ast) =\Lambda_2.
							\end{equation*}
							However, this leads to a contradiction. Therefore, one has  $\Gamma_1=\Gamma_2$ and thus $\mcO_1=\mcO_2$. Let $\Gamma:=\Gamma_1$ and $\mcO:=\mcO_1$. Now we prove $\psi_1=\psi_2$. Note that they are both solutions to the boundary value problem
							\begin{equation*}
								\left\{
								\begin{aligned}
									&\nabla \cdot(g(|\nabla\psi|^2,\psi)\nabla\psi)={\frac{\mathcal{B}'(\psi)}{ g(|\nabla\psi|^2,\psi)}} \quad  \text{ in }  \mcO,\\
									&\psi=Q\text{ on } S_1\cup \Gamma,\quad \psi=0  \text{ on } S_0
								\end{aligned}
								\right.
							\end{equation*}
							with the same asymptotic behavior as  $x_1\to\pm\infty$. Let $\psi_{2,k}(x_1,x_2)=\psi_2(x_1,x_2+k)$ for $k>0$. Then by similar arguments as in Step 1 one has  $\psi_1\leq\psi_{2,k}$ for all $k>0$, so that 
							$\psi_1\leq\psi_2$ in $\mcO$ and vice versa.} Hence $\psi_1=\psi_2$.
						
						This completes the proof of the lemma.
					\end{proof}
					
					\section{Existence of Critical Mass Flux}\label{SEcritical}
					{For the given horizontal velocity at
						the upstream as in Theorem \ref{mainthm},  it has been proved in
						Section  \ref{secremove} that  there exist subsonic jet  flows as long as $Q>{Q}^*$ for some $Q^*$ sufficiently large. In this section, we 
						decrease $Q^*$ as small as possible until some critical mass flux $Q_c$, such that for $Q<Q_c$ either the flow fails to be global subsonic or the stream function $\psi$ fails to be a solution in the sense of Problem \ref{Pb2}. 
						\begin{prop}\label{Epropcritical}
							Let the nozzle boundary $S_1$ defined in \eqref{eq:nozzle} satisfy \eqref{eq:nozzle1} and \eqref{eq:nozzle2}. 
							Let $\bar u\in C^{1,1}([0,\bar H])$ be the incoming  horizontal velocity satisfying  \eqref{cond:u0_eps0} and $Q_*$ be as in \eqref{def:Q_*}. There exists a critical mass flux $Q_c\geq Q_*$ such that if $Q>Q_c$, then there exists a unique solution $\psi$ to Problem \ref{Pb2}, which satisfies
							\begin{equation}\label{Eellipticity}
								0< \psi< Q \ \text{ in }\mcO\quad\text{and}\quad
								\mfM(Q):=\sup_{\overline{\mcO}}\frac{|\nabla\psi|^2}{\mft_c(\mathcal{B}(\psi))}<1,
							\end{equation}
							where the domain $\mcO$ is bounded by $S_0$, $S_1$, and $\Gamma_\psi$. Furthermore, as $Q\rightarrow Q_c$ one has either
							$\mfM(Q)\rightarrow 1$, or there does not
							exist a $\sigma>0$ such that Problem \ref{Pb2} has solutions for all
							$Q\in (Q_c-\sigma,Q_c)$ and
							\begin{equation}\label{Ebifurcationestimate}
								\sup_{Q\in(Q_c-\sigma,Q_c)}\mfM(Q)<1.
							\end{equation}
					\end{prop}}
					\begin{proof}
						The proof for the proposition is inspired by \cite[Proposition 6]{XX3}, where the existence of critical flux for subsonic solutions with non-zero vorticity in fixed nozzles was established. 
						
						{For the given horizontal velocity $\bar u$ at the upstream 
							and any mass flux $Q>Q_*$, one can determine the density $\bar\rho$ at the upstream and the Bernoulli function $\mcB(\psi)$.} 
						Since $\barho$ and $\mcB(\psi)$ depend on $Q$ by definition, in this
						section, we denote them by $\barho(Q)$ and $\mcB(\psi;Q)$,
						respectively.
						
						Let $\{\epsilon_n\}_{n=1}^{\infty}$ be a strictly decreasing
						sequence of positive numbers such that $\epsilon_n\downarrow 0$ and $\epsilon_1\leq \epsilon$ where $\epsilon$ is the constant used in Section \ref{sec:truncation} for subsonic truncations. {Let $\psi_n(\cdot; Q)$ be the function (if it exists) such that
						\begin{itemize}
							\item [(i)] $\psi_n(x; Q)\in  C^{2,\alpha}(\{\psi_n(x;Q)<Q\})\cap C^{1}(\overline{\{\psi_n(x;Q)<Q\}})$ satisfies 
							\begin{equation}\label{eq_critical}
								\left\{
								\begin{aligned}
									&\nabla\cdot\left(g_{\epsilon_n}(|\nabla \psi|^2,\psi)\nabla \psi\right)- \partial_z G_{\epsilon_n}(|\nabla\psi|^2,\psi)=0 &&\text{in } \{\psi<Q\},\\
									&\psi =0 &&\text{on } S_0,\\
									&\psi =Q &&\text{on } S_1 \cup \Gamma_\psi,\\
									&|\nabla \psi| =\Lambda &&\text{on } \Gamma_\psi,
								\end{aligned}
								\right.
							\end{equation}
							where the functions $g_{\epsilon_n}$ and $G_{\epsilon_n}$ are defined in \eqref{eq:truncation_g} and \eqref{Gepsilon}, respectively;
							\item [(ii)] $\psi_n(x; Q)$ satisfies $\partial_{x_1}\psi_n(x;Q)>0$ in $\{\psi_n(x;Q)<Q\}$; 
							\item [(iii)]  the free boundary $\Gamma_{\psi_n}$ satisfies the properties (3)--(5) in Problem \ref{Pb2}. Furthermore, there exist constants $C_1(\epsilon_n, Q, \tau)$ and $C_2(\epsilon_n, Q, R)$ (with $\tau$, $R>0$) depending on $Q$ continuously such that
							\begin{equation}\label{eq9.3.5}
							\|\Upsilon\|_{C^{2, \alpha}([\underline{H}+\tau, 1-\tau ])}\leq C_1(\epsilon_n, Q, \tau) \quad \text{and}\quad \|f\|_{C^{2,\alpha}(R, \infty)}\leq C_2(\epsilon_n, Q, R),
							\end{equation}
							where $\Upsilon$ and $f$ are the functions appeared in the properties (3) and (5) in Problem \ref{Pb2}, respectively.
						\end{itemize}
						Moreover, if 
						\begin{equation}\label{Ecriticaluniformellipticity}
							{\frac{|\nabla \psi_{n}(x; Q)|^2}{ \mft_c({\mathcal{B}}(\psi_n;Q))}\leq
								1-\epsilon_n,}
						\end{equation}
						then $\psi_n(x;Q)$ solves the problem \eqref{eq_pb2}, and it satisfies the far fields behavior claimed in Proposition \ref{prop:asymptotic} and
						\begin{equation*}
							0\leq \psi_n(x;Q)\leq Q.
						\end{equation*}
						Set
						\begin{equation*}
							\mfS_n(Q):=\{\psi_n(x;Q)|\psi_n(x;Q)\,\,\text{satisfies the properties (i)--(iii)}\}.
						\end{equation*}
						Define}
							\begin{equation*}
								\mfM_n(Q):=\inf_{\psi_n\in \mfS_n(Q)}\sup_{\overline{\mcO}}
								\frac{|\nabla\psi_n(x;Q)|^2}{\mft_c(\mathcal{B}(\psi_n;Q))}
							\end{equation*}
							and
							\begin{equation*}
								\mfT_n:=\{s|s\geq Q_*,\,\, \mfM_n(Q)\leq
								1-\epsilon_n\,\,\text{if}\,\,Q\in (s,\infty)\}.
							\end{equation*}
							It follows from Section \ref{secremove} that 
							$[{Q}^*,\infty)\subset \mfT_n$ for sufficiently large $n$. Hence
							$\mfT_n$ is not an empty set. Define $Q_n:=\inf \mfT_n$.
						
						The sequence $\{Q_n\}$ has some nice properties.
						
						First, $\mfM_n(Q)$ is right continuous for $Q>Q_n$. Let $\{Q_{n,k}\}\subset(Q_n,\infty)$ and $Q_{n,k}\downarrow {Q}$ as $k\to \infty$. Since $\mfM_n(Q_{n,k})\leq 1-\epsilon_n$, one has
							\begin{equation*}
								\|\psi_n(x;Q_{n,k})\|_{C^{0,1}(\overline{\mcO_{n,k}})}\leq C\,\, \text{and}\,\, \|\psi_n(x;Q_{n,k})\|_{C^{2,\alpha}(\overline{\mcO_{n,k}}\backslash B_r(A))}\leq C(r)\,\, \text{for any}\,\,r>0,
							\end{equation*}
							where $\mcO_{n,k}$ is the associated domain bounded by $S_0$, $S_1$, and the free boundary $\Gamma_{n,k}$, and $B_r(A)$ is the disk centered at $A=(0,1)$ with radius $r$.  
							Therefore, there exists a subsequence $\psi_n(x;Q_{n,k_m})$
							such that $\psi_n(x;Q_{n,k_m})\rightarrow \psi$ and $\mcO_{n,k_m}\to \mcO$ as $m\to \infty$. 
							{Furthermore, together with the estimates \eqref{eq9.3.5}, 
							$\psi$ satisfies the properties (i)--(iii) mentioned above.} 
							Thus $\mfM_n(Q)\leq
							\lim_{m\rightarrow \infty} \mfM_n(Q_{n,k_m}) \leq 1-\epsilon_n$. It follows from Proposition \ref{propunique} that $\psi$ is unique. Hence $\mfM_n(Q)= \lim_{m\rightarrow \infty} \mfM_n(Q_{n,k_m})$.
						
						Second, $Q_n>Q_*$. Suppose not, by the definition of $Q_n$ one has 
						$Q_*\in \mfT_n$. It follows from the right continuity of $\mfM_n$ that
						$\mfM_n(Q_*)\leq 1-\epsilon_n$. Thus $\psi_n(x;{Q}_*)$ satisfies the far fields
						behavior claimed in Proposition  \ref{prop:asymptotic}.  However, it follows from
						the definition of $Q_*$ that
						$$\sup_{x\in \overline{\mcO}} \frac{|\nabla\psi_n(x; Q_*)|^2}{\mft_c({\mathcal{B}}(\psi_n(x; Q_*)))}
						\geq\sup_{x_2\in[0,\barH]}\frac{|\barho(Q_*)\baru(x_2; Q_*)|^2}{\mft_c(B(x_2))}=1.$$
						Thus $\mfM_n(Q_*)\geq 1$. This leads to a contradiction, which implies
						$Q_n>Q_*$.
						
						Finally, it follows from the definition of $\{Q_n\}$  that $\{Q_n\}$ is  {a decreasing} sequence. 	
						
						Define $Q_c:=\lim_{n\rightarrow \infty} Q_n$. Based on previous properties of $\{Q_n\}$, $Q_c$ is well-defined and $Q_c\geq  Q_*$. Note that for any $Q>Q_c$, there exists a $Q_n\in(Q_c,Q)$ and thus $\mfM_n(Q)\leq 1-\epsilon_n$.
						{Therefore, $\psi=\psi_n(x;Q)$ solves Problem \ref{Pb2}.}
						
						{If $\sup_{Q\in(Q_c,\infty)}\mfM(Q)<1$, then there exists an
							$n$ such that
							$\sup_{Q\in(Q_c,\infty)}\mfM(Q)<1-\epsilon_n$. As the
							same as the proof for the right continuity of $\mfM_n(Q)$ on
							$[Q_n,\infty)$, $\mfM(Q_c)\leq 1-\epsilon_n$. Suppose
							that there exists $\sigma>0$ such that Problem  \ref{Pb2} always has
							a solution $\psi$ for $Q\in(Q_c-\sigma,Q_c)$, and
							\begin{equation*}
								\sup_{Q\in(Q_c-\sigma,Q_c)}
								\mfM(Q)=\sup_{Q\in(Q_c-\sigma,Q_c)}\sup_{\overline{\mcO}}
								\frac{|\nabla\psi|^2}{\mft_c(\mathcal{B}(\psi))}<1.
							\end{equation*}
							Hence there exists a $k>0$ such that
							\begin{equation*}
								\sup_{Q\in(Q_c-\sigma,Q_c)}
								\mfM(Q)=\sup_{Q\in(Q_c-\sigma,Q_c)}\sup_{\overline{\mcO}}
								\frac{|\nabla\psi|^2}{\mft_c(\mathcal{B}(\psi))}\leq
								1-\epsilon_{n+k}.
							\end{equation*}
							This yields that $Q_{n+k}\leq Q_c-\sigma$ and leads to a contradiction. Therefore,  either
							$\mfM(Q)\rightarrow 1$, or there does not exist a $\sigma>0$ such that 
							Problem \ref{Pb2} has solutions for all $Q\in
							(Q_c-\sigma,Q_c)$} and the associated solutions satisfy (\ref{Ebifurcationestimate}).
						
						This completes the proof of the proposition.
					\end{proof}
					
					Combining all the results in previous sections, Theorem \ref{mainthm} is proved.

					\appendix
					\section{Some technical lemmas}
					In this appendix, we collect some lemmas which provide some technical details used in the paper. The first one devotes to the analysis on shear flows.
					
					\begin{lem}\label{lem:variation_sol}
						Let $h>0$ be a constant. Suppose $\phi_h$ is a minimizer of 
						\[
						\mathfrak{J}_h(\varphi)=\int_{0}^{h}G_\epsilon(|\varphi'|^2,\varphi)
						\] in the admissible set $\mathfrak{K}_h:=\{\varphi\in C^{0,1}([0,h];\mathbb R)| \varphi(0)=0, \varphi(h)=Q\}$, {where the function $G_\epsilon$ is defined in \eqref{Gepsilon} and satisfies \eqref{eq:convex}--\eqref{eq:com_energy} in Proposition \ref{prop:assumption}.} 
						Then 
						\begin{itemize}
							\item [(i)]  $0<\phi_h<Q$ on $(0,h)$; 
							\item [(ii)] $\phi_h'>0$ on $[0,h]$;
							\item [(iii)] the value of $\phi_h'(h)$ is strictly decreasing with respect to $h$;
							\item [(iv)] if $0<h_1<h_2$, one has $\phi_{h_1}>\phi_{h_2}$ on $(0,h_1)$.
						\end{itemize} 
					\end{lem}
					\begin{proof}
						By similar arguments as in the proof for \eqref{ELpde} and Lemma \ref{lem:upper_lower_bd}, one knows that $\phi_h$ satisfies 
						\begin{equation}\label{eq:compare3}
							\left\{
							\begin{aligned}
								(g_\epsilon(|\phi'_h|^2,\phi_h)\phi_h')'-\p_zG_\epsilon(|\phi_h'|^2,\phi_h)=0 \quad&\text{on }  (0, h),\\
								\phi_h(0)=0,\quad \phi_h(h)=Q,
								\quad 0\leq\phi_h\leq Q \quad &\text{on }  [0,h]. 
							\end{aligned}
							\right.
						\end{equation} 
						It suffices to prove (i)--(iv) for {the solutions of}  \eqref{eq:compare3}.
						
						\emph{(i).} For ease of notations, denote $\phi:=\phi_h$. Suppose that $\tilde{\phi}$ is also a solution to the equation in \eqref{eq:compare3}, then $w:=\phi-\tilde{\phi}$ satisfies 
						\begin{equation}\label{1d_equ1}
							(\tilde{\mathfrak a}w'+\tilde{\mfb}w)'-\tilde{\mfb}w' -\tilde{\mfc}w=0 \quad\text{on }  (0, h),
						\end{equation}
						where  
						\begin{align*}
							\tilde{\mathfrak a}:=\int_0^1\p_{pp}G_\epsilon(|\tilde\phi_\tau'|^2,\tilde\phi_\tau)d\tau,
							\quad \tilde{\mfb}:=\int_0^1\p_{pz} G_{\epsilon}(|\tilde\phi'_\tau|^2,\tilde\phi_\tau)d\tau,
							\quad \tilde{\mfc}:= \int_0^1\p_{zz}{G}_\epsilon(|\tilde\phi'_\tau|^2,\tilde\phi_\tau)d\tau,
						\end{align*}
						with $\tilde\phi_\tau=\tilde\phi+\tau(\phi-\tilde\phi)$. Notice $0$ and $Q$ are both solutions to the equation in \eqref{eq:compare3}. Let $w_1:=\phi$ and $w_2:=\phi-Q$. Since  
						$$w_1(0)=0=\min_{x_2\in[0,h]}w_1(x_2)
						\quad\text{and}\quad  w_2(h)=0=\max_{x_2\in[0,h]}w_2(x_2),$$ 
						using the strong maximum principle for the equation \eqref{1d_equ1} yields $w_1>0$ and $w_2<0$ on $(0,h)$. This finishes the proof for (i). 
						
						\emph{(ii).} Denote $\phi:=\phi_h$ and $v:=\phi'_h$. Note that by the Hopf lemma one has $v(0)=w_1'(0)>0$ and $v(h)=w_2'(h)>0$, where $w_1$ and $w_2$ are the same as that defined in (i). Then $v$ satisfies
						\begin{equation}\label{1d_deri_equ}
							\left\{
							\begin{aligned}
								&(\mathfrak a v'+\mfb v)'-\mfb v' -\mfc v=0 \quad\text{on }(0,h),\\
								&v(0)>0,\quad v(h)>0
							\end{aligned}
							\right.
						\end{equation} 
						with 
						\begin{align*}
							\mathfrak a:=\p_{pp}G_\epsilon(|\phi'|^2,\phi) ,\quad \mfb:=\p_{pz} G_{\epsilon}(|\phi'|^2,\phi),\quad \mfc:= \p_{zz}{G}_\epsilon(|\phi'|^2,\phi). 
						\end{align*}
						We claim that $v\geq0$ on $(0,h)$. In fact, let $\phi^{(k)}(x_2):=\phi(x_2+k)$ for $k\geq0$, where $\phi(x_2)$ is extended to be $Q$ for $x_2>h$. Let $k_*:=\inf\{k:\phi\leq \phi^{(k)}\}$.  Suppose $k_*>0$. Since $0<\phi<Q$ on $(0,h)$ by (i), then by continuity necessarily $k_\ast<h$ and furthermore, one can find a point $x_2^*\in(0,h-k_*)$ such that $\phi-\phi^{(k_*)}$ attains its maximum $0$ at $x_2=x_2^*$. Since $\phi-\phi^{(k_*)}$ satisfies \eqref{1d_equ1} in $(0,h-k_*)$ and has a sign, by the strong maximum principle one gets a contradiction. Hence $k_*=0$ and $v=\lim_{k\rightarrow 0+}\frac{\phi(x_2+k)-\phi(x_2)}{k}\geq 0$. Then using the strong maximum principle for \eqref{1d_deri_equ} gives (ii). 
						
						\emph{(iii).} For $0<h_1<h_2$, let $\phi_{h_1}$ and $\phi_{h_2}$ be the corresponding solutions of \eqref{eq:compare3}. We need to show $\phi_{h_1}'(h_1)>\phi_{h_2}'(h_2)$. 
						Let $\bar\phi_{h_2}(x_2):=\phi_{h_2}(x_2+(h_2-h_1))$, then $\bar\phi_{h_2}$ satisfies the equation in \eqref{eq:compare3} for $x_2\in(0,h_1)$. Moreover, 
						$$\phi_{h_1}(0)=0<\bar\phi_{h_2}(0) \quad\text{ and }\quad
						\phi_{h_1}(h_1)=Q=\bar\phi_{h_2}(h_1).$$ 
						By considering $\bar\phi_{h_2}^{(k)}(x_2):=\bar\phi_{h_2}(x_2+k)$ and using similar arguments as in (ii), one can prove  $\phi_{h_1}<\bar\phi_{h_2}$ on $(0,h_1)$.
						Notice $(\phi_{h_1}-\bar\phi_{h_2})(h_1)=0$.  Using the Hopf lemma, one has $\phi_{h_1}'(h_1)>\bar\phi_{h_2}'(h_1)=\phi_{h_2}'(h_2)$.
						The proof for (iii) is finished. 
						
						\emph{(iv).} Note that $\phi_{h_1}(0)=0=\phi_{h_2}(0)$ and  $\phi_{h_1}(h_1)=Q>\phi_{h_2}(h_1)$. Using similar arguments as in (ii) and (iii), one has $\phi_{h_1}>\phi_{h_2}$ on $(0,h_1)$.
						
						This completes the proof for the lemma.
					\end{proof}
					
				
				The next lemma gives another version of nondegeneracy.
				\begin{lem}\label{lem_nondegeneracy}
					Let $\bar I$ be a segment joining the points $(\bar x_1\pm a, \bar x_2)$ in $\Omega$ and $\psi:=\psi_\infty$, $\Lambda:=\Lambda_\infty$ be as in Proposition \ref{prop:domain_truncation}. If
					$$\psi=Q \quad \text { in a neighborhood of } \bar I,$$
					and
					$$(\bar x_1, \hat x_2) {\rm~is~a~free~boundary~point~with~}  \hat x_2>\bar x_2,$$
					then
					$$a\leq C,$$
					where $C$ is a constant depending on $\gamma$, $\epsilon$, $\bar u$, $Q$ and  $\Lambda$.
				\end{lem}
				\begin{proof}
					The proof is similar as \cite[Lemma 4.1]{ACF83}. For $\tau\geq0$ define
					$$\gamma_{\tau}:=\left\{(x_1,x_2)\Big| |x_1-\bar x_1|\leq a,~ x_2-\bar x_2=\tau\eta\left(\frac{x_1-\bar x_1}{\sqrt a}\right)\right\},$$
					where
					\begin{align*}\eta(x_1):=\begin{cases}
							e^{-\frac{x_1^2}{1-x_1^2}} &\text{ if } |x_1|<1,\\
							0 &\text{ if } |x_1|\geq1.
					\end{cases}\end{align*}
					Let $\tau_0$ be the smallest value of $\tau$ for which $\gamma_{\tau}$ touches the free boundary at some point, say, $(\tilde x_1,\tilde x_2)$. Notice that
					$$|\tilde x_1-\bar x_1|\leq\sqrt a \quad{\rm and}\quad \tau_0\leq C_0,$$
					where $C_0$ is a universal constant.
					
					For fixed large $m$ consider the domain $D$ bounded by $\gamma_{\tau_0}$, $x_2=\bar x_2+m$ and $x_1=\bar x_1\pm a$. {Let $\phi$ be the solution of  
						\begin{align*}\begin{cases}
								\nabla\cdot(g_\epsilon(|\nabla\phi|^2,\phi)\nabla\phi)-\p_z G_\epsilon(|\nabla\phi|^2,\phi)=0 & {\rm in~}  D,\\
								\phi=Q &{\rm on~} \gamma_{\tau_0},\\
								\phi=0 & {\rm on~} \partial D\backslash \gamma_{\tau_0},\\
								0\leq\phi\leq Q & {\rm in~} D.
						\end{cases}\end{align*}
						The existence of $\phi$ can be obtained by considering the variational problem 
						$$\inf_{K_\phi} \int_D G_\epsilon(|\nabla \varphi|^2,\varphi),$$
						where 
						$$K_\phi=\{\varphi\in H^{1}( D)|\varphi=Q \text{ on } \gamma_{\tau_0}, \ 
						\varphi=0 \text{ on } \p  D\backslash \gamma_{\tau_0}\},$$
						and $0\leq\phi\leq Q$ by similar arguments as in Lemma \ref{lem:upper_lower_bd}. Now we claim that $\psi\geq\phi$ in $D$. In fact, $\psi>\phi$ in a neighborhood of $\partial  D\cap\{x_1=\bar x_1-a\}$ and $\psi\geq\phi$ on the remaining part of $\partial( D\cap\{\psi<Q\})$. By similar arguments as in Lemma \ref{lem:variation_sol}(ii)-(iv) and the strong maximum principle, one gets
						$\psi\geq\phi$ in $ D$. Since $\psi=\phi=Q$ at $(\tilde{x}_1,\tilde{x}_2)$,  one has} 
					\begin{equation}\label{eq:compare1}
						\frac{\p \phi}{\p\nu}(\tilde x_1,\tilde x_2)\geq\frac{\p\psi}{\p\nu}(\tilde x_1,\tilde x_2)=\Lambda.
					\end{equation}
					
					Now, if the assertion of the lemma is not true then the above situation holds for a sequence of $\{a_j\}$  with $a_j\to\infty$. Choose a subsequence such that 
					$$(\bar x_1)_j,~(\bar x_2)_j,~(\tau_0)_j,~(\tilde x_2)_j,~ \frac{(\tilde x_1)_j-(\bar x_1)_j}{\sqrt {a_j}}$$
					converge, and let
					$$x_1^*=\lim_{j\to\infty}\frac{(\tilde x_1)_j-(\bar x_1)_j}{\sqrt {a_j}},$$
					then $x_1^*\in[-1,1]$.
					
					After a translation in the $x_1$-direction so that $(\tilde x_1,\tilde x_2)$ lies on the $x_2$-axis, the corresponding domain $D_j$ converges to
					$$ D^*:=\{(x_1,x_2)| x_1\in\mathbb R,~ \bar x^*_2+\tau^*\eta(x^*_1)<x_2<\bar x^*_2+m\}$$
					and $(\tilde x_1,\tilde x_2)$ converges to $(0,\bar x^*_2+\tau^*\eta_1(x_1^*))$, where $\bar x^*_2$ and $\tau^*$ denote the limit values $(\bar x_2)_j$ and $(\tau_0)_j$ in the above construction for each $a_j$. Furthermore, the corresponding solutions $\{\phi_j\}$ converge to a solution $\phi^*$ of
					\begin{align}\label{eq:compare2}\begin{cases}
							\nabla\cdot(g_\epsilon(|\nabla\phi^*|^2,\phi^*)\nabla\phi^*)-\p_z G_\epsilon(|\nabla\phi^*|^2,\phi^*)=0 & {\rm in~}  D^*,\\
							\phi^*=Q &{\rm on~} x_2=\bar x^*_2+\tau^*\eta(x^*_1),\\
							\phi^*=0 & {\rm on~} x_2=
							\bar x^*_2+m,\\
							0\leq \phi^*\leq Q & {\rm in~}  D^*.
					\end{cases}\end{align}
					One deduces from \eqref{eq:compare1} that
					$$\Lambda\leq\lim_{j\to\infty}\frac{\p\phi_j}{\p\nu}\bigg|_{((\tilde x_1)_j,(\tilde x_2)_j)}=\frac{\p\phi^*}{\p\nu}\bigg|_{(0,\bar x_2^*+\tau^*\eta(x_1^*))}\leq C_m$$
					for some constant $C_m$. 
					Now we prove $C_m<\Lambda$ if $m$ is large enough, then there is a contradiction and the lemma follows. 
					{Indeed, the value of $\p\phi^*/\p\nu$ on $\{x_2=\bar x_2^*+\tau^*\eta(x_1^*)\}$ is strictly decreasing with respect to $m$ (similarly as in Lemma \ref{lem:variation_sol}(iii)). 
						On the other hand, for the same $Q$ and $\Lambda$, we know that if $m-\tau^*\eta(x_1^*)=\unH$ ($\unH$ is the downstream asymptotic height of $\psi$ determined in Proposition \ref{prop:asymptotic}), then  $\p\phi^*/\p\nu=\Lambda$ on $\{x_2=x_2^*+\tau^*\eta(x_1^*)\}$. Therefore choosing $m>\unH+1$ one gets $C_m<\Lambda$. This finishes the proof.}
				\end{proof}

		\medskip
		{\bf Acknowledgement.}
		W.~Shi was partially supported by the German Research Foundation (DFG) by the project SH 1403/1-1. L. Tang was partially supported by  NNSFC  grant 11831109  of China and Fundamental Research Grant for Central  Universities (No. CCNU19TS032). The research of Xie was partially supported by NSFC grants {11971307 and 1221101620,  Fundamental Research Grants for Central Universities, Natural Science Foundation of Shanghai 21ZR1433300, and Program of Shanghai Academic Research Leader 22XD1421400.} Part of the work was done when Xie was visiting Institut des Hautes \'{E}tudes Scientifiques (IHES) and The Institute of Mathematical Sciences (IMS) at The Chinese University of Hong Kong. He would like to thank Professor Zhouping Xin for many helpful discussions over the years, the hospitality and support of both IHES and IMS. 
		
		\bibliographystyle{plain}
		
	\end{document}